\documentclass[aos]{imsart}
\usepackage[english]{babel}
\usepackage[utf8]{inputenc}
\usepackage{amsmath,amsthm, amssymb, latexsym, color,mathtools}
\usepackage{enumerate}
\usepackage{verbatim}
\usepackage{comment}
\usepackage{bbm}
\usepackage{mathrsfs}
\usepackage{tikz}
\usetikzlibrary{cd}
\usetikzlibrary{shapes,arrows,cd}
\usepackage{tikz-cd}
\usepackage[all,2cell]{xy}
\usepackage{abstract}
\usepackage{hyperref}
\usepackage[textsize=tiny]{todonotes}
\usepackage{standalone}
\usepackage{accents}
\usepackage{microtype}

\RequirePackage[colorlinks,citecolor=blue,urlcolor=blue]{hyperref}

\startlocaldefs
\definecolor{org}{rgb}{1,0.53,0.0}

\definecolor{ssw}{rgb}{0.1,0.45,0.1}

\newtheorem{thm}{Theorem}
     
\newtheorem{lem}[thm]{Lemma}

\newtheorem{rem}[thm]{Remark}
\newtheorem{prop}[thm]{Proposition}

\newtheorem{ass}[thm]{Assumption}
\numberwithin{thm}{section}
\numberwithin{equation}{section}

\newcommand{\indicator}{\mathbf{1}}		%indicator function
\newcommand{\identity}{\mathbb{I}}  %identity
\renewcommand{\underbar}[1]{\underaccent{\bar}{#1}}

\newcommand{\data}{\mathbf Y_N}
\newcommand{\D}{\mathrm{d}}

\newcommand{\ProjOne}{\mathcal{P}}
\newcommand{\ProjTwo}{\widetilde{\mathcal{P}}}
\newcommand{\ProjThree}{\mathcal{P}}

\NewDocumentCommand{\op}{o}{%
  \IfNoValueTF{#1}%
    {\mathcal{L}}%
    {\mathcal{L}\!\left[#1\right]}%
}   %operator macro
\NewDocumentCommand{\Iop}{m o}{%
  \IfNoValueTF{#2}%
    {\mathcal{I}_{#1}}%
    {\mathcal{I}_{#1}\!\left[#2\right]}%
}   %Information operator macro
\NewDocumentCommand{\opDeriv}{m o}{%
  \IfNoValueTF{#2}%
    {\mathcal{DL}_{#1}}%
    {\mathcal{DL}_{#1}\![#2]}%
}   %Frechet derivative operator macro
\NewDocumentCommand{\opLin}{o}{%
  \IfNoValueTF{#1}%
    {\tilde{\mathcal{L}}}%
    {\tilde{\mathcal{L}}#1}%
}   %Linearised operator macro
\NewDocumentCommand{\Law}{o m}{
\IfNoValueTF{#1}
{\operatorname{Law}\!\left(#2\right)}
{\operatorname{Law}_{#1}\!\left(#2\right)}
}
\newcommand{\domain}{\mathcal{X}}    %domain macro
\newcommand{\gaussian}{\mathcal{N}_{u_0}} %limiting Gaussian for BvM

\renewcommand{\epsilon}{\varepsilon}

\NewDocumentCommand{\SG}{m}{\textcolor{orange}{[#1]}}

\DeclarePairedDelimiterXPP{\iprodWrapper}[4]
  {\ifblank{#1}{}{{}_{#1}}} % left index
  {\langle}
  {\rangle}
  {\ifblank{#2}{}{_{#2}}}  % right index
  {%
    \ifblank{#3}{\MTemptyplaceholder}{#3},
    \ifblank{#4}{\MTemptyplaceholder}{#4}%
  }

\NewDocumentCommand{\iprod}{s O{} O{} m m}{%
  \IfBooleanTF{#1}
    {\iprodWrapper*{#2}{#3}{#4}{#5}}
    {\iprodWrapper {#2}{#3}{#4}{#5}}%
}
\DeclarePairedDelimiterXPP\normWrapper[2]{}\lVert\rVert{#1}{\ifblank{#2}{\MTemptyplaceholder}{#2}}
\NewDocumentCommand\norm{ s o m }{
	\IfBooleanTF {#1}
	{ \normWrapper*{\IfNoValueF{#2}{_{#2}}}{#3}}
	{ \normWrapper {\IfNoValueF{#2}{_{#2}}}{#3}}
}

\DeclarePairedDelimiterXPP\EVWrapper[2]{#1}[]{}{
	\renewcommand\given{\mathrel{}\mathclose{}\delimsize\vert\mathopen{}\mathrel{}}
	#2
}
\providecommand\given{}
\NewDocumentCommand\EV{ s O{} O{} O{} m }{
	\ifblank {#5}{\mathrm{E}_{#2}^{#3}}
		{\IfBooleanTF {#1}
			{ \EVWrapper*{\mathrm{E}_{#2}^{#3}}{#5} }
			{ \IfNoValueTF{#4}
				{
					\EVWrapper{\mathrm{E}_{#2}^{#3}}{#5}
				}
				{
					\EVWrapper[#4]{\mathrm{E}_{#2}^{#3}}{#5}
				}
			}
		}
}

\DeclarePairedDelimiter\abs\lvert\rvert
\reDeclarePairedDelimiterInnerWrapper\abs{star}{%
	\mathopen{#1\vphantom{\MTkillspecial{#2}}\kern-\nulldelimiterspace\right.}%
	\ifblank{#2}{\MTemptyplaceholder}{#2}%
	\mathclose{\left.\kern-\nulldelimiterspace\vphantom{\MTkillspecial{#2}}#3}%
}

\DeclarePairedDelimiterXPP\ProbWrapper[2]{#1}(){}{
	\renewcommand\given{\nonscript\:\delimsize\vert\nonscript\:\mathopen{}}
	#2
}

\NewDocumentCommand\prob{ s O{} O{} O{} m }{
	\ifblank {#5}{\mathrm{P}_{#2}^{#3}}
		{\IfBooleanTF {#1}
			{ \ProbWrapper*{\mathrm{P}_{#2}^{#3}}{#5} }
			{ \IfNoValueTF{#4}
				{
					\ProbWrapper{\mathrm{P}_{#2}^{#3}}{#5}
				}
				{
					\ProbWrapper[#4]{\mathrm{P}_{#2}^{#3}}{#5}
				}
			}
		}
}

\newcommand{\MTemptyplaceholder}{\:\cdot\:}
\newcommand{\smallo}{
	  \mathchoice
	    {{\scriptstyle\mathcal{O}}}% \displaystyle
	    {{\scriptstyle\mathcal{O}}}% \textstyle
	    {{\scriptscriptstyle\mathcal{O}}}% \scriptstyle
	    {\scalebox{.7}{$\scriptscriptstyle\mathcal{O}$}}%\scriptscriptstyle
	  }	%kleines mathcal o

\endlocaldefs

\begin{document}
\begin{frontmatter}
	\title{Bernstein--von Mises theorems for Bayesian probabilistic numerics}

\begin{aug}
\author[A]{%
  \fnms{Sascha}~\snm{Gaudlitz}%
  \ead[label=e1]{sascha.gaudlitz@epfl.ch}%
}

\author[B]{%
  \fnms{and Sven}~\snm{Wang}%
  \ead[label=e2]{sven.wang@epfl.ch}%
}

\address[B]{%
  Institute of Mathematics,
  EPFL%
  \printead[presep={;\ }]{e2}%
}
\address[A]{%
  Institute of Mathematics,
  \'Humboldt University Berlin%
  \printead[presep={;\ }]{e1}%
}

\end{aug}
    
    \begin{abstract}
    We study probabilistic numerical methods for solving nonlinear PDEs from a Bayesian nonparametric perspective. Given noisy evaluations at random collocation points, we place a truncated Gaussian series prior on the unknown solution and establish contraction at the minimax nonparametric rate, up to a logarithmic factor. Our main results give Gaussian approximations of the posterior in positive-order Sobolev spaces and, under suitable conditions, in the uniform topology. This contrasts with classical ill-posed inverse problems, where Bernstein--von Mises theorems typically require substantially weaker topologies. Here, the observation operator is differential rather than smoothing, and inversion of its linearisation gains regularity, making these strong-topology results possible. The posterior may be centred at either the posterior mean or the posterior mode. We further prove that the Gaussian Laplace approximation is asymptotically equivalent to the true posterior at a $\sqrt{N}$-scale.
\end{abstract}

\end{frontmatter}

\section{Introduction}
Many numerical procedures for partial differential equations naturally involve uncertainty. Forcing terms or boundary data may be observed with statistical errors, the differential equation may be enforced only at finitely many locations, and its solution must be represented in a finite-dimensional approximation space. Probabilistic numerical methods seek to represent these sources of uncertainty through a Bayesian posterior distribution on the unknown solution, rather than through a single point approximation. This perspective goes back at least to Diaconis's formulation of Bayesian probabilistic numerics \cite{diaconis1988bayesian}, see \cite{hennig2015probabilistic,cockayneBayesianProbabilisticNumerical2019, owhadi2019statistical} for more recent accounts and \cite{owhadi2019operator} for a historical review.

Our paper is motivated in particular by Gaussian-process (GP) methods for solving nonlinear PDEs as proposed by \cite{chen2021solving}. These methods model the unknown solution $u$ by a prior GP, which is conditioned to satisfy the PDE constraint at finitely many collocation points, and gives rise to a Bayesian posterior distribution on the solution space of the PDE. In \cite{chen2021solving}, the PDE solution is approximated by the maximum a posteriori estimator (MAP). Subsequent work developed sparse implementations \cite{mengSparseGaussianProcesses2023, chen2025sparse} and proved error estimates for the MAP approximation  \cite{batlleErrorAnalysisKernel2025, chen2025gaussian}; see also \cite{owhadi2019operator} for the connection of GP methods to optimal recovery. Related noisy-collocation methods based on physics-informed neural networks were introduced in \cite{raissiPhysicsinformedNeuralNetworks2019}, and posterior contraction for Bayesian neural-network variants has recently been studied in \cite{sun2024estimationratebayesianpinn,zhao2026posterior}. These contributions concern either deterministic or MAP approximation, or posterior contraction under neural-network priors. The asymptotic shape and frequentist calibration of the full posterior distribution under noisy collocation remain largely open.

The main goal of this paper is to develop rigorous theoretical guarantees for GP-based PDE solution methods by studying them through the lens of Bayesian nonparametric statistics and the infinite-dimensional Bernstein--von Mises (BvM) phenomenon \cite{CN13,castilloBernsteinMisesTheorem2015,GV17, N22,nicklBernsteinvonMisesTheorems2025}. Posterior contraction quantifies the rates at which the posterior distribution concentrates around the true solution \cite{ghosalConvergenceRatesPosterior2000a,shenRatesConvergencePosterior2001a,ghosalConvergenceRatesPosterior2007a,vandervaartRatesContractionPosterior2008, GV17}, but does not describe its asymptotic shape. In finite-dimensional regular models, the BvM theorem states that, after appropriate centring and rescaling, the posterior is asymptotically Gaussian with covariance given by the inverse Fisher information \cite{Vaart1998}. Function-space versions are substantially more delicate: for direct observations, the canonical Gaussian limit is generally tight only in sufficiently weak spaces \cite{freedman1999wald, CN13, CN14}, while smoothing forward maps in ill-posed inverse problems entail a further loss of information \cite{N22}. The mechanism in the present setting is different. Whereas smoothing forward operators induce an information loss and lead to weaker BvM topologies, the operator $\op$ is differential. Inversion of its linearisation gains regularity and allows for a Gaussian approximation of the full posterior in positive-order Sobolev spaces, including  $L^2$ and, in suitable regimes, $L^\infty$. The resulting strong-topology phenomenon complements \cite{nicklBernsteinvonMisesTheorems2025}, where the corresponding regularity instead arises from parabolic smoothing at positive times.

\subsection{Main contributions}

\begin{figure}[h]
\centering
\includegraphics[width =0.7 \textwidth]{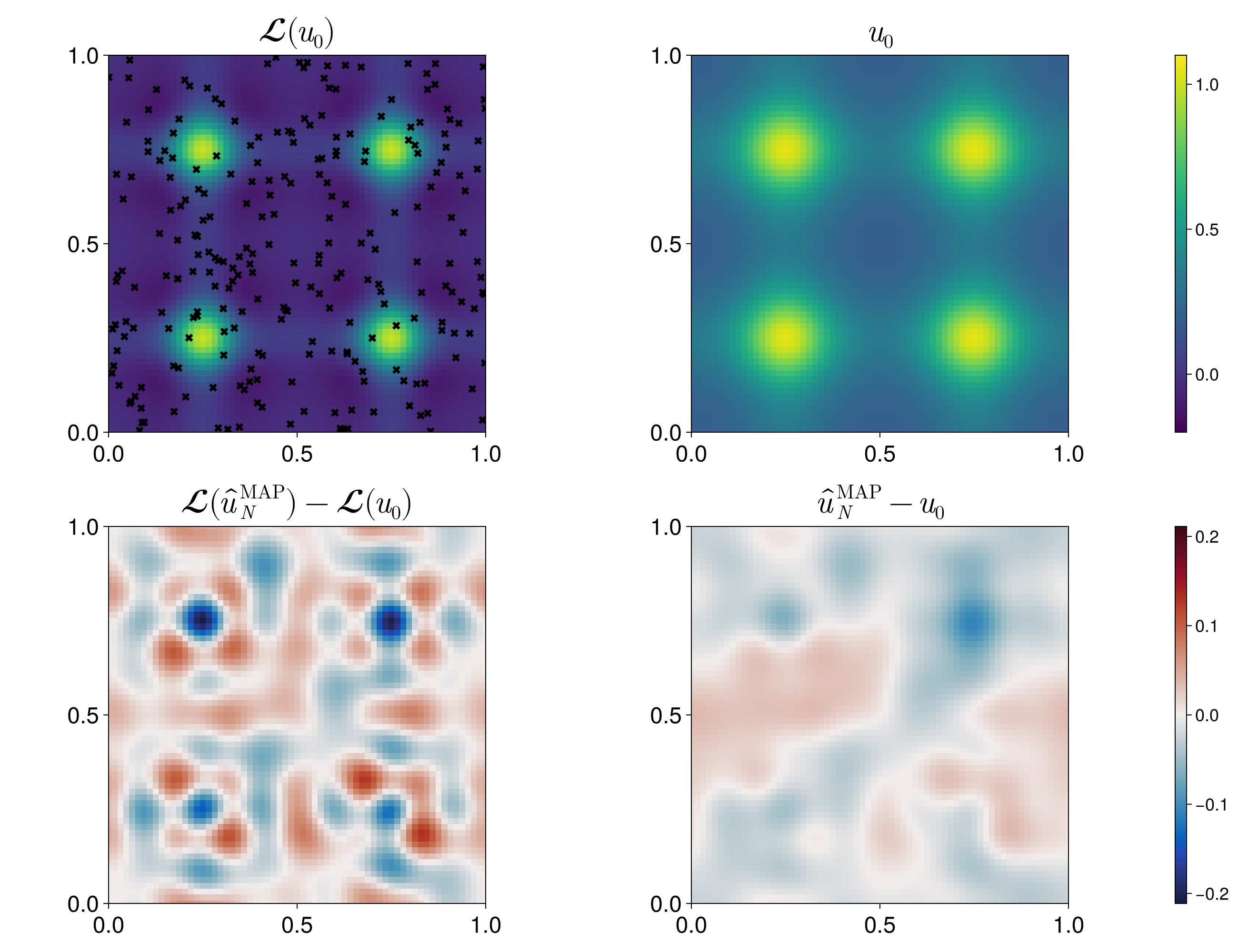}
\caption{Illustration of the reconstruction of the truth $u_0$ and the forward field $\op[u_0]$ for a semilinear elliptic operator $\op$. Black crosses correspond to observed design points $X_i$, $i=1,\dots, N$, $N=250$. See Section \ref{sec:semilienarElliptic} for details.}
\label{fig:Reconstruction}
\end{figure}
In this paper, we prove convergence rates and function-space BvM theorems for a class of Bayesian probabilistic numerical methods for possibly nonlinear PDEs. Let $\domain\subset\mathbb R^d$ be a smooth bounded domain and let $\op\colon \mathcal{V}\to L^2(\domain)$ be a differential operator whose domain $\mathcal{V}$ is a closed subspace of $H^m(\domain)$ for some $m>0$, e.g.\, incorporating prescribed boundary conditions. We consider noisy random evaluations
\begin{equation}\label{eq:intro-observation-model}
    Y_i=\op[u_0](X_i)+\sigma\epsilon_i,
    \qquad i=1,\ldots,N,
\end{equation}
where the design points $(X_i\colon i=1,\dots, N)$ are sampled independently from a density $p$ on $\domain$, bounded above and away from zero, and the variables $\epsilon_i$ are independent standard Gaussians. The goal is to recover the unknown `true' solution $u_0\in\mathcal{V}$ of the differential equation $ \op[u_0]=f$. 
One may interpret \eqref{eq:intro-observation-model} either as noisy random collocation or as a nonlinear statistical inverse problem with forward operator $\op$. It differs, however, from classical \textit{ill-posed} inverse problems \cite{ KNS08, S10, N22, NVW18} where $\op$ is smoothing. In the present setting, we assume a stability estimate of the form
\begin{equation}\label{eq:intro_Stability}
    \norm{u-v}_{H^m(\domain)}\sim \norm{\op[u]-\op[v]}_{L^2(\domain)},\quad u,v\in\mathcal{V}.
\end{equation}
Thus, inversion of $\op$ gains $m$ derivatives, permitting statistical control of the solution in the stronger $H^m(\domain)$-norm.
We verify such estimates for several PDEs in Section \ref{sec:examples}.

We model the unknown solution $u_0$ on general nested approximation spaces $ V_0\subset V_1\subset\cdots\subset\mathcal{V}$, assuming that these spaces admit suitable Jackson and Bernstein estimates standard in multiscale numerical analysis  (see \cite{cohen2003numerical,cohenMultiscaleDecompositionsBounded2000,monasseOrthonormalWaveletBases1998}). For a suitable truncation level  $J=J_N$ we consider a truncated Gaussian series prior with reproducing kernel Hilbert space (RKHS) $\mathbb{H}_N$, which serves simultaneously as a numerical discretisation and as a statistical regularisation.

Figure \ref{fig:Reconstruction} illustrates the procedure for the semilinear elliptic operator $\op[u] = -\kappa\Delta u+\tau(u)$ considered in Section \ref{sec:semilienarElliptic}. The left column displays the true forward field $\op[u_0]$ and the error $\op[\hat{u}_N^{\operatorname{MAP}}]$, while the right column displays $u_0$ and the reconstruction error $\hat{u}_N^{\operatorname{MAP}}-u_0$. The black crosses indicate the sampled collocation points. The stability estimate \ref{eq:intro_Stability} explains that the reconstruction of $u_0$ is visibly better than of the forward field.
%\begin{equation}\label{eq:intro-prior}
%    u=\sum_{|\mu|\leq J}2^{-(m+\beta_0)|\mu|}Z_\mu\psi_\mu,
%    \qquad
%    Z_\mu\stackrel{\mathrm{i.i.d.}}{\sim}N(0,1),
%\end{equation}
%on $V_J$, 

%If $u_0\in H^{m+\beta}(\domain)$, we choose
%\[    2^J\sim N^{1/(2\beta+d)}. \]
%This scaling balances the approximation error of $V_J$ against the statistical complexity of its $\dim(V_J)\sim 2^{Jd}$ active degrees of freedom.

Our first result establishes posterior contraction around $u_0$ in the $H^m(\domain)$-Sobolev norm, see Theorem \ref{thm:Contraction}. For $u_0\in H^{m+\beta}(\domain)$, the contraction rate in $H^m(\domain)$-topology is $\epsilon_N=N^{-\beta/(2\beta+d)}\log(N)$, which is, up to logarithmic factors, the minimax rate for estimating $\beta$-smooth functions in $d$ dimensions \cite{T09}. The posterior mean and the MAP attain the same convergence rate. The proof first establishes contraction in the statistical `information distance' $\norm{\op[u]- \op[u_0]}_{L^2_p(\domain)}$, where $L_p^2(\domain)$ denotes the $p$-weighted $L^2$-space over $\domain$, and then transfers this result to $H^m(\domain)$ by the stability \eqref{eq:intro_Stability}. %, which requires control of an $L^\infty$ envelope of $\mathcal L[u]-\mathcal L[u_0]$ which are achieved for the truncated prior \eqref{eq:intro-prior} via Bernstein estimates on the subspaces $V_J$.

The main results describe the finer local asymptotic shape of the posterior distribution in BvM theorems, see Section \ref{sec:BvM}. The linearisation $\opDeriv{u_0}$ of $\op$ at $u_0$ induces the `LAN inner product' (cf.~\cite{castilloBernsteinMisesTheorem2015, N22})
\begin{equation}\label{eq:intro-lan-inner-product}
    \langle h_1,h_2\rangle_{\mathrm{LAN}}=\frac{1}{\sigma^2}\iprod{\opDeriv{u_0}[h_1]}{\opDeriv{u_0}[h_2]
    }_{L^2_p(\domain)},\quad h_1,h_2\in\mathcal{V},
\end{equation}
characterising the asymptotic covariance of the limiting Gaussian law. Under natural `gradient stability' conditions for $\opDeriv{u_0}$, the LAN norm is equivalent to the $H^m(\domain)$-norm.
%and the associated information operator
%\[
%    \mathcal{I}_{u_0}\colon \mathcal{V}\to\mathcal{V}^\ast,
%    \qquad
%    (\mathcal{I}_{u_0}h_1)(h_2)
%    =
%    \iprod{h_1}{h_2}_{\operatorname{LAN}},\quad h_1,h_2\in\mathcal{V},
%\]
%thus forms an isomorphism.
 %In this sense, the limiting geometry is supplied by the linearised differential equation, rather than by the particular multiscale prior.
The canonical Gaussian process associated with \eqref{eq:intro-lan-inner-product} is generally not tight in the natural $H^m(\domain)$-topology, but becomes a Gaussian random element in the weaker $H^\gamma(\domain)$-topology whenever $0<\gamma<m-d/2$.
%The loss of $d/2$ derivatives is a sufficient condition for the embedding of the LAN Hilbert space into $H^\gamma(\domain)$ to be Hilbert--Schmidt.
Assuming suitable local differentiability of $\op$ and sufficiently high regularity of $u_0$, we prove (cf.~Theorem \ref{thm:BvM_RandomDesign}) the following BvM result in the Wasserstein-$1$ metric with respect to the $H^\gamma(\domain)$-norm:
\begin{equation}\label{eq:intro-bvm}
    \mathcal{W}_{1,H^\gamma(\domain)}\left(\Pi_N\bigl(\sqrt N(\cdot -\bar{u}_N)\mid \data\bigr),\gaussian\right)\xrightarrow{\prob[u_0][N]{}} 0,
\end{equation}
as $N\to \infty$,
where $\bar{u}_N$ denotes the posterior mean and $\Pi_N(\sqrt N(\cdot -\bar{u}_N)\mid \data)$ denotes the rescaled and recentred posterior distribution of $\sqrt N(u -\bar{u}_N)$. Hence the joint posterior fluctuations of the entire function, instead of finitely many coordinates, are asymptotically Gaussian at the parametric scale $N^{-1/2}$. In particular, the result implies the convergence for the $L^2(\domain)$-topology when $m>d/2$ and for $L^\infty(\domain)$ when $d/2<m-d/2$. This strong-topology result does not contradict the earlier impossibility results for direct-observation models \cite{freedman1999wald, CN13}. In the direct model one has $\op=\identity$, corresponding to $m=0$, whereas here the differential operator $\op$ strengthens the information norm by $m$ derivatives.
We further establish a Wasserstein-1 functional `Delta method', see Theorem \ref{thm:nonlinear_wasserstein_delta_method}. For suitably regular Fréchet-differentiable maps $\Phi\colon\mathbb{V}_\gamma\to V$ into a separable Banach space $V$, the posterior distribution of $\sqrt{N}(\Phi(u)-\Phi(\bar{u}_N))$ converges to a Gaussian law in Wasserstein-1 distance. % It therefore transfers the function-space BvM theorem to smooth quantities of interest derived from the numerical solution.

For computational purposes, it is desirable to centre the posterior at the MAP, which is given by the high-dimensional regularised least-squares problem
\begin{equation*}
    \hat{u}_N^{\operatorname{MAP}}\in \operatorname*{argmax}_{u\in V_J}\Big\{-\frac{1}{2\sigma^2}\sum_{i=1}^N (Y_i-\op[u](X_i))^2 - \frac{1}{2}\norm{u}_{\mathbb{H}_N}^2\Big\}.
\end{equation*}
Under second-order regularity assumptions, we prove that the BvM theorem remains valid
with $\bar{u}_N$ replaced by $\hat{u}_N^{\operatorname{MAP}}$, see
Theorem \ref{thm:BvM_MAP}. Moreover, we show that
\begin{equation*}
    \sqrt{N}\mathcal{W}_{1,H^\gamma(\domain)}\left(\Pi_N(\MTemptyplaceholder|\data),\hat{\Pi}_N^{\operatorname{Lap}}(\MTemptyplaceholder|\data)\right)\xrightarrow{\prob[u_0][N]{}}0,
\end{equation*}
as $N\to\infty$, where $\hat{\Pi}_N^{\operatorname{Lap}}(\MTemptyplaceholder|\data)$ is the Gaussian Laplace approximation centred at the MAP, with covariance given by the inverse observed posterior Hessian. Related Gaussian approximation results for finite-dimensional target parameters or growing-dimensional Euclidean models are given in
\cite{spokoinyBernsteinMisesTheorem2014,panovFiniteSampleBernstein2015,katsevichImprovedDimensionDependence2025}. By contrast, Theorems \ref{thm:BvM_MAP} and \ref{thm:LaplaceApproximation} control the full posterior as a probability measure on the fixed Sobolev space $H^\gamma(\domain)$, although asymptotically as $N\to\infty$.

%Theorem \ref{thm:BvM_MAP} is a statistical approximation result and does not provide algorithmic guarantees for locating a global MAP or for sampling from the nonlinear posterior.
Polynomial-time guarantees for optimisation and sampling in related nonlinear inverse problems have been studied in \cite{NW20,bandeira2023free,wangGlobalPolynomialtimeEstimation2026} (see also \cite{nicklBernsteinvonMisesTheorems2025}). Extending these techniques to the present probabilistic numerical setting is an interesting direction for future research.

As key intermediate statements for our BvM theorems, we obtain asymptotic-normality results at the $\sqrt{N}$-scale for both the posterior mean and the MAP estimator, which may be of independent interest. Under an additional approximation condition on the LAN-orthogonal projection of $u_0$ we show that 
\begin{equation*}
    \sqrt N(\bar{u}_N-u_0)\xrightarrow{d} \gaussian,\quad \sqrt N
    \bigl(\hat{u}_N^{\operatorname{MAP}}-u_0
    \bigr)\xrightarrow{d} \gaussian
\end{equation*}
as $N\to\infty$, see Lemmas \ref{lem:Convergence_PosteriorMean}, \ref{lem:Convergence_MAP_Asymptotic_Normality} and Remark \ref{rmk:Bias_Control}.

The assumptions for the main theorems in Section \ref{sec:mainresults} are stated abstractly in order to cover PDEs of different orders and with different nonlinearities. In Section \ref{sec:examples} we verify them for four representative classes: semilinear second-order elliptic equations, linear fourth-order elliptic equations, a one-dimensional stationary transport--reaction equation, and elliptic equations with nonlinear gradient dependence. These examples illustrate the role of regularity estimates for the different PDEs in establishing (global and linearised) stability estimates. They also demonstrate how the order $m$ of the differential operator determines the strength of the topology in which $\sqrt N$-rescaled Gaussian limits exist. In particular, when $d/2<\gamma<m-d/2$, the Sobolev embedding allows us to derive $\sqrt N$-posterior asymptotics in the uniform $L^\infty(\domain)$-norm.

Our proofs are mainly based on tools from Bayesian nonparametrics, in particular those provided by  \cite{GV17, N22,castilloBayesianNonparametricStatistics2024, nicklBernsteinvonMisesTheorems2025}. Posterior contraction is proved using small-ball asymptotics and concentration estimates for Gaussian priors, as well as the global stability of $\op$. For the BvM theorems, we first localise the posterior to an $H^m(\domain)$-ball around the ground truth. Following techniques from \cite{N22,castilloBayesianNonparametricStatistics2024, nicklBernsteinvonMisesTheorems2025} we prove a uniform local asymptotic normality (LAN) expansion for the likelihood, whose remainders can be controlled using empirical process techniques and the Jackson and Bernstein properties of the approximation spaces. A change-of-measure argument then yields convergence of posterior Laplace transforms for finite-dimensional distributions, which can be translated to convergence in Wasserstein distance on $H^\gamma(\domain)$. A distinctive feature of our BvM results is that the posterior can be centred at the optimisation-based MAP. Proving this requires a separate convergence analysis of the MAP and uniform control of the empirical Hessian. The same Hessian estimates show that the negative posterior Hessian at the MAP converges to the information operator. On combination with the MAP-centred BvM theorem, this yields the asymptotic validity of the Gaussian Laplace approximation.

\subsection{Outline} 
The paper is organised as follows. Section \ref{sec:model} introduces the statistical model, the multiscale approximation spaces, and the truncated Gaussian prior. Section \ref{sec:mainresults} establishes posterior contraction and convergence of the posterior mean and MAP, followed by the function-space BvM theorems, the nonlinear delta method, and MAP centring. Section \ref{sec:examples} verifies the assumptions for four classes of differential equations and presents a numerical illustration. The proofs and supporting technical results are collected in the appendices.

\pagebreak

\section{Statistical model, prior and posterior}\label{sec:model}

\subsection{Notation}

For a metric space $(X,d)$ we denote by $B(a,d,L)$ the closed ball in $(X,d)$ with centre $a\in X$ and radius $L>0$. We denote by $\identity$ the identity operator on $X$. For a measure $\mu$ on $X$ and a measurable map $f\colon X\to Y$ for another metric space $(Y,d_Y)$, we denote by $f_{\#}\mu$ the pushforward of $\mu$ under $f$ on $Y$. For a linear bounded operator $A$ between two Banach spaces $A_1$ and $A_2$ we denote by $\norm{A}_{\mathcal{L}(A_1,A_2)}$ its operator norm.
We write $a\lesssim b$ (or $b\gtrsim a$) if there exists a constant $0<C<\infty$ depending only on nonasymptotic quantities such that $a\le Cb$ and $a\sim b$ if $a\lesssim b$ and $a\gtrsim b$.
With $\xrightarrow{d}$ we denote convergence in distribution and with $\xrightarrow{\prob{}}$ convergence in $\prob{}$-probability.
We use the notation $\mathcal{O}_{\prob{}}$ for boundedness in $\prob{}$-probability and $\smallo_{\prob{}}$ for convergence in $\prob{}$-probability. For functions $f,g$ on $\domain$ and $a\in\mathbb{R}^N$ we use the shorthand notations
\begin{equation*}
    \iprod{f}{g}_N\coloneqq\frac{1}{N}\sum_{i=1}^N f(X_i)g(X_i),\quad \iprod{a}{f}_N\coloneqq \frac{1}{N}\sum_{i=1}^N a_if(X_i)
\end{equation*}
for the empirical inner products. We also introduce $(x)_+\coloneqq \max(0,x)$ for $x\in\mathbb{R}$.

\subsection{Statistical setting}

Let $\domain\subset\mathbb{R}^d$ be a bounded domain with smooth boundary. We write $H^s(\domain)$, $s\in\mathbb{R}$, for the usual $L^2$-based Sobolev spaces, $W^{k,\infty}(\domain)$, $k\in\mathbb{N}_0$, for the integer-order Sobolev spaces based on $L^\infty$, and $C(\overline{\domain})$ for the space of continuous functions on the closure of $\domain$. Fix $m\in\mathbb{N}$ and let $\mathcal{V}$ be a closed linear subspace of $H^m(\domain)$, endowed with the inherited $H^m(\domain)$-norm. We consider a possibly nonlinear differential operator
\[\op\colon \mathcal{V}\to L^2(\domain).\]
The space $\mathcal{V}$ encodes the prescribed homogeneous boundary conditions; for example, one may take $\mathcal{V}=H^2(\domain)\cap H_0^1(\domain)$ for homogeneous Dirichlet conditions or $\mathcal{V}
=\bigl\{u\in H^2(\domain):\partial_\nu u=0
\text{ on }\partial\domain\bigr\}$ for homogeneous Neumann conditions. In particular, the approximation spaces introduced below are subspaces of $\mathcal{V}$ and therefore inherit the boundary conditions. Known and sufficiently smooth inhomogeneous boundary conditions can be reduced to this setting by a standard `lifting' argument. Throughout the paper we also assume, that for some $d/2<\kappa_0<\beta$, the map $\op\colon\mathcal{V}\cap H^{m+\kappa_0}(\domain)\to H^{\kappa_0}(\domain)$ is locally bounded and continuous, such that $\op[u]\in C(\domain)$ for any $u\in \mathcal{V}\cap H^{m+\kappa_0}(\domain)$ by the Sobolev embedding.

For some $u_0\in \mathcal{V}$ with $\op[u_0]\in C(\domain)$ we are given observations $\data= ((X_i,Y_i)\colon i=1,\dots, N)$
with
\begin{equation*}
    Y_i = \op[u_0](X_i) + \sigma\epsilon_i,\quad i=1,\dots, N,
\end{equation*}
where $\epsilon_i\overset{i.i.d.}{\sim}N(0,1)$, and $X_i\overset{i.i.d.}{\sim} p(x)\,\D x$ for a Lebesgue-density $p$ satisfying $0<\underbar{p}\le p(x)\le \bar{p}<\infty$, $x\in\domain$ for some constants $\underbar{p},\bar{p}$. We assume throughout that $(\epsilon_i\colon i=1,\dots, N)$ and $(X_i\colon i=1,\dots, N)$ are mutually independent measurement errors and design points.

Our goal is to reconstruct $u_0\in \mathcal{V}\cap H^{m+\beta}(\domain)$ when $N\to\infty$ for some regularity index $\beta> d/2$. The log-likelihood $\ell_N$ is given by
\begin{equation}
    \ell_N(u) =-\frac{1}{2\sigma^2}\sum_{i=1}^N (Y_i-\op[u](X_i))^2\label{eq:Likelihood}
\end{equation}
for any $u\in\mathcal{V}$ such that $\op[u]\in C(\domain)$. For any such $u$ we define 
\[\prob[u][]{}\coloneqq \operatorname{Law}((X_1,\op[u](X_1)+\sigma\epsilon_1)) =  \operatorname{Law}((X_1,Y_1)),\] with corresponding product measure $\prob[u][N]{}$, $N\in\mathbb{N}$. We denote by 
\begin{equation*}
    \iprod{u}{v}_{L_p^2(\domain)}\coloneqq \int_{\domain} u(x)v(x)p(x)\,\D x,\quad u,v\in L^2(\domain),
\end{equation*}
the weighted $L^2$-inner product. Since $0<\underbar{p}\le p(x)\le\bar{p}<\infty$, we have the norm equivalence $\sqrt{\underbar{p}}\norm{}_{L^2(\domain)}\le \norm{}_{L_p^2(\domain)}\le \sqrt{\bar{p}}\norm{}_{L^2(\domain)}$.

\subsection{A truncated Gaussian series prior and the posterior distribution}

To construct the prior for $u$, we first introduce a sequence of finite-dimensional approximation spaces compatible with the Sobolev spaces on $\mathcal{V}$ and its prescribed boundary conditions.
\begin{ass}\label{assump:ApproximatonSets}
    Let $V_0\subset V_1\subset \cdots \subset \mathcal{V}$ be finite-dimensional approximation spaces whose union is dense in $(\mathcal{V},\norm{}_{H^m(\domain)})$ and which satisfy $\operatorname{dim}(V_J)\sim 2^{Jd}$, uniformly in $J\in\mathbb{N}$. Moreover, the $\iprod[][L^2(\domain)]{}{}$-orthogonal projection $\ProjOne_J\colon L^2(\domain)\to V_J$, $J\in\mathbb{N}$, satisfies the following $\norm{}_{H^\kappa(\domain)}$-stability for  for any $0\le \kappa<S$,
    \begin{equation}
        \sup_{J\in\mathbb{N}}\norm{\ProjOne_J u}_{H^\kappa(\domain)}\lesssim \norm{u}_{H^\kappa(\domain)},\quad u\in \mathcal{V}\cap H^\kappa(\domain).\label{eq:H^m_stability}
    \end{equation}
    Furthermore, the Jackson and Bernstein estimates hold: For some $S>m+\beta+d/2$ and any $0\le s< t<  S$ we have
    \begin{align}
        \norm{(\identity-\ProjOne_J)u}_{H^s(\domain)}&\lesssim 2^{-J(t-s)}\norm{u}_{H^t(\domain)},\quad u\in\mathcal{V}\cap H^t(\domain),\label{eq:Jackson}\\
        \norm{u}_{H^t(\domain)}&\lesssim 2^{J(t-s)}\norm{u}_{H^s(\domain)},\quad u\in V_J.\label{eq:Bernstein}
    \end{align}
    Finally, we assume that $\mathcal V$ admits an $\iprod{}{}_{L^2(\domain)}$-orthonormal `multiscale' basis  $(\psi_\mu)_{\mu\in \Lambda}$ with $\Lambda=\{(j,k), j\in\mathbb{N}, k = 0,\dots, \max(1,2^{jd}-1)\}$  such that: 
    
    \begin{itemize}
        \item[(i)] Each approximation space is given by $V_J = \operatorname{span}(\psi_\mu: \mu \in \Lambda~\text{with}~|\mu|\le J)$, where we write $|\mu|=j$ for $\mu=(j,k)\in\Lambda$.
        \item[(ii)] For any $0\le s<S$ and $0<\epsilon<S-s$ we have 
    \begin{equation}
        c\norm{u}_{H^s(\domain)}^2\le  \sum_{\mu\in\Lambda}2^{2\abs{\mu}s}\iprod{u}{\psi_\mu}_{L^2(\domain)}^2\le C\norm{u}_{H^{s+\epsilon}(\domain)}^2,\quad u\in \mathcal{V}\cap H^{s+\epsilon}(\domain),\label{eq:CharacterisationSobolevSpaces} 
    \end{equation}
    where the constant $C$ may depend on $\epsilon$.
    \end{itemize}
\end{ass}
We refer to \cite{monasseOrthonormalWaveletBases1998,cohenMultiscaleDecompositionsBounded2000,cohen2003numerical} for constructions of such multiscale decompositions.
Having defined the approximation spaces $(V_J\colon J\in\mathbb{N})$ and the basis $(\psi_\mu\colon \mu\in\Lambda)$, we define the prior on $u$ as the law of the following truncated Gaussian series.
For smoothness parameter $\beta_0>d/2$ let
\begin{equation}
    \pi_N = \Law{\sum_{\abs{\mu}\le J}2^{-(m+\beta_0)\abs{\mu}}Z_\mu\psi_\mu},\quad Z_\mu\overset{i.i.d}{\sim}N(0,1),\label{eq:Prior}
\end{equation}
where the cut-off $J=J_N$ depends on $N$.
The characterisation of the Sobolev norm from \eqref{eq:CharacterisationSobolevSpaces}  implies that the Cameron--Martin space of the finite-dimensional Gaussian measure $\pi_N$ is given by $V_J$ with norm
\begin{equation}
    \norm{u}_{\mathbb{H}_N}^2= \sum_{\abs{\mu}\le J}2^{2(m+\beta_0)\abs{\mu}}\iprod{u}{\psi_\mu}_{L^2(\domain)}^2 \le  C\norm{u}_{H^{m+\beta_0+\epsilon}(\domain)}^2,\quad u\in V_J,\label{eq:RKHS_Norm}
\end{equation}
for $0<\epsilon<S-m-\beta_0$ and some constant $C$ depending on $\epsilon$.

Given the Gaussian prior and the likelihood \eqref{eq:Likelihood}, the posterior distribution for any Borel set $A\subset V_J$ is given by
\begin{equation*}
    \Pi_N(A|\data)\coloneqq \frac{\int_A e^{\ell_N(u)}\,\D \pi_N(u)}{\int_{V_J} e^{\ell_N(u)}\,\D \pi_N(u)} = \frac{\int_A e^{-\frac{1}{2\sigma^2}\sum_{i=1}^N(Y_i-\op[u](X_i))^2-\frac{1}{2}\norm{u}_{\mathbb{H}_N}^2}\,\D u}{\int_{V_J} e^{-\frac{1}{2\sigma^2}\sum_{i=1}^N(Y_i-\op[u](X_i))^2-\frac{1}{2}\norm{u}_{\mathbb{H}_N}^2}\,\D u},
\end{equation*}
where $\D u$ denotes the coordinate Lebesgue-measure on $V_J$.
We denote the expectation operator associated to the posterior distribution $\Pi_N(\MTemptyplaceholder|\data)$ by $\EV[\Pi_N]{\MTemptyplaceholder\given\data}$.

\begin{rem}
%Instead of defining $\ProjOne_J\colon \mathcal{V}\to V_J$ as the $\iprod[][L^2(\domain)]{}{}$-orthogonal projection and imposing the uniform $H^m(\domain)$-stability condition \eqref{eq:H^m_stability}, one may define $\ProjOne_J$ as the $\iprod[][H^m(\domain)]{}{}$-orthogonal projection onto $V_J$. In this case, the uniform $H^m(\domain)$-stability holds automatically, with stability constant equal to one.\todo{Is this comment useful to the reader? If not, let's take it out}
The orthonormality requirement in Assumption \ref{assump:ApproximatonSets} can be relaxed to a biorthogonal Riesz basis, see \cite{cohenBiorthogonalBasesCompactly1992,dahmenMultiscaleWaveletMethods2003} for details on the construction of such bases. More precisely, let $(\psi_\mu\colon \mu\in\Lambda)$ and $(\tilde\psi_\mu)_{\mu\in\Lambda}$ be a biorthogonal system satisfying $\iprod{\psi_\mu}{\tilde{\psi}_\nu}_{L^2(\domain)}=\delta_{\mu\nu}$ for any $\mu,\nu\in\Lambda$,
and define
\begin{equation*}
    V_J\coloneqq \operatorname{span}(\psi_\mu\colon \abs{\mu}\le J),\quad \ProjOne_Ju\coloneqq \sum_{\abs{\mu}\le J}\iprod{u}{\tilde{\psi}_\mu}\psi_\mu,\quad u\in\mathcal{V}.
\end{equation*}
Then $\ProjOne_J$ is a projection onto $V_J$, although in general no longer $\iprod{}{}_{L^2(\domain)}$-orthogonal.
It is sufficient to assume that $\ProjOne_J$ satisfies the stability, Jackson and Bernstein estimates from Assumption \ref{assump:ApproximatonSets} and that the Sobolev-norm characterisation \eqref{eq:CharacterisationSobolevSpaces} takes the form 
    \begin{equation*}
        c\norm{u}_{H^s(\domain)}^2\le  \sum_{\mu\in\Lambda}2^{2\abs{\mu}s}\iprod{u}{\tilde{\psi}_\mu}_{L^2(\domain)}^2\le C\norm{u}_{H^{s+\epsilon}(\domain)}^2,\quad u\in \mathcal{V}\cap H^{s+\epsilon}(\domain).
    \end{equation*}
    for the same $s,\epsilon$ as in Assumption \ref{assump:ApproximatonSets}.
The prior may then be defined via the law of $\sum_{\abs{\mu}\le J}2^{-(m+\beta_0)\abs{\mu}}Z_\mu\psi_\mu$, $Z_\mu\overset{i.i.d.}{\sim}N(0,1)$. If, in addition,
\begin{equation*}
    \sup_{\mu\in\Lambda}\norm{2^{(m-\epsilon_0)\abs{\mu}}\tilde{\psi}_\mu}_{H^{-m}(\domain)}<\infty
\end{equation*}
for some $\epsilon_0>0$, our results generalise to this setting.
\end{rem}

\section{Main results}\label{sec:mainresults}

%\begin{ass}[Assumption on the Wavelet system]\label{assump:WaveletSystem}
%    We need that 
%    \begin{equation*}
%        \norm{f}_{H^s(\domain)}^2 \sim \sum_{(k,j)\in V}2^{2ks}\iprod{f}{\psi_{(k,j)}}_{L^2(\domain)}^2,\quad f\in H^s(\domain), s\ge 0
%    \end{equation*}
%    for a suitable index set $V$ and the fractional Sobolev spaces $H^{s}(\domain)$ are defined via the spectral powers of the Dirichlet Laplace operator. The constant in $\sim$ is not allowed to depend on $f$ or $s$.
%\end{ass}
%Assumption \ref{assump:WaveletSystem} ensures the following conditions
%\SG{Write this sufficiently general}

%\begin{rem}
%    For the statistical model with pointwise and noisy observations from \eqref{eq:Observationmodel_discrete}, the small ball estimate from Theorem \ref{thm:GeneralContraction} requires us to control
%    \begin{equation*}
%\pi_N(B_N)\ge \Pi\left(\norm{\op[u_0]-\op[u]}_{L^2(\domain)}^2\le \frac{\sigma^2}{2}\epsilon_N^2,  \norm{\op[u_0]-\op[u]}_{L^4(\domain)}^4\le 2\sigma^4\epsilon_N^2\right),
%\end{equation*}
%compare Lemma \ref{lem:RandomDesign_Hellinger/KL}. This would require to control $\norm{\op[u]-\op[u_0]}_{L^4(\domain)}$. \SG{Stability results applicable?-> Ja! (theorem 9.19 im Trundiger)->Contraction in Hellinger, but how to go to $L^2$? Or use a rescaled prior?}
%\end{rem}

This section contains the frequentist guarantees for the posterior distribution. We start with posterior contraction and convergence rates for the posterior mean and the MAP. % 
Subsequently, we state our BvM theorems, whose proofs require the contraction and convergence results.

\subsection{Posterior contraction rates}\label{sec:Contraction}

The following assumption on the operator $\op\colon\mathcal{V}\to L^2(\domain)$ combines a global Lipschitz estimate with a quantification of the 'injectivity' of $\op$. Not only does it ensure that the statistical model is identifiable, but it also allows us to transfer contraction rates for recovering $\op[u_0]$ to those for $u_0$. It is commonly imposed for Bayesian inference for inverse problems, compare \cite{NVW18} and Condition 2.1.1 of \cite{N22}.

\begin{ass}[Global stability]\label{assump:Operator_stability}
    There exist constants $0<c\le C<\infty$ such that
        \begin{equation}
            c\norm{u-v}_{H^m(\domain)} \le \norm{\op[u]-\op[v]}_{L_p^2(\domain)}\le C\norm{u-v}_{H^m(\domain)},\quad u,v\in  \mathcal{V}.\label{eq:Stability_Lipschitz_Bounds}
        \end{equation}
        Moreover, we have
        \begin{equation}
            \norm{\op[u]-\op[v]}_{L^\infty(\domain)}\le C \norm{u-v}_{W^{m,\infty}(\domain)},\quad u,v\in\mathcal{V}\cap W^{m,\infty}(\domain).\label{eq:Lipschitz_Bounds_infty}
        \end{equation}
\end{ass}

\begin{thm}[Posterior contraction in $H^m(\domain)$]\label{thm:Contraction}
    Grant Assumptions \ref{assump:ApproximatonSets} and \ref{assump:Operator_stability}. Assume that $u_0\in\mathcal{V}\cap H^{m+\beta}(\domain)$ with $\beta>d/2$ and $d/2<\beta_0<\beta+d/2$.
     Consider the prior $\pi_N$ from \eqref{eq:Prior} with cut-off $2^J\sim N^{1/(2\beta+d)}$. Then there exists a constant $0<D<\infty$ such that for any $L$ large enough we have
    \begin{equation}
        \Pi_N\left(u\colon \norm{u-u_0}_{H^m(\domain)}\le  L \epsilon_N \,\vert\, \data\right)=1 -\mathcal{O}_{\prob[u_0][N]{}}(e^{-D N \epsilon_N^2})\label{eq:Contraction_RandomDesign}        
    \end{equation}
    for $\epsilon_N = N^{-\beta/(2\beta+d)}\log(N)$  as $N\to\infty$.
\end{thm}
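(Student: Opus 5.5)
The plan is to use the general contraction theorem for random-design regression with Gaussian priors (in the spirit of Theorem 1.3.2 of \cite{N22} / \cite{GV17}). It first yields contraction in the information distance $d(u,u_0)=\norm{\op[u]-\op[u_0]}_{L_p^2(\domain)}$ on a sieve, and the lower bound in \eqref{eq:Stability_Lipschitz_Bounds} then converts this into $H^m(\domain)$-contraction. The three ingredients are a small-ball (prior mass) bound, a sieve of large prior probability, and tests or uniform concentration of the empirical norm on the sieve. The exponential form $e^{-DN\epsilon_N^2}$ comes from the standard evidence lower bound combined with the complement-sieve prior mass bound.

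\textbf{Small ball.} Write $\delta_N=N^{-\beta/(2\beta+d)}$, so that $\epsilon_N=\delta_N\log N$.
\begin{itemize}
\item For $\ProjOne_J u_0\in V_J$, the Jackson estimate \eqref{eq:Jackson} with $s=m$, $t=m+\beta$ gives $\norm{u_0-\ProjOne_Ju_0}_{H^m(\domain)}\lesssim 2^{-J\beta}\sim\delta_N$.
\item We need $\pi_N(\norm{u-u_0}_{H^m(\domain)}\le C\delta_N)\ge e^{-cN\delta_N^2}$, and moreover smallness in sup-norm of $\op[u]-\op[u_0]$, so that the KL neighbourhoods, which involve $\EV{(\op[u]-\op[u_0])(X)^2}$ and bounded fourth moments, are controlled.
\item Using Anderson/Cameron--Martin, $\pi_N(\norm{u-\ProjOne_Ju_0}\le\delta)\ge e^{-\norm{\ProjOne_Ju_0}_{\mathbb{H}_N}^2/2}\pi_N(\norm{u}\le\delta)$.
\item By \eqref{eq:RKHS_Norm} and \eqref{eq:CharacterisationSobolevSpaces}, $\norm{\ProjOne_Ju_0}_{\mathbb{H}_N}^2\lesssim\sum_{|\mu|\le J}2^{2(m+\beta_0)|\mu|}\langle u_0,\psi_\mu\rangle^2\lesssim 2^{2J(\beta_0-\beta)_+}\norm{u_0}_{H^{m+\beta}}^2$. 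This is $\lesssim N\delta_N^2=2^{Jd}$ exactly because $\beta_0<\beta+d/2$.
\item The centred small ball in $H^m$ for a finite-dimensional product Gaussian with $2^{Jd}$ coordinates is bounded below by $e^{-c2^{Jd}\log N}$, by a direct coordinate-wise computation. This is the source of the $\log N$ in $\epsilon_N$.
\item For sup-norm control, apply \eqref{eq:Lipschitz_Bounds_infty} with the Bernstein inequality \eqref{eq:Bernstein} and Sobolev embedding on $V_J$.
\end{itemize}

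\textbf{Sieve and tests.} Take $\Theta_N=\{u\in V_J:\norm{u}_{H^{m+\kappa}(\domain)}\le M\}$ for some $d/2<\kappa<\beta_0$, intersected with $H^m$-balls via Borell's inequality. Coordinate-wise Gaussian tails give $\pi_N(\Theta_N^c)\le e^{-(C+4)N\epsilon_N^2}$, since the number of coordinates $2^{Jd}\lesssim N\epsilon_N^2$. On $\Theta_N$, functions $\op[u]$ are uniformly bounded in $L^\infty$ by the local boundedness of $\op$ into $H^{\kappa_0}$ and Sobolev embedding. The entropy of $\Theta_N$ in $L^\infty$ is $\lesssim 2^{Jd}\log N$, as $\Theta_N$ is a ball in a $2^{Jd}$-dimensional space. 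The standard random-design Gaussian-regression tests, built from Bernstein's inequality and a peeling/covering argument, then apply. This gives contraction in $\norm{\op[u]-\op[u_0]}_{L^2_p}$ at rate $L\epsilon_N$ with the exponential remainder.

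\textbf{Conclusion and main obstacle.} Given the above, \eqref{eq:Stability_Lipschitz_Bounds} yields $\norm{u-u_0}_{H^m(\domain)}\le c^{-1}L\epsilon_N$. The main obstacle is the tests step, namely uniform equivalence of the empirical and population norms of $\op[u]-\op[u_0]$ over the sieve with exponential error probability. This requires a uniform $L^\infty$ envelope for $\op[u]-\op[u_0]$. I would obtain that envelope via \eqref{eq:Lipschitz_Bounds_infty} combined with Bernstein estimates on $V_J$: for $u\in V_J$ near $\ProjOne_Ju_0$, $\norm{u-\ProjOne_Ju_0}_{W^{m,\infty}}\lesssim 2^{Jd/2}\norm{u-\ProjOne_J u_0}_{H^m}$, which is bounded since $\beta>d/2$. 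The term $\ProjOne_Ju_0-u_0$ is handled by Jackson with $H^{m+\kappa_0}$ embedding into $W^{m,\infty}$.
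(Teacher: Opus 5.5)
\textbf{There is a genuine gap in the sieve/test step.} Your small-ball step is fine and is essentially the paper's. The gap is exactly the envelope problem you flag but do not resolve.

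First, a sieve $\{u\in V_J\colon \norm{u}_{H^{m+\kappa}(\domain)}\le M\}$ with fixed $M$ (or a fixed-radius $H^m$-ball) cannot satisfy $\pi_N(\Theta_N^c)\le e^{-(C+4)N\epsilon_N^2}$. For a coarsest index $\abs{\mu}=0$ one has $\norm{u}_{H^{m+\kappa}(\domain)}\ge\norm{u}_{L^2(\domain)}\ge\abs{Z_\mu}$, so $\pi_N(\Theta_N^c)\ge\mathrm{P}(\abs{Z_\mu}>M)$, a positive constant.

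Second, your counting argument ``$2^{Jd}\lesssim N\epsilon_N^2$'' is the one in Lemma \ref{lem:Prior}\eqref{num:aux_rescaled_Prior_sieve}, and it only works for radius of order $\sqrt N\epsilon_N$. On such a sieve, $\norm{\op[u]-\op[u_0]}_{L^\infty(\domain)}$ is only bounded by a multiple of $\sqrt N\epsilon_N\to\infty$. So random-design $L^2_p$-tests in the spirit of \cite{N22}, which need a fixed sup-norm bound on the regression functions, are not available.

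Third, your fallback envelope $\norm{u-\ProjOne_Ju_0}_{W^{m,\infty}}\lesssim 2^{J\eta}\norm{u-\ProjOne_J u_0}_{H^m}$ ``for $u$ near $\ProjOne_J u_0$'' is circular. The tests must reject every sieve element with $\norm{\op[u]-\op[u_0]}_{L_p^2(\domain)}>L\epsilon_N$, including those at distance of order $\sqrt N\epsilon_N$, where this bound says nothing useful.

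\textbf{The missing ingredient} is an envelope proportional to the distance that holds on all of $V_J$, not just near $u_0$. By the lower bound in \eqref{eq:Stability_Lipschitz_Bounds}, \eqref{eq:Lipschitz_Bounds_infty}, Sobolev embedding, Jackson and Bernstein, every $u\in V_J$ satisfies
\[
\norm{\op[u]-\op[u_0]}_{L^\infty(\domain)}\lesssim 2^{J\eta}\bigl(\norm{\op[u]-\op[u_0]}_{L_p^2(\domain)}+2^{-J\beta}\bigr),\qquad d/2<\eta<\beta.
\]
This is what would make your peeling close. On a shell of radius $r$ the envelope is $\lesssim 2^{J\eta}r$, so concentration of the empirical norm yields an exponent of order $N2^{-2J\eta}\gg N\epsilon_N^2$ because $\eta<\beta$, as in Step 2 of Lemma \ref{lem:MAP_Convergence}.

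\textbf{How the paper does it.} The paper uses this bound more economically and avoids $L^2_p$-tests altogether.
\begin{itemize}
\item It first proves contraction in Hellinger distance via Theorem \ref{thm:GeneralContraction}. Hellinger tests need no envelope, and the Hellinger entropy of the $\sqrt N\epsilon_N$-sieve is controlled by its $H^m$-entropy, since $h^2\lesssim\norm{u-v}_{H^m(\domain)}^2$.
\item For each $u\in V_J$ with $h(\mathrm{P}_u,\mathrm{P}_{u_0})\le L\epsilon_N$, it combines $\norm{\op[u]-\op[u_0]}_{L_p^2(\domain)}^2\lesssim(1+S^2)h^2$ from Lemma \ref{lem:RandomDesign_Hellinger/KL} with the global envelope above.
\item It absorbs the resulting term $\epsilon_N^2 2^{2J\eta}\norm{\op[u]-\op[u_0]}_{L_p^2(\domain)}^2$ into the left-hand side, using $2^{J\eta}\epsilon_N\to0$.
\item The lower stability bound then turns the $L_p^2$-rate for $\op[u]$ into the $H^m(\domain)$-rate for $u$.
\end{itemize}
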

\begin{proof}
    See Section \ref{sec:Proofs_Contraction}.
\end{proof}
Theorem \ref{thm:Contraction} shows that the posterior distribution $\Pi_N(\MTemptyplaceholder|\data)$ concentrates around the truth $u_0$ at rate $\epsilon_N$ in $H^m(\domain)$-norm as the sample size $N$ tends to infinity. The global stability of $\mathcal{L}$ provided by Assumption \ref{assump:Operator_stability} is important here, since it transfers a statistical contraction statement for the observed field $\op[u]$ in $L^2(\domain)$-norm into the recovery of the solution $u_0$ in $H^m(\domain)$-norm.
Next we show that the posterior mean $\bar{u}_N\coloneqq \EV[\Pi_N]{u\given\data}$ converges to $u_0$ in $H^m(\domain)$-norm at the minimax-optimal rate (up to $\log$-factors).

\begin{lem}[Nonparametric convergence of the posterior mean]\label{lem:Convergence_PosteriorMean_Nonparametric}
    Grant Assumptions \ref{assump:ApproximatonSets} and \ref{assump:Operator_stability}. Assume that $u_0\in\mathcal{V}\cap H^{m+\beta}(\domain)$ with $\beta>d/2$ and $d/2<\beta_0<\beta+d/2$.
     Consider the prior $\pi_N$ from \eqref{eq:Prior} with cut-off $2^J\sim N^{1/(2\beta+d)}$. Then
     \begin{equation*}
         \norm{\bar{u}_N - u_0}_{H^m(\domain)}=\mathcal{O}_{\prob[u_0][N]{}}(\epsilon_N).
     \end{equation*}
\end{lem}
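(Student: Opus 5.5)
The plan is the standard argument that turns posterior contraction into a rate for the posterior mean. By Jensen's inequality for the Bochner integral in $H^m(\domain)$,
\begin{equation*}
    \norm{\bar u_N-u_0}_{H^m(\domain)}\le \EV[\Pi_N]{\norm{u-u_0}_{H^m(\domain)}\given\data}.
\end{equation*}
We split the right-hand side over the event $A_N\coloneqq\{u\colon \norm{u-u_0}_{H^m(\domain)}\le L\epsilon_N\}$ and its complement. On $A_N$ the contribution is at most $L\epsilon_N$. It therefore remains to show that
\begin{equation*}
    \EV[\Pi_N]{\norm{u-u_0}_{H^m(\domain)}\indicator_{A_N^c}(u)\given\data}=\mathcal{O}_{\prob[u_0][N]{}}(\epsilon_N).
\end{equation*}

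For the complement we use Cauchy--Schwarz under the posterior. This bounds the term by
\begin{equation*}
    \Pi_N(A_N^c\mid\data)^{1/2}\,\EV[\Pi_N]{\norm{u-u_0}_{H^m(\domain)}^2\given\data}^{1/2}.
\end{equation*}
Theorem \ref{thm:Contraction} gives $\Pi_N(A_N^c\mid\data)=\mathcal{O}_{\prob[u_0][N]{}}(e^{-DN\epsilon_N^2})$. The key point is to revisit its proof: the standard route (Theorem 1.3.2 of \cite{N22} / \cite{GV17}) provides the evidence lower bound
\begin{equation*}
    \int e^{\ell_N(u)-\ell_N(u_0)}\,\D\pi_N(u)\ge e^{-(1+C_0)N\epsilon_N^2}
\end{equation*}
with $\prob[u_0][N]{}$-probability tending to one. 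On this event, since $\ell_N(u)-\ell_N(u_0)$ has $\prob[u_0][N]{}$-expectation at most zero under Fubini (the likelihood ratio has mean one), we obtain
\begin{equation*}
    \EV[u_0][N]{}\Big[\int \norm{u-u_0}^2_{H^m(\domain)}e^{\ell_N(u)-\ell_N(u_0)}\,\D\pi_N(u)\Big]=\int\norm{u-u_0}_{H^m(\domain)}^2\,\D\pi_N(u).
\end{equation*}
The right-hand side is bounded by $2\norm{u_0}^2_{H^m(\domain)}+2\EV[\pi_N]{\norm{u}_{H^m(\domain)}^2}$. By \eqref{eq:CharacterisationSobolevSpaces} and $\beta_0>d/2$, this is $\sum_{j\le J}2^{jd}2^{-2\beta_0 j}=\mathcal{O}(1)$. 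By Markov's inequality, the posterior second moment is therefore $\mathcal{O}_{\prob[u_0][N]{}}(e^{(1+C_0)N\epsilon_N^2})$.

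Combining the two factors does not directly close, because the contraction exponent $D$ must beat $(1+C_0)/2$. To handle this, we instead apply the same Fubini/Markov argument to the numerator $\int_{A_N^c}\norm{u-u_0}_{H^m(\domain)}e^{\ell_N(u)-\ell_N(u_0)}\,\D\pi_N(u)$ directly. Its expectation is at most $\pi_N(A_N^c)^{1/2}\,\EV[\pi_N]{\norm{u-u_0}^2_{H^m(\domain)}}^{1/2}$. The prior mass bound $\pi_N(A_N^c)\le e^{-(C_0+4)N\epsilon_N^2}$ for $L$ large is exactly what the proof of Theorem \ref{thm:Contraction} establishes via the sieve and the Borell--TIS inequality, combined with the test construction. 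Strictly, the contraction proof uses tests on part of $A_N^c$ and a prior-mass bound on the rest, so we split accordingly. On the sieve part, we bound the expectation of $(1-\phi_N)$ times the integral via Cauchy--Schwarz in $u$ and the exponential type-II error $e^{-KN\epsilon_N^2}$ of the tests, together with the polynomial-in-$N$ second moment above. The type-I error contributes $o(1)$ in probability.

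Dividing by the evidence lower bound then yields a complement contribution of order $e^{-cN\epsilon_N^2}=o(\epsilon_N)$, and hence the claim. The main obstacle is making the constants line up, i.e.\ choosing $L$ large enough that the exponential decay of both the prior mass outside the sieve and the type-II errors exceeds the evidence lower bound. The construction behind Theorem \ref{thm:Contraction} allows arbitrarily large exponents by enlarging $L$, so we would invoke that freedom explicitly.
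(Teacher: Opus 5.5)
Your first route is exactly the paper's proof, and it does close, so the detour is unnecessary. Cauchy--Schwarz gives $\mathrm{E}^{\Pi_N}[\|u-u_0\|_{H^m}\mathbf{1}_{A_N^c}\mid\mathbf{Y}_N]^2\le \mathrm{E}^{\Pi_N}[\|u-u_0\|_{H^m}^2\mid\mathbf{Y}_N]\,\Pi_N(A_N^c\mid\mathbf{Y}_N)=O_{\mathrm{P}}(e^{-(D-\tilde D)N\epsilon_N^2})$. Here $\tilde D$ is the evidence exponent in \eqref{eq:Marginal_Likelihood_Bound_RandomDesign}, so the requirement is $D>\tilde D$. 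Both factors enter under a square root, so your ``$(1+C_0)/2$'' should read $1+C_0$. In the proof of Theorem~\ref{thm:Contraction}, $\tilde D=2+c_2$ is fixed by the small-ball exponent. By contrast, $D$ can be any number below $c_3-c_2-2$, where $c_3=C_0$ is the sieve exponent of Lemma~\ref{lem:Prior}. This exponent can be made as large as desired, at the cost of larger sieve, Hellinger and $H^m$ radii, and the paper states explicitly that $\tilde D<D$ is arranged this way. That is precisely the freedom you invoke in your last paragraph. Applied to the first route, it finishes the proof at once, since $e^{-(D-\tilde D)N\epsilon_N^2/2}=o(\epsilon_N)$.

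Your replacement argument can be made rigorous, but one step as written is false: $\pi_N(A_N^c)$ is not exponentially small. $A_N$ is an $H^m$-ball of radius $L\epsilon_N\to0$ around $u_0$, while prior draws have $H^m$-norm of order one. So $\pi_N(A_N)$ is merely bounded \emph{below} by the small-ball estimate $e^{-C_2N\epsilon_N^2}$, and in fact $\pi_N(A_N^c)\to1$. Only the complement of the sieve $\{\|u\|_{H^{m+\beta_0}}\le L\sqrt N\epsilon_N\}$ has exponentially small prior mass.

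Your corrected split is sound. It handles the sieve complement via prior mass and $\mathrm{E}^N_{u_0}[e^{\ell_N(u)-\ell_N(u_0)}]=1$, and the sieve part via tests and $\mathrm{E}^N_{u_0}[(1-\phi_N)e^{\ell_N(u)-\ell_N(u_0)}]=\mathrm{E}^N_{u}[1-\phi_N]$. Two things are needed to make it work. First, use indicator tests, so that the type-I term is exactly zero on an event of probability tending to one; an $o_{\mathrm{P}}(1)$ bound on that term would not give $O_{\mathrm{P}}(\epsilon_N)$. Second, invoke the Hellinger-to-$H^m$ transfer from the proof of Theorem~\ref{thm:Contraction}, which places $A_N^c\cap\{\text{sieve}\}$ inside the Hellinger-complement where the tests apply.

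Even so, this re-runs the contraction proof with the weight $\|u-u_0\|_{H^m}$ inserted. It requires opening up the test construction, which the paper only cites. It also needs the same constant bookkeeping, namely type-II and sieve exponents exceeding $\tilde D$. It gains no generality over the paper's short argument, which uses Theorem~\ref{thm:Contraction} as a black box together with the relation $\tilde D<D$ from its proof.
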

\begin{proof}
    See Section \ref{sec:Proofs_Contraction}.
\end{proof}
The next result concerns the convergence of the MAP estimator.
For $u\in V_J$ we denote by
\begin{equation*}
    \ell_N^{\operatorname{reg}}(u)\coloneqq \ell_N(u) -\frac{1}{2}\norm{u}_{\mathbb{H}_N}^2
\end{equation*}
the regularised loglikelihood.
The MAP $\hat{u}_N^{\operatorname{MAP}}$ is an element of 
\begin{equation}
    \operatorname*{argmax}_{u\in V_J}\{\ell_N^{\operatorname{reg}}(u)\}=\operatorname*{argmax}_{u\in V_J}\Big\{-\frac{1}{2\sigma^2}\sum_{i=1}^N (Y_i-\op[u](X_i))^2 - \frac{1}{2}\norm{u}_{\mathbb{H}_N}^2\Big\}.\label{eq:MAP_Definition}
\end{equation}
Note that $\ell_N^{\operatorname{reg}}$ is continuous on the finite-dimensional space $V_J$. Since $\ell_N^{\operatorname{reg}}(u)\le -\norm{u}_{\mathbb{H}_N}^2/2\to-\infty$ for $\norm{u}_{\mathbb{H}_N}\to\infty$, a standard localization argument implies that there exists at least one global maximiser. By well-known measurable selection theorems (see \cite{brownMeasurableSelectionsExtrema1973} or \cite{potscher1997dynamic}) we may assume that $\hat{u}_N^{\operatorname{MAP}}$ is a measurable choice of the data $\data$.

The principal computational motivation for considering the MAP estimator $\hat{u}_N^{\operatorname{MAP}}$ is its characterisation in \eqref{eq:MAP_Definition} as the solution of a finite-dimensional (non-convex) optimisation problem. Unlike the posterior mean $\bar{u}_N$, its computation does not require integration with respect to the non-log-concave posterior density. %For nonlinear forward maps, however, the objective is generally non-convex.\todo{Wondering whether we should take out this sentence?} %The results below concern a global optimiser and do not provide guarantees for its numerical computation.

\begin{lem}[Nonparametric convergence of the MAP]\label{lem:Convergence_MAP_Nonparametric}
Grant Assumptions \ref{assump:ApproximatonSets} and \ref{assump:Operator_stability}.
Assume that $u_0\in\mathcal{V}\cap H^{m+\beta}(\domain)$ with $\beta>d/2$ and $d/2<\beta_0<\beta+d/2$.
     Consider the prior $\pi_N$ from \eqref{eq:Prior} with cut-off $2^J\sim N^{1/(2\beta+d)}$. Then the MAP estimator satisfies
    \begin{equation*}
        \norm{\hat{u}_N^{\operatorname{MAP}}-u_0}_{H^m(\domain)}= \mathcal{O}_{\prob[u_0][N]{}}(\epsilon_N).
    \end{equation*}
\end{lem}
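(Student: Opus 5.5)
The plan is the standard ``basic inequality'' argument for penalised least squares, carried out in the information distance $\norm{\op[u]-\op[u_0]}_{L^2_p(\domain)}$. Afterwards, Assumption \ref{assump:Operator_stability} transfers the bound to $H^m(\domain)$. As comparison element take $u_J\coloneqq\ProjOne_J u_0\in V_J$. By \eqref{eq:Jackson} and \eqref{eq:Stability_Lipschitz_Bounds},
\[
\norm{\op[u_J]-\op[u_0]}_{L^2_p(\domain)}\lesssim 2^{-J\beta}\norm{u_0}_{H^{m+\beta}(\domain)}\lesssim N^{-\beta/(2\beta+d)}.
\]
By \eqref{eq:RKHS_Norm}, \eqref{eq:CharacterisationSobolevSpaces} and \eqref{eq:H^m_stability},
\[
\norm{u_J}_{\mathbb H_N}^2\lesssim \sum_{\abs{\mu}\le J}2^{2(m+\beta_0)\abs{\mu}}\iprod{u_0}{\psi_\mu}^2\lesssim 2^{2J(\beta_0-\beta)_+}\norm{u_0}^2_{H^{m+\beta}(\domain)}\lesssim N\epsilon_N^2,
\]
where the last step uses $\beta_0<\beta+d/2$. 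Writing $\hat u=\hat{u}_N^{\operatorname{MAP}}$, $g_u\coloneqq\op[u]-\op[u_0]$ and $W_N(g)\coloneqq \frac1N\sum_i\epsilon_i g(X_i)$, the inequality $\ell_N^{\operatorname{reg}}(\hat u)\ge\ell_N^{\operatorname{reg}}(u_J)$ gives
\begin{equation*}
\frac{1}{2\sigma^2}\norm{g_{\hat u}}_N^2+\frac{1}{2N}\norm{\hat u}_{\mathbb H_N}^2\le \frac{1}{2\sigma^2}\norm{g_{u_J}}_N^2+\frac{1}{2N}\norm{u_J}_{\mathbb H_N}^2+\frac{1}{\sigma}\bigl(W_N(g_{\hat u})-W_N(g_{u_J})\bigr).
\end{equation*}

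The first consequence is crude a priori control. Since $W_N(g)\le \norm{g}_N\,(\frac1N\sum\epsilon_i^2)^{1/2}$ and $\norm{g_{u_J}}_\infty=\mathcal O(1)$ (by \eqref{eq:Lipschitz_Bounds_infty} and the Sobolev embedding, using $\beta>d/2$), we obtain $\norm{\hat u}_{\mathbb H_N}^2\lesssim N$ with high probability. By the Bernstein inequality \eqref{eq:Bernstein} on $V_J$ and \eqref{eq:CharacterisationSobolevSpaces}, this places $\hat u$ in a set $\mathcal U_N=\{u\in V_J:\norm{u}_{\mathbb H_N}\le C\sqrt N\epsilon_N\,\text{or } C\sqrt N\}$. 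On that set we get $\norm{u}_{W^{m,\infty}(\domain)}\lesssim\norm{u}_{H^{m+\kappa}(\domain)}$ for some $\kappa>d/2$ with polynomially controlled growth. I would first work with the cruder bound $\sqrt N$ and iterate: once the rate is known, the penalty term is itself bounded by $N\epsilon_N^2$ from the basic inequality.

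The main obstacle is the uniform empirical process control of $W_N(g_u)$ over $u\in\mathcal U_N$, together with replacing $\norm{\cdot}_N$ by $\norm{\cdot}_{L^2_p(\domain)}$. Because $\mathcal U_N\subset V_J$ has dimension $\sim 2^{Jd}=N\epsilon_N^2/\log^2 N$ and $\op$ is Lipschitz from $W^{m,\infty}$ to $L^\infty$ by \eqref{eq:Lipschitz_Bounds_infty}, the class $\{g_u\}$ has metric entropy $\lesssim 2^{Jd}\log(1/\delta)$ in sup-norm. I would use a peeling argument over shells $\norm{g_u}_{L^2_p}\in(2^{k}\epsilon_N,2^{k+1}\epsilon_N]$, chaining for the Gaussian process conditional on the design, and Bernstein/Talagrand for $\norm{g}_N^2-\norm{g}_{L^2_p}^2$. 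This should give, with probability tending to one, uniformly in the class,
\[
\abs{W_N(g_u)}\lesssim \epsilon_N\norm{g_u}_{L^2_p}+\epsilon_N^2,\qquad \abs{\norm{g_u}_N^2-\norm{g_u}_{L^2_p}^2}\le \tfrac12\norm{g_u}_{L^2_p}^2+C\epsilon_N^2.
\]
The sup-norm envelope needed for Bernstein's inequality is exactly where the RKHS control from the a priori step enters. The $\log N$ in $\epsilon_N$ absorbs the entropy. This is the same machinery as in the proof of Theorem \ref{thm:Contraction} (test construction), so I would reuse those estimates from Section \ref{sec:Proofs_Contraction} where possible.

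Inserting these bounds into the basic inequality yields $\norm{g_{\hat u}}_{L^2_p}^2\lesssim \epsilon_N\norm{g_{\hat u}}_{L^2_p}+\epsilon_N^2$, hence $\norm{\op[\hat u]-\op[u_0]}_{L^2_p(\domain)}=\mathcal O_{\prob[u_0][N]{}}(\epsilon_N)$. The lower bound in \eqref{eq:Stability_Lipschitz_Bounds} then gives $\norm{\hat u-u_0}_{H^m(\domain)}\le c^{-1}\norm{g_{\hat u}}_{L^2_p(\domain)}=\mathcal O_{\prob[u_0][N]{}}(\epsilon_N)$, which proves Lemma \ref{lem:Convergence_MAP_Nonparametric}.
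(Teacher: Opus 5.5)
\paragraph{A gap in the sup-norm envelope.} Your overall architecture can be made to work: a basic inequality against $\ProjOne_J u_0$, peeling in $\norm{g_u}_{L_p^2(\domain)}$ with $g_u=\op[u]-\op[u_0]$, Gaussian chaining given the design, and a uniform comparison of $\norm{\cdot}_N$ with $\norm{\cdot}_{L_p^2(\domain)}$. However, the step you single out as the crux uses the wrong envelope, and with that envelope the argument does not reach $\epsilon_N$.

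The RKHS control gives at best
\[
\norm{u}_{W^{m,\infty}(\domain)}\lesssim\norm{u}_{H^{m+\beta_0}(\domain)}\lesssim\norm{u}_{\mathbb{H}_N}\lesssim\sqrt N\epsilon_N\approx 2^{Jd/2}\log N,
\]
and only $\sqrt N$ before your iteration. So the sup-norm envelope of $g_u$ is $M\approx 2^{Jd/2}\log N$. Take a class of entropy $H\approx\operatorname{dim}(V_J)\log N\approx 2^{Jd}\log N$. The Bernstein part of the uniform deviation of $\norm{g}_N^2-\norm{g}_{L_p^2(\domain)}^2$ is then of order $M^2H/N\approx 2^{2Jd}\log^3 N/N$. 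This exceeds $\epsilon_N^2\approx 2^{Jd}\log^2 N/N$ by a factor $2^{Jd}\log N\to\infty$.

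Consequently your bound $\abs{\norm{g_u}_N^2-\norm{g_u}_{L_p^2}^2}\le\frac12\norm{g_u}_{L_p^2}^2+C\epsilon_N^2$ fails on every shell of radius below roughly $M\sqrt{H/N}\approx\sqrt N\epsilon_N^2$, up to logarithms. The same comparison also controls the empirical diameter in your Gaussian chaining. Balancing only yields the rate $N^{-(2\beta-d)/(2(2\beta+d))}$, which is strictly slower than $\epsilon_N$. Iterating does not help: the basic inequality cannot bound the penalty better than $\norm{\hat u}_{\mathbb{H}_N}^2\lesssim N\epsilon_N^2$, so the RKHS envelope never drops below $\sqrt N\epsilon_N$.

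\paragraph{The missing idea.} What you need is a shell-localised envelope that scales with the radius. The paper uses this in Step 2 of the proof of Lemma \ref{lem:MAP_Convergence}, and already in \eqref{eq:Contraction_finite_dimensional_envelope_2}. Fix $u\in V_J$ and $d/2<\eta<\beta$. Combining \eqref{eq:Lipschitz_Bounds_infty}, the Sobolev embedding, the Bernstein estimate \eqref{eq:Bernstein} applied to $u-\ProjOne_J u_0\in V_J$, the Jackson estimate \eqref{eq:Jackson}, and the lower bound in \eqref{eq:Stability_Lipschitz_Bounds} gives
\[
\norm{g_u}_{L^\infty(\domain)}\lesssim 2^{J\eta}\bigl(\norm{g_u}_{L_p^2(\domain)}+2^{-J\beta}\bigr).
\]
So on a shell of radius $s\ge\epsilon_N$ the envelope is $M_s\lesssim 2^{J\eta}s$. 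The deviation then becomes $s^2\bigl(2^{J\eta}\sqrt{H/N}+2^{2J\eta}H/N\bigr)=\smallo(s^2)$, because $2^{J(2\eta+d)}\log N/N=N^{-2(\beta-\eta)/(2\beta+d)}\log N\to0$. The tail $\exp(-cN2^{-2J\eta})$ survives a union bound over the $\mathcal{O}(\log N)$ shells.

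With this envelope, the RKHS bound is needed only to cap the number of shells, and your crude $\norm{\hat u}_{\mathbb{H}_N}\lesssim\sqrt N$ already does that. With this replacement your route closes.

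\paragraph{Comparison with the paper.} Once repaired, your route differs from the paper's mainly in organisation. The paper first gets $\norm{g_{\hat u}}_N\lesssim\epsilon_N$ and $\norm{\hat u}_{\mathbb{H}_N}\lesssim\sqrt N\epsilon_N$ from Theorem 3.1 of \cite{wangGlobalPolynomialtimeEstimation2026}. That step works conditionally on the design and needs no envelope. Afterwards the paper only needs the one-sided comparison $\norm{g}_{L_p^2}\le 2\norm{g}_N$ of Lemma \ref{lem:EmpiricalNorm}. Your version needs both directions of the norm comparison, but avoids the external theorem.
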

\begin{proof}
    Follows from Lemma \ref{lem:MAP_Convergence}.
\end{proof}

\subsection{Nonparametric BvM theorems}\label{sec:BvM}

The posterior contraction of the previous section describes the region in which most posterior mass lies, but not the shape of the posterior inside that region. The BvM theorems below identify this local shape: after centring at a suitable estimator and rescaling by $\sqrt{N}$, the full posterior converges to a Gaussian measure whose covariance is determined by the linearisation of $\mathcal{L}$ at $u_0$. We first impose the first-order regularity conditions needed for a uniform local asymptotic normality (LAN) expansion.
\begin{ass}[Local first-order regularity]\label{assump:operator_linearisation}
        There exists an open neighbourhood $\mathcal{U}$ of $u_0$ in $(\mathcal{V},\norm{}_{H^m(\domain)})$ such that $\op\colon\mathcal{V}\to L^2(\domain)$ is Fréchet differentiable on $\mathcal{U}$ with bounded derivative $\opDeriv{u}\in\mathcal{L}(\mathcal{V},L^2(\domain))$ for any $u\in\mathcal{U}$. Moreover, there exist constants $0<c\le C<\infty$ such that the following conditions hold:
        \begin{enumerate}[(i)]
        \item Gradient stability: For every $u\in\mathcal{V}$ we have 
        \begin{equation}
            c\norm{u}_{H^m(\domain)}\le \norm{\opDeriv{u_0}[u]}_{L_p^2(\domain)}\le  C\norm{u}_{H^m(\domain)}.\label{eq:Graphnormequivalence}
        \end{equation}
        \item Quadratic remainder: For all $u,v\in\mathcal{U}$ we have
        \begin{equation}
            \norm{\op[u]-\op[v]-\opDeriv{v}[u-v]}_{L^2(\domain)} \le C \norm{u-v}_{H^m(\domain)}^2.\label{eq:linearisation_2}
        \end{equation}
        If, additionally, $u,v\in \mathcal{U}\cap W^{m,\infty}(\domain)$, then $\opDeriv{u}$ restricts to a bounded operator from $\mathcal{V}\cap W^{m,\infty}(\domain)$ into $L^\infty(\domain)$ and
        \begin{equation}
            \norm{\op[u]-\op[v]-\opDeriv{v}[u-v]}_{L^\infty(\domain)} \le C \norm{u-v}_{W^{m,\infty}(\domain)}^2.\label{eq:linearisation_infty}
        \end{equation}
        \item Lipschitz continuity of the derivative at $u_0$: For any $h_1,h_2\in\mathcal{V}$ with $u_0+h_1\in\mathcal{U}$ we have
                \begin{align}
        \begin{split}
            \norm{(\opDeriv{u_0}-\opDeriv{u_0+h_1})[h_2]}_{L^2(\domain)}&\le C\norm{h_1}_{H^m(\domain)}\norm{h_2}_{H^m(\domain)},\\
            \norm{(\opDeriv{u_0}-\opDeriv{u_0+h_1})[h_2]}_{L^\infty(\domain)}&\le C\norm{h_1}_{W^{m,\infty}(\domain)}\norm{h_2}_{W^{m,\infty}(\domain)},
            \end{split}\label{eq:linearisation2}
        \end{align}
        where the second inequality is required when $h_1,h_2\in \mathcal{V}\cap W^{m,\infty}(\domain)$ and $u_0+h_1\in \mathcal{U}\cap W^{m,\infty}(\domain)$.
        \end{enumerate}
\end{ass}
Apart from the gradient-stability condition \eqref{eq:Graphnormequivalence}, the remaining conditions in Assumption \ref{assump:operator_linearisation} are implied if $\op$ is continuously Fréchet differentiable near $\op$ with locally Lipschitz derivative.
We refer to \cite{castreGradientStabilityNonlinear2026} for an in-depth discussion of the gradient stability property \eqref{eq:Graphnormequivalence} which was first used in \cite{NW20} to show local curvature of the posterior density in inverse problems.

We define the LAN inner product on $\mathcal{V}$ by
\begin{equation*}
    \iprod[][\operatorname{LAN}]{u}{v}\coloneqq \frac{1}{\sigma^2}\iprod[][L_p^2(\domain)]{\opDeriv{u_0}[u]}{\opDeriv{u_0}[v]},\quad u,v\in \mathcal{V}.
\end{equation*}
In view of \eqref{eq:Graphnormequivalence} we immediately deduce the norm equivalence
\begin{equation}
    c\norm{u}_{H^m(\domain)}\le \norm{u}_{\operatorname{LAN}}\le  C\norm{u}_{H^m(\domain)},\quad u\in\mathcal{V}\label{eq:Normequivalence}
\end{equation}
for constants $0<c\le C<\infty$ depending on $\sigma$ and $\op$. 
Let $\mathcal{V}^\ast$ be the topological dual space to $\mathcal{V}$ and define the linear operator $\mathcal{I}_{u_0}\colon \mathcal{V}\to\mathcal{V}^\ast$ via the identity
    \begin{equation*}
        (\mathcal{I}_{u_0}u)(v) \coloneqq \iprod{u}{v}_{\operatorname{LAN}}= \frac{1}{\sigma^2}\iprod{\opDeriv{u_0}[u]}{\opDeriv{u_0}[v]}_{L_p^2(\domain)},\quad u,v\in\mathcal{V}.
    \end{equation*}
    Then $\mathcal{I}_{u_0}\colon \mathcal{V}\to\mathcal{V}^\ast$ is a topological isomorphism by \eqref{eq:Graphnormequivalence} and the Lax-Milgram Theorem (Theorem 9.14 of \cite{renardyIntroductionPartialDifferential2004a}).

Next we define the limiting Gaussian measure. Note that $(\mathcal{V},\iprod{}{}_{\operatorname{LAN}})$ is a separable Hilbert space by \eqref{eq:Normequivalence} and \eqref{eq:Graphnormequivalence}.
Let $\gaussian$ be a Gaussian isonormal process indexed by $(\mathcal{V},\iprod{}{}_{\operatorname{LAN}})$, i.e. a centred Gaussian process indexed by $\mathcal{V}$ with covariance
\begin{equation*}
    \EV{\gaussian(\phi)\gaussian(\psi)} = \iprod{\phi}{\psi}_{\operatorname{LAN}},\quad \phi,\psi\in\mathcal{V}.
\end{equation*}
Note that $\gaussian$ cannot be realised as a (tight) Gaussian measure on $\mathcal{V}$, since its identity covariance operator is not trace class with respect to $\iprod{}{}_{\operatorname{LAN}}$. However, in the present setting we may alternatively view $\gaussian$ as a tight Gaussian measure supported on a larger `ambient' Sobolev space with weaker topology $H^\gamma(\domain)$ for $\gamma<m-d/2$. In slight abuse of notation, we shall use the same notation $\gaussian$ for both those interpretations of the Gaussian limiting measure. We elaborate this technical point in the following remark.
\begin{rem}\label{rmk:Tightness_Limiting_Measure}
For $m>d/2$ and $0<\gamma<m-d/2$ denote by $\mathbb{V}_\gamma$ the closure of $\mathcal{V}$ with respect to $\norm{}_{H^\gamma(\domain)}$. Since $\norm{}_{\operatorname{LAN}}$ and $\norm{}_{H^m(\domain)}$ are equivalent norms by \eqref{eq:Normequivalence} and recalling the Hilbert-Schmidt properties of Sobolev embeddings, we deduce that the embedding $\iota\colon (\mathcal{V},\norm{}_{\operatorname{LAN}})\hookrightarrow(\mathbb{V}_\gamma, \norm{}_{H^\gamma(\domain)})$ is a Hilbert-Schmidt operator. Consequently, the sum $X\coloneqq \sum_{k\in\mathbb{N}} \gaussian(e_k)\iota [e_k]$ for an orthonormal basis of $(\mathcal{V},\iprod{}{}_{\operatorname{LAN}})$ converges in $L^2(\Omega,(\mathbb{V}_\gamma,\norm{}_{H^\gamma(\domain)}))$ and defines a Gaussian random variable in $(\mathbb{V}_\gamma,\norm{}_{H^\gamma(\domain)})$. Since for any $\psi\in \mathbb{V}_\gamma$ we have $\iprod{X}{\psi}_{H^\gamma(\domain)} = \gaussian(\iota^\ast\psi)$ in $L^2(\Omega)$, it follows that
\begin{equation*}
    \EV{\iprod{X}{\phi}_{H^\gamma(\domain)}\iprod{X}{\psi}_{H^\gamma(\domain)}}=\EV{\gaussian(\iota^\ast \phi)\gaussian(\iota^\ast \psi)}= \iprod{\iota^\ast \phi}{\iota^\ast\psi}_{\operatorname{LAN}} = \iprod{\iota\iota^\ast\phi}{\psi}_{H^\gamma(\domain)}
\end{equation*}
for any $\phi,\psi\in \mathbb{V}_\gamma$. Consequently, the law 
of $X$ on $(\mathbb{V}_\gamma,\norm{}_{H^\gamma(\domain)})$ has trace-class covariance operator $\iota\iota^\ast$. Fernique's Theorem (Theorem 2.7 of \cite{DaPrato2014}) implies that $\EV[X\sim\gaussian]{\norm{X}_{H^{\gamma}(\domain)}^p}<\infty$ for any $p\ge 0$.
\end{rem}

Following Definition 6.1 of \cite{villaniOptimalTransport2009}, for a Polish space $(\domain,d)$ we denote by $\mathcal{W}_{1,\domain}$ the Wasserstein-1 distance
\begin{equation*}
    \mathcal{W}_{1,\domain}(\mu,\nu) = \inf\left\{ \EV{d(X,Y)}\colon \operatorname{Law}(X)=\mu, \operatorname{Law}(Y)=\nu\right\}
\end{equation*}
between two probability measures $\mu,\nu$ on $\domain$ with finite first moments. Recall that $\Pi_N(\MTemptyplaceholder|\data)$ denotes the posterior distribution and that $\bar{u}_N=\EV[\Pi_N]{u\given\data}$ denotes the posterior mean. For $J\in\mathbb{N}$ let $\ProjTwo_{J}\colon \mathcal{V}\to V_J$ be the $\iprod{}{}_{\operatorname{LAN}}$-orthogonal projection onto the approximation space $V_J$, see Lemma \ref{lem:Properties_LAN_Norm} for details.

\begin{thm}[BvM theorem centred at the posterior mean]\label{thm:BvM_RandomDesign}
    Grant Assumptions \ref{assump:ApproximatonSets}, \ref{assump:Operator_stability} and \ref{assump:operator_linearisation}.
    Let $m>d/2$, $0<\gamma<m-d/2$, consider the prior $\pi_N$ from \eqref{eq:Prior} with cut-off $2^J\sim N^{1/(2\beta+d)}$ and assume that $u_0\in \mathcal{V}\cap H^{m+\beta}(\domain)$ for $\beta>2d$, $d/2<\beta_0<\beta+d/4$. Define the (random) map $\tau_N(u)\coloneqq \sqrt{N}(u-\bar{u}_N)$ for $u\in V_J$. Then
    \begin{equation*}
        \mathcal{W}_{1,H^{\gamma}(\domain)}\left((\tau_N)_{\#}\Pi_N(\MTemptyplaceholder|\data), \gaussian\right) \xrightarrow{\prob[u_0][N]{}}0, \quad N\to\infty.
    \end{equation*}
\end{thm}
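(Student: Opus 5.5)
We follow the now-standard route of \cite{castilloBernsteinMisesTheorem2015, N22, nicklBernsteinvonMisesTheorems2025}: localise, expand the likelihood, and identify the limit through Laplace transforms before upgrading to $\mathcal{W}_{1,H^\gamma(\domain)}$.

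\emph{Localisation.} By Theorem \ref{thm:Contraction}, the posterior places mass $1-\mathcal{O}_{\prob[u_0][N]{}}(e^{-DN\epsilon_N^2})$ on $\Theta_N\coloneqq\{u\in V_J\colon\norm{u-u_0}_{H^m(\domain)}\le L\epsilon_N\}$. We replace $\Pi_N(\cdot|\data)$ by the renormalised restriction $\Pi_N^{\Theta_N}(\cdot|\data)$. The two differ in total variation by $\mathcal{O}_{\prob{}}(e^{-DN\epsilon_N^2})$. Their first $H^\gamma$-moments differ by a negligible amount, because on $V_J$ the Bernstein inequality \eqref{eq:Bernstein} and a crude posterior moment bound (Gaussian tails of the prior) control $\sqrt N\norm{u}_{H^\gamma}$ by a polynomial in $N$, and this is killed by the exponential factor. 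The same bound lets us recentre at $\bar u_N^{\Theta_N}$ instead of $\bar u_N$.

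On $\Theta_N$, using $\beta>2d$ and \eqref{eq:Bernstein}, we also have $\norm{u-u_0}_{W^{m,\infty}}\lesssim 2^{Jd/2}\epsilon_N\to0$ (after a Jackson step for $u_0-\ProjOne_Ju_0$). This puts us in $\mathcal U$ with the $L^\infty$ bounds of Assumption \ref{assump:operator_linearisation} available.

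\emph{Uniform LAN and Laplace transforms.} Fix $\phi\in\mathcal V$ and $t\in\mathbb R$, and consider the perturbation $u_t\coloneqq u-\frac{t}{\sqrt N}\ProjTwo_J\phi$. The LAN projection is the natural choice because $\iprod{u}{\ProjTwo_J\phi}_{\operatorname{LAN}}=\iprod{u}{\phi}_{\operatorname{LAN}}$ for $u\in V_J$. Expanding $\ell_N(u)-\ell_N(u_t)$ via \eqref{eq:linearisation_2}, \eqref{eq:linearisation2} gives
\[
\ell_N(u)-\ell_N(u_t)=\frac{t}{\sqrt N}\sum_i\frac{\epsilon_i}{\sigma}\opDeriv{u_0}[\ProjTwo_J\phi](X_i)-t\sqrt N\iprod{u-u_0}{\ProjTwo_J\phi}_{\operatorname{LAN}}-\frac{t^2}{2}\norm{\ProjTwo_J\phi}_{\operatorname{LAN}}^2+R_N(u),
\]
with $\sup_{\Theta_N}|R_N|=o_{\prob{}}(1)$. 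The remainder $R_N$ contains three kinds of terms:
\begin{itemize}
\item[(a)] quadratic remainders, of size $\sqrt N\epsilon_N^2\cdot 2^{Jd/2}$ or $N\epsilon_N^2\cdot N^{-1/2}$, which vanish because $\beta>2d$;
\item[(b)] the replacement of empirical inner products $\iprod{\cdot}{\cdot}_N$ by $L^2_p$ ones, which is an empirical process over $\{\op[u]-\op[u_0]\colon u\in\Theta_N\}$ and is controlled by chaining with $L^\infty$ envelopes from \eqref{eq:Lipschitz_Bounds_infty} and \eqref{eq:Bernstein}, in a class of dimension $2^{Jd}$;
\item[(c)] Gaussian noise cross terms, controlled by a sup of a Gaussian process over $\Theta_N$ via Dudley's bound.
\end{itemize}
The prior shift costs $\frac{t}{\sqrt N}\iprod{u}{\ProjTwo_J\phi}_{\mathbb H_N}$ plus $\mathcal{O}(N^{-1}\norm{\ProjTwo_J\phi}_{\mathbb H_N}^2)$. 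This is $o(1)$ on $\Theta_N$ provided $\norm{\ProjTwo_J\phi}_{\mathbb H_N}\ll\sqrt N$ and $\norm{u_0}_{\mathbb H_N}\norm{\ProjTwo_J\phi}_{\mathbb H_N}\ll \sqrt N$. Here $\beta_0<\beta+d/4$ enters, first for smooth $\phi$ and then for general $\phi$ by density. Also $u_t\in\Theta_N$ up to an enlargement of $L$.

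A change of variables $u\mapsto u_t$ then gives
\[
\EV[\Pi_N^{\Theta_N}]{e^{t\sqrt N\iprod{u-\hat u}{\phi}_{\operatorname{LAN}}}\given \data}\to e^{t^2\norm{\phi}_{\operatorname{LAN}}^2/2},
\]
with $\hat u\coloneqq u_0+\mathcal I_{u_0}^{-1}\Delta_N$ and $\Delta_N$ the score. Hence the finite-dimensional projections converge weakly, in probability, to those of $\gaussian$. Laplace-transform convergence also yields uniform integrability, so the centring can be swapped from $\hat u$ to the posterior mean. In particular $\sqrt N(\bar u_N-\hat u)$ tested against $\phi$ tends to $0$ in probability, and this is where the difficulty of centring at $\bar u_N$ is absorbed.

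\emph{Upgrade to $\mathcal W_{1,H^\gamma}$.} Take the basis $(\psi_\mu)$ and $\gamma<\gamma'<m-d/2$. It suffices to show
\[
\EV[\Pi_N]{\norm{\sqrt N(u-\bar u_N)}_{H^{\gamma'}}^2\given\data}=\mathcal O_{\prob{}}(1),
\]
or the tail version $\sum_{|\mu|>K}2^{2|\mu|\gamma}\EV[\Pi_N]{N\iprod{u-\bar u_N}{\psi_\mu}^2\given\data}\to0$ uniformly as $K\to\infty$. Combined with finite-dimensional convergence, this gives tightness, and $\mathcal W_1$ convergence then follows by the argument of Theorem 2.7.14 of \cite{N22}. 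Each coefficient $\iprod{u}{\psi_\mu}_{L^2}=\iprod{u}{\mathcal I_{u_0}^{-1}\psi_\mu}_{\operatorname{LAN}}$ is handled by the Laplace bound above, made uniform over $\mu$ with $\norm{\mathcal I^{-1}_{u_0}\psi_\mu}_{\operatorname{LAN}}\lesssim 2^{-m|\mu|}$, since $\mathcal I_{u_0}^{-1}$ maps $H^{-m}$-type functionals boundedly. The sum then converges because $\sum_\mu 2^{2|\mu|(\gamma-m)}<\infty$ for $\gamma<m-d/2$.

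The main obstacle is step (b)/(c) made \emph{uniform over $\mu$ with $|\mu|\le J$}. The remainder bounds must hold simultaneously for all $2^{Jd}$ directions, with second-moment control rather than convergence in probability alone. I would aim for a maximal inequality with a $\log N$ loss, absorbed by the polynomial slack that $\beta>2d$ provides.
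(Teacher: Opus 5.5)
Your architecture matches the paper's: localise, uniform LAN, Cameron--Martin shift along $\ProjTwo_J\phi$, Laplace transforms uniform over a dual ball, coordinatewise tightness via $x^2\le e^x+e^{-x}$ with test functionals $\pm 2^{(m-\epsilon)|\mu|}\psi_\mu$, then a swap to the posterior mean. Controlling (b), (c) by finite-dimensional chaining on $V_J$, instead of the Sobolev-ball entropy bounds the paper feeds into Lemma 3.12 of \cite{NW20}, is a legitimate variant under $\beta>2d$.

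The genuine gap is the intermediate centring $\hat u=u_0+\mathcal I_{u_0}^{-1}\Delta_N$. First, $\Delta_N(h)=\frac{1}{\sigma N}\sum_i\epsilon_i\opDeriv{u_0}[h](X_i)$ uses point values of the $L^2$-function $\opDeriv{u_0}[h]$. So $\Delta_N\notin\mathcal V^\ast$, and $\hat u$ is not defined. More seriously, carry out your own change of variables for $u\in V_J$, using $\langle u,(\identity-\ProjTwo_J)\phi\rangle_{\operatorname{LAN}}=0$. The exponent becomes
\[
\frac{t^2}{2}\|\ProjTwo_J\phi\|_{\operatorname{LAN}}^2-t\sqrt N\langle(\identity-\ProjTwo_J)u_0,\phi\rangle_{\operatorname{LAN}}-t\sqrt N\,\Delta_N\bigl((\identity-\ProjTwo_J)\phi\bigr)+\smallo_{\mathrm P}(1).
\]
Incidentally, the quadratic term in $\ell_N(u)-\ell_N(u_t)$ carries a $+$ sign, not $-$.

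The deterministic middle term is not controlled by the hypotheses. For the directions you actually need, $\phi=\mathcal I_{u_0}^{-1}\psi_\mu$, it equals $\sqrt N\langle\psi_\mu,(\identity-\ProjTwo_J)u_0\rangle_{L^2(\domain)}$. This is the $\mu$-th coefficient of $\sqrt N(\identity-\ProjTwo_J)u_0$.
\begin{itemize}
\item The only available bound is $\|(\identity-\ProjTwo_J)u_0\|_{H^m(\domain)}\lesssim 2^{-J\beta}$, while $\sqrt N2^{-J\beta}\asymp 2^{Jd/2}\to\infty$.
\item No regularity of $\mathcal I_{u_0}^{-1}$ beyond $\mathcal V^\ast\to\mathcal V$ is assumed, so there is no duality gain.
\item Your claim $\sqrt N\langle\bar u_N-\hat u,\phi\rangle\to0$ would amount to bias-free asymptotic normality of $\sqrt N(\bar u_N-u_0)$. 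The paper gets that only under the extra condition of Lemma \ref{lem:Convergence_PosteriorMean} and Remark \ref{rmk:Bias_Control}.
\item Your uniform-in-$\mu$ tightness bound no longer follows either: the weighted squared biases sum to roughly $N\|(\identity-\ProjTwo_J)u_0\|_{H^\gamma(\domain)}^2$.
\end{itemize}

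The missing idea is to centre at $\tilde u_N=\ProjTwo_Ju_0+Z_N$ from \eqref{eq:EfficientEstimator_RandomDesign}. Here $Z_N\in V_J$ is the $\langle\cdot,\cdot\rangle_{\operatorname{LAN}}$-representer of the score restricted to $V_J$. With this centring the bias cancels identically for $u\in V_J$. The Laplace transform then equals $e^{t^2\|\ProjTwo_J\psi_0\|_{\operatorname{LAN}}^2/2}(1+\smallo_{\mathrm P}(1))$ uniformly over $\|\psi\|_{\mathcal V^\ast}\le M$. The posterior mean absorbs the shift $\ProjTwo_Ju_0-u_0$, which is exactly why the theorem needs no bias condition.

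Two smaller corrections. The step you single out as the main obstacle is automatic:
\begin{itemize}
\item For all $\|\psi\|_{\mathcal V^\ast}\le M$ and $u\in\Theta_N$, the shifted points $u_t$ lie in one slightly enlarged $H^m$-ball of $V_J$ (the paper's $A_N$). One uniform LAN bound over that set therefore covers every direction at once.
\item The Cameron--Martin factor is uniform by the Bernstein estimate for $\ProjTwo_J$ in Lemma \ref{lem:Properties_LAN_Norm}. No density argument is needed.
\item The weights $2^{2|\mu|(\gamma-m+\epsilon)}$ are deterministic and summable. So it suffices that $\sup_\mu$ of the localised Laplace transforms is $\mathcal O_{\mathrm P}(1)$; no union bound or second-moment control is required.
\end{itemize}

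Finally, in the localisation step the posterior moments are only $\mathcal O_{\mathrm P}(e^{\tilde DN\epsilon_N^2})$ via the evidence bound \eqref{eq:Marginal_Likelihood_Bound_RandomDesign}, not polynomial in $N$. You therefore need a contraction exponent $D>\tilde D$. The proof of Theorem \ref{thm:Contraction} arranges this by taking $C_0$ large.
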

\begin{proof}
    See Section \ref{sec:Proofs_BvM}.
\end{proof}

Theorem \ref{thm:BvM_RandomDesign} is an infinite-dimensional statement describing the joint posterior fluctuations for the whole conditional random function $u|\data$, rather than for individual marginals. The loss of a little more than $d/2$-Sobolev regularity is required to make the limiting Gaussian measure $\gaussian$ tight, see Remark \ref{rmk:Tightness_Limiting_Measure}. The regularising effect of order $m$ of the inverse of $\opDeriv{u_0}$ enables measuring the posterior fluctuations in a positive Sobolev norm up to order $m-d/2>0$. If moreover $d/2<\gamma<m-d/2$, we obtain a BvM in the uniform $\norm{}_{L^\infty(\domain)}$ topology, allowing for uncertainty quantification for pointwise evaluations of $u$.

From Theorem \ref{thm:BvM_RandomDesign}, one can deduce asymptotic normality of the posterior mean.
\begin{lem}[Convergence of posterior mean]\label{lem:Convergence_PosteriorMean}
    Grant Assumptions \ref{assump:ApproximatonSets}, \ref{assump:Operator_stability} and \ref{assump:operator_linearisation}. Let $m>d/2$, $0<\gamma<m-d/2$, consider the prior $\pi_N$ from \eqref{eq:Prior} with cut-off $2^J\sim N^{1/(2\beta+d)}$ and assume that $u_0\in \mathcal{V}\cap H^{m+\beta}(\domain)$ for $\beta>2d$, $d/2<\beta_0<\beta+d/4$.
   Then
    \begin{equation*}
        \sqrt{N}(\bar{u}_N-\ProjTwo_J u_0)\xrightarrow{d}\gaussian
    \end{equation*}
    under $\prob[u_0][N]{}$ as $N\to\infty$. If, additionally, $\sqrt{N}\norm{(\identity - \ProjTwo_J)u_0}_{H^\gamma(\domain)}\to 0$ as $N\to\infty$, then
    \begin{equation*}
        \sqrt{N}(\bar{u}_N-u_0)\xrightarrow{d}\gaussian
    \end{equation*}
    under $\prob[u_0][N]{}$ as $N\to\infty$.
\end{lem}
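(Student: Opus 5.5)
The plan is to derive the statement from the machinery behind Theorem \ref{thm:BvM_RandomDesign}: a uniform LAN expansion of the likelihood on an $H^m(\domain)$-ball around $u_0$, and a change-of-measure argument for posterior Laplace transforms. The key observation is that the recentred posterior is asymptotically centred at a random ``efficient'' element and not at $\bar u_N$ itself. Write
\begin{equation*}
W_N\coloneqq \frac{1}{\sigma^2\sqrt N}\sum_{i=1}^N\epsilon_i\sigma\,\opDeriv{u_0}[\,\cdot\,](X_i),
\end{equation*}
a random linear functional on $V_J$. Let $\Delta_N\in V_J$ be its Riesz representer with respect to $\iprod{}{}_{\operatorname{LAN}}$ restricted to $V_J$, i.e.\ $\iprod{\Delta_N}{h}_{\operatorname{LAN}}=W_N(h)$ for $h\in V_J$. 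The proof of Theorem \ref{thm:BvM_RandomDesign} (Section \ref{sec:Proofs_BvM}) should show that
\begin{equation*}
\mathcal{W}_{1,H^\gamma(\domain)}\bigl(\Law{\sqrt N(u-\ProjTwo_J u_0)-\Delta_N\given \data},\gaussian\bigr)\to 0
\end{equation*}
in probability. If the paper only proves the version centred at $\bar u_N$, I would rerun its Laplace-transform step with centring $\ProjTwo_Ju_0+\Delta_N/\sqrt N$. This is the natural centring because the LAN expansion reads $\ell_N(\ProjTwo_Ju_0+h/\sqrt N)-\ell_N(\ProjTwo_Ju_0)=W_N(h)-\tfrac12\norm{h}_{\operatorname{LAN}}^2+o_P(1)$, and the projection $\ProjTwo_J$ kills the linear bias term coming from $\opDeriv{u_0}[u_0-\ProjTwo_Ju_0]$.

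\textbf{Step 1 (mean identification).} Wasserstein-1 convergence in $H^\gamma(\domain)$ implies convergence of first moments as Bochner integrals, since $\norm{\EV{X}-\EV{Y}}_{H^\gamma}\le\mathcal{W}_1$. The posterior mean of $\sqrt N(u-\ProjTwo_Ju_0)-\Delta_N$ is $\sqrt N(\bar u_N-\ProjTwo_Ju_0)-\Delta_N$, and $\gaussian$ is centred. Hence
\begin{equation*}
\norm{\sqrt N(\bar u_N-\ProjTwo_Ju_0)-\Delta_N}_{H^\gamma(\domain)}\xrightarrow{\prob[u_0][N]{}}0.
\end{equation*}
This uses that the posterior has finite first moments, which holds because it is a Gaussian-dominated finite-dimensional measure, together with Remark \ref{rmk:Tightness_Limiting_Measure} for $\gaussian$. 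The contraction of Theorem \ref{thm:Contraction} and the exponential tail in \eqref{eq:Contraction_RandomDesign} control the mass outside the localisation ball, so the $\mathcal{W}_1$ statement really does cover the full, unlocalised posterior.

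\textbf{Step 2 (CLT for $\Delta_N$ in $H^\gamma$).} By Slutsky it remains to show $\Delta_N\xrightarrow{d}\gaussian$ in $H^\gamma(\domain)$.
\begin{itemize}
\item[(a)] Finite-dimensional distributions. For fixed $\phi\in\mathcal{V}$ we have $\iprod{\Delta_N}{\phi}_{\operatorname{LAN}}=W_N(\ProjTwo_J\phi)$. This is a normalised sum of i.i.d.\ centred terms with conditional variance $\norm{\ProjTwo_J\phi}_{\operatorname{LAN}}^2\to\norm{\phi}_{\operatorname{LAN}}^2$, by density of $\bigcup_J V_J$. The Lindeberg CLT then gives the correct Gaussian limits jointly for finitely many $\phi$. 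Passing to $H^\gamma$-pairings uses $\iprod{\Delta_N}{\psi}_{H^\gamma}=\iprod{\Delta_N}{\iota^\ast\psi}_{\operatorname{LAN}}$.
\item[(b)] Tightness in $H^\gamma(\domain)$. Take an LAN-orthonormal basis $(e_k)$ adapted to the nested spaces $V_J$. Then $\Delta_N=\sum_{k\le\dim V_J}W_N(e_k)e_k$ with $\EV{W_N(e_k)W_N(e_l)}=\delta_{kl}$. The tail $\EV{\norm{\sum_{k>K}W_N(e_k)e_k}_{H^\gamma}^2}\le\sum_{k>K}\norm{\iota e_k}_{H^\gamma}^2$ is uniformly small by the Hilbert--Schmidt property of $\iota$ from Remark \ref{rmk:Tightness_Limiting_Measure}. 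This gives asymptotic finite-dimensionality, hence tightness.
\end{itemize}
Together (a) and (b) give $\Delta_N\xrightarrow{d}\gaussian$, and therefore the first claim $\sqrt N(\bar u_N-\ProjTwo_Ju_0)\xrightarrow{d}\gaussian$.

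\textbf{Step 3 (second claim).} Since $\sqrt N(\bar u_N-u_0)=\sqrt N(\bar u_N-\ProjTwo_Ju_0)-\sqrt N(\identity-\ProjTwo_J)u_0$ and the deterministic term vanishes in $H^\gamma$ by hypothesis, Slutsky's lemma gives the second claim.

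The main obstacle is Step 1. It needs the BvM statement centred at $\ProjTwo_Ju_0+\Delta_N/\sqrt N$ with the uniform LAN remainder controlled under the given $\beta>2d$. In particular, the remainder $\sqrt N\,\iprod{\op[u]-\op[u_0]-\opDeriv{u_0}[u-u_0]}{\cdot}_N$ must be shown to be $o_P(1)$ using \eqref{eq:linearisation_2}, \eqref{eq:linearisation_infty} and the Bernstein estimate \eqref{eq:Bernstein}. My first approach would be to extract this directly from the proof of Theorem \ref{thm:BvM_RandomDesign}, which presumably establishes exactly this intermediate centring before replacing it with $\bar u_N$.
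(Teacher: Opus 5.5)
Your proposal is correct and follows essentially the paper's route. The reduction to the efficient centring $\tilde u_N=\ProjTwo_J u_0+\Delta_N/\sqrt N$ via the mean comparison is exactly \eqref{eq:Comparison_PosteriorMean_EfficientEstimator}--\eqref{eq:Comparison_PosteriorMean_Locality}, and the CLT for $\Delta_N$ uses Lindeberg--Feller plus Cram\'er--Wold as in the paper. One caveat on Step 1: passing to the unlocalised posterior needs the posterior moment bound of Lemma~\ref{lem:Localisationisfine}, obtained from the evidence lower bound, and not just the exponentially small mass outside $\mathbb{A}_N$.

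The only real difference is tightness. You obtain it from an LAN-orthonormal basis and the Hilbert--Schmidt property of $\iota$. The paper instead truncates with $\ProjOne_K$ and uses the multiscale bound \eqref{eq:stochastic_remainder}, which gives the slightly stronger $\mathcal{W}_{1,H^\gamma(\domain)}$-convergence.
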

\begin{proof}
    See Section \ref{sec:Proofs_BvM}.
\end{proof}
\begin{rem}\label{rmk:Bias_Control}
   The condition $\sqrt{N}\norm{(\identity - \ProjTwo_J)u_0}_{H^\gamma(\domain)}\to 0$ can be verified similarly to Lemma \ref{lem:Properties_LAN_Norm} \eqref{num:ProjectionJackson}, provided the $H^\gamma(\domain)\to H^\gamma(\domain)$-stability of $\ProjTwo_J$
\begin{equation}
\sup_{J\in\mathbb{N}}\norm{\ProjTwo_J}_{\mathcal{L}(H^\gamma(\domain),H^\gamma(\domain))}<\infty\label{eq:Hgammastability}
\end{equation}
holds. In this case, we obtain a parametric rate $N^{-1/2}$ for the estimation of the function $u_0$ in $H^\gamma(\domain)$-norm.
\end{rem}

The next theorem shows how a BvM theorem on $H^\gamma(\domain)$ transfers under Fréchet-differentiable transformations.

\begin{thm}\label{thm:nonlinear_wasserstein_delta_method}
    Grant Assumptions \ref{assump:ApproximatonSets}, \ref{assump:Operator_stability} and \ref{assump:operator_linearisation}. Let $m>d/2$, $0<\gamma<m-d/2$, consider the prior $\pi_N$ from \eqref{eq:Prior} with cut-off $2^J\sim N^{1/(2\beta+d)}$ and assume that $u_0\in \mathcal{V}\cap H^{m+\beta}(\domain)$ for $\beta>2d$, $d/2<\beta_0<\beta+d/4$. Let $(V,\norm{}_V)$ be a separable Banach space and $\Phi\colon (\mathbb{V}_\gamma,\norm{}_{H^\gamma(\domain)})\to (V,\norm{}_V)$ be Borel-measurable and continuously Fréchet differentiable on an open neighbourhood $\mathscr{U}\subset\mathbb{V}_\gamma$ of $u_0\in \mathbb{V}_\gamma$ with linear and bounded derivative $\mathcal{D}\Phi_{u_0}\colon (\mathbb{V}_\gamma,\norm{}_{H^\gamma(\domain)})\to (V,\norm{}_V)$.
    Assume, moreover, that $\Phi$ has polynomial growth: there exist constants $q\ge 1$ and $0<C<\infty$ such that
    \begin{equation*}
        \norm{\Phi(u)}_V\le C\left(1+\norm{u}_{H^\gamma(\domain)}^q\right),\quad u\in\mathbb{V}_\gamma.
    \end{equation*}    
    For $u\in\mathbb{V}_\gamma$ define the transformation
    %\begin{equation*}
    %    S_{N,1}(u)\coloneqq \sqrt{N}(\Phi(u)-\Phi(u_0) +\mathcal{D}\Phi_{u_0}(u_0-\bar{u}_N)),\quad S_{N,2}(u)\coloneqq \sqrt{N}(\Phi(u)-\Phi(\bar{u}_N)).
    %\end{equation*}
    \begin{equation*}
        S_{N}(u)\coloneqq \sqrt{N}(\Phi(u)-\Phi(\bar{u}_N)).
    \end{equation*}
     If
        \begin{equation*}
            \norm{\mathcal{D}\Phi_{u_0} - \mathcal{D}\Phi_{u}}_{\mathcal{L}(\mathbb{V}_\gamma,V)}\le C \norm{u-u_0}_{H^\gamma(\domain)},\quad u\in\mathscr{U},
        \end{equation*}
        for some constant $0<C<\infty$, then
        \begin{equation*}
        \mathcal{W}_{1,V}\left((S_{N})_{\#}\Pi_N(\MTemptyplaceholder|\data), (\mathcal{D}\Phi_{u_0})_{\#}\gaussian\right)\xrightarrow{\prob[u_0][N]{}}0
    \end{equation*}
    as $N\to\infty$.
\end{thm}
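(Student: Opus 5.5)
Throughout, write $\mathcal{D}\Phi\coloneqq\mathcal{D}\Phi_{u_0}$. The plan is to compare $S_N$ with the linear map $T_N(u)\coloneqq\mathcal{D}\Phi[\sqrt N(u-\bar u_N)]=\mathcal{D}\Phi[\tau_N(u)]$ and use the triangle inequality:
\begin{equation*}
\mathcal{W}_{1,V}\bigl((S_N)_{\#}\Pi_N,(\mathcal{D}\Phi)_{\#}\gaussian\bigr)\le \mathcal{W}_{1,V}\bigl((S_N)_{\#}\Pi_N,(T_N)_{\#}\Pi_N\bigr)+\mathcal{W}_{1,V}\bigl((\mathcal{D}\Phi\circ\tau_N)_{\#}\Pi_N,(\mathcal{D}\Phi)_{\#}\gaussian\bigr).
\end{equation*}
The second term is the easy one. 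A bounded linear map $\mathcal{D}\Phi$ is $\norm{\mathcal{D}\Phi}$-Lipschitz from $H^\gamma(\domain)$ to $V$. Pushing an optimal coupling forward therefore gives at most $\norm{\mathcal{D}\Phi}_{\mathcal{L}(\mathbb{V}_\gamma,V)}\,\mathcal{W}_{1,H^\gamma}((\tau_N)_{\#}\Pi_N,\gaussian)$. By Theorem~\ref{thm:BvM_RandomDesign} this tends to $0$ in probability.

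For the first term I will use the identity coupling $u\mapsto(S_N(u),T_N(u))$, which gives the bound $\EV[\Pi_N]{\norm{S_N(u)-T_N(u)}_V\given\data}$. I split the integral over the event $A_N\coloneqq\{u:\norm{u-u_0}_{H^\gamma}\le \delta_N\}$ and its complement. The choice is $\delta_N\coloneqq L\epsilon_N$, which is admissible since $\norm{\cdot}_{H^\gamma}\lesssim\norm{\cdot}_{H^m}$. By Theorem~\ref{thm:Contraction} the complement has posterior mass $\mathcal{O}_P(e^{-DN\epsilon_N^2})$, and by Lemma~\ref{lem:Convergence_PosteriorMean_Nonparametric} the mean satisfies $\bar u_N\in A_N\subset\mathscr U$ with high probability. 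On $A_N$ the segment between $\bar u_N$ and $u$ stays in a ball around $u_0$ inside $\mathscr U$. The mean value inequality in Banach spaces and the Lipschitz hypothesis on $\mathcal{D}\Phi_u$ then give
\begin{equation*}
\norm{\Phi(u)-\Phi(\bar u_N)-\mathcal{D}\Phi[u-\bar u_N]}_V\le C\bigl(\norm{u-u_0}_{H^\gamma}+\norm{\bar u_N-u_0}_{H^\gamma}\bigr)\norm{u-\bar u_N}_{H^\gamma}.
\end{equation*}
Multiplying by $\sqrt N$, the contribution of $A_N$ is bounded by $C\delta_N\,\EV[\Pi_N]{\norm{\tau_N(u)}_{H^\gamma}\given\data}$. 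By Theorem~\ref{thm:BvM_RandomDesign}, convergence in $\mathcal{W}_1$ implies convergence of first moments, so this posterior expectation is $\mathcal{O}_P(1)$ (the limit has finite moments by Remark~\ref{rmk:Tightness_Limiting_Measure}). Since $\delta_N\to0$, the contribution of $A_N$ is $\smallo_P(1)$.

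The main obstacle is the complement $A_N^c$. There I bound crudely using polynomial growth:
\begin{equation*}
\norm{S_N(u)-T_N(u)}_V\lesssim\sqrt N\bigl(1+\norm{u}_{H^\gamma}^q+\norm{\bar u_N}_{H^\gamma}^q+\norm{u-\bar u_N}_{H^\gamma}\bigr).
\end{equation*}
Cauchy--Schwarz then gives
\begin{equation*}
\EV[\Pi_N]{\indicator_{A_N^c}\norm{S_N-T_N}_V\given\data}\le \sqrt N\,\Pi_N(A_N^c\mid\data)^{1/2}\,\EV[\Pi_N]{(1+\norm{u}_{H^\gamma}^{q}+\dots)^2\given\data}^{1/2}.
\end{equation*}
The factor $\sqrt N e^{-DN\epsilon_N^2/2}$ decays superpolynomially, so it remains to show that posterior $2q$-th moments grow at most polynomially in $N$ in probability. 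I would obtain this by bounding the posterior moments of $\norm{u}_{H^\gamma}$ through a Markov-type argument on $\EV[u_0][N]{}$ of the numerator. The normalising constant is bounded below by $e^{-CN\epsilon_N^2}$ via the small-ball estimate used in the proof of Theorem~\ref{thm:Contraction}. The numerator is controlled by $\int\norm{u}^{2q}e^{\ell_N(u)-\ell_N(u_0)}\,\D\pi_N$, whose $\prob[u_0][N]{}$-expectation is at most the prior moment $\int\norm{u}^{2q}\,\D\pi_N$, which is bounded uniformly in $N$. Alternatively, the exponential tail in the contraction proof (shells of radius $kL\epsilon_N$ with mass $e^{-Dk^2N\epsilon_N^2}$) can be reused directly. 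This yields a bound of $e^{CN\epsilon_N^2}$ times a constant, and I must check that this still beats $e^{-DN\epsilon_N^2}$. If it does not, I would instead work with a contraction statement with a larger $L$, which makes $D$ arbitrarily large relative to $C$. Combining the three pieces gives the claim.
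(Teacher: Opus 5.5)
Your argument is correct and rests on the same estimates as the paper's proof. On a contraction set you use the mean-value bound with the Lipschitz derivative. Off that set you use Cauchy--Schwarz with polynomial growth, the marginal-likelihood lower bound \eqref{eq:Marginal_Likelihood_Bound_RandomDesign}, uniform prior moments and $D>\tilde D$; the last is exactly what the proof of Theorem~\ref{thm:Contraction} secures by taking the sieve exponent $C_0$ large, so your hedge resolves as you suggest and the shell alternative is unnecessary. The only difference is bookkeeping: you couple $S_N$ with $\mathcal{D}\Phi_{u_0}\circ\tau_N$ directly under the full posterior and invoke Theorem~\ref{thm:BvM_RandomDesign}, whereas the paper first localises $(S_N)_{\#}\Pi_N$ to $(S_N)_{\#}\Pi_N^{\mathbb{A}_N}$ and then linearises using the localised BvM \eqref{eq:claim_BvM_localised}.
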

\begin{proof}
    See Section \ref{sec:Proofs_BvM}.
\end{proof}

\begin{rem}~
\begin{enumerate}[(i)]
    \item Note that $\Phi(\bar{u}_N)$ is a plug-in centring and differs in general from the posterior mean of $\Phi(u)$ when $\Phi$ is nonlinear.
    \item Theorem \ref{thm:nonlinear_wasserstein_delta_method} is a Wasserstein-1 version of the well-known functional delta method, see Chapter 20 of \cite{Vaart1998} or Theorem 3.10.4 of \cite{vandervaartWeakConvergenceEmpirical2023}.
    \item Typical examples for the transformation $\Phi$ in Theorem \ref{thm:nonlinear_wasserstein_delta_method} include bounded linear projections, Nemytskii operators or nonlinear integral functionals such as $u\mapsto \int_{\domain}g(u(x))w(x)\,\D x$.
\end{enumerate}
     
\end{rem}

Next we show that the BvM theorem can be extended to centring at the MAP instead of at the posterior mean. Since the MAP is obtained from a finite-dimensional optimisation problem and does not require posterior integration as the posterior mean, this yields computational advantages.

\begin{ass}[Local second-order regularity]\label{assump:operator_linearisation_2}
        The operator $\op$ is twice continuously Fréchet differentiable on the open neighbourhood $\mathcal{U}$ from Assumption \ref{assump:operator_linearisation}. Denoting its second derivative at $u\in\mathcal{U}$ by $\mathcal{D}^2\op_u$, there exists a constant $0<C<\infty$ such that for $u,v\in\mathcal{U}$ and $h_1,h_2\in\mathcal{V}$ we have
        \begin{align*}
            \norm{\mathcal{D}^2\op_u[h_1,h_2]}_{L^2(\domain)}&\le C\norm{h_1}_{H^m(\domain)}\norm{h_2}_{H^m(\domain)},\\
            \norm{(\mathcal{D}^2 \op_u - \mathcal{D}^2 \op_v)[h_1,h_2]}_{L^2(\domain)}&\le C\norm{u-v}_{H^m(\domain)}\norm{h_1}_{H^m(\domain)}\norm{h_2}_{H^m(\domain)}.
            \end{align*}
            If, additionally, $u,v\in \mathcal{U}\cap W^{m,\infty}(\domain)$ and $h_1,h_2 \in \mathcal{V}\cap W^{m,\infty}(\domain)$, then
            \begin{align*}
            \norm{\mathcal{D}^2\op_u[h_1,h_2]}_{L^\infty(\domain)}&\le C\norm{h_1}_{W^{m,\infty}(\domain)}\norm{h_2}_{W^{m,\infty}(\domain)},\\
            \norm{(\mathcal{D}^2 \op_u - \mathcal{D}^2 \op_v)[h_1,h_2]}_{L^\infty(\domain)}&\le C\norm{u-v}_{W^{m,\infty}(\domain)}\norm{h_1}_{W^{m,\infty}(\domain)}\norm{h_2}_{W^{m,\infty}(\domain)}.
        \end{align*}
\end{ass}
Assumption \ref{assump:operator_linearisation_2} is implied if $\op$ is twice locally Fréchet differentiable around $u_0$ with Lipschitz-continuous second Fréchet derivative.

\begin{thm}[BvM centred at the MAP]\label{thm:BvM_MAP}
Grant Assumptions \ref{assump:ApproximatonSets}, \ref{assump:Operator_stability}, \ref{assump:operator_linearisation} and \ref{assump:operator_linearisation_2}.
Let $m>d/2$, $0<\gamma<m-d/2$, consider the prior $\pi_N$ from \eqref{eq:Prior} with cut-off $2^J\sim N^{1/(2\beta+d)}$ and assume that $u_0\in \mathcal{V}\cap H^{m+\beta}(\domain)$ for $\beta>2d$, $d/2<\beta_0<\beta+d/4$. Define the map $\tau_N(u)\coloneqq \sqrt{N}(u-\hat{u}_N^{\operatorname{MAP}})$ for $u\in V_J$, then
    \begin{equation*}
        \mathcal{W}_{1,H^{\gamma}(\domain)}\left((\tau_N)_{\#}\Pi_N(\MTemptyplaceholder|\data), \gaussian\right) \xrightarrow{\prob[u_0][N]{}}0, \quad N\to\infty.
    \end{equation*}
\end{thm}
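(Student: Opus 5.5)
The plan is to reduce Theorem \ref{thm:BvM_MAP} to Theorem \ref{thm:BvM_RandomDesign}. Write $\tau_N^{\operatorname{MAP}}(u)=\sqrt N(u-\hat u_N^{\operatorname{MAP}})$ and $\tau_N^{\operatorname{mean}}(u)=\sqrt N(u-\bar u_N)$. Then $\tau_N^{\operatorname{MAP}}(u)=\tau_N^{\operatorname{mean}}(u)+\sqrt N(\bar u_N-\hat u_N^{\operatorname{MAP}})$, so the two pushforward posteriors differ only by a deterministic (given $\data$) translation. Since translation by $a$ moves a measure by at most $\norm{a}_{H^\gamma(\domain)}$ in $\mathcal W_{1,H^\gamma(\domain)}$, the triangle inequality gives
\begin{equation*}
\mathcal W_{1,H^\gamma}\bigl((\tau_N^{\operatorname{MAP}})_\#\Pi_N,\gaussian\bigr)\le \mathcal W_{1,H^\gamma}\bigl((\tau_N^{\operatorname{mean}})_\#\Pi_N,\gaussian\bigr)+\sqrt N\norm{\bar u_N-\hat u_N^{\operatorname{MAP}}}_{H^\gamma(\domain)}.
\end{equation*}
The first term vanishes in probability by Theorem \ref{thm:BvM_RandomDesign}. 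It therefore suffices to show $\sqrt N\norm{\bar u_N-\hat u_N^{\operatorname{MAP}}}_{H^\gamma(\domain)}=\smallo_{\prob[u_0][N]{}}(1)$.

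To control this term, I compare both estimators to the common linear quantity
\begin{equation*}
\tilde u_N\coloneqq \ProjTwo_J u_0+\frac{1}{\sqrt N}\Iop{u_0,J}^{-1}W_N,
\end{equation*}
where $W_N(h)=\frac{1}{\sigma^2\sqrt N}\sum_i\epsilon_i\sigma\,\opDeriv{u_0}[h](X_i)$ is the LAN score on $V_J$ and $\Iop{u_0,J}$ is the restriction of $\mathcal I_{u_0}$ to $V_J$. The two steps are as follows.
\begin{enumerate}[(a)]
\item The proof of Theorem \ref{thm:BvM_RandomDesign} (via the uniform LAN expansion and convergence of Laplace transforms) should give $\sqrt N(\bar u_N-\tilde u_N)\to0$ in $H^\gamma$. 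This is exactly how Lemma \ref{lem:Convergence_PosteriorMean} is obtained, so I will reuse it.
\item For the MAP, Lemma \ref{lem:Convergence_MAP_Nonparametric} localises $\hat u_N^{\operatorname{MAP}}$ to an $H^m$-ball of radius $L\epsilon_N$ around $u_0$. Since $\beta>2d$, we have $\epsilon_N\to 0$ fast enough that this ball lies in $\mathcal U$, and by the Bernstein estimate \eqref{eq:Bernstein} it also lies in a $W^{m,\infty}$-neighbourhood. The MAP is an interior maximiser on $V_J$, so the first-order condition $\nabla\ell_N^{\operatorname{reg}}(\hat u_N^{\operatorname{MAP}})=0$ holds on $V_J$. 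I Taylor expand this gradient around $\ProjTwo_J u_0$ using Assumption \ref{assump:operator_linearisation_2}:
\begin{equation*}
0=\nabla\ell_N^{\operatorname{reg}}(\ProjTwo_Ju_0)+H_N(\xi)\,[\hat u_N^{\operatorname{MAP}}-\ProjTwo_Ju_0],
\end{equation*}
with $H_N$ an averaged empirical Hessian.
\end{enumerate}

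The key bounds are then:
\begin{enumerate}[(i)]
\item $-N^{-1}H_N(\xi)\to\Iop{u_0,J}$ in operator norm on $(V_J,\norm{}_{H^m})$. This uses uniform empirical process bounds on $\iprod{\cdot}{\cdot}_N-\iprod{\cdot}{\cdot}_{L^2_p}$ over the unit ball of $V_J$ in $W^{m,\infty}$, with entropy of order $2^{Jd}$. It also uses the second-derivative Lipschitz bounds, the residual-times-$\mathcal D^2\op$ term, and the prior term $N^{-1}\norm{\cdot}_{\mathbb H_N}$, which is $\lesssim N^{-1}2^{2J\beta_0}\to 0$ by $\beta_0<\beta+d/4$.
\item $N^{-1/2}\nabla\ell_N^{\operatorname{reg}}(\ProjTwo_J u_0)=W_N+\smallo_P(1)$ in the dual norm induced by $H^\gamma$ mapped through $\Iop{u_0,J}^{-1}$. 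The bias term $\sqrt N\,\mathcal I_{u_0}(u_0-\ProjTwo_Ju_0)$ vanishes on $V_J$ by LAN-orthogonality, up to a quadratic remainder of order $\sqrt N\norm{(\identity-\ProjTwo_J)u_0}_{H^m}^2$, which is small by the Jackson estimate \eqref{eq:Jackson}.
\end{enumerate}
Inverting, and using that the remainder terms are of size $\sqrt N\epsilon_N^2\cdot 2^{Jd/2}$ up to logs, which is $\smallo(1)$ when $\beta>2d$, gives $\sqrt N\norm{\hat u_N^{\operatorname{MAP}}-\tilde u_N}_{H^m}\to 0$. Hence the same holds in $H^\gamma$, since $\gamma<m$.

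The main obstacle is (i): uniform control of the empirical Hessian over the whole localised ball in $V_J$ at the precision needed to invert it with $\smallo_P(N^{-1/2})$ accuracy. The factor $2^{Jd/2}$ lost when passing from $H^m$ to $W^{m,\infty}$ via Bernstein must be absorbed by $\epsilon_N$; I expect this to be exactly where $\beta>2d$ is needed. I would first try a chaining or Bernstein-inequality bound for $\sup_{h,g}|\iprod{\opDeriv{}[h]}{\opDeriv{}[g]}_N-\iprod{\opDeriv{}[h]}{\opDeriv{}[g]}_{L^2_p}|$ over the finite-dimensional unit ball, giving order $\sqrt{2^{Jd}\log N/N}\,2^{Jd/2}$.
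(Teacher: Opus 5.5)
Your proposal follows essentially the paper's argument: the paper also reduces to the BvM centred at the linear estimator $\tilde{u}_N$ from the proof of Theorem \ref{thm:BvM_RandomDesign}, and proves $\sqrt{N}\norm{\hat{u}_N^{\operatorname{MAP}}-\tilde{u}_N}_{H^m(\domain)}\to0$ by a mean-value expansion of the regularised score with the averaged Hessian $\bar{\mathcal{J}}_N$; your steps (i) and (ii) are Lemmas \ref{lem:IntegratedIntermediatepoint_Invertibility} and \ref{lem:ScoreExpansion}, and the remaining differences are cosmetic (the paper expands around $\ProjOne_J u_0$ rather than $\ProjTwo_J u_0$, and centres directly at $\tilde{u}_N$ rather than passing through $\bar{u}_N$). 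The one point to state correctly is the precision needed in (i): since $\sqrt{N}\norm{\tilde{u}_N-\ProjTwo_J u_0}_{H^m(\domain)}$ is only $\mathcal{O}_{\prob[u_0][N]{}}(\sqrt{\operatorname{dim}(V_J)})$, you need $\norm{\bar{\mathcal{J}}_N-\mathcal{I}_{u_0,J}}_{\mathcal{L}(V_J,V_J^\ast)}=\smallo_{\prob[u_0][N]{}}(\operatorname{dim}(V_J)^{-1/2})$ rather than mere convergence, and what forces $\beta>2d$ is the Lipschitz part of the Hessian along the segment, of order $2^{3J\eta}\epsilon_N$ with $\eta>d/2$, not a term of size $\sqrt{N}\epsilon_N^2 2^{Jd/2}$ (which would only need $\beta>d$).
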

\begin{proof}
    See Section \ref{sec:Proofs_BvM}.
\end{proof}

\begin{rem}~
\begin{enumerate}[(i)]
    \item The posterior mean coincides with the MAP estimator in conjugate Gaussian models. This identity generally fails in the present nonlinear setting. Our proof expands the regularised score between $\ProjOne_J u_0$ and $\hat{u}_N^{\operatorname{MAP}}$ and controls the empirical Hessian using the second-order regularity provided by Assumption \ref{assump:operator_linearisation_2}. Consequently, we establish that the MAP estimator is an efficient estimator and the BvM centred at the MAP follows.
    \item For Theorem \ref{thm:BvM_MAP} alone, Assumption \ref{assump:operator_linearisation_2} could potentially be weakened to first-order regularity of $\op$ by treating the MAP as an Z-estimator and building upon established theory (Theorem 19.26 of \cite{Vaart1998}, Theorem 3.3.1 of \cite{vandervaartWeakConvergenceEmpirical2023}). We retain the stronger Assumption \ref{assump:operator_linearisation_2} to streamline the proofs since controlling the empirical score Hessian is required for establishing the convergence of the Laplace approximation in Theorem \ref{thm:LaplaceApproximation} (below).
    \item Related results in \cite{panovFiniteSampleBernstein2015,katsevichImprovedDimensionDependence2025} concern Gaussian approximations in semiparametric or growing-dimensional Euclidean models: \cite{panovFiniteSampleBernstein2015} principally treats a low-dimensional target marginal, while \cite{katsevichImprovedDimensionDependence2025} derives a mode-centred Laplace approximation with explicit finite-sample bounds. By contrast, Theorem \ref{thm:BvM_MAP} concerns the full posterior as a probability measure on a fixed Sobolev space $H^\gamma(\domain)$, while $\operatorname{dim}(V_J)\to\infty$.
    \end{enumerate}
\end{rem}

\begin{lem}[Convergence of the MAP]\label{lem:Convergence_MAP_Asymptotic_Normality}
    Grant Assumptions \ref{assump:ApproximatonSets}, \ref{assump:Operator_stability}, \ref{assump:operator_linearisation} and \ref{assump:operator_linearisation_2}. Let $m>d/2$, $0<\gamma<m-d/2$, consider the prior $\pi_N$ from \eqref{eq:Prior} with cut-off $2^J\sim N^{1/(2\beta+d)}$ and assume that $u_0\in \mathcal{V}\cap H^{m+\beta}(\domain)$ for $\beta>2d$, $d/2<\beta_0<\beta+d/4$.
   Then
    \begin{equation*}
        \sqrt{N}(\hat{u}_N^{\operatorname{MAP}}-\ProjTwo_J u_0)\xrightarrow{d}\gaussian
    \end{equation*}
    under $\prob[u_0][N]{}$ as $N\to\infty$.  If, additionally, $\sqrt{N}\norm{(\identity - \ProjTwo_J)u_0}_{H^\gamma(\domain)}\to 0$ as $N\to\infty$, then
    \begin{equation*}
        \sqrt{N}(\hat{u}_N^{\operatorname{MAP}}-u_0)\xrightarrow{d}\gaussian
    \end{equation*}
    under $\prob[u_0][N]{}$ as $N\to\infty$.
\end{lem}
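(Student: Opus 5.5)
The natural route mirrors the derivation of Lemma \ref{lem:Convergence_PosteriorMean} from Theorem \ref{thm:BvM_RandomDesign}: transfer the asymptotic normality of the posterior mean to the MAP by showing that the two centrings are asymptotically equivalent at the $\sqrt N$-scale in $H^\gamma(\domain)$. Concretely, I would aim to prove
\begin{equation*}
    \sqrt{N}\norm{\hat{u}_N^{\operatorname{MAP}}-\bar{u}_N}_{H^\gamma(\domain)}\xrightarrow{\prob[u_0][N]{}}0 .
\end{equation*}
Combined with $\sqrt N(\bar u_N-\ProjTwo_J u_0)\xrightarrow{d}\gaussian$ from Lemma \ref{lem:Convergence_PosteriorMean} and Slutsky's lemma in the separable space $\mathbb{V}_\gamma$, this gives the first claim. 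The second claim then follows by adding the deterministic term $\sqrt N(\ProjTwo_J u_0-u_0)$, which vanishes in $H^\gamma(\domain)$ by hypothesis.

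To obtain the equivalence, I would use both BvM theorems. By Theorem \ref{thm:BvM_RandomDesign} and Theorem \ref{thm:BvM_MAP}, the posterior laws of $\sqrt N(u-\bar u_N)$ and of $\sqrt N(u-\hat u_N^{\operatorname{MAP}})$ are each $\mathcal{W}_{1,H^\gamma}$-close to the centred measure $\gaussian$. The second law is a translate of the first by $h_N\coloneqq\sqrt N(\bar u_N-\hat u_N^{\operatorname{MAP}})$. The triangle inequality therefore gives
\begin{equation*}
    \mathcal{W}_{1,H^\gamma}(\gaussian(\cdot-h_N),\gaussian)\xrightarrow{\prob[u_0][N]{}}0 .
\end{equation*}
For a translate of a centred measure $\mu$ with finite first moment, the Kantorovich–Rubinstein duality, tested with a norm-attaining bounded linear functional $\ell$ with $\norm{\ell}=1$, yields $\mathcal W_1(\mu(\cdot-h),\mu)\ge|\ell(h)|=\norm{h}_{H^\gamma}$, because $\int\ell\,\D\mu=0$. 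Hence $\norm{h_N}_{H^\gamma}\to0$ in probability, which is exactly the required equivalence.

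If one prefers not to rely on Theorem \ref{thm:BvM_MAP}, since its proof in the paper presumably passes through this lemma, the direct argument would be a score expansion. First use Lemma \ref{lem:Convergence_MAP_Nonparametric} to localise $\hat u_N^{\operatorname{MAP}}$ in an $H^m$-ball of radius $\epsilon_N$ around $u_0$. Then, by the first-order condition $\nabla\ell_N^{\operatorname{reg}}(\hat u_N^{\operatorname{MAP}})=0$ on $V_J$, expand the score around $\ProjOne_J u_0$. The empirical Hessian $-N^{-1}\nabla^2\ell_N$ must be shown to be uniformly close to $\Iop{u_0}$ on $V_J$ over this ball, using Assumption \ref{assump:operator_linearisation_2}, Bernstein inequalities to pass from $H^m$ to $W^{m,\infty}$, and empirical-process bounds; this is the main obstacle. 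Invert the resulting linear equation to get
\begin{equation*}
    \sqrt N(\hat u_N^{\operatorname{MAP}}-\ProjTwo_J u_0)=\Iop{u_0}^{-1}\bigl(N^{-1/2}\textstyle\sum_i\sigma^{-1}\epsilon_i\opDeriv{u_0}[\cdot](X_i)\bigr)\big|_{V_J}+\smallo_{\prob{}}(1).
\end{equation*}
Here the prior gradient $\norm{\cdot}_{\mathbb H_N}$-term and the bias are negligible, given $\beta_0<\beta+d/4$ and $\beta>2d$. Finally, convergence of the leading term to $\gaussian$ in $H^\gamma(\domain)$ follows from finite-dimensional CLTs combined with tightness from the Hilbert–Schmidt embedding in Remark \ref{rmk:Tightness_Limiting_Measure}.
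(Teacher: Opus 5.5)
Your main argument is correct, but it is not how the paper proceeds. The paper argues only at the level of estimators, in three steps. First, the proof of Lemma~\ref{lem:Convergence_PosteriorMean} gives the CLT $\sqrt{N}(\tilde{u}_N-\ProjTwo_J u_0)\xrightarrow{d}\gaussian$ in $H^\gamma(\domain)$ for the one-step estimator $\tilde{u}_N$ from \eqref{eq:EfficientEstimator_RandomDesign}. Second, the proof of Theorem~\ref{thm:BvM_MAP} gives $\sqrt{N}\norm{\hat{u}_N^{\operatorname{MAP}}-\tilde{u}_N}_{H^m(\domain)}=\smallo_{\prob[u_0][N]{}}(1)$ in \eqref{eq:MAP_efficientEstimaotr}, via the score expansion around $\ProjOne_J u_0$ (Lemmas~\ref{lem:IntegratedIntermediatepoint_Invertibility} and~\ref{lem:ScoreExpansion}). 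Third, Slutsky's lemma concludes.

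Your worry about circularity is therefore unfounded. Theorem~\ref{thm:BvM_MAP} is proved without this lemma, and the dependence runs the other way.

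Your route uses only the statements of the BvM results, plus the fact that $\mathcal{W}_1$ controls shifts and differences of means. This forces any two centrings with the same centred BvM limit to agree up to $\smallo_{\prob[u_0][N]{}}(N^{-1/2})$ in $H^\gamma(\domain)$. It is the same device the paper uses in \eqref{eq:Comparison_PosteriorMean_EfficientEstimator} to pass from $\tilde u_N$ to $\bar u_N$.

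In fact Theorem~\ref{thm:BvM_MAP} alone suffices for the comparison step. The posterior mean of $\sqrt{N}(u-\hat{u}_N^{\operatorname{MAP}})$ is exactly your $h_N$, and $\gaussian$ is centred, so $\norm{h_N}_{H^\gamma(\domain)}$ is bounded by the Wasserstein distance in that theorem. You do not need Theorem~\ref{thm:BvM_RandomDesign} for this step.

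What each route buys:
\begin{itemize}
\item Your approach is modular and yields $\sqrt{N}\norm{\hat{u}_N^{\operatorname{MAP}}-\bar{u}_N}_{H^\gamma(\domain)}\to 0$ as a by-product.
\item But it only gives $H^\gamma$-closeness, and it deduces a purely frequentist statement about an estimator from full-posterior Wasserstein convergence (localisation, posterior moment bounds).
\item The paper's route needs no posterior analysis at all. It also gives the stronger $H^m$-equivalence of the MAP with the efficient estimator $\tilde u_N$.
\end{itemize}

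Your fallback sketch is essentially the paper's argument, but it glosses over one quantitative point. By Lemma~\ref{lem:ScoreExpansion}, $\sqrt{N}\norm{\tilde{u}_N-\ProjOne_J u_0}_{H^m(\domain)}$ is only $\mathcal{O}_{\prob[u_0][N]{}}(\sqrt{\operatorname{dim}(V_J)})$, not $\mathcal{O}_{\prob[u_0][N]{}}(1)$. Hence the averaged Hessian along the segment must approximate $\mathcal{I}_{u_0,J}$ at rate $\smallo_{\prob[u_0][N]{}}(\operatorname{dim}(V_J)^{-1/2})$ in $\mathcal{L}(V_J,V_J^\ast)$; merely $\smallo_{\prob[u_0][N]{}}(1)$ is not enough. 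This is one of the places where $\beta>2d$ is needed.

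Also, the leading term should be $\mathcal{I}_{u_0,J}^{-1}$ applied to the score restricted to $V_J$, not $\mathcal{I}_{u_0}^{-1}$ applied and then restricted. That is, the leading term is exactly $\sqrt{N}(\tilde u_N-\ProjTwo_J u_0)$.
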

\begin{proof}
    See Section \ref{sec:Proofs_BvM}.
\end{proof}
As discussed in Remark \ref{rmk:Bias_Control}, the projection of $u_0$ in Lemma \ref{lem:Convergence_MAP_Asymptotic_Normality} can thus be avoided if the projections $\ProjTwo_J$ are $H^\gamma(\domain)$-stable uniformly in $J\in\mathbb{N}$.

Finally, we also obtain the following convergence result for the Laplace approximation. Let $\mathcal{J}_N(v)\coloneqq -\mathcal{D}^2\ell_N^{\operatorname{reg}}(v)/N$ be the scaled negative Hessian of the log-posterior at $v\in V_J$. On the event where $\mathcal{J}_N(\hat{u}_N^{\operatorname{MAP}})$ is invertible, define the Laplace approximation
\begin{equation*}
    \hat{\Pi}_N^{\operatorname{Lap}}(\MTemptyplaceholder|\data)\coloneqq N\Big(\hat{u}_N^{\operatorname{MAP}}, \frac{1}{N}\mathcal{J}_N(\hat{u}_N^{\operatorname{MAP}})^{-1}\Big)
\end{equation*}
as a Gaussian measure on $V_J$, conditionally on the data $\data$.
\begin{thm}[Laplace approximation]\label{thm:LaplaceApproximation}
    Grant Assumptions \ref{assump:ApproximatonSets}, \ref{assump:Operator_stability}, \ref{assump:operator_linearisation} and \ref{assump:operator_linearisation_2}. Let $m>d/2$, $0<\gamma<m-d/2$, consider the prior $\pi_N$ from \eqref{eq:Prior} with cut-off $2^J\sim N^{1/(2\beta+d)}$ and assume that $u_0\in \mathcal{V}\cap H^{m+\beta}(\domain)$ for $\beta>2d$, $d/2<\beta_0<\beta+d/4$.
   Then $\prob[u_0][N]{\mathcal{J}_N(\hat{u}_N^{\operatorname{MAP}})\text{ is invertible}}\to 1$ and
    \begin{equation*}
        \sqrt{N}\mathcal{W}_{1,H^{\gamma}(\domain)}\left(\Pi_N(\MTemptyplaceholder|\data), \hat{\Pi}_N^{\operatorname{Lap}}(\MTemptyplaceholder|\data)\right) \xrightarrow{\prob[u_0][N]{}}0, \quad N\to\infty.
    \end{equation*}
    as $N\to\infty$.
\end{thm}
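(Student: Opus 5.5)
By scale invariance of $\mathcal{W}_1$ under the affine map $u\mapsto\sqrt N(u-\hat u_N^{\operatorname{MAP}})$, the claim is equivalent to
\begin{equation*}
\mathcal{W}_{1,H^\gamma(\domain)}\bigl((\tau_N)_{\#}\Pi_N(\MTemptyplaceholder|\data),(\tau_N)_{\#}\hat\Pi_N^{\operatorname{Lap}}(\MTemptyplaceholder|\data)\bigr)\xrightarrow{\prob[u_0][N]{}}0,
\end{equation*}
with $\tau_N(u)=\sqrt N(u-\hat u_N^{\operatorname{MAP}})$. We have $(\tau_N)_{\#}\hat\Pi_N^{\operatorname{Lap}}=N(0,\mathcal{J}_N(\hat u_N^{\operatorname{MAP}})^{-1})$ on $V_J$. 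By Theorem \ref{thm:BvM_MAP} and the triangle inequality, it suffices to show
\begin{equation*}
\mathcal{W}_{1,H^\gamma(\domain)}\bigl(N(0,\mathcal{J}_N(\hat u_N^{\operatorname{MAP}})^{-1}),\gaussian\bigr)\xrightarrow{\prob[u_0][N]{}}0.
\end{equation*}
Here $\gaussian$ is viewed on $\mathbb{V}_\gamma$ as in Remark \ref{rmk:Tightness_Limiting_Measure}. I split this further through the intermediate Gaussian $N(0,\mathcal{I}_J^{-1})$. The operator $\mathcal{I}_J$ on $V_J$ is the restriction of the LAN form, $(\mathcal{I}_J h)(g)=\iprod{h}{g}_{\operatorname{LAN}}$ for $h,g\in V_J$. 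Its law is that of $\ProjTwo_J X$ with $X\sim\gaussian$, since $\ProjTwo_J$ is the LAN-orthogonal projection.

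\textbf{Step 1: the projected limit.} I show $\mathcal{W}_1(N(0,\mathcal{I}_J^{-1}),\gaussian)\le \EV{\norm{(\identity-\ProjTwo_J)X}_{H^\gamma}}\to0$. Expand $X=\sum_k g_k e_k$ in a LAN-orthonormal basis adapted to $V_J$, that is, one whose first $\dim V_J$ elements span $V_J$. The tail then has second moment $\sum_{k>\dim V_J}\norm{e_k}_{H^\gamma}^2$. This is the tail of the Hilbert–Schmidt sum of $\iota$, and it tends to $0$ because the subspaces $V_J$ are nested with dense union.

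\textbf{Step 2: Hessian convergence.} I show that $\mathcal{J}_N(\hat u_N^{\operatorname{MAP}})$ is close to $\mathcal{I}_J$ in the relative sense
\begin{equation*}
\delta_N:=\sup_{h\in V_J,\,\norm{h}_{\operatorname{LAN}}=1}\bigl|\langle(\mathcal{J}_N(\hat u_N^{\operatorname{MAP}})-\mathcal{I}_J)h,h\rangle\bigr|\xrightarrow{\prob[u_0][N]{}}0.
\end{equation*}
Writing $\ell_N$ out, we get $\mathcal{J}_N(v)[h,h]=\sigma^{-2}\iprod{\opDeriv{v}h}{\opDeriv{v}h}_N-\sigma^{-2}\iprod{Y-\op[v]}{\mathcal{D}^2\op_v[h,h]}_N+N^{-1}\norm{h}_{\mathbb{H}_N}^2$. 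I treat the terms one at a time.
\begin{itemize}
\item[(a)] The prior term is bounded by $N^{-1}2^{2J(m+\beta_0)}2^{-2Jm}\norm{h}_{H^m}^2\lesssim N^{-1}2^{2J\beta_0}$, using \eqref{eq:Bernstein}. This tends to $0$ because $2^J\sim N^{1/(2\beta+d)}$ and $\beta_0<\beta+d/2$.
\item[(b)] Replace $v=\hat u_N^{\operatorname{MAP}}$ by $u_0$ in $\opDeriv{v}$ and $\mathcal{D}^2\op_v$. The error is controlled by the $L^\infty$ Lipschitz bounds in Assumptions \ref{assump:operator_linearisation}(iii) and \ref{assump:operator_linearisation_2}, together with Bernstein bounds $\norm{h}_{W^{m,\infty}}\lesssim 2^{Jd/2}\norm{h}_{H^m}$. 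For the distance $\norm{\hat u_N^{\operatorname{MAP}}-u_0}_{W^{m,\infty}}$ I use Lemma \ref{lem:Convergence_MAP_Nonparametric}, the Jackson estimate \eqref{eq:Jackson} and Sobolev embedding, which give a bound of order $2^{Jd/2}\epsilon_N$. The product stays $o(1)$ because $\beta>2d$.
\item[(c)] The empirical-versus-population deviation $\sup_h|\iprod{\opDeriv{u_0}h}{\opDeriv{u_0}h}_N-\iprod{\opDeriv{u_0}h}{\opDeriv{u_0}h}_{L^2_p}|$ over the finite-dimensional unit ball is handled by a matrix Bernstein inequality. The envelope is of order $2^{Jd}$ and the dimension of order $2^{Jd}$, so the deviation is $O_P(\sqrt{2^{Jd}\log N/N})\to0$.
\item[(d)] The residual term is $\iprod{\sigma\epsilon+\op[u_0]-\op[v]}{\mathcal{D}^2\op_v[h,h]}_N$. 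Its noise part is a Gaussian chaos over a $2^{Jd}$-dimensional quadratic form, which is small by the same envelope argument. Its bias part is at most $\norm{\op[u_0]-\op[v]}_{L^\infty}2^{Jd}$, which is small by (b).
\end{itemize}
This is where I expect the real work, namely the empirical Hessian control. I would reuse the uniform empirical-process bounds already developed for the MAP expansion in the proof of Theorem \ref{thm:BvM_MAP}. Since $\mathcal{I}_J$ is coercive with constant $1$ in the LAN norm, $\delta_N<1/2$ implies invertibility of $\mathcal{J}_N(\hat u_N^{\operatorname{MAP}})$ with probability tending to one, which gives the first claim of the theorem.

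\textbf{Step 3: Gaussian comparison.} Write $A=\mathcal{I}_J^{1/2}\mathcal{J}_N^{-1}\mathcal{I}_J^{1/2}$, viewed on $(V_J,\norm{}_{\operatorname{LAN}})$; it satisfies $\norm{A-\identity}\lesssim\delta_N$. Couple the two Gaussians as $\mathcal{I}_J^{-1/2}g$ and $\mathcal{I}_J^{-1/2}A^{1/2}g$ with the same standard Gaussian $g$ on $V_J$. Then
\begin{equation*}
\EV{\norm{\mathcal{I}_J^{-1/2}(A^{1/2}-\identity)g}_{H^\gamma}^2}\le\norm{A^{1/2}-\identity}^2\,\operatorname{tr}\bigl(\iota\mathcal{I}_J^{-1}\iota^\ast\bigr)\lesssim\delta_N^2\norm{\iota}_{HS}^2\to0,
\end{equation*}
with $\iota\colon(\mathcal{V},\norm{}_{\operatorname{LAN}})\hookrightarrow\mathbb{V}_\gamma$ as in Remark \ref{rmk:Tightness_Limiting_Measure}. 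The Hilbert–Schmidt norm is uniform in $J$ and finite because $\gamma<m-d/2$. This relative-operator bound is what makes the operator-norm control of Step 2 sufficient despite the growing dimension. Combining Steps 1–3 with Theorem \ref{thm:BvM_MAP} and multiplying back by $\sqrt N$ gives the claim.
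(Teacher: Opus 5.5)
Your proposal is correct. The reduction (rescaling by $\tau_N$, Theorem \ref{thm:BvM_MAP}, triangle inequality) and the Hessian step match the paper, which cites \eqref{eq:aux_TrueInformationvsAverages} and \eqref{eq:UniformHessianControl} at $t=1$ to obtain $\|\mathcal{J}_N(\hat u_N^{\operatorname{MAP}})-\mathcal{I}_{u_0,J}\|_{\mathcal{L}(V_J,V_J^\ast)}=o_P(1)$.

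You differ in the Gaussian comparison. The paper fixes a level $K$ and splits $\mathcal{W}_1(N(0,\mathcal{J}_N(\hat u_N^{\operatorname{MAP}})^{-1}),\gaussian)$ into three terms:
\begin{itemize}
\item a tail beyond $V_K$, controlled uniformly in $N$ via the Loewner bound $\mathcal{J}_N(\hat u_N^{\operatorname{MAP}})^{-1}\le 2\mathcal{I}_{u_0,J}^{-1}$ and the wavelet tail sum;
\item a $K$-dimensional covariance-convergence term, using $\mathcal{I}_{u_0,J}^{-1}=\ProjTwo_J\mathcal{I}_{u_0}^{-1}$ on $V_K^\ast$ and $\ProjTwo_J\to\identity$ strongly;
\item the term of Lemma \ref{lem:Convergence_Gaussian_Measures}.
\end{itemize}
It then sends $N\to\infty$ before $K\to\infty$.

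You instead pass through the intermediate law $N(0,\mathcal{I}_{u_0,J}^{-1})=\operatorname{Law}(\ProjTwo_J X)$. Its distance to $\gaussian$ is a Hilbert--Schmidt tail along a LAN-orthonormal basis adapted to the nested $V_J$. You then compare $N(0,\mathcal{J}_N^{-1})$ with it by a synchronous coupling, which gives the explicit bound $\mathcal{W}_1\lesssim\delta_N\|\iota\|_{HS}$.

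What each route buys:
\begin{itemize}
\item Yours is one-parameter and quantitative, and needs only relative operator-norm closeness of the Hessians. It sidesteps the finite-dimensional covariance-convergence step, which is the least transparent part of the paper's argument.
\item The paper's reuses the $K$-truncation template already set up for Theorem \ref{thm:BvM_RandomDesign}.
\item Your matrix-Bernstein bound in (c) and the matrix Gaussian-series bound in (d) are sharper than the paper's dimension-times-maximal-entry estimate. The dimension then enters only through $\log N$, although under $\beta>2d$ either estimate suffices.
\end{itemize}

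Two minor points. First, the embedding gives $\|h\|_{W^{m,\infty}}\lesssim 2^{J\eta}\|h\|_{H^m}$ on $V_J$ only for $\eta>d/2$, not for $\eta=d/2$; this is harmless here. Second, $\hat u_N^{\operatorname{MAP}}$ depends on the noise. The Gaussian bound in (d) is therefore valid only after replacing $\mathcal{D}^2\op_{\hat u_N^{\operatorname{MAP}}}$ by $\mathcal{D}^2\op_{u_0}$, with the replacement error carrying the factor $N^{-1}\sum_i|\epsilon_i|=O_P(1)$. Your ordering of (b) before (d) does this implicitly.
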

\begin{proof}
    See Section \ref{sec:Proofs_BvM}.
\end{proof}

\begin{comment}
\begin{rem}
    Let $\mu_N$ be the posterior measure and $r_N=\sqrt{N}$, then the condition \eqref{eq:condition_remainder_1} can be checked if the Fréchet remainder can be controlled quantitatively: If locally around $\theta_0$ we have
    \begin{equation*}
        \norm{\Phi(\theta)-\Phi(\theta_0)- \mathcal{D}\Phi_{\theta_0}(\theta-\theta_0)}_V\le \norm{\theta-\theta_0}_V^{1+\gamma}
    \end{equation*}
    for some $\gamma>0$, then \eqref{eq:condition_remainder_1} reduces to
    \begin{equation*}
         \EV[\Pi_N]{\sqrt{N}\norm{\theta-\theta_0}_U^{1+\gamma}|\data }\xrightarrow{\prob[\theta_0]{}}0,
    \end{equation*}
    which can be verified using the ideas from the proof of Lemma \ref{lem:Localisationisfine}.
    If $\Phi$ is Fréchet differentiable in an open neighbourhood of $\theta_0$ and $\theta\mapsto \mathcal{D}\Phi_{\theta_0}$ is Lipschitz-continuous, then the condition \eqref{eq:condition_remainder_2} simplifies to
    \begin{equation*}
        \EV[\Pi_N]{\sqrt{N}\norm{\theta-a_N}_U(\norm{\theta_0-\theta}_U + \norm{\theta_0-a_N}_U)\given\data }\xrightarrow{\prob[\theta_0]{}}0,
    \end{equation*}
    which can be verified using ideas from the proofs of Lemmas \ref{lem:Convergence_PosteriorMean}, \ref{lem:Localisationisfine}, if higher posterior moments of $\norm{\theta-a_N}_U$ can be controlled.\SG{rewrite remark?}
\end{rem}
\end{comment}

\section{Examples}\label{sec:examples}

We illustrate the previous assumptions and results for four classes of PDEs. The first two examples are elliptic equations of second and fourth order, respectively. The third example is a one-dimensional first-order transport-reaction equation, while the final example contains nonlinear gradient dependence. Except for the one-dimensional transport example, let $\domain\subset \mathbb{R}^d$ be a smooth bounded domain, and let the domain $\mathcal{V}\subset H^m(\domain)$ be chosen according to homogeneous boundary conditions. 
The construction of boundary-adapted multiscale decompositions satisfying Assumption \ref{assump:ApproximatonSets} is well-studied in this setting, see e.g. \cite{monasseOrthonormalWaveletBases1998,cohen2003numerical,cohenMultiscaleDecompositionsBounded2000}.

The examples further illustrate how the order $m$ the affects the limiting Gaussian measure and thus the asymptotic posterior covariance. Since the limiting process is tight in $H^\gamma(\domain)$ for $\gamma<m-d/2$, higher-order differential operators allow for BvM limits in stronger topologies. The fourth-order elliptic example allows uniform-norm conclusions in dimensions $d\le 3$, the second-order elliptic examples do so in dimension $d=1$, whereas the first-order transport equation in dimension one yields $H^\gamma(\domain)$-limits only for $\gamma<1/2$.

\subsection{Semilinear elliptic equation}\label{sec:semilienarElliptic}

Consider the operator 
\begin{equation*}
    \op[u]=-\Delta u + \lambda u +\tau(u),\quad\lambda>0,
\end{equation*}
where $\tau\in C^\infty(\mathbb{R})$ satisfies for some $K>0$ the condition $0\le \tau'(z)\le K$ for all $z\in\mathbb{R}$ and its higher derivatives are bounded. We take $m=2$. For homogeneous Dirichlet boundary conditions we consider $\mathcal{V}=H^2(\domain)\cap H_0^1(\domain)$, whereas for homogeneous Neumann boundary conditions we take $\mathcal{V} = \{u\in H^2(\domain)\colon \partial_\nu u=0\text{ on }\partial\domain\}$, where $\partial_\nu$ denotes the outward normal derivative. Periodic boundary conditions are covered by taking $\domain=\mathbb{T}^d=\mathbb{R}^d/\mathbb{Z}^d$, the $d$-dimensional torus and $\mathcal{V}= H^2(\mathbb{T}^d)$.
We refer to \cite{GT98} for standard elliptic well-posedness and regularity properties, and to \cite{runstSobolevSpacesFractional1996} for the required properties of Nemytskii operators on Sobolev spaces. 

First note that $u\mapsto \tau(u)\colon H^{2+\kappa_0}(\domain)\to H^{\kappa_0}(\domain)$ is locally bounded and continuous for any $\kappa_0>d/2$. Next we verify the global stability Assumption \ref{assump:Operator_stability}. For $u,v\in\mathcal{V}$ set $w=u-v$ and
\begin{equation*}
    q_{u,v}(x) = \lambda + \int_0^1\tau'(v(x)+tw(x))\,\D t\in[\lambda,\lambda+K],\quad x\in\domain.
\end{equation*}
Then $\op[u]-\op[v]=-\Delta w + q_{u,v}w$, coercivity of $-\Delta + q_{u,v}$ and elliptic $H^2(\domain)$-regularity imply the global stability \eqref{eq:Stability_Lipschitz_Bounds}. The Lipschitz bound \eqref{eq:Lipschitz_Bounds_infty} follows from the global Lipschitz continuity of $\tau$.
Consequently, the nonparametric contraction and convergence results of both the posterior mean and the MAP of Section \ref{sec:Contraction} apply with $d\ge 1$ and $m=2$.

We proceed with the stronger regularity conditions of Assumptions \ref{assump:operator_linearisation} and \ref{assump:operator_linearisation_2} required for the BvM results for $d\le 3$. The first two Fréchet derivatives are given by
\begin{equation*}
    \opDeriv{u}[h] = -\Delta h+ \lambda h+\tau'(u)h,\quad \mathcal{D}^2\op_u [h_1,h_2] = \tau''(u)h_1h_2,\quad u,h,h_1,h_2\in \mathcal{V}.
\end{equation*}
Since $\lambda+ \tau'(u_0)\ge\lambda$, the previous coercivity and elliptic-regularity argument applies and yields the gradient stability \eqref{eq:Graphnormequivalence}. The remaining conditions follow from Taylor's theorem and Sobolev embeddings with $d\le 3$. Consequently, the BvM results of Section \ref{sec:BvM} apply with $d\le 3$ and $0<\gamma<2-d/2$. In particular, when $d=1$, one may choose $1/2<\gamma<3/2$, and the Sobolev embedding shows that the posterior fluctuations around their centre are of parametric order $N^{-1/2}$ in $L^\infty(\domain)$. If the projection bias condition in Remark \ref{rmk:Bias_Control} holds, then we obtain a parametric convergence rate of both the posterior mean and the MAP in $\norm{}_{L^\infty(\domain)}$.

\section*{Numerical illustration}

To illustrate our results we perform a numerical simulation with the forward operator
\begin{equation*}
    \op[u]=0.04(- 0.1 \Delta u + \tau(u)),\quad \tau(z) = \frac{z}{10} + \tanh(5(z-0.35))+\tanh(1.75),\quad z\in\mathbb{R},
\end{equation*}
on the two-dimensional torus $\mathbb{T}^2$ with $X_i\overset{i.i.d.}{\sim}\operatorname{Unif(\mathbb{T}^2)}$. The true solution is the superposition of functions as shown in Figure \ref{fig:Reconstruction}
and the measurement noise level is $\sigma = 0.03$.
Let $\mathbb{Z}_+^2 = \{\mu=(\mu_1,\mu_2)\in\mathbb{Z}^2\colon\mu_1>0\}\cup\{ (0,\mu_2)\colon\mu_2>0\}$ and define the $L^2(\mathbb{T}^2)$-orthonormal Fourier basis by 
\begin{equation*}
    \psi_\mu = \begin{cases}
    1,&\mu = 0,\\
    \sqrt{2}\cos(2\pi\iprod{\mu}{x}_{\mathbb{R}^2}),&\mu\in\mathbb{Z}_+^2,\\
    \sqrt{2}\sin(2\pi\iprod{-\mu}{x}_{\mathbb{R}^2}),&-\mu\in\mathbb{Z}_+^2,
    \end{cases}
\end{equation*}
for $\mu\in\mathbb{Z}^2$. We let $\abs{\mu} = 0$ if $\max(\abs{\mu_1},\abs{\mu_2})\le 1$ and $\abs{\mu}=\lceil \log_2(\max(\abs{\mu_1},\abs{\mu_2}))\rceil$ for all other $\mu\in\mathbb{Z}^2$. We take $\beta=4.5$, $\beta_0=2$ and scale the prior by a factor of 5.

In Figure \ref{fig:Reconstruction} we compare the recovery of $u_0$ with the recovery of the forward field $\op[u_0]$ when using the MAP $\hat{u}_N^{\operatorname{MAP}}$ for $N=250$ sample points. We can observe that the discrepancy between reconstruction and truth of $u_0$ is visibly better than the reconstruction of the forward field. This illustrates the effect of the stability Assumption \ref{assump:Operator_stability}: The prediction error for $\op[u]$ in $\norm{}_{L^2(\domain)}$ controls the error for $u$ in the stronger norm $\norm{}_{H^2(\domain)}$.

In Figure \ref{fig:Diagnostics} (left) we analyse the local shape of the posterior distribution. We use a Metropolis-adjusted Langevin algorihm (MALA) to obtain samples from the posterior distribution. After centring at the MAP, rescaling by $\sqrt{N}$ and the LAN standard deviation from Theorem \ref{thm:BvM_MAP}, the empirical marginals approach the standard Gaussian density as the sample size increases from $N=150$ to $N=1000$, as predicted by Theorem \ref{thm:BvM_MAP}. Note that the MAP centring is obtained by optimisation only and the MALA posterior simulation is needed solely to verify the Gaussian approximation.

Figure \ref{fig:Diagnostics} (right) displays the mean squared error of the MAP in progressively stronger Sobolev norms using a Monte-Carlo estimate with 100 samples, namely $\norm{}_{H^1(\domain)}$, $\norm{}_{H^2(\domain)}$ and $\norm{}_{H^3(\domain)}$. The clear deterioration of the empirical slope when strengthening the norm aligns well with the results of Lemmas \ref{lem:Convergence_MAP_Nonparametric} and \ref{lem:Convergence_MAP_Asymptotic_Normality} and reflects the difficulty of recovering high-frequency components of the signal. The jumps at sample size $N=1000$ occur because the increased cut-off admits an additional resolution level.

\begin{figure}
    \centering
    \includegraphics[width=\linewidth]{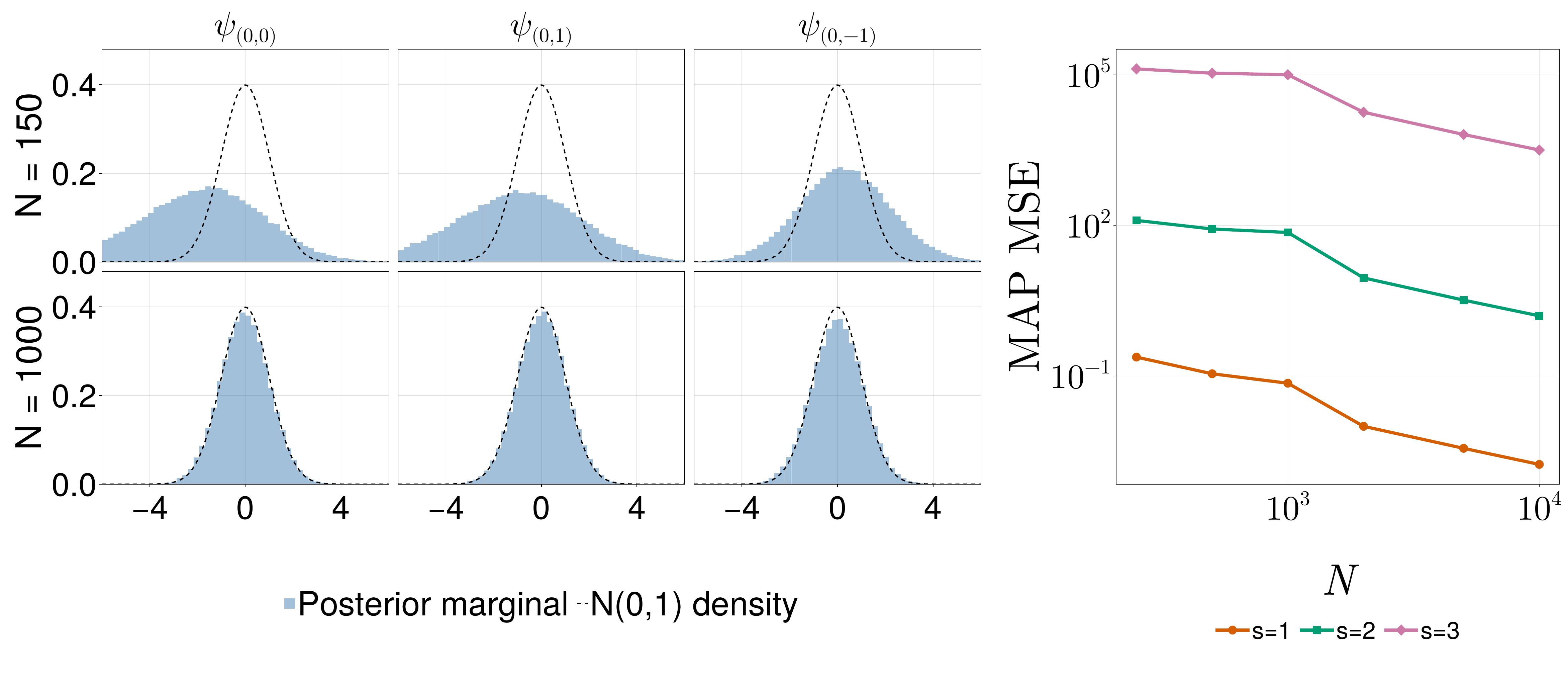}
    \caption{Left: Histograms of the centred and standardised Fourier coefficients $\sqrt{N}\iprod{u-\hat{u}_N^{\operatorname{MAP}}}{\psi_\mu}_{L^2(\domain)} \allowbreak/\norm{(\psi_\mu)_0}_{\operatorname{LAN}}$, $\mu=(0,0), (0,1), (0,-1)$ for $N=150$ and $N=1000$, compared with limit $N(0,1)$-density from Theorem \ref{thm:BvM_MAP} for $N=150$ and $N=1000$ observations. Right: Monte-Carlo estimates of the MAP MSE $\EV[u_0]{\norm{\hat{u}_N^{\operatorname{MAP}}-u_0}_{H^s(\domain)}^2}$ for $s=1,2,3$ for $100$ Monte-Carlo samples. Log-log scale.}
    \label{fig:Diagnostics}
\end{figure}

\subsection{A fourth-order elliptic equation}

Consider the linear elliptic operator of fourth order given by
\begin{equation*}
    \op[u] =\Delta^2 u + \lambda u,\quad\lambda>0.
\end{equation*}
We take $m=4$ and impose homogeneous Navier boundary conditions, such that
\begin{equation*}
    \mathcal{V}=\{u\in H^4(\domain)\colon u=0,\Delta u=0\text{ on }\partial\domain\}.
\end{equation*} 
For well-posedness and regularity theory of the corresponding boundary value problem $\op[u]=f$ we refer to Chapter 2 of \cite{gazzolaPolyharmonicBoundaryValue2010}. 

First note that for every $\kappa_0>d/2$ the operator $\op\colon\mathcal{V}\cap H^{4+\kappa_0}(\domain)\to H^{\kappa_0}(\domain)$ is continuous.
Fourth-order elliptic regularity (cf.~Theorem 2.20 of \cite{gazzolaPolyharmonicBoundaryValue2010}) and $\lambda>0$ imply the global stability \eqref{eq:Stability_Lipschitz_Bounds}. The Lipschitz bound \eqref{eq:Lipschitz_Bounds_infty} also follows. Consequently, the nonparametric contraction and convergence results of both the posterior mean and thee MAP of Section \ref{sec:Contraction} apply with $d\ge 1$ and $m=4$.

Since $\op$ is linear, verifying the linearisation properties of Assumptions \ref{assump:operator_linearisation} and \ref{assump:operator_linearisation_2} is particularly simple with
\begin{equation*}
    \opDeriv{u}[h] = \Delta^2 h +\lambda h \text{ and }\mathcal{D}^2\op_u =0,\quad u,h\in \mathcal{V}.
\end{equation*}
The gradient stability \eqref{eq:Graphnormequivalence} is implied by the fourth-order elliptic regularity, while the other conditions follow directly.
Consequently, the BvM results of Section \ref{sec:BvM} apply with $d\le 7$ and $0<\gamma<4-d/2$. In particular, we obtain parametric contraction and convergence rates in $L^\infty(\domain)$ for $d\le 3$, provided the bias condition of Remark \ref{rmk:Bias_Control} is satisfied.

\begin{comment}
Since $m=4$, we can take $d\le 7$ and $0<\gamma<4-d/2$ and obtain the Bernstein--von Mises results for Navier boundary condition in
\begin{equation*}
    \mathbb{V}_\gamma
    =
    \begin{cases}
        H^\gamma(\domain), & 0<\gamma<1/2,\\
        \{u\in H^\gamma(\domain)\colon 
          u|_{\partial\domain}=0\},
            & 1/2<\gamma<5/2,\\
        \{u\in H^\gamma(\domain)\colon 
          u|_{\partial\domain}=0,\
          (\Delta u)|_{\partial\domain}=0\},
            & 5/2<\gamma<4,
    \end{cases}
\end{equation*}
see Chapter 1 of \cite{LM72}.
In particular, we obtain parametric contraction and convergence rates in $L^\infty(\domain)$ for $d\le 3$, provided the bias condition of Remark \ref{num:Bias_Control} is satisfied.
\end{comment}

\subsection{A stationary transport-reaction equation}

Consider $\domain=(0,1)$ and the first-order forward operator
\begin{equation*}
    \op[u] = u'+\lambda u + \tau(u),\quad\lambda>0
\end{equation*}
with the boundary condition $u(0)=0$. Assume that $\tau\in  C^\infty(\mathbb{R})$ satisfies for some $K>0$ the condition $0\le\tau'(z)\le K$ for all $z\in\mathbb{R}$ and has bounded higher derivatives. Here, $d=1$, $m=1$ and $\mathcal{V} = \{u\in H^1(\domain)\colon u(0)=0\}$.

To show the global stability condition \eqref{eq:Stability_Lipschitz_Bounds} of Assumption \ref{assump:Operator_stability} we set $w=u-v$ for $u,v\in\mathcal{V}$ and
\begin{equation*}
    q_{u,v}\coloneqq \lambda + \int_0^1\tau'(v+tw)\,\D t\in [\lambda,\lambda+K]
\end{equation*}
and note that $\op[u]-\op[v] = w'+q_{u,v}w$. Since $w(0)=0$ by the boundary conditions, we find
\begin{equation*}
    \iprod{\op[u]-\op[v]}{w}_{L^2(\domain)} = \frac{1}{2}w(1)^2 + \int_0^1 q_{u,v}(x)w(x)^2\,\D x\ge\lambda\norm{w}_{L^2(\domain)}^2.
\end{equation*}
Hence $\norm{w}_{L^2(\domain)}\lesssim \norm{\op[u]-\op[v]}_{L^2(\domain)}$ and $w'=\op[u]-\op[v]-q_{u,v}w$ gives 
\begin{equation*}
    \norm{u-v}_{H^1(\domain)}\lesssim \norm{\op[u]-\op[v]}_{L^2(\domain)}.
\end{equation*}
Together with the global Lipschitz continuity of $\tau$, this proves the global stability and Lipschitz bounds \eqref{eq:Stability_Lipschitz_Bounds} and \eqref{eq:Lipschitz_Bounds_infty}. Consequently, the nonparametric contraction and convergence results of both the posterior mean and the MAP of Section \ref{sec:Contraction} apply with $m=1$.

The Fréchet derivatives of $\op$ are given by
\begin{equation*}
    \opDeriv{u}[h] = h'+(\lambda+\tau'(u))h,\quad \mathcal{D}^2\op_u[h_1,h_2]=\tau''(u)h_1h_2,\quad u,h,h_1,h_2\in\mathcal{V}.
\end{equation*}
For the gradient stability \eqref{eq:Graphnormequivalence} we define $q_0=\lambda+\tau'(u_0)\in[\lambda,\lambda+K]$ and observe
\begin{equation*}
    \iprod{\opDeriv{u_0}[h]}{h}_{L^2(\domain)} = \frac{1}{2}h(1)^2 + \int_0^1 q_0(x)h(x)^2\,\D x \ge\lambda\norm{h}_{L^2(\domain)}^2.
\end{equation*}
We deduce that $\norm{h}_{L^2(\domain)}\lesssim \norm{\opDeriv{u_0}[h]}_{L^2(\domain)}$ and, combined with $h'=\opDeriv{u_0}[h]-q_0h$, we deduce that gradient stability and Lipschitz conditions \eqref{eq:Graphnormequivalence} and \eqref{eq:Lipschitz_Bounds_infty} hold.
Taylor's theorem implies the remaining differentiability assumptions. Consequently, the BvM results of Section \ref{sec:BvM} apply with $d=1$ and $0<\gamma<1/2$.

\subsection{A gradient-nonlinear elliptic equation}

Consider the operator
\begin{equation*}
    \op[u]=-\Delta u + \lambda u + \rho(\nabla u),\quad\lambda >0,
\end{equation*}
where $\rho\in C_b^\infty(\mathbb{R}^d,\mathbb{R})$. We take $m=2$ and either Dirichlet or Neumann boundary conditions as in Section \ref{sec:semilienarElliptic}. Denote by $\mathcal{A}=-\Delta + \lambda\identity$ the linear part of $\op$ and let $c>0$ satisfy $\norm{\mathcal{A}u}_{L^2(\domain)}\ge c\norm{u}_{H^2(\domain)}$ for any $u\in\mathcal{V}$. Assume that
\begin{equation*}
    \norm{\nabla\rho}_{L^\infty(\mathbb{R}^d)}\sup_{u\in\mathcal{V},u\neq 0}\frac{\norm{\nabla u}_{L^2(\domain)}}{\norm{u}_{H^2(\domain)}}<c,
\end{equation*}
which ensures that the global stability of $\op$ is inherited from $\mathcal{A}$.

First note that $u\mapsto \rho(\nabla u)\colon H^{2+\kappa_0}(\domain)\to H^{\kappa_0}(\domain)$ is locally bounded and continuous. Furthermore, we find
\begin{align*}
    \norm{\op[u]-\op[v]}_{L^2(\domain)}&\ge \norm{\mathcal{A}(u-v)}_{L^2(\domain)} - \norm{\rho(\nabla u) - \rho(\nabla v)}_{L^2(\domain)}\\
    &\ge \Big(c - \norm{\nabla\rho}_{L^\infty(\mathbb{R}^d)}\sup_{u\in\mathcal{V},u\neq 0}\frac{\norm{\nabla u}_{L^2(\domain)}}{\norm{u}_{H^2(\domain)}}\Big)\norm{u-v}_{H^2(\domain)}.
\end{align*}
Since the reverse inequality is immediate, the global stability and Lipschitz conditions \eqref{eq:Stability_Lipschitz_Bounds} and \eqref{eq:Lipschitz_Bounds_infty} hold. Consequently, the nonparametric contraction and convergence results of both the posterior mean and the MAP of Section \ref{sec:Contraction} apply with $d\ge 1$ and $m=2$.

We proceed with verifying Assumptions \ref{assump:operator_linearisation} and \ref{assump:operator_linearisation_2} for the relevant range $d\le 3$. The derivatives are given by
\begin{equation*}
    \opDeriv{u}[h] = \mathcal{A}h + \nabla\rho(\nabla u)\cdot\nabla h\text{ and }\mathcal{D}^2\op_u[h_1,h_2]=\mathcal{D}^2 \rho(\nabla u)[\nabla h_1,\nabla h_2],\quad u,h,h_1,h_2\in\mathcal{V}.
\end{equation*}
The gradient stability \eqref{eq:Graphnormequivalence} follows from our assumption on $\rho$ since
\begin{align*}
    \norm{\opDeriv{u_0}[h]}_{L^2(\domain)}&\ge \norm{\mathcal{A}h}_{L^2(\domain)} - \norm{\nabla\rho(\nabla u_0)\cdot\nabla h}_{L^2(\domain)}\\
    &\ge \Big(c - \norm{\nabla\rho}_{L^\infty(\mathbb{R}^d)}\sup_{u\in\mathcal{V},u\neq 0}\frac{\norm{\nabla u}_{L^2(\domain)}}{\norm{u}_{H^2(\domain)}}\Big)\norm{h}_{H^2(\domain)}
\end{align*}
for any $h\in\mathcal{V}$. For $d\le 3$ we find by Taylor's theorem and Sobolev embedding that the linearisation conditions of Assumptions \ref{assump:operator_linearisation} and \ref{assump:operator_linearisation_2} hold. Consequently, the BvM results of Section \ref{sec:BvM} apply with $d\le 3$ and $0<\gamma<2-d/2$. In particular, we obtain parametric contraction and convergence rates in $L^\infty(\domain)$ for $d=1$, provided the bias condition of Remark \ref{rmk:Bias_Control} is satisfied.

\section*{Acknowledgements and AI disclosure}
SG and SW acknowledge funding by the Deutsche Forschungsgemeinschaft (DFG, German Research Foundation) – CRC/TRR 388 "Rough Analysis, Stochastic Dynamics and Related Fields" – Project ID 516748464, projects B07 and B08. We thank Andrew Stuart and Houman Owhadi for fruitful early discussions on posterior contraction for probabilistic PDE solution methods that helped initiate this line of research. We thank Markus Reiß for fruitful discussions. ChatGPT 5.6 was used for proofreading and for the simulations of Section \ref{sec:semilienarElliptic}. The authors independently verified all final mathematical arguments, simulation code, and reported results and take full responsibility for the manuscript.

\bibliography{tvbib}
\bibliographystyle{abbrv}

\newpage
\appendix

\section{Proof for Section \ref{sec:Contraction}}\label{sec:Proofs_Contraction}

\begin{lem}\label{lem:Prior}
	For 
    %$0\leq\beta_0<\beta$
    $d/2<\beta_0<\beta+d/2$
    and $u_0\in \mathcal{V}\cap H^{m+\beta}(\domain)$ consider the Gaussian prior $\pi_N$ from \eqref{eq:Prior} on $V_J$  with cut-off $J=J_N\in\mathbb{N}$ such that $2^J\sim N^{1/(2\beta+d)}$. Let 
    %$\epsilon_N = N^{-\beta/(2\beta+1)}\log(N)$
    $\epsilon_N = N^{-\beta/(2\beta+d)}\log(N)$. Then for any $C_0>1$ and $L',L'''>0$ there exist constants $L,L'',C_1,C_2>0$ satisfying the following properties:
	\begin{enumerate}[(a)]
		\item\label{num:aux_rescaled_Prior_sieve} Sieve set: $\pi_N(u\in V_J:\norm{u}_{H^{m+\beta_0}(\domain)}>L\sqrt{N}\epsilon_N)\le \exp(-C_0N\epsilon_N^2)$.
		\item\label{num:aux_rescaled_Prior_entropy} Entropy bound: \[\log\left(N\left(V_J\cap B(0,\norm{}_{H^{m+\beta_0}(\domain)},L\sqrt{N}\epsilon_N), \norm{}_{H^m(\domain)}, \epsilon_N\right)\right)\le C_1N\epsilon_N^2.\]
        \item\label{num:aux_rescaled_Prior_smallball} Small ball probability: $\pi_N (u\in V_J:\norm{u-u_0}_{H^m(\domain)}\leq L'\epsilon_N)\ge \exp(-C_2 N\epsilon_N^2)$.
		\item\label{num:aux_rescaled_Prior_restrict}
        Posterior regularity: For any $0\le \kappa\le \beta$ and $u\in V_J$ with $\norm{u-u_0}_{H^m(\domain)}\le L'''\epsilon_N$ we have
        \begin{equation*}
            \norm{u}_{H^{m+\kappa}(\domain)} \le L'' \begin{cases}
                1,&\kappa<\beta,\\
                \log(N),&\kappa=\beta.
            \end{cases}
        \end{equation*}
        %\begin{equation*}
        %    \pi_N(u\in V_J: \norm{u}_{H^{m+\kappa}(\domain)}> C_2,\norm{u-u_0}_{H^m(\domain)}\leq C_3\varepsilon_{N}\,\vert\, X^N)\xrightarrow{\prob[u_0][N]{}} 0,\qquad N\to\infty.
        %\end{equation*}
	\end{enumerate}
\end{lem}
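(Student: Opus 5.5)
The plan is to reduce all four statements to finite-dimensional Gaussian computations on $V_J$, using the coefficient characterisation \eqref{eq:CharacterisationSobolevSpaces} and the Jackson/Bernstein estimates \eqref{eq:Jackson}--\eqref{eq:Bernstein}. Throughout, I use the bookkeeping identity
\begin{equation*}
D_J\coloneqq\dim V_J\sim 2^{Jd}\sim N^{d/(2\beta+d)}=N\epsilon_N^2/\log^2(N),
\end{equation*}
so that any bound of the form $D_J\log N$ is $o(N\epsilon_N^2)$. Write $u=\sum_{|\mu|\le J}2^{-(m+\beta_0)|\mu|}Z_\mu\psi_\mu$ for a prior draw.

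For (a), the lower inequality in \eqref{eq:CharacterisationSobolevSpaces} with $s=m+\beta_0$ gives $\norm{u}_{H^{m+\beta_0}(\domain)}^2\le c^{-1}\sum_{|\mu|\le J}Z_\mu^2$. The right-hand side is a $\chi^2_{D_J}$ variable. The Laurent--Massart bound $\mathrm P(\chi^2_D>D+2\sqrt{Dt}+2t)\le e^{-t}$ with $t=C_0N\epsilon_N^2$ then yields (a) once $L^2$ is a large multiple of $C_0$, because $D_J\le N\epsilon_N^2$.

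For (b), map $u\in V_J$ to the coefficients $a_\mu=2^{(m+\beta_0)|\mu|}\iprod{u}{\psi_\mu}_{L^2(\domain)}$. By the lower bound in \eqref{eq:CharacterisationSobolevSpaces} we have $\norm{u}_{H^m(\domain)}\lesssim\norm{a}_{\ell^2}$. By the upper bound with a small loss $\epsilon'$, followed by Bernstein \eqref{eq:Bernstein}, we get $\norm{a}_{\ell^2}\lesssim\norm{u}_{H^{m+\beta_0+\epsilon'}(\domain)}\lesssim2^{J\epsilon'}\norm{u}_{H^{m+\beta_0}(\domain)}$. Hence the sieve ball is contained in a Euclidean ball in $\mathbb R^{D_J}$ of radius $\lesssim 2^{J\epsilon'}L\sqrt N\epsilon_N$, a polynomial in $N$. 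Covering it at scale $\asymp\epsilon_N$ needs at most $(CN^{C'})^{D_J}$ balls, so the log-covering number is $\lesssim D_J\log N\le C_1N\epsilon_N^2$.

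For (c), first split off the truncation bias. Jackson \eqref{eq:Jackson} gives $\norm{(\identity-\ProjOne_J)u_0}_{H^m(\domain)}\lesssim2^{-J\beta}\norm{u_0}_{H^{m+\beta}(\domain)}\lesssim N^{-\beta/(2\beta+d)}$, which is $\le L'\epsilon_N/2$ for large $N$ because of the $\log N$ factor. It therefore suffices to lower-bound $\pi_N(\norm{u-\ProjOne_Ju_0}_{H^m(\domain)}\le L'\epsilon_N/2)$. Since $\ProjOne_Ju_0\in\mathbb H_N$, the Cameron--Martin/Anderson inequality bounds this from below by $e^{-\norm{\ProjOne_Ju_0}_{\mathbb H_N}^2/2}\,\pi_N(\norm{u}_{H^m(\domain)}\le L'\epsilon_N/2)$.

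\begin{itemize}
\item[] \emph{RKHS norm.} For small $\epsilon'>0$,
\begin{equation*}
\norm{\ProjOne_Ju_0}_{\mathbb H_N}^2\le 2^{2J(\beta_0-\beta+\epsilon')_+}\sum_\mu2^{2(m+\beta-\epsilon')|\mu|}\iprod{u_0}{\psi_\mu}^2\lesssim2^{2J(\beta_0-\beta+\epsilon')_+}\norm{u_0}_{H^{m+\beta}(\domain)}^2,
\end{equation*}
using the upper half of \eqref{eq:CharacterisationSobolevSpaces} with the $\epsilon'$ loss. Because $\beta_0<\beta+d/2$, I can pick $\epsilon'$ with $2(\beta_0-\beta+\epsilon')<d$, which makes this $\lesssim 2^{Jd}\le N\epsilon_N^2$.
\item[] \emph{Centred small ball.} The upper characterisation gives $\norm{u}_{H^m(\domain)}^2\lesssim\sum_{|\mu|\le J}2^{-2(\beta_0-\epsilon'')|\mu|}Z_\mu^2\le\sum_{|\mu|\le J}Z_\mu^2$. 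It is therefore enough that all $|Z_\mu|\le c\epsilon_N2^{-Jd/2}$. By independence this event has probability at least $(c'\epsilon_N2^{-Jd/2})^{D_J}=\exp(-O(D_J\log N))\ge\exp(-C_2N\epsilon_N^2)$.
\end{itemize}

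For (d), take $u\in V_J$ with $\norm{u-u_0}_{H^m(\domain)}\le L'''\epsilon_N$ and split $\norm{u}_{H^{m+\kappa}(\domain)}\le\norm{u-\ProjOne_Ju_0}_{H^{m+\kappa}(\domain)}+\norm{\ProjOne_Ju_0}_{H^{m+\kappa}(\domain)}$. The second term is $\lesssim\norm{u_0}_{H^{m+\kappa}(\domain)}$ by the stability \eqref{eq:H^m_stability}. For the first, Bernstein \eqref{eq:Bernstein} together with the Jackson bias bound gives
\begin{equation*}
\norm{u-\ProjOne_Ju_0}_{H^{m+\kappa}(\domain)}\lesssim2^{J\kappa}\big(L'''\epsilon_N+2^{-J\beta}\big)\lesssim2^{J(\kappa-\beta)}\log N.
\end{equation*}
This is $O(1)$ for $\kappa<\beta$, since a negative power of $N$ beats $\log N$, and $O(\log N)$ for $\kappa=\beta$.

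The main obstacle I expect is the $\epsilon$-loss in the upper half of \eqref{eq:CharacterisationSobolevSpaces}, particularly in the RKHS-norm bound of (c). There the room is exactly the margin $\beta_0<\beta+d/2$, so the argument has to reduce the smoothness slightly to $\beta-\epsilon'$ rather than working at $\beta$ itself. In (b) and the centred small-ball estimate the same loss is harmless, since it only costs a factor $2^{J\epsilon'}$, i.e.\ a $\log N$ inside an entropy that already has slack of order $\log N$.
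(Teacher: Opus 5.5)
Your proposal is correct and follows essentially the same route as the paper: a $\chi^2$ tail bound for (a), a volumetric covering bound in dimension $\dim(V_J)$ for (b), the Jackson bias split followed by the Cameron--Martin shift inequality and a coordinatewise bound $|Z_\mu|\lesssim \epsilon_N 2^{-Jd/2}$ for (c), and Bernstein plus Jackson around $\ProjOne_J u_0$ for (d). One small slip: in the centred small-ball step, the bound $\|u\|_{H^m(\domain)}^2\lesssim\sum_{|\mu|\le J}2^{-2\beta_0|\mu|}Z_\mu^2$ comes from the lower half of \eqref{eq:CharacterisationSobolevSpaces} with $s=m$, not from the upper half, so no $\epsilon''$ loss is incurred there.
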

\begin{proof}
\begin{enumerate}[(a)]
\item We have $\operatorname{dim}(V_J)\lesssim 2^{Jd}\lesssim N^{d/(2\beta+d)}\lesssim N\epsilon_N^2$. Thus, for $L$ large enough it holds that $\operatorname{dim}(V_J)\leq (L^2/2)N\epsilon_N^2$. For this $L$, we find with the random variables $Z_{\mu}\overset{i.i.d.}{\sim} N(0,1)$, $\abs{\mu}\leq J$, from \eqref{eq:Prior} that
	\begin{align*}
		\pi_N(u\in V_J:\norm{u}^2_{H^{m+\beta_0}(\domain)}> L^2N\epsilon_N^2)
			&\le \prob[][][]{\sum_{\abs{\mu}\leq J}Z^2_{\mu}>cL^2N\epsilon_N^2}\\
			&\leq\prob[][][]{\sum_{\abs{\mu}\leq J}(Z^2_{\mu}-1)>c(L^2/2)N\epsilon_N^2},
	\end{align*}
    for some $0<c<\infty$,
	where the last inequality holds since there are at most $\operatorname{dim}(V_J)$ many $Z_{\mu}$ in the sum. The random variables $Z_{\mu}^2-1$ are independent centred chi-squared random variables. Absorbing $c$ into $L$ and applying a standard subexponential inequality (e.g., Equation 3.29 of \cite{GN16}) implies for the last probability the upper bound 
	\begin{align*}
		\exp\bigg(-\frac{(L^2/2)^2(N\epsilon_N^2)^2}{4\operatorname{dim}(V_J)+(L^2/2)N\epsilon_N^2}\bigg) \leq \exp(-(L^2/10)N\epsilon_N^2).
	\end{align*}
	\item For some constant $0<C<\infty$ we have 
	\begin{align*}
		&\log\left(N\left(V_J\cap B(0,\norm{}_{H^{m+\beta_0}(\domain)},L\sqrt{N}\epsilon_N), \norm{}_{H^m(\domain)}, \epsilon_N\right)\right)\\
        &\quad \le \log\left(N\left(V_J\cap B(0,\norm{}_{H^{m}(\domain)},cL\sqrt{N}\epsilon_N), \norm{}_{H^m(\domain)}, \epsilon_N\right)\right)\\
		&\quad  \leq \log\left(N\left(B(0,\norm{}_{\mathbb{R}^{\operatorname{dim}(V_J)}},cL\sqrt{N}\epsilon_N), \norm{}_{\mathbb{R}^{\operatorname{dim}(V_J)}}, \epsilon_N\right)\right)\\
		&\quad \leq \operatorname{dim}(V_J)\log(3cL\sqrt{N})\lesssim N^{d/(2\beta+d)}\log(N)\lesssim N\epsilon_N^2,
	\end{align*} 
	using metric entropy estimates for finite-dimensional Euclidean balls (e.g.~Proposition 4.3.34 of \cite{GN16}).
    \item For $u_0\in H^{m+\beta}(\domain)$ we have $\norm{u_0-\ProjOne_J u_0}_{H^m(\domain)}\lesssim 2^{-J\beta}\norm{u_0}_{H^{m+\beta}(\domain)}\lesssim \epsilon_N$, so for $C=L'/2$
	\begin{align*}
		\pi_N (u\in V_J:\norm{u-u_0}_{H^m(\domain)}\leq L'\epsilon_N) &\geq \pi_N (u\in V_J:\norm{u-\ProjOne_Ju_0}_{H^m(\domain)}\leq C\epsilon_N)\\
		&\geq \pi_N (u\in V_J:\norm{u-\ProjOne_J u_0}_{H^{m+\beta_0}(\domain)}\leq C\epsilon_N)
    \end{align*}
    Using that $\ProjOne_J u_0\in V_J$, \eqref{eq:RKHS_Norm}, \eqref{eq:Bernstein} and $\beta_0<\beta+d/2$ we find 
    \begin{equation*}
        \norm{\ProjOne_J u_0}_{\mathbb{H}_N}\lesssim \norm{\ProjOne_Ju_0}_{H^{m+\beta_0+\epsilon}(\domain)} \lesssim 2^{J(\beta_0+\epsilon-\beta)_+}\norm{u_0}_{H^{m+\beta}(\domain)}\lesssim \sqrt{N}\epsilon_N,
    \end{equation*}
    with $0<\epsilon<\beta+d/2-\beta_0$. Consequently, we can apply Corollary 2.6.18 of \cite{GN16} to find 
    \begin{equation*}
        \pi_N (u\in V_J:\norm{u-u_0}_{H^m(\domain)}\leq L'\epsilon_N)\ge e^{-cN\epsilon_N^2}\pi_N(u\in V_J:\norm{u}_{H^{m+\beta_0}(\domain)}\leq C\epsilon_N)
	\end{equation*}
	Using the notation from (a) and letting $Z\overset{d}{\sim} N(0,1)$, the last expression is lower bounded by
	\begin{align*}
		&e^{-cN\epsilon_N^2}\prob[][][]{\sum_{\abs{\mu}\le J}Z^2_{\mu}\leq c'\epsilon_N^2}\geq e^{-cN\epsilon_N^2}\prob[][][]{\operatorname{dim}(V_J)\max_{\abs{\mu}\le J}Z^2_{\mu}\leq C^2\epsilon_N^2}\\
		&\quad = e^{-cN\epsilon_N^2}\prob[][][]{\operatorname{dim}(V_J)Z^2\leq C^2\epsilon_N^2}^{\operatorname{dim}(V_J)}\geq e^{-cN\epsilon_N^2}\prob[][][]{|Z|\leq C'N^{-1/2}}^{\operatorname{dim}(V_J)}
	\end{align*}
	for some $c',C'>0$, using again that $\operatorname{dim}(V_J)\lesssim N\epsilon_N^2$. Since the standard normal density is uniformly bounded away from zero near the origin, the last probability is up to a constant lower bounded by $N^{-\operatorname{dim}(V_J)/2}\gtrsim \exp(-cN^{d/(2\beta+d)}\log(N))\ge\exp(-cN\epsilon_N^2)$ for some $0<C<\infty$. Adjusting the constant for multiplicative factors yields the desired small ball lower bound.
	\item Suppose $u\in V_J$ with $\norm{u-u_0}_{H^m(\domain)}\leq L'''\epsilon_N=L'''N^{-\beta/(2\beta+d)}\log(N)$. Then, by the triangle inequality, we have
	\begin{align*}
		\norm{u}_{H^{m+\kappa}(\domain)}
			&\leq \norm{u-\ProjOne_Ju_0}_{H^{m+\kappa}(\domain)} + \norm{\ProjOne_Ju_0}_{H^{m+\kappa}(\domain)}\\
            &\le 2^{J\kappa}\norm{u-\ProjOne_Ju_0}_{H^m(\domain)}+ \norm{u_0}_{H^{m+\beta}(\domain)}\\
            &\leq 2^{J\kappa}\norm{u-u_0}_{H^m(\domain)} + 2^{J\kappa}\norm{(\identity-\ProjOne_J)u_0}_{H^m(\domain)} + \norm{u_0}_{H^{m+\beta}(\domain)}\\
            &\lesssim  N^{(\kappa-\beta)/(2\beta+d)}(1+\log(N)) + \norm{u_0}_{H^{m+\beta}(\domain)},
	\end{align*}
	which is bounded by a constant uniformly in $N\in\mathbb{N}$ as long as $\kappa<\beta$ and grows logarithmically if $\kappa=\beta$. This proves the claim taking $L''>\norm{u_0}_{H^{m+\beta}(\domain)}$ large enough.
    \end{enumerate}

\end{proof}

The following theorem is an adaptation of Theorem 13 in \cite{GN20} to our setting.

\begin{thm}\label{thm:GeneralContraction}
Let $\pi_N$ be a sequence of prior Borel probability measures on $V_J$. Fix some $u_0$ for which we observe the data $\data$, and let $\Pi_N(\MTemptyplaceholder|\data)$ be the resulting posterior distribution. Suppose $\epsilon_N>0$ is a sequence such that $\epsilon_N\to 0$ and $\sqrt N\epsilon_N\to \infty$ as $N\to \infty$. Define the `KL-moment' neighbourhoods
\begin{equation*}
\begin{split}
    B_N\coloneqq \Big\{ u\in V_J \colon &\EV[\prob[u_0]{}][][\Big]{\log \Big(\frac{\D\prob[u_0]{(X_1,Y_1)}}{\D\prob[u]{(X_1,Y_1)}} \Big)}\le \epsilon_N^2,\\
    ~&\EV[\prob[u_0]{}][][\Big]{ \log \Big(\frac{\D\prob[u_0]{(X_1,Y_1)}}{\D\prob[u]{(X_1,Y_1)}} \Big)^2} \le \epsilon_N^2 \Big\}.
\end{split}
\end{equation*}
Assume the following conditions:
\begin{enumerate}[(a)]
    \item\label{num:General_Contraction_SmallBall} For all $N$ large enough,
    \[ \pi_N(B_N)\ge c_1e^{-c_2N\epsilon_N^2},~\text{ for some }c_1,c_2>0. \]
    \item\label{num:General_Contraction_SieveMass} There is a sequence of Borel sets $\mathcal{U}_N\subseteq V_J$ such that 
    \[ \pi_N(\mathcal{U}_N^c)\lesssim e^{-c_3N\epsilon_N^2},~ \text{ for some } c_3>c_2+2. \]
    \item\label{num:General_Contraction_SieveEntropy} We have the entropy bound
    \[ \log (N(\mathcal{U}_N, h,\epsilon_N))\le c_4 N\epsilon_N^2 \]
    for some $c_4>0$.
\end{enumerate}
Then $\epsilon_N$ is a rate of contraction for estimating $\prob[u_0]{}$, in the Hellinger distance, i.e. for any $L>4$ with $L^2>12\max(c_3, c_4)$ and all $0<D<c_3-c_2-2$ we have
\begin{equation*}
    \Pi_N( u\in V_J\colon  h(\prob[u]{},\prob[u_0]{})\ge L\epsilon_N|\data ) = \mathcal{O}_{\prob[u_0][N]{}}(e^{-DN\epsilon_N^2})
\end{equation*}
as $N\to\infty$.
\end{thm}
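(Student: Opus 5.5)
This is the Ghosal--Ghosh--van der Vaart argument for i.i.d.\ observations, in the form of Theorem 13 of \cite{GN20}. It has three steps: a lower bound on the evidence, a testing step in Hellinger distance, and a union bound over the sieve.

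\emph{Evidence lower bound.} Write the posterior as a ratio. The numerator is $\int_{A} \prod_i (p_u/p_{u_0})(X_i,Y_i)\,\D\pi_N(u)$ with $A=\{h(\prob[u]{},\prob[u_0]{})\ge L\epsilon_N\}$, and the denominator is the same integral over $V_J$. I would use the standard lemma (Lemma 7.3.2 in \cite{GN16}) for the KL-moment neighbourhood $B_N$. Let $\nu$ be the normalised restriction of $\pi_N$ to $B_N$. For any $K>0$, with $\prob[u_0][N]{}$-probability at least $1-1/(K^2N\epsilon_N^2)$,
\[\int \prod_i \frac{p_u}{p_{u_0}}\,\D\nu(u)\ge e^{-(1+K)N\epsilon_N^2}.\]
The proof is Jensen applied to $\log$, followed by Chebyshev with the second-moment condition. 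Combining this with (a), the denominator is at least $c_1e^{-(c_2+1+K)N\epsilon_N^2}$ on this event.

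\emph{Tests and sieve.} Cover $\mathcal U_N\cap A$ by Hellinger balls of radius $\epsilon_N$ around points $u_j$; by (c) there are at most $e^{c_4N\epsilon_N^2}$ of them. For each centre with $h(\prob[u_j]{},\prob[u_0]{})\ge L\epsilon_N$, Le Cam--Birgé give a test $\phi_j$ with
\[\EV[u_0]{\phi_j}\le e^{-cNL^2\epsilon_N^2},\qquad \sup_{h(u,u_j)\le L\epsilon_N/4}\EV[u]{1-\phi_j}\le e^{-cNL^2\epsilon_N^2}.\]
Let $\phi_N=\max_j\phi_j$. The union bound gives $\EV[u_0]{\phi_N}\le e^{(c_4-cL^2)N\epsilon_N^2}\to0$, which is where the condition $L^2>12\max(c_3,c_4)$ is used. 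Now split
\[\Pi_N(A\mid\data)\le \phi_N + \Pi_N(A\cap \mathcal U_N\mid\data)(1-\phi_N) + \Pi_N(\mathcal U_N^c\mid\data).\]
By Fubini, the $\prob[u_0][N]{}$-expectation of each numerator is bounded by the prior mass times the type-II error:
\begin{itemize}
\item the $\mathcal U_N^c$ numerator has expectation at most $e^{-c_3N\epsilon_N^2}$ by (b);
\item the $A\cap\mathcal U_N$ numerator has expectation at most $e^{-cL^2N\epsilon_N^2}$.
\end{itemize}
Divide by the denominator bound, take $K=1$, and apply Markov's inequality. This gives $\mathcal O_{\prob[u_0][N]{}}(e^{-(c_3-c_2-2)N\epsilon_N^2})$, hence any $D<c_3-c_2-2$ works.

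\emph{Main obstacle.} The difficulty is bookkeeping rather than a new idea: making the exponential rate compatible with the $\mathcal O_{\prob{}}$ statement. The test term $\phi_N$ itself is not exponentially small in $\prob{}$-probability. It only has small expectation, so I would bound it via $\EV[u_0]{\phi_N}\le e^{-(cL^2-c_4)N\epsilon_N^2}$ and Markov, using that $L$ is large. The event from the evidence bound has probability only $1-O(1/(N\epsilon_N^2))$, which is compatible with an $\mathcal O_{\prob{}}$ statement. I also need to check that the tests constructed for the i.i.d.\ product $\prob[u][N]{}$ exist for these regression laws. They do: Hellinger tests exist for arbitrary i.i.d.\ models (Theorem 7.1.4 of \cite{GN16}), so no model structure is needed here.
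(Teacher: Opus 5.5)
Your proposal is correct and follows the same route as the paper. The paper gives no separate proof; it takes the statement from Theorem 13 of \cite{GN20}, which is proved by exactly this argument: the evidence lower bound of Lemma 7.3.2 of \cite{GN16}, Hellinger tests over the sieve, and Markov's inequality on the good event.

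One detail needs tightening. A ball of your $\epsilon_N$-cover that meets $A$ is only guaranteed a centre with $h(\mathrm{P}_{u_j},\mathrm{P}_{u_0})\ge (L-1)\epsilon_N$, so build the tests for that separation (or use an internal cover); this only perturbs the constants in the exponents.
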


\begin{proof}[Proof of Theorem \ref{thm:Contraction}]
Our goal is to apply Theorem \ref{thm:GeneralContraction} to first derive contraction rates for $\op[u]$ in Hellinger distance. In the next step, we use the equivalence of the Hellinger distance to $\norm{}_{L^2(\domain)}$ from Lemma \ref{lem:RandomDesign_Hellinger/KL} to extend the contraction to $L^2(\domain)$-distance for $\op[u]$ and subsequently we use the bounds \eqref{eq:Stability_Lipschitz_Bounds} to extend the contraction to $u$.
 Before verifying the small ball condition, we note that \eqref{eq:Lipschitz_Bounds_infty}, the Sobolev embedding, the Jackson and Bernstein estimates \eqref{eq:Jackson}, \eqref{eq:Bernstein}, and Lemma \ref{lem:Prior} \eqref{num:aux_rescaled_Prior_restrict} imply that for any $u\in V_J$ with $\norm{u-u_0}_{H^m(\domain)}\le \epsilon_N$ and any $d/2<\eta<\beta$ we have
 \begin{align*}
 %\begin{split}
 S(u,u_0)&\coloneqq \norm{\op[u]-\op[u_0]}_{L^\infty(\domain)}\lesssim \norm{u-u_0}_{W^{m,\infty}(\domain)}\\
 &\lesssim \norm{u-\ProjOne_J u_0}_{H^{m+\eta}(\domain)} + \norm{(\identity-\ProjOne_J)u_0}_{H^{m+\eta}(\domain)}\\
 &\lesssim 2^{J\eta}\epsilon_N + 2^{-J(\beta-\eta)}\lesssim 1.
 %\end{split}\label{eq:UniformBound_Candidates}
 \end{align*} Consequently, for such $u$ we have $S(u,u_0)\le \bar{C}$ with $S(u,u_0)$ from Lemma \ref{lem:RandomDesign_Hellinger/KL} for some constant $\bar{C}>0$ independent of $u$ and $N$.
 By Lemma \ref{lem:RandomDesign_Hellinger/KL} and the upper bound from \eqref{eq:Stability_Lipschitz_Bounds} we have
 \begin{align*}
    B_N&=  \Big\{u\in V_J \colon \EV*[\prob[u_0]{}]{\log \left(\frac{\D\prob[u_0]{(X_1,Y_1)}}{\D\prob[u]{(X_1,Y_1)}}\right) } \le \epsilon_N^2,\\
    &\qquad\qquad\qquad \qquad \EV*[\prob[u_0]{}]{\log \left(\frac{\D\prob[u_0]{(X_1,Y_1)}}{\D\prob[u]{(X_1,Y_1)}}\right) }^2 \le \epsilon_N^2\Big\}\\
    &\supset\big\{u\in V_J \colon \norm{\op[u_0]-\op[u]}_{L^2(\domain)}^2 \le 2\sigma^2\epsilon_N^2/\bar{p}, \\
    &\qquad\qquad\qquad\qquad \norm{\op[u_0]-\op[u]}_{L^2(\domain)}^2 \le \epsilon_N^2\sigma^4/(2\bar{p}(\bar{C}^2 +\sigma^2))\big\}\\
    %&\supset \left\{u\in V_J \colon \norm{u_0-u}_{H^m(\domain)}^2 \le c\sigma^2\epsilon_N^2, \EV*[u_0]{\log \left(\frac{p(u_0,Y)}{p(u,Y)}\right) }^2 \le \epsilon_N^2\right\}\\
    %&\supset \left\{u\in V_J\colon \norm{u_0-u}_{H^m(\domain)}^2 \le c\sigma^2\epsilon_N^2, \norm{\op[u_0]-\op[u]}_{L^2(\domain)}^2\le \epsilon_N^2\sigma^4/(2\bar{C}^2+\sigma^2)\right\}\\
    &\supset  \left\{u\in V_J\colon \norm{u_0-u}_{H^m(\domain)}^2 \le L'\epsilon_N^2\right\}
 \end{align*}
for some constant $0<L'<1$ independent of $N$ sufficiently small.
Lemma \ref{lem:Prior} \eqref{num:aux_rescaled_Prior_smallball} implies that
\begin{equation}
\pi_N(B_N)\ge  \pi_N\left(\norm{u_0-u}_{H^m(\domain)}^2\le \tilde{C}\epsilon_N^2\right)\ge \exp(-C_2N\epsilon_N^2)\label{eq:aux_KL_Ball_Mass}
\end{equation}
and condition \eqref{num:General_Contraction_SmallBall} of Theorem \ref{thm:GeneralContraction} follows with $c_1 = 1$ and $c_2=C_2$. Conditions \eqref{num:General_Contraction_SieveMass} and \eqref{num:General_Contraction_SieveEntropy} are verified by Lemma \ref{lem:Prior} \eqref{num:aux_rescaled_Prior_sieve} and \eqref{num:aux_rescaled_Prior_entropy} with $c_3 = C_0$ and some $c_4>0$ since 
\begin{equation*}
    h^2(\prob[u]{},\prob[v]{})\le  \frac{1}{4\sigma^2}\norm{\op[u]-\op[v]}_{L_p^2(\domain)}^2\le \frac{\bar{p}C}{4\sigma^2}\norm{u-v}_{H^m(\domain)}^2,\quad u,v\in \mathcal{V}
\end{equation*}
by Lemma \ref{lem:RandomDesign_Hellinger/KL} and \eqref{eq:Stability_Lipschitz_Bounds}. The condition $c_3>c_2+2$ can be achieved by choosing $C_0$ large enough.
Consequently, we have shown 
\begin{equation*}
    \Pi_N( u\colon  h(\prob[u]{},\prob[u_0]{})\ge L\epsilon_N\,|\, \data )=\mathcal{O}_{\prob*[u_0][N]{}}(e^{-DN\epsilon_N^2}),
\end{equation*}
for any $L>4$ with $L^2>12\max(c_3,c_4)$ and any $0<D<c_3-c_2-2$. By choosing $c_3=C_0$ large enough, we can ensure that $\tilde{D}<D< c_3-c_2-2$.\\
Before proceeding with the proof, we show that
\begin{equation}
    \prob*[u_0][N]{\int_{V_J} \prod_{i=1}^N \frac{\D \prob[v]{}}{\D \prob[u_0]{}}(X_i,Y_i) \,\D \pi_N(v) \ge e^{-\tilde{D}N\epsilon_N^2}} \to1,\label{eq:Marginal_Likelihood_Bound_RandomDesign}
\end{equation}
for some $0<\tilde{D}<D$, which will play an important role in the localisation step of the BvM proof in Lemma \ref{lem:Localisationisfine}. By Lemma 7.3.2 of \cite{GN16} we have 
\begin{equation*}
    \int_{V_J} \prod_{i=1}^N \frac{\D \prob[v]{}}{\D \prob[u_0]{}}(X_i,Y_i) \,\D \pi_N(v) \ge \pi_N(B_N)e^{-2N\epsilon_N^2}
\end{equation*}
with $\prob[u_0][N]{}$-probability at least $1-1/(N\epsilon_N^2)$. In combination with \eqref{eq:aux_KL_Ball_Mass} we find \eqref{eq:Marginal_Likelihood_Bound_RandomDesign} with $\tilde{D}= 2+ C_2=2+c_2$.\\
In the next step, we show that the posterior concentrates on the set
\begin{equation*}
    \mathcal{U}_N = \{u\in V_J\colon\norm{u}_{H^{m+\beta_0}(\domain)}\le L\sqrt{N}\epsilon_N\},
\end{equation*}
where $L$ is chosen such that Lemma \ref{lem:Prior} \eqref{num:aux_rescaled_Prior_sieve} yields $\pi_N(\mathcal{U}_N^c)\le e^{-C_0N\epsilon_N^2}$ for $C_0$ as large as desired. Since $\EV[\prob[u_0][N]{}]{\prod_{i=1}^N \frac{\D\prob[u]{}}{\D\prob[u_0]{}}(X_i,Y_i)} = 1$ we find by Markov's inequality
\begin{equation*}
    \prob[u_0][N]{\int_{\mathcal{U}_N^c}\prod_{i=1}^N\frac{\D\prob[u]{}}{\D\prob[u_0]{}}(X_i,Y_i)\,\D\pi_N(u)\ge e^{-(C_0-1)N\epsilon_N^2}}\le e^{-N\epsilon_N^2}.
\end{equation*}
On the intersection of this event with the event from \eqref{eq:Marginal_Likelihood_Bound_RandomDesign} we have $\Pi_N(\mathcal{U}_N^c|\data)\le e^{-(C_0-1-\tilde{D})N\epsilon_N^2}$ such that choosing $C_0$ sufficiently large yields posterior concentration on $\mathcal{U}_N$.\\
In the final step of the proof, we deduce the contraction on $\norm{}_{H^m(\domain)}$. %To this end, note that we restrict the analysis to the set $\mathcal{B}_N= \{u\in V_J\colon \norm{u}_{H^{m+\beta_0}(\domain)}\le L\sqrt{N}\epsilon_N\}$, where the posterior distribution asymptotically concentrates since $c_3>c_2+2$.
Fix any $d/2<\eta<\beta$ and deduce from \eqref{eq:Lipschitz_Bounds_infty}, the Sobolev embedding, and the
Jackson and Bernstein estimates \eqref{eq:Jackson},
\eqref{eq:Bernstein} the bound
\begin{align*}
    S(u,u_0)&\lesssim \norm{u-\ProjOne_Ju_0}_{W^{m,\infty}(\domain)}+\norm{\ProjOne_Ju_0-u_0}_{W^{m,\infty}(\domain)}\\
    &\lesssim \norm{u-\ProjOne_Ju_0}_{H^{m+\eta}(\domain)}+\norm{\ProjOne_Ju_0-u_0}_{H^{m+\eta}(\domain)}\\
    &\lesssim 2^{J\eta}\norm{u-\ProjOne_Ju_0}_{H^m(\domain)}+2^{-J(\beta-\eta)}\norm{u_0}_{H^{m+\beta}(\domain)}.
\end{align*}
The stability estimate \eqref{eq:Stability_Lipschitz_Bounds} and the Jackson estimate further imply
\begin{align*}
    \norm{u-\ProjOne_Ju_0}_{H^m(\domain)}&\le\norm{u-u_0}_{H^m(\domain)}+\norm{(\identity-\ProjOne_J)u_0}_{H^m(\domain)}\\
    &\lesssim \norm{\op[u]-\op[u_0]}_{L_p^2(\domain)}+2^{-J\beta}.
\end{align*}
Consequently, we find
\begin{equation}
    S(u,u_0)\lesssim 2^{J\eta}\big(\norm{\op[u]-\op[u_0]}_{L_p^2(\domain)}+2^{-J\beta}\big),\quad u\in V_J.\label{eq:Contraction_finite_dimensional_envelope_2}
\end{equation}
By Lemma \ref{lem:RandomDesign_Hellinger/KL} and the elementary bound $z/(1-e^{-z})\lesssim 1+z$, $z\ge0$, we have
\begin{equation}
    \norm{\op[u]-\op[u_0]}_{L_p^2(\domain)}^2 \lesssim\big(1+S(u,u_0)^2\big)
    h^2(\prob[u]{},\prob[u_0]{}).\label{eq:Hellinger_to_prediction_with_envelope}
\end{equation}
Let $L>0$ be fixed and suppose that $h(\prob[u]{},\prob[u_0]{})\le L\epsilon_N$. Combining \eqref{eq:Contraction_finite_dimensional_envelope_2} and \eqref{eq:Hellinger_to_prediction_with_envelope}, and absorbing $L$ into the constants, yields
\begin{align}
    \norm{\op[u]-\op[u_0]}_{L_p^2(\domain)}^2&\lesssim
    \epsilon_N^2+\epsilon_N^2 2^{2J\eta}\big(\norm{\op[u]-\op[u_0]}_{L_p^2(\domain)}+2^{-J\beta}\big)^2\nonumber\\
    &\lesssim\epsilon_N^2+\epsilon_N^2 2^{2J\eta}\norm{\op[u]-\op[u_0]}_{L_p^2(\domain)}^2+
    \epsilon_N^2 2^{-2J(\beta-\eta)}.\label{eq:Contraction_absorption_inequality}
\end{align}
Since $2^J\sim N^{1/(2\beta+d)}$ and $\epsilon_N=N^{-\beta/(2\beta+d)}\log(N)$, we have
\begin{equation*}
    \epsilon_N^2 2^{2J\eta}\lesssim N^{-2(\beta-\eta)/(2\beta+d)}\log(N)^2=\smallo(1)
\end{equation*}
as $N\to\infty$.
Thus, for all $N$ sufficiently large, the second term on the right-hand side of \eqref{eq:Contraction_absorption_inequality} can be absorbed into the left-hand side. It follows that for any $u\in V_J$ with $h(\prob[u]{},\prob[u_0]{})\le L\epsilon_N$ we have $\norm{\op[u]-\op[u_0]}_{L_p^2(\domain)}\lesssim \epsilon_N$.
The constants in this implication may depend on $L$, but not on $u$ or $N$. The lower stability estimate in \eqref{eq:Stability_Lipschitz_Bounds} then shows that also $\norm{u-u_0}_{H^m(\domain)}\lesssim \epsilon_N$.
Consequently, after increasing the multiplicative constant in the
$H^m(\domain)$-ball if necessary,
\begin{align*}
    \Pi_N\left(u\in V_J\colon \norm{u-u_0}_{H^m(\domain)}>C_L\epsilon_N \,\middle|\,\data\right)&\le \Pi_N\left(u\in V_J\colon h(\prob[u]{},\prob[u_0]{})>L\epsilon_N \,\middle|\,\data\right)\\
    &=\mathcal{O}_{\prob[u_0][N]{}}
    \big(e^{-DN\epsilon_N^2}\big).
\end{align*}
This completes the proof.
\end{proof}

\begin{proof}[Proof of Lemma \ref{lem:Convergence_PosteriorMean_Nonparametric}]
    For $L>0$ from Theorem \ref{thm:Contraction} define the set $\mathcal{A}_N=\{u\in V_J\colon \norm{u-u_0}_{H^m(\domain)}\le L\epsilon_N\}$ such that $\Pi_N(\mathcal{A}_N^c|\data)=\mathcal{O}_{\prob[u_0][N]{}}(e^{-DN\epsilon_N^2})$ as $N\to\infty$. By Jensen's inequality we find
    \begin{equation*}
        \norm{\bar{u}_N-u_0}_{H^m(\domain)}\le \EV[\Pi_N]{\norm{u-u_0}_{H^m(\domain)}\given \data}\le L\epsilon_N + \EV[\Pi_N]{\norm{u-u_0}_{H^m(\domain)}\indicator_{\mathcal{A}_N^c}\given\data}.
    \end{equation*}
    we only need to control the second summand. Since $\EV[\prob[u_0][N]{}]{e^{\ell_N(u)-\ell_N(u_0)}} = 1$, we find with $\Omega_N$ the event from \eqref{eq:Marginal_Likelihood_Bound_RandomDesign} that
    \begin{align*}
        \EV[\prob[u_0][N]{}]{\indicator_{\Omega_N}\EV[\Pi_N]{\norm{u-u_0}_{H^m(\domain)}^2\given\data}} \hspace{-5em}&\\
        &\le e^{\tilde{D}N\epsilon_N^2}\int_{V_J}\norm{u-u_0}_{H^m(\domain)}^2\EV[\prob[u_0][N]{}]{e^{\ell_N(u)-\ell_N(u_0)}}\,\D\pi_N(u)\\
        &= e^{\tilde{D}N\epsilon_N^2}\int_{V_J}\norm{u-u_0}_{H^m(\domain)}^2\,\D\pi_N(u).
    \end{align*}
    Moreover, we have
    \begin{align*}
        \int_{V_J}\norm{u-u_0}_{H^m(\domain)}^2\,\D\pi_N(u)&\lesssim  1+ \int_{V_J}\norm{u}_{H^m(\domain)}^2\,\D\pi_N(u)\lesssim 1 + \sum_{\abs{\mu}\le J}2^{2m\abs{\mu}}2^{-2(m+\beta_0)\abs{\mu}}\\
        &\lesssim \sum_{j\le J}2^{jd}2^{-2j\beta_0}\le \sum_{j\in\mathbb{N}}2^{-j(2\beta_0-d)}\lesssim 1
    \end{align*}
    by \eqref{eq:CharacterisationSobolevSpaces} and since $\beta_0>d/2$. Consequently, we find 
    \begin{equation*}
        \EV[\Pi_N]{\norm{u-u_0}_{H^m(\domain)}^2\given\data}=\mathcal{O}_{\prob[u_0][N]{}}(e^{\tilde{D}N\epsilon_N^2})
    \end{equation*}
    and thus
    \begin{align*}
        \EV[\Pi_N]{\norm{u-u_0}_{H^m(\domain)}\indicator_{\mathcal{A}_N^c}\given\data}^2 &\le \EV[\Pi_N]{\norm{u-u_0}_{H^m(\domain)}^2\given\data}\Pi_N(\mathcal{A}_N^c|\data)\\
        &= \mathcal{O}_{\prob[u_0][N]{}}(e^{-(D-\tilde{D})N\epsilon_N^2}).
    \end{align*}
    Since $\tilde{D}<D$, this term is $\smallo_{\prob[u_0][N]{}}(\epsilon_N)$ the proof is complete.
\end{proof}

\begin{lem}\label{lem:MAP_Convergence}
    Grant Assumptions \ref{assump:ApproximatonSets} and \ref{assump:Operator_stability}.
    Assume that $u_0\in\mathcal{V}\cap H^{m+\beta}(\domain)$ with $\beta>d/2$ and $d/2<\beta_0<\beta+d/2$.
     Consider the prior $\pi_N$ from \eqref{eq:Prior} with cut-off $2^J\sim N^{1/(2\beta+d)}$. Then the MAP estimator satisfies 
%\[ \|\op[\hat{u}_N^{\operatorname{MAP}}]-\op[u_0]\|_{L^2(\domain)} = O_{P}(\delta_N)~~~~\text{for}~~~~\delta_N:= N^{-\frac{2\beta-d}{2(2\beta+d)}}. \]
%If also $\beta >d$, then we also have the following convergence in the $W^{2,\infty}$-norm,
%    \[ \|\hat{u}_N^{\operatorname{MAP}} - u_0\|^2_{W^{2,\infty}}= O_P(N^{-\frac{\beta -d}{2\beta+d}}).\]
\begin{align}
    \norm{\hat{u}_N^{\operatorname{MAP}}-u_0}_{H^m(\domain)}&=\mathcal{O}_{\prob[u_0][N]{}}(\epsilon_N),\label{eq:MAP_H^M_Convergence}\\
    \norm{\hat{u}_N^{\operatorname{MAP}}-u_0}_{W^{m,\infty}(\domain)}&=\mathcal{O}_{\prob[u_0][N]{}}(2^{J\eta}\epsilon_N)\label{eq:MAP_W^M_Convergence}
\end{align}
for any $d/2<\eta<\beta$,
as well as
\begin{equation}
    \EV[\prob[u_0][N]{}]{\norm{\op[\hat{u}_N^{\operatorname{MAP}}]-\op[u_0]}_N^2}\lesssim \epsilon_N^2.\label{eq:MAP_PredictionConvergence}
\end{equation}
%\[\E\big[ \|\op[\hat{u}_N^{\operatorname{MAP}}]- \op[u_0]\|_{N}^{2} \big] \le C N^{-2\beta/(2\beta+d)}\log(N)^2,\]
%and also
%\[ \E \|\hat{u}_N^{\operatorname{MAP}} - u_0\|_{H^2}^2 \le C'  \E \|\op[\hat{u}_N^{\operatorname{MAP}}] - \op[u_0]\|_{L^2(\domain)}^2\le C'' N^{-(2\beta-d)/(2\beta+d)} \log(N)^3. \]

\end{lem}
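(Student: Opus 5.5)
The proof follows the classical analysis of penalised least-squares estimators, relying on the basic inequality together with empirical-process control over a finite-dimensional sieve. Since $\hat{u}_N^{\operatorname{MAP}}$ maximises $\ell_N^{\operatorname{reg}}$ over $V_J$ and $\ProjOne_J u_0\in V_J$, we have $\ell_N^{\operatorname{reg}}(\hat{u}_N^{\operatorname{MAP}})\ge \ell_N^{\operatorname{reg}}(\ProjOne_J u_0)$. Writing $Y_i=\op[u_0](X_i)+\sigma\epsilon_i$ and expanding the squares gives the basic inequality
\begin{equation*}
\norm{\op[\hat u]-\op[u_0]}_N^2+\frac{\sigma^2}{N}\norm{\hat u}_{\mathbb{H}_N}^2\le \norm{\op[\ProjOne_Ju_0]-\op[u_0]}_N^2+\frac{\sigma^2}{N}\norm{\ProjOne_Ju_0}_{\mathbb{H}_N}^2+2\sigma\iprod{\epsilon}{\op[\hat u]-\op[\ProjOne_Ju_0]}_N,
\end{equation*}
with $\hat u=\hat{u}_N^{\operatorname{MAP}}$. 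The deterministic terms on the right are both of order $\epsilon_N^2$. For the first, \eqref{eq:Lipschitz_Bounds_infty}, the Sobolev embedding and the Jackson estimate give $\norm{\op[\ProjOne_Ju_0]-\op[u_0]}_{L^\infty}\lesssim 2^{-J(\beta-\eta)}$. Better still, its expectation equals $\norm{\cdot}_{L^2_p}^2\lesssim 2^{-2J\beta}$ by \eqref{eq:Stability_Lipschitz_Bounds}. For the second, the bound from the proof of Lemma \ref{lem:Prior}(c) gives $\norm{\ProjOne_Ju_0}_{\mathbb{H}_N}\lesssim\sqrt N\epsilon_N$.

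The key step is the multiplier term. Because $\op$ is nonlinear, I will not linearise it. Instead I will bound $\sup_{v}\iprod{\epsilon}{\op[v]-\op[\ProjOne_Ju_0]}_N$ over the peeled sets $\{v\in V_J:\norm{v}_{\mathbb{H}_N}\le R\}$ by a chaining or Dudley bound for the Gaussian process conditional on the design. Its canonical metric is $\norm{\op[v]-\op[w]}_N/\sqrt N\le \norm{\op[v]-\op[w]}_{L^\infty}/\sqrt N\lesssim \norm{v-w}_{H^{m+\eta}}/\sqrt N$. The entropy of $\mathbb{H}_N$-balls in $V_J$ in this metric is at most $\dim(V_J)\log N$, which is the same computation as Lemma \ref{lem:Prior}(b). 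Using a localised or peeling version, in the style of van de Geer's ratio-type inequalities, this yields
\begin{equation*}
2\sigma\iprod{\epsilon}{\op[\hat u]-\op[\ProjOne_Ju_0]}_N\le \tfrac12\norm{\op[\hat u]-\op[\ProjOne_Ju_0]}_N^2+\tfrac{\sigma^2}{2N}\norm{\hat u}_{\mathbb{H}_N}^2+\mathcal{O}_{\prob{}}\Big(\tfrac{\dim(V_J)\log N}{N}\Big).
\end{equation*}
The last term is $\lesssim\epsilon_N^2$. The first two terms are absorbed into the left-hand side of the basic inequality. Taking expectations carefully, via a Gaussian concentration bound for the supremum so that the bound holds in mean and not only in probability, gives \eqref{eq:MAP_PredictionConvergence}. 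It also gives the side bound $\norm{\hat u}_{\mathbb{H}_N}\lesssim\sqrt N\epsilon_N$, so that $\hat u\in\mathcal U_N$ with high probability.

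To pass from the empirical norm to $\norm{\cdot}_{L^2_p}$ and then to $H^m$, I will repeat the absorption argument from the proof of Theorem \ref{thm:Contraction}. On the sieve, functions $\op[v]-\op[u_0]$ have $L^\infty$-envelope $S(v,u_0)\lesssim 2^{J\eta}(\norm{\op[v]-\op[u_0]}_{L^2_p}+2^{-J\beta})$ by \eqref{eq:Contraction_finite_dimensional_envelope_2}. A uniform ratio-type concentration bound (Bernstein's inequality plus a union bound over a $\epsilon_N$-net of $\mathcal U_N$, whose log-cardinality is $\lesssim N\epsilon_N^2$ by Lemma \ref{lem:Prior}(b)) then gives
\begin{equation*}
\norm{g}_{L^2_p}^2\le 2\norm{g}_N^2+C\epsilon_N^2,\qquad g=\op[v]-\op[u_0],
\end{equation*}
uniformly in $v$, with high probability. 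This step needs $2^{2J\eta}\epsilon_N^2\to0$, which holds since $\eta<\beta$. Hence $\norm{\op[\hat u]-\op[u_0]}_{L^2_p}=\mathcal{O}_{\prob{}}(\epsilon_N)$, and the lower stability bound in \eqref{eq:Stability_Lipschitz_Bounds} yields \eqref{eq:MAP_H^M_Convergence}. Finally, \eqref{eq:MAP_W^M_Convergence} follows from the Sobolev embedding, with $\eta>d/2$, and the Bernstein and Jackson inequalities:
\begin{equation*}
\norm{\hat u-u_0}_{W^{m,\infty}}\lesssim 2^{J\eta}\norm{\hat u-\ProjOne_Ju_0}_{H^m}+2^{-J(\beta-\eta)}\lesssim 2^{J\eta}\epsilon_N.
\end{equation*}

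The main obstacle I expect is the localised supremum of the multiplier process. Its $L^\infty$-envelope is only controlled through $2^{J\eta}$ and the $\mathbb{H}_N$-radius, so the peeling must be arranged so that the logarithmic entropy factor and the envelope growth remain within $\epsilon_N^2$, including the $\log N$ built into $\epsilon_N$. Obtaining this in expectation, rather than just in probability, requires an additional tail-integration step.
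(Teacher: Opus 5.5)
Your argument is correct in outline and has the same two-stage skeleton as the paper: first a rate in the empirical prediction norm plus penalty, then a transfer to $\|\cdot\|_{L^2_p}$, then stability, Jackson and Bernstein. Both stages are carried out differently, though.

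\emph{Stage 1.} The paper does not run the basic inequality and peeling itself. It checks the entropy-integral condition $\Psi(\epsilon_N)\lesssim\sqrt N\epsilon_N^2$ for $\tau^2(u,v)=\|\op[u]-\op[v]\|_N^2+\frac{\sigma^2}{N}\|u\|_{\mathbb H_N}^2$ and invokes Theorem 3.1 of \cite{wangGlobalPolynomialtimeEstimation2026}. That theorem returns all moments and an exponential tail of $\tau^2(\hat u,u_0)$ at once, so \eqref{eq:MAP_PredictionConvergence} needs no separate tail integration. Your Stage 1 is a hand-made version of that theorem.

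\emph{Stage 2.} The paper cuts the sieve into $\mathcal O(\log N)$ dyadic shells $\{s_N<\|\op[u]-\op[u_0]\|_{L^2_p}\le 2s_N\}$. On each shell the envelope is uniformly $\lesssim 2^{J\eta}s_N$, and Lemma \ref{lem:EmpiricalNorm} is applied shell by shell. You avoid shells by applying Bernstein's inequality with a function-dependent envelope. Since $\|\op[v]-\op[u_0]\|_{L^\infty}\lesssim 2^{J\eta}(\|\op[v]-\op[u_0]\|_{L^2_p}+\epsilon_N)$ on $V_J$, the exponent is $\gtrsim N2^{-2J\eta}$ at every scale. This beats the log-cardinality of the net exactly when $2^{2J\eta}\epsilon_N^2\to0$. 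The argument is valid, slightly more elementary, and uses the same restriction $\eta<\beta$ as the paper.

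Three details need repair.

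(i) The net in Stage 2 cannot be the $H^m$-net at scale $\epsilon_N$ from Lemma \ref{lem:Prior}(b). Moving from a net point $v'$ to $v$ costs $\|\op[v]-\op[v']\|_N\le\|\op[v]-\op[v']\|_{L^\infty}\lesssim 2^{J\eta}\|v-v'\|_{H^m}$, which would leave an additive error of order $2^{2J\eta}\epsilon_N^2$ instead of $\epsilon_N^2$. Use an $\epsilon_N$-net of $\{\op[v]\colon v\in\mathcal U_N\}$ in $L^\infty$ instead, i.e. an $H^m$-net at scale $2^{-J\eta}\epsilon_N$. Its log-cardinality is still $\lesssim\dim(V_J)\log N\lesssim N\epsilon_N^2$. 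This is why Lemma \ref{lem:EmpiricalNorm} is stated with sup-norm covering numbers.

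(ii) In Stage 1 you must peel in $\tau(v,\ProjOne_Ju_0)$, i.e. jointly in $\|\op[v]-\op[\ProjOne_Ju_0]\|_N$ and $N^{-1/2}\|v\|_{\mathbb H_N}$, not over $\mathbb H_N$-balls alone. On $\{\|v\|_{\mathbb H_N}\le\sqrt N\epsilon_N\}$ the $\|\cdot\|_N$-diameter of $\op[v]-\op[\ProjOne_Ju_0]$ can be of order $\sqrt N\epsilon_N$. Dudley's bound then only gives $\epsilon_N\sqrt{\dim(V_J)\log N}\gg\epsilon_N^2$. With $\tau$-localisation the $\mathbb H_N$-radius enters only through the logarithm in the entropy, which is what your displayed target inequality implicitly requires.

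(iii) The factor $\tfrac12$ in front of $\|\op[\hat u]-\op[\ProjOne_Ju_0]\|_N^2$ is too large. Since $\tfrac12\|\op[\hat u]-\op[\ProjOne_Ju_0]\|_N^2\le\|\op[\hat u]-\op[u_0]\|_N^2+\|\op[\ProjOne_Ju_0]-\op[u_0]\|_N^2$, absorbing it would consume the entire left-hand side. Any constant strictly below $\tfrac12$ works, at the price of a larger constant in the $\dim(V_J)\log N/N$ term.
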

\begin{proof} \textbf{Step 1: rate for empirical norm.}
We use Theorem 3.1 in \cite{wangGlobalPolynomialtimeEstimation2026} to first derive a rate for the empirical norm. Define the functional
\[\tau^2(u,v) = \|\op[u]-\op[v]\|_N^2+\frac{\sigma^2}{N} \|u\|_{\mathbb{H}_N}^2, \quad u,v\in \mathcal{V}. \]
We define the sets of functions
\[\begin{aligned}
     \mathcal{U}^\ast(R)= \{ u\in V_J:  \tau^2(u,\ProjOne_J u_0) \le R^2\},\quad \mathcal{V}^\ast(R)= \{ \op[u]: u \in  \mathcal{U}^\ast(R)\},\quad R>0.\\
\end{aligned}  \]
For $u\in\mathcal{U}^\ast(R)$ we have $\norm{u}_{\mathbb{H}_N}\le \sqrt{N}R$. The Lipschitz bound \eqref{eq:Lipschitz_Bounds_infty} combined with the Sobolev embedding and \eqref{eq:CharacterisationSobolevSpaces} yields 
\begin{equation*}
   \norm{\op[u]-\op[v]}_N\le \norm{\op[u]-\op[v]}_{L^\infty(\domain)}\lesssim \norm{u-v}_{H^{m+\beta_0}(\domain)}\lesssim \norm{u-v}_{\mathbb{H}_N}\lesssim \sqrt{N}R
\end{equation*}
for $u,v\in \mathcal{U}^\ast(R)$.
By the covering numbers for finite-dimensional balls (Proposition 4.3.34 of \cite{GN16}) we find
\begin{equation*}
    \log(N(\mathcal{V}^\ast(R),\norm{}_N,\rho))\lesssim \operatorname{dim}(V_J)\log\left(\frac{C\sqrt{N}R}{\rho}\right)
\end{equation*}
for any $\rho>0$.
%By \eqref{eq:CharacterisationSobolevSpaces} we find \[\mathcal{U}^\ast(R)\subseteq \{u \in V_J : \|u\|_{H^{m+\beta_0}(\domain)}^2 \le N R^2 \} \subseteq \{ u\in V_J: \|u\|_{L^2(\domain)}^2\le NR^2 \}.\]
%Note that $\norm{u}_{L^\infty(\domain)}\le C \norm{u}_{H^{\eta}(\domain)}\le \bar{C} 2^{J\eta }\norm{u}_{L^2(\domain)}$ for any $u\in V_J$ and $\eta>d/2$ by the Sobolev embedding and the Bernstein estimate \eqref{eq:Bernstein}.
%Denoting by $\operatorname{dim}(V_J)\simeq 2^{Jd}\simeq N^{d/(2\beta+d)}$  the dimension of the level-$J$ wavelet space and using the global Lipschitz estimate $\|\mathcal Lu - \mathcal Lu '\|_\infty\lesssim \|u - u '\|_\infty$, we have the 
%entropy bound (here $C,C',...$ denote generic constants)
%\begin{equation*}
%    \begin{split}
%    \forall \rho >0:~~~~~~H(\rho,\mathcal{V}^\ast(R), \|\cdot\|_N) &\le H(\rho,\mathcal{V}^\ast(R), \|\cdot\|_\infty) \\
%    &\le 
%    H(\rho,\mathcal{U}^\ast(R), C\|\cdot\|_\infty) \\
%    &\le H(\rho,\mathcal{U}^\ast(R), C'2^{Jd/2}\|\cdot\|_{L^2(\domain)})\\
%    &\le \operatorname{dim}(V_J) \log \big(C''2^{Jd/2}\sqrt NR / \rho \big),
%    \end{split}
%\end{equation*}
%since $\mathcal{U}^\ast(R)$ is bounded in %the $L^2$-norm by $\sqrt{N}R$. Hence
%\[
%H(\rho,\mathcal{V}^\ast(R),\|\cdot\|_N)
%\lesssim
%2^{Jd}\log\left(\frac{C2^{Jd/2}\sqrt{N}R}{\rho}\right).
%\]
A change of variables $\rho=Rt$ gives
\begin{equation*}
    \begin{split}
        \int_0^{\sigma R} \sqrt{\log(N(\mathcal{V}^\ast(R),\|\cdot\|_N,\rho)})\,\D\rho
&\lesssim
2^{Jd/2}
\int_0^{\sigma R}
\sqrt{\log\left(\frac{C\sqrt{N}R}{\rho}\right)}
\,\D\rho\\
&
\lesssim 2^{Jd/2}R\int_0^\sigma
\sqrt{\log\left(\frac{C\sqrt{N}}{t}\right)}
\,\D t \\
&\lesssim 
2^{Jd/2}R\sqrt{\log(N)}.
\end{split}
\end{equation*}
The right hand side is an upper bound for the metric entropy integral in Theorem 3.1 of \cite{wangGlobalPolynomialtimeEstimation2026}. For a suitable constants $0<C<\infty$ we define
\[\Psi(R) \coloneqq R+ C N^{\frac{d}{2(2\beta+d)}} R\sqrt{\log(N)}.\]
Note that $R\mapsto \Psi(R)/R^2$ is non-increasing and
\[ \sqrt N \epsilon_N^2 = N^{-\frac{2\beta-d}{2(2\beta+d)}}\log(N)^2 \gtrsim \Psi(\epsilon_N). \]
Indeed, this follows from $\epsilon_N \gtrsim 1/\sqrt N$ and the fact that
\[ \sqrt N \epsilon_N^2 \ge \sqrt N\epsilon_N^2/\log(N) = \epsilon_N N^{\frac{d}{2(2\beta+d)}},\]
We may thus apply Theorem 3.1 of \cite{wangGlobalPolynomialtimeEstimation2026} to deduce that for all $k\ge 1$ we have the moment bounds
\begin{equation*}
\EV[\prob[u_0][N]{}]{ \tau^{2k}(\hat{u}_N^{\operatorname{MAP}},u_0) } \le C_k \Big(
\tau^{2k}(\ProjOne_J u_0,u_0) 
+ \epsilon_N^{2k}+\epsilon_N^{2k-2}\frac{\sigma^2}{N}\Big)\lesssim \tau^{2k}(\ProjOne_J u_0,u_0) + \epsilon_N^{2k}
\end{equation*}
for some $C_k>0$, where we used $\epsilon_N\gtrsim N^{-1/2}$.
Furthermore, we obtain the concentration inequality
 \begin{equation}\label{tau-concentration}
     \prob[u_0][N][\Big]{\tau^{2}(\hat{u}_N^{\operatorname{MAP}},u_0) \ge 2\big(\tau^{2}(\ProjOne_J u_0,u_0) + R^2 \big)}\le C\exp\Big( -\frac{NR^2}{C}\Big),\quad\forall R\ge \epsilon_N.
 \end{equation}
Next, we control the (random) empirical norm within the term $\tau^{2}(\ProjOne_J u_0,u_0)$ on the right hand side.
By the Jackson estimate \eqref{eq:Jackson} and the Lipschitz bounds \eqref{eq:Stability_Lipschitz_Bounds}, \eqref{eq:Lipschitz_Bounds_infty} we find
\begin{align*}
    \norm{\op[\ProjOne_J u_0]-\op[u_0]}_{L_p^2(\domain)}&\lesssim \norm{\ProjOne_J u_0-u_0}_{H^m(\domain)}\lesssim 2^{-J\beta}\norm{u_0}_{H^{m+\beta}(\domain)}\lesssim 2^{-J\beta}\\
    \norm{\op[\ProjOne_J u_0]-\op[u_0]}_{L^\infty(\domain)}&\lesssim \norm{\ProjOne_J u_0-u_0}_{W^{m,\infty}(\domain)}\lesssim 2^{-J(\beta-\eta)}\norm{u_0}_{H^{m+\beta}(\domain)}\lesssim 2^{-J(\beta-\eta)}
\end{align*}
for any $\eta>d/2$. Define the centred random variables
\begin{equation*}
    Z_i\coloneqq(\op[u_0]- \op[\ProjOne_J u_0])^2(X_i)-\|\op[u_0]-\op[\ProjOne_J u_0]\|_{L_p^2(\domain)}^2,\quad i=1,\dots, N,
\end{equation*}
then
\begin{align*}
    \abs{Z_i}&\le \|\op[u_0]-\op[\ProjOne_J u_0]\|_{L^\infty(\domain)}^2 \lesssim 2^{-2J(\beta-\eta)},\\
        \EV{Z_i^2} &\lesssim \|\op[u_0]-\op[\ProjOne_J u_0]\|_{L^2(\domain)}^2\|\op[u_0]-\op[\ProjOne_J u_0]\|_{\infty}^2\lesssim 2^{-2J(2\beta-\eta)},
\end{align*}
for any $i=1,\dots, N$. Applying Bernstein's inequality
%\begin{equation*}
%    \begin{split}
%        Z_i&=(\op[u_0]- \op[\ProjOne_J u_0])^2(X_i)-\|\op[u_0]-\op[\ProjOne_J u_0]\|_{L^2(\domain)}^2,\\ |Z_i|&\le \|\op[u_0]-\op[\ProjOne_J u_0]\|_\infty^2 \lesssim 2^{-2J(\beta -d/2)}\|\op[u_0]\|_{H^{\beta}}^2 \lesssim \epsilon_N^2 2^{Jd},\\
%        \E [Z_i^2] &\lesssim \|\op[u_0]-\op[\ProjOne_J u_0]\|_{L^2(\domain)}^2\|\op[u_0]-\op[\ProjOne_J u_0]\|_{\infty}^2\lesssim \epsilon_N^42^{Jd},
%    \end{split}
%\end{equation*}
shows that for any $L\ge 1$, and some $C,C'>0$,
\begin{equation*}
    \begin{split}
    \prob[u_0][N]{\|\op[u_0]-\op[\ProjOne_J u_0]\|_N^2-\|\op[u_0]-\op[\ProjOne_J u_0]\|_{L_p^2(\domain)}^2\ge L^2\epsilon_N^2 }\hspace{-10em}&\\
    &\le 2\exp\Big(-\frac{NL^4\epsilon_N^4}{C(2^{-2J(2\beta-\eta)}+L^2\epsilon_N^2 2^{-2J(\beta-\eta)})}\Big)\\
    &\le 2\exp(-C' L^2 N 2^{-2J\eta}\log(N)^2)\\
    &\le 2\exp(-C' L^2N\epsilon_N^2),
    %&\lesssim \exp(-N L^2/2^{Jd}C')\\
    %&\lesssim \exp(-\epsilon_N^{-2}L^2/C'')\\
    %&\lesssim \exp(-N\epsilon_N^2L^2/C'').
    \end{split}
\end{equation*}
where, in the final step, we used $\beta>\eta$.
%Here, in the final step, we also used that $\epsilon_N = o( N^{-1/4})$ due to our assumption $\beta > d/2 $.
Noting also that 
\begin{equation*}
    \frac{1}{N}\|\ProjOne_J u_0\|_{\mathbb{H}_N}^2\lesssim \frac{1}{N}\|\ProjOne_Ju_0\|_{H^{m+\beta_0+\epsilon}}^2\le \frac{1}{N}2^{2J(\beta_0+\epsilon-\beta)_+}\|u_0\|_{H^{m+\beta}}^2\lesssim \epsilon_N^2
\end{equation*}
since $\beta_0<\beta+d/2$, the above computations as well as \eqref{tau-concentration} imply that there exists $K_0$ such that for all $K\ge K_0$ and some $0<C<\infty$, 
\begin{equation}
\prob[u_0][N]{\tau^2(\hat{u}_N^{\operatorname{MAP}}, u_0)\ge K^2\epsilon_N^2} \le C\exp\Big(-\frac{NK^2\epsilon_N^2}{C}\Big).\label{eq:MAP_Convergence_EmpiricalNorm}
\end{equation}
We also have the moment bounds
\begin{equation*}
    \EV[\prob[u_0][N]{}]{ \tau^{2k}(\hat{u}_N^{\operatorname{MAP}}, u_0) } \le C_k\epsilon_N^{2k}
\end{equation*}
for any $k\ge 1$, which shows \eqref{eq:MAP_PredictionConvergence}.\\
\textbf{Step 2: rate for the $L_p^2(\domain)$-norm.}
For $s_N\ge K\epsilon_N$ and with $K,L>0$ to be chosen later, define the shell
\begin{equation*}
    \mathcal{M}_{s_N}\coloneqq\left\{u\in V_J\colon s_N<\norm{\op[u]-\op[u_0]}_{L_p^2(\domain)} \le 2s_N,\norm{u}_{\mathbb H_N}\le L\sqrt{N}\epsilon_N \right\}.
\end{equation*}
For every $u\in\mathcal{M}_{s_N}$, the stability estimate \eqref{eq:Stability_Lipschitz_Bounds} and the Jackson estimate \eqref{eq:Jackson} imply
\begin{equation*}
    \norm{u-\ProjOne_Ju_0}_{H^m(\domain)}\le\norm{u-u_0}_{H^m(\domain)}+\norm{(\identity-\ProjOne_J)u_0}_{H^m(\domain)}\lesssim s_N+2^{-J\beta}\lesssim s_N,
\end{equation*}
where we used $2^{-J\beta}\lesssim\epsilon_N\le s_N/K$ Moreover, the $H^m(\domain)$-radius of $\mathcal{M}_{s_N}$ is of the order $s_N$:
\[\mathcal{M}_{s_N}\subseteq \{ u\in V_J: \norm{u-u_0}_{H^m(\domain)}\le Cs_N \}\]
for some $C>0$. Let $d/2<\eta<\beta$.
By \eqref{eq:Lipschitz_Bounds_infty}, the Sobolev embedding, and the Jackson and Bernstein estimates \eqref{eq:Bernstein}, \eqref{eq:Jackson}, it follows that
\begin{align*}
    \norm{\op[u]-\op[u_0]}_{L^\infty(\domain)}&\le\norm{\op[u]-\op[\ProjOne_Ju_0]}_{L^\infty(\domain)}+\norm{\op[\ProjOne_Ju_0]-\op[u_0]}_{L^\infty(\domain)}\nonumber\\
    &\lesssim 2^{J\eta}\norm{u-\ProjOne_Ju_0}_{H^m(\domain)}+2^{-J(\beta-\eta)}\lesssim 2^{J\eta}s_N.
\end{align*}
Consequently, the class $\mathcal{W}_{s_N}\coloneqq\left\{\op[u]-\op[u_0]\colon u\in\mathcal{M}_{s_N}\right\}$ satisfies
\begin{equation}
    M_{s_N}\coloneqq \sup_{f\in\mathcal{W}_{s_N}} \norm{f}_{L^\infty(\domain)} \lesssim 2^{J\eta}s_N.\label{eq:MAP_shell_envelope_M}
\end{equation}
For any $u,v\in \mathcal{M}_{s_N}\subset V_J$ we find using the stability and Lipschitz estimates \eqref{eq:Stability_Lipschitz_Bounds}, \eqref{eq:Lipschitz_Bounds_infty}, and the definition of $\mathcal{M}_{s_N}$ that
\begin{equation*}
    \norm{\op[u]-\op[v]}_{L^\infty(\domain)}\lesssim \norm{u-v}_{W^{m,\infty}(\domain)}\lesssim2^{J\eta}\norm{u-v}_{H^m(\domain)}%\lesssim2^{J\eta}\norm{\op[u]-\op[v]}_{L_p^2(\domain)}\\
   % &\lesssim 2^{J\eta}s_N
\end{equation*}
with hidden constant independent of $u,v\in\mathcal{M}_{s_N}$. 
Using the covering-number bound
for finite-dimensional balls (Proposition 4.3.34 of \cite{GN16}), we obtain
\begin{equation}
\begin{split}
\log\Big(N\Big(\mathcal{W}_{s_N},\norm{}_{L^\infty(\domain)},\rho \Big)\Big)&\lesssim 
\log\Big(N\Big(\mathcal{M}_{s_N},C2^{J\eta}\norm{}_{H^m(\domain)},\rho \Big)\Big)
\\
&\lesssim \operatorname{dim}(V_J)\log\left(\frac{C2^{J\eta}s_N}{\rho} \right)\\
&\lesssim \operatorname{dim}(V_J) \log(N)\label{eq:MAP_shell_entropy}
\end{split}
\end{equation}
for any $0<\rho\le s_N$.
Choose a sufficiently small constant $c_0>0$ and set $\delta_N=c_0s_N$. By \eqref{eq:MAP_shell_envelope_M} and \eqref{eq:MAP_shell_entropy}, the entropy condition \eqref{eq:aux_Entropycondition} of Lemma \ref{lem:EmpiricalNorm} follows, because
\begin{equation*}
    \frac{N\delta_N^2}{M_{s_N}^2 \operatorname{dim}(V_J)\log(N)}\gtrsim\frac{N^{2(\beta-\eta)/(2\beta+d)}}{\log(N)}\to\infty,\quad N\to\infty,
\end{equation*}
for $\eta<\beta$. Furthermore, we find
\begin{equation*}
    \frac{M_{s_N}}{\sqrt N}\lesssim\frac{2^{J\eta}}{\sqrt N}s_N=\smallo(s_N).
\end{equation*}
After decreasing $c_0$ if necessary, the condition $s_N\ge C(\delta_N+M_{s_N}/\sqrt N)$ in Lemma \ref{lem:EmpiricalNorm} is therefore satisfied for all sufficiently large $N$. Applying that lemma with $R=s_N$ yields
\begin{equation}
    \prob[u_0][N][\Big]{\exists u\in\mathcal{M}_{s_N}\colon\norm{\op[u]-\op[u_0]}_N\le \frac{s_N}{2}}\le 2\exp\left(-\frac{Ns_N^2}{CM_{s_N}^2}\right)\le2\exp\left(-cN2^{-2J\eta}\right).\label{eq:MAP_shell_probability_revised}
\end{equation}
We now verify that only logarithmically many shells have to be considered for $\hat{u}_N^{\operatorname{MAP}}$. Assume that $u\in V_J$ satisfies $\norm{u}_{\mathbb H_N}\le L\sqrt{N}\epsilon_N$, then by \eqref{eq:CharacterisationSobolevSpaces} we also have $\norm{u}_{H^m(\domain)}\lesssim L\sqrt{N}\epsilon_N$ and we deduce that
\begin{equation}
    \norm{\op[u]-\op[u_0]}_{L_p^2(\domain)}\lesssim\norm{u-u_0}_{H^m(\domain)}\lesssim L\sqrt{N}\epsilon_N+\norm{u_0}_{H^m(\domain)}\le C_L\sqrt{N}\epsilon_N,\label{eq:MAP_maximal_shell_radius}
\end{equation}
for some $0<C_L<\infty$. Define the event
\begin{equation*}
    \mathcal{E}_N\coloneqq\left\{\norm{\op[\hat{u}_N^{\operatorname{MAP}}]-\op[u_0]}_N\le L\epsilon_N,\norm{\hat{u}_N^{\operatorname{MAP}}}_{\mathbb H_N}\le L\sqrt{N}\epsilon_N\right\}.
\end{equation*}
By \eqref{eq:MAP_Convergence_EmpiricalNorm}, for every sufficiently large $L$, we have $\prob[u_0][N]{\mathcal{E}_N}\to1$.
Fix such an $L$ and choose $K>2L$. If
$\mathcal{E}_N$ occurs and $\norm{\op[\hat{u}_N^{\operatorname{MAP}}]-\op[u_0]}_{L_p^2(\domain)}>K\epsilon_N$, then there is a unique integer $j\ge0$ such that
\begin{align*}
    &2^jK\epsilon_N<\norm{\op[\hat{u}_N^{\operatorname{MAP}}]-\op[u_0]}_{L_p^2(\domain)}\le 2^{(j+1)}K\epsilon_N\text{ and }\\
    &
    \norm{\op[\hat{u}_N^{\operatorname{MAP}}]-\op[u_0]}_N\le L\epsilon_N\le \frac{2^jK\epsilon_N}2.
\end{align*}
By \eqref{eq:MAP_maximal_shell_radius}, we deduce that $2^jK\epsilon_N<C_L\sqrt N\epsilon_N$.
Consequently, we require
\begin{equation*}
    0\le j\le j_{\max,N}\coloneqq\left\lceil\log_2\left(\frac{C_L\sqrt N}{K}\right)\right\rceil=\mathcal{O}(\log(N)).
\end{equation*}
Using \eqref{eq:MAP_shell_probability_revised} and the union bound, we conclude that
\begin{align*}
    \prob[u_0][N]{\norm{\op[\hat{u}_N^{\operatorname{MAP}}]-\op[u_0]}_{L_p^2(\domain)}>K\epsilon_N}&\le \prob[u_0][N]{\mathcal{E}_N^c}+2(j_{\max,N}+1)\exp\left(-cN2^{-2J\eta}\right)\to0
\end{align*}
as $N\to\infty$. We have shown that
\begin{equation*}
    \norm{\op[\hat{u}_N^{\operatorname{MAP}}]-\op[u_0]}_{L_p^2(\domain)}=\mathcal{O}_{\prob[u_0][N]{}}(\epsilon_N).
\end{equation*}
The stability estimate \eqref{eq:Stability_Lipschitz_Bounds} yields \eqref{eq:MAP_H^M_Convergence}. Finally, the Jackson and Bernstein estimates \eqref{eq:Jackson}, \eqref{eq:Bernstein} imply, for every $d/2<\eta<\beta$, that
\begin{align*}
    \norm{\hat{u}_N^{\operatorname{MAP}}-u_0}_{W^{m,\infty}(\domain)}&\lesssim 2^{J\eta}\norm{\hat{u}_N^{\operatorname{MAP}}-\ProjOne_Ju_0}_{H^m(\domain)}+2^{-J(\beta-\eta)}\norm{u_0}_{H^{m+\beta}(\domain)}\\
    &=\mathcal{O}_{\prob[u_0][N]{}}
    \left(2^{J\eta}\epsilon_N\right),
\end{align*}
which shows \eqref{eq:MAP_W^M_Convergence} and concludes the proof.
\end{proof}

\section{Proof for Section \ref{sec:BvM}}\label{sec:Proofs_BvM}

For $\psi\in\mathcal{V}^\ast$ define $\psi_0 \coloneqq \mathcal{I}_{u_0}^{-1}\psi\in\mathcal{V}$ and the perturbation
\begin{equation*}
    u_t = u - \frac{t}{\sqrt{N}}\ProjTwo_{J}\psi_0\in V_J,\quad u\in V_J, t\in\mathbb{R},\psi\in\mathcal{V}^\ast.
\end{equation*}
For $K,L,M>0$ and $t\in\mathbb{R}$ define the sets
\begin{align*}
\mathbb{A}_N &= \{ u\in V_J \,\colon\, \norm{u-u_0}_{H^m(\domain)}\le L\epsilon_N \text{ and }\norm{u}_{H^{m+\beta}(\domain)}\le K\log(N)\}\\
    A_N &=  \{ u_t\colon  u\in \mathbb{A}_N \text{ and }\psi\in \mathcal{V}^\ast, \norm{\psi}_{\mathcal{V}^\ast} \le  M\}.
\end{align*}

For notational simplicity we abbreviate the notations for the posterior distribution $\Pi_N(\MTemptyplaceholder)\coloneqq \Pi_N(\MTemptyplaceholder|\data)$ and $\EV[\Pi_N]{\MTemptyplaceholder}\coloneqq \EV[\Pi_N]{\MTemptyplaceholder\given\data}$ in this section.
Define the localised prior $\pi_N^{\mathbb{A}_N}\coloneqq \pi_N|_{\mathbb{A}_N}/\pi_N(\mathbb{A}_N)$ with corresponding posterior $\Pi_N^{\mathbb{A}_N}(\MTemptyplaceholder)=\Pi_N^{\mathbb{A}_N}(\MTemptyplaceholder|\data)$.

We also recall that $\mathcal{I}_{u_0}\colon \mathcal{V}\to\mathcal{V}^\ast$ is a topological isomorphism.
    In particular,
    \begin{align}
    \begin{split}
         \norm{\mathcal{I}_{u_0}^{-1}(v)}_{H^m(\domain)} &=\norm{\mathcal{I}_{u_0}^{-1}(v)}_{\mathcal{V}} \le C \norm{v}_{\mathcal{V}^\ast},\quad v\in \mathcal{V}^\ast,\\
        \norm{\mathcal{I}_{u_0}^{-1}(v|_\mathcal{V})}_{H^m(\domain)} &=\norm{\mathcal{I}_{u_0}^{-1}(v|_\mathcal{V})}_{\mathcal{V}}\le C \norm{(v|_\mathcal{V})}_{\mathcal{V}^\ast}\le C\norm{v}_{H^{-m}(\domain)},\quad v\in H^{-m}(\domain),
        \end{split}\label{eq:InformationIsomorphism}
    \end{align}
    where we denote by $(v|_\mathcal{V})$ the restriction of $v\in H^{-m}(\domain)$ to $\mathcal{V}\subset H^m(\domain)$. Furthermore, for $u\in \mathcal{V}$ and $\psi\in\mathcal{V}^\ast$, we have
    \begin{equation}
        \iprod[\mathcal{V}^\ast][\mathcal{V}]{\psi}{u} = \iprod[\mathcal{V}^\ast][\mathcal{V}]{\mathcal{I}_{u_0}\mathcal{I}_{u_0}^{-1}\psi}{u} = \frac{1}{\sigma^2}\iprod{\opDeriv{u_0}\mathcal{I}_{u_0}^{-1}\psi}{\opDeriv{u_0}[u]}_{L_p^2(\domain)} = \iprod{\mathcal{I}_{u_0}^{-1}\psi}{u}_{\operatorname{LAN}},\label{eq:FromDualtoLAN}
    \end{equation}
    where $\iprod[\mathcal{V}^\ast][\mathcal{V}]{}{}$ denotes the dual pairing between $\mathcal{V}^\ast$ and $\mathcal{V}$.

%The regularity condition ensures that $u_t=u+t\ProjTwo_{J}\psi_0/\sqrt{N}\in A_N$ for any fixed $t\in\mathbb{R}$ and $N$ large enough whenever $\norm{\psi}_{H^{\gamma}(\domain)}\le C$ for some $0<C<\infty$.

\subsection{Uniform LAN-expansion and convergence of the Laplace-transformation}

\begin{prop}[Uniform LAN expansion]\label{prop:uniformLANExpansion_RandomDesign}
    Grant Assumptions \ref{assump:ApproximatonSets}, \ref{assump:Operator_stability} and \ref{assump:operator_linearisation}. For $\beta>2d$ and $d/2<\beta_0<\beta+d/4$ consider $u_0\in \mathcal{V}\cap H^{m+\beta}(\domain)$ and the prior $\pi_N$ from \eqref{eq:Prior} with cut-off $2^J\sim N^{1/(2\beta+d)}$. Then we have the uniform LAN expansion
    \begin{equation*}
        \ell_N(u)-\ell_N(u_0) = -\frac{N}{2}\norm{u-u_0}_{\operatorname{LAN}}^2 +\frac{1}{\sigma}\sum_{i=1}^N \epsilon_i \opDeriv{u_0}[u-u_0] (X_i) + N(u),\quad u\in A_N,
    \end{equation*}
   % with
    %\begin{equation}
    %    \sup_{u\in A_N}\abs{N(u)}=\smallo_{\prob[u_0][N]{}}(1)\label{eq:remainder_general}
    %\end{equation}
   with     
   \begin{equation*}
        \sup_{u\in \mathbb{A}_N, \norm{\psi}_{\mathcal{V}^\ast}\le M}(\abs{N(u)} + \abs{N(u_t)})=\smallo_{\prob[u_0][N]{}}(1)
    \end{equation*}
   for any fixed $t\in\mathbb{R}$, $M>0$ and $N\in\mathbb{N}$ large enough.
\end{prop}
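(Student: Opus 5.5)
We set $h\coloneqq u-u_0$ and $w_u\coloneqq \op[u]-\op[u_0]$, and split off the linearisation error $r_u\coloneqq w_u-\opDeriv{u_0}[h]$. Substituting $Y_i=\op[u_0](X_i)+\sigma\epsilon_i$ into \eqref{eq:Likelihood} gives the exact identity
\begin{equation*}
    \ell_N(u)-\ell_N(u_0)=-\frac{N}{2\sigma^2}\norm{w_u}_N^2+\frac{1}{\sigma}\sum_{i=1}^N\epsilon_i w_u(X_i).
\end{equation*}
Since $\norm{h}_{\operatorname{LAN}}^2=\sigma^{-2}\norm{\opDeriv{u_0}[h]}_{L_p^2(\domain)}^2$, the remainder decomposes as $N(u)=R_1(u)+R_2(u)+R_3(u)$ with
\begin{align*}
    R_1(u)&=-\frac{N}{2\sigma^2}\big(\norm{w_u}_{L_p^2(\domain)}^2-\norm{\opDeriv{u_0}[h]}_{L_p^2(\domain)}^2\big),\\
    R_2(u)&=-\frac{N}{2\sigma^2}\big(\norm{w_u}_N^2-\norm{w_u}_{L_p^2(\domain)}^2\big),\\
    R_3(u)&=\frac{1}{\sigma}\sum_{i=1}^N\epsilon_i r_u(X_i).
\end{align*}
We will bound each $R_k$ uniformly over $u\in\mathbb{A}_N$ and over $u_t$ with $\norm{\psi}_{\mathcal{V}^\ast}\le M$.

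\textbf{Preliminary bounds on $A_N$.} For $u\in\mathbb{A}_N$ we have $\norm{h}_{H^m(\domain)}\le L\epsilon_N$. For $u_t$, \eqref{eq:InformationIsomorphism} and the $\operatorname{LAN}$-stability of $\ProjTwo_J$ give the additional term $\abs{t}N^{-1/2}\norm{\ProjTwo_J\psi_0}_{H^m(\domain)}\lesssim N^{-1/2}\le\epsilon_N$. So all elements of $A_N$ lie in an $H^m(\domain)$-ball of radius $\lesssim\epsilon_N$ around $u_0$, hence in $\mathcal{U}$ for $N$ large.

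For the uniform norm, fix $d/2<\eta<\beta$ and write $u-u_0=(u-\ProjOne_Ju_0)+(\ProjOne_Ju_0-u_0)$. The Sobolev embedding, the Bernstein estimate \eqref{eq:Bernstein} on $V_J$ and the Jackson estimate \eqref{eq:Jackson} give
\begin{equation*}
    \norm{h}_{W^{m,\infty}(\domain)}\lesssim 2^{J\eta}\epsilon_N\eqqcolon\delta_N .
\end{equation*}
The $u_t$-correction contributes at most $2^{J\eta}N^{-1/2}$, which is of smaller order. Choosing $\eta$ close to $d/2$ and using $\beta>2d$, we get $\delta_N\to0$, together with polynomial margins such as $N\epsilon_N^3\to0$. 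Assumption \ref{assump:operator_linearisation}(ii) then yields $\norm{r_u}_{L^2(\domain)}\lesssim\epsilon_N^2$ and $\norm{r_u}_{L^\infty(\domain)}\lesssim\delta_N^2$. Assumption \ref{assump:Operator_stability} yields $\norm{w_u}_{L^2(\domain)}\lesssim\epsilon_N$ and $\norm{w_u}_{L^\infty(\domain)}\lesssim\delta_N$.

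\textbf{Deterministic and noise terms.} $R_1$ is deterministic. Since $\norm{w}^2-\norm{w-r}^2=\langle r,2w-r\rangle$, we get
\begin{equation*}
    \abs{R_1(u)}\lesssim N\norm{r_u}_{L^2(\domain)}\big(\norm{w_u}_{L^2(\domain)}+\norm{r_u}_{L^2(\domain)}\big)\lesssim N\epsilon_N^3\to0 .
\end{equation*}
The exponent of $N\epsilon_N^3$ is $(d-\beta)/(2\beta+d)<0$. Conditionally on the design, $R_3$ is a centred Gaussian process indexed by $A_N$, which is a set of dimension $\lesssim\dim(V_J)$. I would bound its supremum by Dudley's entropy integral in the intrinsic metric $\sqrt N\norm{r_u-r_v}_N$. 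The diameter is at most $\sqrt N\sup_u\norm{r_u}_{L^\infty(\domain)}\lesssim\sqrt N\delta_N^2$. The Lipschitz continuity of $u\mapsto r_u$ into $L^\infty(\domain)$, via Assumption \ref{assump:operator_linearisation}(iii) and Bernstein on $V_J$, makes the log-covering numbers $\lesssim\dim(V_J)\log N$, exactly as in \eqref{eq:MAP_shell_entropy}. This gives
\begin{equation*}
    \sup_u\abs{R_3(u)}=\mathcal{O}_{\prob{}}\big(\sqrt N\delta_N^2\sqrt{2^{Jd}\log N}\big),
\end{equation*}
which tends to zero for $\beta>2d$. A sharper route is to use the empirical $L^2$-size $\norm{r_u}_N\approx\epsilon_N^2$ instead of the sup-norm.

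\textbf{Main obstacle: $R_2$.} $R_2$ is $N$ times the centred empirical process of the squared functions $w_u^2$ over the finite-dimensional class $\{w_u\colon u\in A_N\}$. I would invoke a Bernstein-type chaining bound, in the style of Lemma \ref{lem:EmpiricalNorm} or Theorem 3.5.4 of \cite{GN16}. The class $\{w_u^2\}$ has envelope $\lesssim\delta_N^2$, $L^2(P)$-diameter $\lesssim\delta_N\epsilon_N$, and log-covering numbers $\lesssim2^{Jd}\log N$. This gives
\begin{equation*}
    \sup_u\abs{R_2(u)}\lesssim_{\prob{}}\sqrt N\,\delta_N\epsilon_N\sqrt{2^{Jd}\log N}+\delta_N^2\,2^{Jd}\log N .
\end{equation*}
Plugging in $2^J\sim N^{1/(2\beta+d)}$, $\epsilon_N=N^{-\beta/(2\beta+d)}\log N$ and $\eta\downarrow d/2$, the first term has exponent roughly $(\tfrac12(2\beta+d)-2\beta+d)/(2\beta+d)=(3d/2-\beta)/(2\beta+d)$. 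This is negative under $\beta>2d$, which is presumably where that condition, and the restriction $\beta_0<\beta+d/4$ via the RKHS bounds, originate. Checking that the logarithmic factors and the additional $u_t$ perturbation do not spoil these exponents is the step I expect to require the most care. Combining the three bounds gives $\sup(\abs{N(u)}+\abs{N(u_t)})=\smallo_{\prob[u_0][N]{}}(1)$.
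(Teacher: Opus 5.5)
Your argument is correct and uses the same three-term decomposition as the paper. Your $R_1,R_2,R_3$ are the paper's $\mathcal{R}_{N,3},\mathcal{R}_{N,2},\mathcal{R}_{N,1}$, and the deterministic term is handled identically via $N\epsilon_N^3\to0$. The difference lies in how the random suprema are controlled.

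The paper gets sup-norm control by interpolating between the $H^m$-radius $L\epsilon_N$ and the bound $\|u\|_{H^{m+\beta}}\le K\log N$ built into $\mathbb{A}_N$. It then applies the Bernstein-type chaining Lemma 3.12 of \cite{NW20} to both random terms, including $\epsilon_i r_u(X_i)$. For this it uses the dimension-free metric entropy of Sobolev balls $B(0,r,H^{m+\eta})$ in $H^m$ and in $B^m_{\infty,1}$. That $L^\infty$-entropy integral converges only for $\eta>d$, so the squared-difference term $\mathcal{R}_{N,2}$ forces $\beta>\eta+d>2d$.

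You obtain the same sup-norm radius $2^{J\eta}\epsilon_N$, which coincides with the paper's $r_{N,\eta}$. You get it from the Bernstein inequality on $V_J$ and Jackson for $u_0$, without using the $H^{m+\beta}$-constraint. You then exploit $A_N\subset V_J$ to use volumetric covering numbers of order $\operatorname{dim}(V_J)\log N$, which lets $\eta$ approach $d/2$. Conditioning on the design and using a Gaussian Dudley bound for $R_3$ is also more elementary than the paper's treatment. The price is the factor $\sqrt{\operatorname{dim}(V_J)}$ in the entropy integrals. In your route, $\beta>2d$ is forced by $R_3$, whose exponent is $(d+2\eta-\beta)/(2\beta+d)$, while $R_2$ would only need $\beta>3d/2$.

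Two minor points:
\begin{itemize}
\item For $R_2$ you need a two-sided bound on the centred process. Lemma \ref{lem:EmpiricalNorm} is only a one-sided ratio bound and does not give this. Rely instead on the VC-type moment bound from \cite{GN16}, or on Lemma 3.12 of \cite{NW20}, fed with your sup-norm covering numbers.
\item $\beta_0$ plays no role in this proposition. The restriction $\beta_0<\beta+d/4$ enters only through the Cameron--Martin bounds in Lemma \ref{lem:Change_of_measures}.
\end{itemize}
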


\begin{proof}%[Proof of Proposition \ref{prop:uniformLANExpansion_RandomDesign}]
%Denote $\epsilon_N =N^{-\beta/(2\beta+d)}$.
    Before starting the proof, note that for any $0\le \eta \le \beta$ we have by Sobolev interpolation that for any $u\in A_N$ and $0\le \eta\le\beta$ with $r_{N,\eta}=\epsilon_N^{(\beta-\eta)/\beta}\log(N)^{\eta/\beta}$ we have
    \begin{equation}
        \sup_{u\in A_N}\norm{u-u_0}_{H^{m+\eta}(\domain)}\le  \sup_{u\in A_N}\norm{u-u_0}_{H^m(\domain)}^{1-\eta/\beta} \norm{u-u_0}_{H^{m+\beta}(\domain)}^{\eta/\beta}\lesssim r_{N,\eta}\label{eq:interpolation_u}
    \end{equation}
    and by \eqref{eq:Lipschitz_Bounds_infty} for $d/2<\eta\le\beta$
    \begin{equation}
        \sup_{u\in A_N}\norm{\op[u]-\op[u_0]}_{L^\infty(\domain)}\lesssim \sup_{u\in A_N}\norm{u-u_0}_{H^{m+\eta}(\domain)}\lesssim r_{N,\eta},\label{eq:interpolation_Lu}
    \end{equation}
    %\begin{equation}
    %    \norm{u-u_0}_{H^{2+\eta}(\domain)}\le \norm{u-u_0}_{H^2(\domain)}^{1-\eta/\beta}\norm{u-u_0}_{H^{2+\beta}(\domain)}^{\eta/\beta}\lesssim \epsilon_N^{(\beta-\eta)/\beta},\quad u\in A_N,\label{eq:interpolation_Lu}
    %\end{equation}
    where we used that according to the Bernstein estimate \eqref{eq:Bernstein_orthogonalprojection} and \eqref{eq:InformationIsomorphism} we have
    \begin{equation*}
        \sup_{\psi\colon\norm{\psi}_{\mathcal{V}^\ast} \le  M}\frac{1}{\sqrt{N}}\norm{\ProjTwo_{J}\psi_0}_{H^{m+\beta}(\domain)}\lesssim N^{-1/2}2^{J\beta}\norm{\psi_0}_{H^m(\domain)}\lesssim N^{-1/2+\beta/(2\beta+d)}=\smallo(1).
    \end{equation*}
    %Furthermore, choose $\kappa' = \kappa(2\beta-d)/(2\beta)$, which satisfies $d/2<\kappa'<\kappa$ by the condition on $\beta$.   
For $u\in \mathbb{A}_N$ the loglikelihood difference can be decomposed as    
\begin{align*}
    \ell_N(u)-\ell_N(u_0) &= -\left(\frac{1}{2\sigma^2}\sum_{i=1}^N (Y_i - \op[u](X_i))^2 - \frac{1}{2\sigma^2}\sum_{i=1}^N (Y_i - \op[u_0](X_i))^2\right)\\
    &= -\frac{1}{2\sigma^2}\sum_{i=1}^N (\op[u](X_i)-\op[u_0](X_i))^2 + \frac{1}{\sigma}\sum_{i=1}^N \epsilon_i(\op[u](X_i)-\op[u_0](X_i))\\
    %&= -\frac{N}{2\sigma^2}\norm{\op[u]-\op[u_0]}_{L^2(\domain)}^2 + \frac{\sqrt{N}}{\sigma}\iprod{\eta}{\op[u]-\op[u_0]}_{L^2(\domain)}\\
    %&\quad + \frac{N}{2\sigma^2}\left(\norm{\op[u]-\op[u_0]}_{L^2(\domain)}^2- \frac{1}{N}\sum_{i=1}^N (\op[u](X_i)-\op[u_0](X_i))^2\right)\\
    %&\quad + \frac{\sqrt{N}}{\sigma}\left(\frac{1}{\sqrt{N}}\sum_{i=1}^N \epsilon_i(\op[u](X_i)-\op[u_0](X_i))- \sqrt{N}\iprod{\eta}{\op[u]-\op[u_0]}_{L^2(\domain)}\right).\\
    %&= -\frac{1}{2\sigma^2}\sum_{i=1}^N (gma}\sum_{i=1}^N \epsilon_i\opDeriv{u_0}[u-u_0](X_i) + \frac{1}{\sigma}\sum_{i=1}^N \epsilon_i(\op[u](X_i)-\op[u_0](X_i) - \opDeriv{u_0}[u-u_0](X_i))\\
    &= -\frac{N}{2}\norm{u-u_0}_{\operatorname{LAN}}^2 + \frac{\sqrt{N}}{\sigma}\frac{1}{\sqrt{N}}\sum_{i=1}^N \epsilon_i \opDeriv{u_0}[u-u_0] (X_i)\\
    &\quad  + \mathcal{R}_{N,1} + \mathcal{R}_{N,2}+ \mathcal{R}_{N,3}
\end{align*}
with 
\begin{align*}
    \mathcal{R}_{N,1} &= \sqrt{N}\frac{1}{\sigma \sqrt{N}}\sum_{i=1}^N \epsilon_i (\op[u](X_i)-\op[u_0](X_i)-\opDeriv{u_0}[u-u_0] (X_i))\\
    \mathcal{R}_{N,2} &= \frac{N}{2\sigma^2}\left(\norm{\op[u]-\op[u_0]}_{L_p^2(\domain)}^2- \frac{1}{N}\sum_{i=1}^N (\op[u](X_i)-\op[u_0](X_i))^2\right)\\
    %\mathcal{R}_{N,2} &= \frac{\sqrt{N}}{\sigma}\left(\frac{1}{\sqrt{N}}\sum_{i=1}^N \epsilon_i(\op[u](X_i)-\op[u_0](X_i))- \iprod{\eta}{\op[u]-\op[u_0]}_{L^2(\domain)}\right)\\
    \mathcal{R}_{N,3}&= \frac{N}{2}\left(\norm{u-u_0}_{\operatorname{LAN}}^2-\frac{1}{\sigma^2}\norm{\op[u]-\op[u_0]}_{L_p^2(\domain)}^2\right).
    %\mathcal{R}_{N,4} &= \frac{\sqrt{N}}{\sigma}\left(\iprod{\eta}{\op[u]-\op[u_0]}_{L^2(\domain)} - \iprod{\eta}{\opDeriv{u_0}[u-u_0]}_{L^2(\domain)}\right).
\end{align*}
%\begin{align*}
%    \mathcal{R}_{N,1} &= -\frac{1}{2\sigma^2}\left(\sum_{i=1}^N  \opDeriv{u_0}[u-u_0](X_i) ^2 -(\op[u](X_i)-\op[u_0](X_i))^2\right)\\
%    \mathcal{R}_{N,2} &= \frac{N}{2\sigma^2}\left(\frac{1}{N}\sum_{i=1}^N (\opDeriv{u_0}[u-u_0](X_i))^2-\norm{\opDeriv{u_0}[u-u_0]}_{L^2(\domain)}^2 \right)\\
%    \mathcal{R}_{N,3} &= \frac{1}{\sigma}\sum_{i=1}^N \epsilon_i\opDeriv{u_0}[u-u_0](X_i) - \frac{\sqrt{N}}{\sigma}\iprod{\eta}{\opDeriv{u_0}[u-u_0]}_{L^2(\domain)}\\
%    \mathcal{R}_{N,4} &= \frac{1}{\sigma}\sum_{i=1}^N \epsilon_i(\op[u](X_i)-\op[u_0](X_i) - \opDeriv{u_0}[u-u_0](X_i)).
%\end{align*}
We will show that each of the remainders is uniformly small over $u\in A_N$ by using the chaining lemma for empirical process in the form of Lemma 3.12 in \cite{NW20}.\\
\textbf{$\mathcal{R}_{N,1}$}:
For $u\in A_N$ denote $h_u\coloneqq \op[u]-\op[u_0] - \opDeriv{u_0}[u-u_0]$ such that 
\begin{equation*}
    \mathcal{R}_{N,1} = \frac{\sqrt{N}}{\sigma}\left(\frac{1}{\sqrt{N}}\sum_{i=1}^N \epsilon_ih_u(X_i)\right).
\end{equation*}
To apply Lemma 3.12 of \cite{NW20} we first compute using the linearisation \eqref{eq:linearisation_infty}, the Sobolev embedding and \eqref{eq:interpolation_u} that
\begin{align*}
    \sup_{u\in A_N}\norm{h_u}_{L^\infty(\domain)} &= \sup_{u\in A_N}\norm{\op[u]-\op[u_0]-\opDeriv{u_0}[u-u_0]}_{L^\infty(\domain)}\\
    %&= \sup_{u\in A_N}\norm{\tau(u)-\tau(u_0) - \tau'(u_0)(u-u_0)}_{L^\infty(\domain)}\\
    %&= \sup_{u\in A_N}\norm*{\int_0^1 \tau''(u_0 + t(u-u_0))(u-u_0)^2\,dt}_{L^\infty(\domain)}\\
    &\lesssim  \sup_{u\in A_N}\norm{u-u_0}_{W^{m,\infty}(\domain)}^2\lesssim \sup_{u\in A_N}\norm{u-u_0}_{H^{m+\eta}(\domain)}^2\\
    &\lesssim r_{N,\eta}^2
\end{align*}
for $\eta>d/2$.
%where we used the Sobolev embedding $\norm{}_{L^\infty(\domain)}\lesssim \norm{}_{H^2(\domain)}$ for $d\le 3$ and
%.
%Since $d\le 8$ we have $\norm{}_{L^4(\domain)}\lesssim \norm{}_{H^2(\domain)}$ by the Sobolev embedding theorem. 
Applying \eqref{eq:linearisation_2} and \eqref{eq:Perturbation_Contraction} we find
\begin{equation*}
    \EV[\prob[u_0]{}]{h_u(X_1)^2} \le \bar{p} \norm{h_u}_{L^2(\domain)}^2\lesssim \norm{u-u_0}_{H^m(\domain)}^4\lesssim \epsilon_N^4.
\end{equation*}
We are left to compute the entropy numbers. Using \eqref{eq:linearisation_2}, \eqref{eq:linearisation2} and \eqref{eq:Perturbation_Contraction} we find for $u,v\in A_N$ the bound
\begin{align*}
d_2(u,v) &=\sqrt{\EV[\prob[u_0]{}]{(h_u(X_1) - h_v(X_1))^2}}\\
&\le \sqrt{\bar{p}}\norm{\op[u]-\op[v] - \opDeriv{u_0}[u-v]}_{L^2(\domain)}\\
&\le \sqrt{\bar{p}}\norm{\op[u]-\op[v] - \opDeriv{v}[u-v]}_{L^2(\domain)} + \sqrt{\bar{p}}\norm{(\opDeriv{v}-\opDeriv{u_0})(u-v)}_{L^2(\domain)}\\
&\lesssim \norm{u-v}_{H^m(\domain)}^2 + \norm{v-u_0}_{H^m(\domain)}\norm{u-v}_{H^m(\domain)}\\
%&= \norm{\tau(u)-\tau(v) - \tau'(u_0)(u-v)}_{L^2(\domain)}\\
%&\le \norm{\tau(u)-\tau(v)-\tau'(u)(u-v)}_{L^2(\domain)} + \norm{(\tau'(u_0) - \tau'(u))(u-v)}_{L^2(\domain)}\\
%&= \norm*{\int_0^1 \tau''(v + t(u-v))(u-v)^2\,dt}_{L^2(\domain)}\\
%&\quad + \norm*{\int_0^1 \tau''(u_0 + t(u-u_0))(u-u_0)(u-v) \, dt}_{L^2(\domain)}\\
%&\lesssim \norm{\tau''}_{L^\infty(\mathbb{R})}\norm{u-v}_{L^4(\domain)}^2 + \norm{\tau''}_{L^{\infty}(\mathbb{R})}\norm{u-v}_{L^4(\domain)} \norm{u-u_0}_{L^4(\domain)}\\
&\lesssim \epsilon_N\norm{u-v}_{H^m(\domain)}.
\end{align*}
Similarly, we find 
\begin{align*}
    d_\infty(u,v) &= \norm{\op[u]-\op[v] - \opDeriv{u_0}[u-v]}_{L^\infty(\domain)}\\
    &\le \norm{\op[u]-\op[v] - \opDeriv{v}[u-v]}_{L^\infty(\domain)} + \norm{(\opDeriv{v}-\opDeriv{u_0})(u-v)}_{L^\infty(\domain)}\\
&\lesssim \norm{u-v}_{W^{m,\infty}(\domain)}^2 + \norm{v-u_0}_{W^{m,\infty}(\domain)}\norm{u-v}_{W^{m,\infty}(\domain)}\\
    %= \norm{\tau(u)-\tau(v) - \tau'(u_0)(u-v)}_{L^\infty(\domain)}\\
%&\le \norm{\tau(u)-\tau(v)-\tau'(u)(u-v)}_{L^\infty(\domain)} + \norm{(\tau'(u_0) - \tau'(u))(u-v)}_{L^\infty(\domain)}\\
%&= \norm*{\int_0^1 \tau''(v + t(u-v))(u-v)^2\,dt}_{L^\infty(\domain)}\\
%&\quad + \norm*{\int_0^1 \tau''(u_0 + t(u-u_0))(u-u_0)(u-v) \, dt}_{L^\infty(\domain)}\\
%&\lesssim \norm{\tau''}_{L^\infty(\mathbb{R})}\norm{u-v}_{L^\infty(\domain)}^2 + \norm{\tau''}_{L^\infty(\mathbb{R})}\norm{u-v}_{L^\infty(\domain)} \norm{u-u_0}_{L^\infty(\domain)}\\
&\lesssim r_{N,\eta}\norm{u-v}_{W^{m,\infty}(\domain)}.
\end{align*}
Consequently, the covering numbers can be bounded as
\begin{align*}
    N(A_N,d_2,\rho) &\le N\left(B\left(0,Cr_{N,\eta},\norm{}_{H^{m+\eta}(\domain)}\right),\norm{}_{H^m(\domain)},\frac{C'\rho}{\epsilon_N}\right),\\
    N(A_N,d_\infty,\rho) &\le N\left(B\left(0,Cr_{N,\eta},\norm{}_{H^{m+\eta}(\domain)}\right),\norm{}_{W^{m,\infty}(\domain)},\frac{C'\rho}{r_{N,\eta}}\right)\\
    &\le N\left(B\left(0,C r_{N,\eta},\norm{}_{H^{m+\eta}(\domain)}\right),\norm{}_{B_{\infty,1}^m(\domain)},\frac{C''\rho}{r_{N,\eta}}\right)
\end{align*}
for some constants $0<C,C',C''<\infty$ and the Besov-space $B_{\infty,1}^m(\domain)$ of Definition 2.5.1/1 of \cite{edmundsFunctionSpacesEntropy1996a}.
Consequently, using the covering numbers provided by Theorem 4.10.3 of \cite{T83} and Theorem 3.3.2 of \cite{edmundsFunctionSpacesEntropy1996a} we find
\begin{align*}
    \int_0^{C\epsilon_N^2}\sqrt{\log(N(A_N,d_2,\rho))}d\rho &\lesssim \int_0^{C\epsilon_N^2} \left(\frac{\epsilon_Nr_{N,\eta}}{\rho}\right)^{d/(2\eta)} d\rho \\
    &\lesssim r_{N,\eta}^{d/(2\eta)}\epsilon_N^{2-d/(2\eta)}\\
     \int_0^{Cr_{N,\eta}^2}\log(N(A_N,d_\infty,\rho))d\rho &\lesssim \int_0^{Cr_{N,\eta}^2} \left(\frac{r_{N,\eta}^2}{\rho}\right)^{d/\eta} d\rho \\
     %&\lesssim\epsilon_N^{2d(\beta-\eta)/(\eta\beta)}\epsilon_N^{2(1 - d/\eta)( (\beta-\eta)/\beta)}\\
     &\lesssim r_{N,\eta}^2
\end{align*}
if $\eta>d$.
An application of Lemma 3.12 of \cite{NW20} therefore yields 
\begin{equation*}
    \prob*{\sup_{u\in A_N} \abs{\mathcal{R}_{N,1}} \ge C''\sqrt{N}\left[r_{N,\eta}^{d/(2\eta)}\epsilon_N^{2-d/(2\eta)}+\epsilon_N^2 \sqrt{\delta} + \frac{r_{N,\eta}^2(1+\delta)}{\sqrt{N}}\right]} \le 2e^{-\delta}
\end{equation*}
for some constant $0<C''<\infty$ and any $\delta>0$. To show $\sup_{u\in A_N}\abs{\mathcal{R}_{N,1}} = \smallo_{\prob[u_0][N]{}}(1)$, we require
\begin{equation*}
    \left(2-\frac{d}{2\beta}\right)\frac{\beta}{2\beta+d}>\frac{1}{2},\quad \frac{2\beta}{2\beta+d}>\frac{1}{2},\quad 2-\frac{2\eta}{\beta}>0,\quad\eta>d.
\end{equation*}
These conditions are satisfied as soon as $\beta>\eta>d$, which is possible since $\beta> 2d>d$.\\
\textbf{$\mathcal{R}_{N,2}$}: First recall that $\norm{\psi_0}_{H^m(\domain)}\lesssim \norm{\psi}_{\mathcal{V}^\ast}\le 1$ from \eqref{eq:InformationIsomorphism} such that the Bernstein estimate \eqref{eq:Bernstein_orthogonalprojection} yields
\begin{align*}
    %\norm*{\op[\ProjTwo_{J}\psi_0/\sqrt{N}]}_{L^\infty(\domain)}&\lesssim
    \norm{\ProjTwo_{J}\psi_0/\sqrt{N}}_{H^{m+\eta}(\domain)}\lesssim N^{\eta/(2\beta+d)-1/2} \norm{\psi_0}_{H^m(\domain)}&\lesssim \epsilon_N^{(2\beta-2\eta+d)/(2\beta)}
    %&\le N^{-\beta(1-d/(2\beta))/(2\beta+d)}=\epsilon_N^{1-d/(2\beta)} \le \epsilon_N^{(\beta-\eta)/\beta}
\end{align*}
for any $\eta\ge 0$.
Define $h_u\coloneqq (\op[u]-\op[u_0])^2$ for $u\in A_N$ , then
\begin{equation*}
    \mathcal{R}_{N,2} =-\frac{\sqrt{N}}{2\sigma^2}\left(\frac{1}{\sqrt{N}}\sum_{i=1}^N (h_u(X_i) - \EV[\prob[u_0]{}]{h_u(X_1)})\right).
\end{equation*}
%Before we proceed, we note that on $A_N$ we have by $1\gtrsim \norm{u}_{H^{2+\kappa}}\gtrsim \norm{\op[u]}_{L^\infty(\domain)}$ if $\kappa>d/2$. By interpolation, this implies
%\begin{align*}
%    \sup_{u\in{A_N}}\norm{\op[u]-\op[u_0]}_{L^\infty(\domain)}&\lesssim \norm{u-u_0}_{H^{2+d/2}(\domain)}\lesssim \norm{u-u_0}_{H^2(\domain)}^{(\kappa-d/2)/\kappa}\norm{u-u_0}_{H^{2+\kappa}(\domain)}\\
%    &\lesssim \epsilon_N^{(\kappa-d/2)/\kappa}.
%\end{align*}
With $\eta>d/2$ we have from \eqref{eq:Stability_Lipschitz_Bounds}, \eqref{eq:Lipschitz_Bounds_infty} and \eqref{eq:interpolation_Lu} that
\begin{align*}
    \sup_{u\in A_N}&\norm{h_u}_{L^\infty(\domain)}\\
    &= \sup_{u\in A_N}\norm{(\op[u]-\op[u_0])^2}_{L^\infty(\domain)}= \sup_{u\in A_N}\norm{\op[u]-\op[u_0]}_{L^\infty(\domain)}^2\\
    &\le \sup_{u\in \mathbb{A}_N,\norm{\psi}_{\mathcal{V}^\ast}\le M}\left(\norm{\op[u]-\op[u_0]}_{L^\infty(\domain)}^2 + \norm*{\op[u]-\op[u-t\ProjTwo_{J}\psi_0/\sqrt{N}]}_{L^\infty(\domain)}^2\right)\\
    &\lesssim \sup_{u\in \mathbb{A}_N,\norm{\psi}_{\mathcal{V}^\ast}\le M}\left(\norm{u-u_0}_{H^{m+\eta}(\domain)}^2+\epsilon_N^{(2\beta-2\eta +d)/\beta}\right)\\
    &\lesssim \sup_{u\in \mathbb{A}_N}\norm{u-u_0}_{H^{m+\eta}(\domain)}^2 + \epsilon_N^{(2\beta-2\eta +d)/\beta}\\
    &\lesssim r_{N,\eta}^2.
\end{align*}
Proceeding similarly, we find
\begin{align*}
    \EV[\prob[u_0]{}]{h_u(X_1)^2} &\le \bar{p}\norm{(\op[u]-\op[u_0])^2}_{L^2(\domain)}^2\lesssim   \norm{\op[u]-\op[u_0]}_{L^2(\domain)}^2 \norm{\op[u]-\op[u_0]}_{L^\infty(\domain)}^2\\
    &\lesssim \norm{u-u_0}_{H^m(\domain)}^2 r_{N,\eta}^2\lesssim  \epsilon_N^2r_{N,\eta}^2.
\end{align*}
For the distance $d_2$ we find
\begin{align*}
    d_2(u,v) &=\sqrt{\EV[\prob[u_0]{}]{(h_u(X_1) - h_v(X_1))^2}}\\
    &\le \sqrt{\bar{p}} \norm*{(\op[u]-\op[u_0])^2 - (\op[v]-\op[u_0])^2}_{L^2(\domain)}\\
    &=\sqrt{\bar{p}}\norm*{(\op[u]-\op[v])(\op[u]+\op[v]-2\op[u_0])}_{L^2(\domain)}\\
    &\le \sqrt{\bar{p}}\norm*{\op[u]-\op[v]}_{L^2(\domain)}\norm*{\op[u]+\op[v]-2\op[u_0]}_{L^\infty(\domain)}\\
    &\lesssim \norm*{u-v}_{H^m(\domain)}\left(\norm*{u-u_0}_{H^{m+\eta}(\domain)} + \norm*{v-u_0}_{H^{m+\eta}(\domain)}\right)\\
    &\lesssim r_{N,\eta}\norm{u-v}_{H^m(\domain)}.
\end{align*}
    Similarly,
\begin{align*}
    d_\infty(u,v) &= \norm*{(\op[u]-\op[u_0])^2 - (\op[v]-\op[u_0])^2}_{L^\infty(\domain)}\\
    &= \norm*{(\op[u]-\op[v])(\op[u]+\op[v]-2\op[u_0])}_{L^\infty(\domain)}\\
    &\le \norm*{\op[u]-\op[v]}_{L^\infty(\domain)}\norm*{\op[u]+\op[v]-2\op[u_0]}_{L^\infty(\domain)}\\
    &\lesssim \norm*{u-v}_{W^{m,\infty}(\domain)}\left(\norm*{u-u_0}_{H^{m+\eta}(\domain)} + \norm*{v-u_0}_{H^{m+\eta}(\domain)}\right)\\
    &\lesssim r_{N,\eta}\norm{u-v}_{W^{m,\infty}(\domain)}.
\end{align*}
%Since by interpolation for any $d/2<\eta<\kappa$ we have $\norm{u-u_0}_{H^{2+\eta}(\domain)}\le \norm{u-u_0}_{H^2(\domain)}^{(\kappa-\eta)/\kappa}\norm{u-u_0}_{H^{2+\kappa}(\domain)}^{\eta/\beta}\lesssim \epsilon_N^{(\kappa-\eta)/\kappa}$ we find that $\mathbb{A}_N\subset \{u\in H^{2+\kappa}(\domain)\colon \norm{u-u_0}_{H^{2+\eta}(\domain)}\le \epsilon_N^{(\kappa-\eta)/\kappa}\}$.
 The entropy numbers can therefore be bounded as
\begin{align*}
    N(A_N,d_2,\rho) &\le N\left(B\left(0,Cr_{N,\eta},\norm{}_{H^{m+\eta}(\domain)}\right),\norm{}_{H^m(\domain)},\frac{C'\rho}{r_{N,\eta}}\right),\\
    N(A_N,d_\infty,\rho) &\le N\left(B\left(0,Cr_{N,\eta},\norm{}_{H^{m+\eta}(\domain)}\right),\norm{}_{W^{m,\infty}(\domain)},\frac{C'\rho}{r_{N,\eta}}\right)\\
    &\le N\left(B\left(0,C r_{N,\eta},\norm{}_{H^{m+\eta}(\domain)}\right),\norm{}_{B_{\infty,1}^m(\domain)},\frac{C''\rho}{r_{N,\eta}}\right)
\end{align*}
for some constants $0<C,C',C''<\infty$ and the Besov-space $B_{\infty,1}^m(\domain)$ of Definition 2.5.1/1 of \cite{edmundsFunctionSpacesEntropy1996a}.
Consequently, using the covering numbers provided by Theorem 4.10.3 of \cite{T83} and by Theorem 3.3.2 of \cite{edmundsFunctionSpacesEntropy1996a} we find
\begin{align*}
    \int_0^{C\epsilon_Nr_{N,\eta}}\sqrt{\log(N(A_N,d_2,\rho))}d\rho &\lesssim \int_0^{C\epsilon_Nr_{N,\eta}} \left(\frac{r_{N,\eta}^2}{\rho}\right)^{d/(2\eta)} d\rho
    %& \lesssim \epsilon_N^{d(\beta-\eta)/(\eta\beta)}\epsilon_N^{(1 - d/(2\eta))(1+ (\beta-\eta)/\beta)}\\
    %&= \epsilon_N^{2-(2\eta+d)/(2\beta)},\\
    \lesssim r_{N,\eta}^{1+d/(2\eta)}\epsilon_N^{1-d/(2\eta)}\\
     \int_0^{Cr_{N,\eta}^2}\log(N(A_N,d_\infty,\rho))d\rho &\lesssim \int_0^{Cr_{N,\eta}^2} \left(\frac{r_{N,\eta}^2}{\rho}\right)^{d/\eta} d\rho 
     %&\lesssim\epsilon_N^{2d(\beta-\eta)/(\eta\beta)}\epsilon_N^{2(1 - d/\eta)( (\beta-\eta)/\beta)}\\
     %&=\epsilon_N^{2 - 2\eta/\beta}.
     \lesssim r_{N,\eta}^2
\end{align*}
for $\eta>d$.
An application of Lemma 3.12 of \cite{NW20} therefore yields
\begin{equation*}
    \prob*{\sup_{u\in A_N} \abs{\mathcal{R}_{N,2}} \ge C'''\sqrt{N}\left[r_{N,\eta}^{1+d/(2\eta)}\epsilon_N^{1-d/(2\eta)}+\epsilon_Nr_{N,\eta}\sqrt{\delta} + \frac{r_{N,\eta}^2(1+\delta)}{\sqrt{N}}\right]} \le 2e^{-\delta}
\end{equation*}
for some constant $0<C'''<\infty$.
To show that $\sup_{u\in A_N}\abs{\mathcal{R}_{N,2}}=\smallo_{\prob[u_0][N]{}}(1)$, we thus require the conditions
\begin{equation*}
    \left(2- \frac{2\eta+d}{2\beta}\right)\frac{\beta}{2\beta+d}>\frac{1}{2},\quad \left(1+\frac{\beta-\eta}{\beta}\right)\frac{\beta}{2\beta+d}>\frac{1}{2},2 - \frac{2\eta}{\beta}>0,\quad \eta>d,
\end{equation*}
which are satisfied as soon as $\beta>\eta+d$ (possible since $\beta>2d$ by choosing $\eta>d$ sufficiently small).
%$\beta>d$, $d\beta/(2\beta - d)<\beta_{0}\le\beta$, $d/2<\eta <  \beta_{0}(1 - d/(2\beta))$.
\\
For $\mathcal{R}_{N,3}$ we find
\begin{align*}
    \abs{\mathcal{R}_{N,3}} &= \frac{N}{2\sigma^2}\left(\abs{\norm{\opDeriv{u_0}[u-u_0]}_{L_p^2(\domain)}^2-\norm{\op[u]-\op[u_0]}_{L_p^2(\domain)}^2}\right)\\
    &=\frac{N}{2\sigma^2}\abs{\iprod{\opDeriv{u_0}[u-u_0]-(\op[u]-\op[u_0])}{\opDeriv{u_0}[u-u_0]+(\op[u]-\op[u_0])}_{L_p^2(\domain)}}\\
    &\le \frac{N\bar{p}}{2\sigma^2}\norm{\opDeriv{u_0}[u-u_0]-(\op[u]-\op[u_0])}_{L^2(\domain)}\\
    &\quad\times\left(\norm*{\opDeriv{u_0}[u-u_0]}_{L^2(\domain)}+\norm*{\op[u]-\op[u_0]}_{L^2(\domain)}\right)\\
    %&\le \bar{p} \frac{N}{2\sigma^2}\left(\abs{\norm*{\opDeriv{u_0}[u-u_0]}_{L^2(\domain)}-\norm*{\op[u]-\op[u_0]}_{L^2(\domain)}}\right)\\
    %&\quad\times\left(\norm*{\opDeriv{u_0}[u-u_0]}_{L^2(\domain)}+\norm*{\op[u]-\op[u_0]}_{L^2(\domain)}\right)\\
    %&\le \bar{p}\frac{N}{2\sigma^2}\norm*{\op[u]-\op[u_0]-\opDeriv{u_0}[u-u_0]}_{L^2(\domain)}\\
    %&\quad \times\left(\norm*{\opDeriv{u_0}[u-u_0]}_{L^2(\domain)}+\norm*{\op[u]-\op[u_0]}_{L^2(\domain)}\right)\\
    &\lesssim N\norm*{u-u_0}_{H^m(\domain)}^2\norm{u-u_0}_{H^m(\domain)}\\
    &\lesssim N\epsilon_N^3 
\end{align*}
for $u\in \mathbb{A}_N$ by \eqref{eq:Stability_Lipschitz_Bounds}, \eqref{eq:linearisation_2} and \eqref{eq:interpolation_u} with $\eta=0$. %and the embedding of $H^2(\domain)$ into $L^4(\domain)$.
To ensure that this is $\smallo(1)$ we require that $3\beta /(2\beta + d)>1$, i.e. $\beta>d$.

\end{proof}

\begin{lem}[Change of measure]\label{lem:Change_of_measures}
    Grant Assumptions \ref{assump:ApproximatonSets}, \ref{assump:Operator_stability} and \ref{assump:operator_linearisation}. For $\beta>2d$ and $d/2<\beta_0<\beta+d/4$ consider the prior $\pi_N$ from \eqref{eq:Prior} with cut-off $2^J\sim N^{1/(2\beta+d)}$. Let $M>0$ and $t\in\mathbb{R}$, then
    \begin{equation*}
    \sup_{\psi\colon \norm{\psi}_{\mathcal{V}^\ast}\le M}\left|\frac{\int_{\mathbb{A}_N}e^{\ell_N(u_t)}\, \D\pi_N(u)}{\int_{\mathbb{A}_N} e^{\ell_N(u)}\, \D\pi_N(u)}-1\right|=\smallo_{\prob[u_0][N]{}}(1).
\end{equation*}
\end{lem}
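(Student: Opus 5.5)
The plan is to avoid the LAN expansion here: write $u_t$ as a Cameron--Martin shift of the Gaussian prior $\pi_N$ and change variables in the numerator. Set $h\coloneqq h_{\psi,t}\coloneqq t\ProjTwo_J\psi_0/\sqrt N\in V_J$, so that $u_t=u-h$. Since $\pi_N$ is a nondegenerate Gaussian measure on $V_J$ with Cameron--Martin space $(V_J,\norm{}_{\mathbb H_N})$, the substitution $v=u-h$ gives
\begin{equation*}
\int_{\mathbb A_N}e^{\ell_N(u_t)}\,\D\pi_N(u)=\int_{\mathbb A_N-h}e^{\ell_N(v)}\exp\Big(-\iprod{v}{h}_{\mathbb H_N}-\tfrac12\norm{h}_{\mathbb H_N}^2\Big)\,\D\pi_N(v).
\end{equation*}
The claim then reduces to two facts: (i) the exponential weight equals $1+\smallo(1)$ uniformly over $v\in\mathbb A_N-h$ and $\norm{\psi}_{\mathcal V^\ast}\le M$; (ii) replacing the domain $\mathbb A_N-h$ by $\mathbb A_N$ changes the integral only by a factor $1+\smallo_{\prob[u_0][N]{}}(1)$, relative to $\int_{\mathbb A_N}e^{\ell_N}\D\pi_N$.

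For (i), combine \eqref{eq:InformationIsomorphism}, $\norm{\ProjTwo_J\psi_0}_{H^m(\domain)}\lesssim\norm{\psi_0}_{H^m(\domain)}\lesssim M$, the RKHS bound \eqref{eq:RKHS_Norm} and the Bernstein inequality. This gives
\begin{equation*}
\norm{h}_{\mathbb H_N}\lesssim N^{-1/2}2^{J(\beta_0+\epsilon)}=N^{(\beta_0+\epsilon)/(2\beta+d)-1/2}\to0,
\end{equation*}
because $\beta_0<\beta+d/4<\beta+d/2$ and $\epsilon$ is small. The cross term is the delicate one. Write it coefficientwise and apply Cauchy--Schwarz with the weights split as $2^{(m+\beta)|\mu|}\cdot2^{(m+2\beta_0-\beta)|\mu|}$. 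Using \eqref{eq:CharacterisationSobolevSpaces} and Bernstein, this yields
\begin{equation*}
|\iprod{v}{h}_{\mathbb H_N}|\lesssim\norm{v}_{H^{m+\beta+\epsilon}}\,N^{-1/2}\,2^{J(2\beta_0-\beta+\epsilon)_+}.
\end{equation*}
The factor $\norm{v}_{H^{m+\beta+\epsilon}}$ costs at most $2^{J\epsilon}\log N$ on $\mathbb A_N-h$: the shift itself is small in $H^{m+\beta}$, as computed at the start of the proof of Proposition \ref{prop:uniformLANExpansion_RandomDesign}. Altogether the cross term is $\lesssim\log(N)N^{(2\beta_0-\beta+3\epsilon)_+/(2\beta+d)-1/2}$. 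This tends to zero exactly when $\beta_0<\beta+d/4$ (for small $\epsilon$), which explains that hypothesis. I expect this to be the main quantitative obstacle, in particular handling the $\epsilon$-loss in \eqref{eq:CharacterisationSobolevSpaces} and the $\log N$ factor. All bounds are deterministic and uniform in $\norm{\psi}_{\mathcal V^\ast}\le M$.

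For (ii), note that $\norm{h}_{H^m(\domain)}\lesssim N^{-1/2}=\smallo(\epsilon_N)$ and $\norm{h}_{H^{m+\beta}(\domain)}=\smallo(1)$. Write $\mathbb A_N(L,K)$ for $\mathbb A_N$ with constants $L,K$. Then $\mathbb A_N(L,K)-h$ is sandwiched between $\mathbb A_N(L/2,K/2)$ and $\mathbb A_N(2L,2K)$ for $N$ large, uniformly in $\psi$. By (i), it therefore suffices to show that $\int_{\mathbb A_N(L',K')}e^{\ell_N}\D\pi_N\big/\int_{\mathbb A_N(L,K)}e^{\ell_N}\D\pi_N\to1$ in probability for such pairs of constants. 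This ratio equals $\Pi_N(\mathbb A_N(L',K'))/\Pi_N(\mathbb A_N(L,K))$. Both posterior masses tend to one in probability by two facts. First, Theorem \ref{thm:Contraction} gives contraction in $H^m$ for $L$ large. Second, Lemma \ref{lem:Prior}(d) with $\kappa=\beta$ shows that every element of the $H^m$-contraction ball automatically satisfies $\norm{u}_{H^{m+\beta}}\le L''\log N$, so the $K$-constraint is inactive once $K\ge L''$. One must therefore fix $L,K$ large enough in the definition of $\mathbb A_N$, so that even the halved constants are admissible. With that choice, the numerator and denominator both equal $(1+\smallo_{\prob[u_0][N]{}}(1))$ times the same marginal integral, and taking the supremum over $\psi$ completes the argument.
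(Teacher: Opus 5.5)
Your proposal is correct and follows the paper's proof. Both arguments apply a Cameron--Martin shift of $\pi_N$ by $h=t\ProjTwo_J\psi_0/\sqrt N$, show that the resulting weight is $1+\smallo(1)$ uniformly (the cross term is exactly where $\beta_0<\beta+d/4$ enters), and then compare the posterior masses of $\mathbb A_N-h$ and $\mathbb A_N$ via Theorem \ref{thm:Contraction} and Lemma \ref{lem:Prior}(d). Your split-weight Cauchy--Schwarz bound on $\iprod{v}{h}_{\mathbb H_N}$ yields the same condition as the paper's $\norm{h}_{\mathbb H_N}\norm{g}_{\mathbb H_N}$ bound. Taking the bound over the actual integration domain $v\in\mathbb A_N-h$, together with the explicit two-sided sandwich, tidies up two points the paper treats somewhat loosely.
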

\begin{proof}
By the Cameron--Martin Theorem (Proposition 2.26 of \cite{DaPrato2014}) we have 
\begin{equation*}
    \int_{\mathbb{A}_N}e^{\ell_N(u_t)}\D\pi_N(u) = \int_{\mathbb{A}_N-t\ProjTwo_{J}\psi_0/\sqrt{N}} e^{\ell_N(g)}e^{-\iprod{t\ProjTwo_{J}\psi_0/\sqrt{N}}{g}_{\mathbb{H}_N} -\norm{t\ProjTwo_{J}\psi_0/\sqrt{N}}_{\mathbb{H}_N}^2/2}\D\pi_N(g).
\end{equation*}
Using the representation of the Cameron--Martin norm \eqref{eq:RKHS_Norm}, the Bernstein estimate \eqref{eq:Bernstein_orthogonalprojection} and \eqref{eq:InformationIsomorphism} we find
\begin{align*}
    \norm{\ProjTwo_{J}\psi_0}_{\mathbb{H}_N} &\lesssim \norm{\ProjTwo_{J}\psi_0}_{H^{m+\beta_0+\epsilon}(\domain)}\lesssim N^{(\beta_0+\epsilon)/(2\beta+d)}\norm{\psi_0}_{H^m(\domain)}\\
    %&\le \begin{cases}
        %\norm{\psi_0}_{H^{4+\tilde{\gamma}}(\domain)}\lesssim \norm{\psi}_{H^{\tilde{\gamma}}(\domain)}, & \beta_0\le 0,\\
        &\lesssim N^{(\beta_0+\epsilon)/(2\beta+d)}\norm{\psi}_{\mathcal{V}^\ast}= \smallo\left(N^{1/2}\right),
\end{align*}
provided $\epsilon+\beta_0 <\beta+d/2$. Similarly, for $g\in \mathbb{A}_N$ we have from Lemma \ref{lem:Prior} \eqref{num:aux_rescaled_Prior_restrict} that 
\begin{equation*}
\norm{g}_{\mathbb{H}_N}\le \norm{g}_{H^{m+\beta_0+\epsilon}(\domain)}\lesssim 2^{J(\beta_0+\epsilon-\beta)_+}\norm{g}_{H^{m+\beta}(\domain)}\lesssim 2^{J(\beta_0+\epsilon-\beta)_+}\log(N)
\end{equation*}
 for $0<\epsilon<\beta+d/4-\beta_0$, such that
\begin{align*}
    \abs{\iprod{t\ProjTwo_{J}\psi_0/\sqrt{N}}{g}_{\mathbb{H}_N}}&\le \frac{\abs{t}}{\sqrt{N}}\norm{\ProjTwo_{J}\psi_0}_{\mathbb{H}_N}\norm{g}_{\mathbb{H}_N}\\
    &=\mathcal{O}(N^{-1/2}2^{J(\beta_0+\epsilon +(\beta_0+\epsilon-\beta)_+)}\log(N))=\smallo(1)
\end{align*}
whenever $\beta_0+\epsilon<\beta+d/4$. We deduce that
\begin{equation*}
    \sup_{g\in \mathbb{A}_N, \norm{\psi}_{\mathcal{V}^\ast}\le M} \left|e^{-\iprod{t\ProjTwo_{J}\psi_0/\sqrt{N}}{g}_{\mathbb{H}_N} -\norm{t\ProjTwo_{J}\psi_0/\sqrt{N}}_{\mathbb{H}_N}^2/2}-1\right|=\smallo(1).
\end{equation*}
Furthermore,
\begin{align*}
    \left|\frac{\int_{\mathbb{A}_N}e^{\ell_N(u_t)}\, \D\pi_N(u)}{\int_{\mathbb{A}_N} e^{\ell_N(u)}\, \D\pi_N(u)} - 1\right| &= \left|\frac{\int_{\mathbb{A}_N-t\ProjTwo_{J}\psi_0/\sqrt{N}}e^{\ell_N(u)}\, \D\pi_N(u)}{\int_{\mathbb{A}_N} e^{\ell_N(u)}\, \D\pi_N(u)}-1\right|+\smallo(1)\\
    &=  \left|\frac{\Pi_N\left(\mathbb{A}_N - t\ProjTwo_{J}\psi_0/\sqrt{N} | \data \right)}{\Pi_N\left(\mathbb{A}_N| \data \right)}-1\right|+ \smallo(1).
\end{align*}
Since $\Pi_N(\mathbb{A}_N|\data)\xrightarrow{\prob[u_0][N]{}}(1)$ by Theorem \ref{thm:Contraction}, it remains to show that, for fixed $t\in\mathbb{R}$ and uniformly over $\psi\in\mathcal{V}^\ast$ with $\norm{\psi}_{\mathcal{V}^\ast}\le M$, we have $\Pi_N(\mathbb{A}_N-t\ProjTwo_J\psi_0/\sqrt{N}|\data)\xrightarrow{\prob[u_0][N]{}}(1)$ as $N\to\infty$.
Note that we have
\begin{equation}
    \sup_{u\in A_N}\norm{u-u_0}_{H^m(\domain)}\le \sup_{v\in \mathbb{A}_N}\norm{v-u_0}_{H^m(\domain)}  + \sup_{\psi\colon \norm{\psi}_{\mathcal{V}^\ast} \le  M}\frac{1}{\sqrt{N}}\norm{\psi_0}_{H^m(\domain)}\lesssim \epsilon_N,\label{eq:Perturbation_Contraction}
\end{equation}
by Lemma \ref{lem:Properties_LAN_Norm} \eqref{num:ProjectionBoundedness} and \eqref{eq:InformationIsomorphism}. Furthermore, by Theorem \ref{thm:Contraction} and Lemma \ref{lem:Prior}, we have $\Pi_N(\mathbb{A}_N|\data)\xrightarrow{\prob[u_0][N]{}}(1)$. Since 
\begin{equation*}
    \sup_{\psi\colon\norm{\psi}_{\mathcal{V}^\ast}\le M}\frac{\norm{\ProjTwo_J\psi_0}_{H^m(\domain)}}{\sqrt{N}}\lesssim\sup_{\psi\colon\norm{\psi}_{\mathcal{V}^\ast}\le M} \frac{\norm{\psi_0}_{\mathcal{V}}}{\sqrt{N}}=\smallo(\epsilon_N)
\end{equation*}
as $N\to\infty$ by \eqref{eq:InformationIsomorphism} and
\begin{equation*}
    \sup_{\psi\colon\norm{\psi}_{\mathcal{V}^\ast}\le M}\frac{\norm{\ProjTwo_J \psi_0}_{H^{m+\beta}(\domain)}}{\sqrt{N}} \le \sup_{\psi\colon\norm{\psi}_{\mathcal{V}^\ast}\le M}\frac{C2^{J\beta}\norm{\ProjTwo_J \psi_0}_{H^m(\domain)}}{\sqrt{N}} =\smallo(\log(N)),
\end{equation*}
and by choosing $L+1$ instead of $L$, we find 
\begin{equation*}
\begin{split}
    \mathbb{A}_N &- t\ProjTwo_J\psi_0/\sqrt{N}\\
    &\supset \{ u\in V_J \,\colon\, \norm{u-u_0}_{H^m(\domain)}\le (L-1)\epsilon_N \text{ and }\norm{u}_{H^{m+\beta}(\domain)}\le (K-1)\log(N)\}
\end{split}
\end{equation*}
for any $\psi\in\mathcal{V}^\ast$ with $\norm{\psi}_{\mathcal{V}^\ast}\le M$ for $N$ sufficiently large. Consequently,
\begin{align*}
    &\inf_{\psi\colon \norm{\psi}_{\mathcal{V}^\ast}\le M}\Pi_N(\mathbb{A}_N - t\ProjTwo_J\psi_0/\sqrt{N}|\data) \\
    &~~\ge \Pi_N\Big( u\in V_J \,\colon\, \norm{u-u_0}_{H^m(\domain)}\le (L-1)\epsilon_N\text{ and }\norm{u}_{H^{m+\beta}(\domain)}\le (K-1)\log(N)\Big)\\
    &~~\xrightarrow{\prob[u_0][N]{}}1
\end{align*}
 as $N\to\infty$ by Theorem \ref{thm:Contraction} choosing $L$ and $K$ large enough.
\end{proof}

Define 
\begin{equation}
    \tilde{u}_N = \ProjTwo_J u_0 + \frac{1}{\sigma N}\sum_{\abs{\mu},\abs{\mu'}\le J} (G_J^{-1})_{\mu,\mu'} \sum_{i=1}^N \epsilon_i\opDeriv{u_0}[\psi_\mu](X_i) \psi_{\mu'}=\colon \ProjTwo_J u_0 + Z_N\in V_J,\label{eq:EfficientEstimator_RandomDesign}
\end{equation}
where $G_J= (G_{\mu,\mu'})_{\mu,\mu'}$ with $G_{\mu,\mu'}=\iprod{\psi_\mu}{\psi_{\mu'}}_{\operatorname{LAN}}$ is the Gram matrix, which is invertible by \eqref{eq:Graphnormequivalence}.
Note that by \eqref{eq:FromDualtoLAN} we have for any $\psi\in \mathcal{V}^\ast$ the identity
\begin{align*}
    \iprod[\mathcal{V}^\ast][\mathcal{V}]{\psi}{\tilde{u}_N} &= \iprod[\mathcal{V}^\ast][\mathcal{V}]{\psi}{\ProjTwo_J u_0} + \frac{1}{\sigma N}\sum_{i=1}^N \epsilon_i \opDeriv{u_0}[\ProjTwo_{J}\psi_0](X_i)\\
    &=\iprod[\mathcal{V}^\ast][\mathcal{V}]{\psi}{u_0} - \iprod{u_0}{(\identity - \ProjTwo_{J})\psi_0}_{\operatorname{LAN}} + \frac{1}{\sigma N}\sum_{i=1}^N \epsilon_i \opDeriv{u_0}[\ProjTwo_{J}\psi_0](X_i),
\end{align*}
since for $c_{\psi,\mu}\coloneqq \iprod[\mathcal{V}^\ast][\mathcal{V}]{\psi}{\psi_\mu}$, the coefficient vector of $\ProjTwo_J \psi_0$ is given by $G_J^{-1} c_\psi$.

%\SW{Can one somehow write this in a more intuitive notation? Like $ \frac{1}{\sqrt N}\langle \eta, \psi_k\rangle_N \lambda_k^{-1}\psi_k$ or so...}\SG{It's the inverse of the Fisher information applied to the score (projected on V_J)

\begin{prop}\label{prop:Uniform_Laplace_Transform_Control}
    Grant Assumptions \ref{assump:ApproximatonSets}, \ref{assump:Operator_stability} and \ref{assump:operator_linearisation}. For $\beta>2d$ and $d/2<\beta_0<\beta+d/4$ consider $u_0\in \mathcal{V}\cap H^{m+\beta}(\domain)$ and the prior $\pi_N$ from \eqref{eq:Prior} with cut-off $2^J\sim N^{1/(2\beta+d)}$. Then, for any  $\psi\in \mathcal{V}^\ast$ and $t\in\mathbb{R}$, we have
    \begin{equation}
        \EV*[\Pi_N^{\mathbb{A}_N}]{\exp\left(t\sqrt{N}\iprod[\mathcal{V}^\ast][\mathcal{V}]{\psi}{u-\tilde{u}_N}\right)\given \data} = \exp\left(\frac{t^2}{2}\norm{\ProjTwo_J\psi_0}_{\operatorname{LAN}}^2\right)(1+R_{N,t}(\psi)),\label{eq:claim_LaplaceConvergence}
    \end{equation}
    where for any fixed $M>0$ and $t\in\mathbb{R}$
    \begin{equation*}
        \sup_{\norm{\psi}_{\mathcal{V}^\ast}\le M}\abs{R_{N,t}(\psi)}=\smallo_{\prob[u_0][N]{}}(1)
    \end{equation*}
    as $N\to\infty$. Moreover, the following properties hold:
    \begin{enumerate}[(i)]
        \item For any $\psi\in\mathcal{V}^\ast$ we have
        \begin{equation*}
            \EV*[\Pi_N^{\mathbb{A}_N}]{\exp\left(t\sqrt{N}\iprod[\mathcal{V}^\ast][\mathcal{V}]{\psi}{u-\tilde{u}_N}\right)\given \data}\xrightarrow{\prob[u_0][N]{}} \exp\left(\frac{t^2}{2}\norm{\psi_0}_{\operatorname{LAN}}^2\right)
        \end{equation*}
        as $N\to\infty$.
        \item For any $M>0$ the uniform bound
        \begin{equation*}
            \sup_{\norm{\psi}_{\mathcal{V}^\ast}\le M}\EV*[\Pi_N^{\mathbb{A}_N}]{\exp\left(t\sqrt{N}\iprod[\mathcal{V}^\ast][\mathcal{V}]{\psi}{u-\tilde{u}_N}\right)\given \data} =\mathcal{O}_{\prob[u_0][N]{}}(1)
        \end{equation*}
        as $N\to\infty$ holds.
    \end{enumerate}
\end{prop}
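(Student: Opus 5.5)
The approach is the standard LAN-based Laplace-transform computation, as in \cite{N22,castilloBayesianNonparametricStatistics2024}. Write the localised posterior Laplace transform as a ratio of integrals over $\mathbb{A}_N$ and shift the integration variable by $u\mapsto u_t=u-t\ProjTwo_J\psi_0/\sqrt N$. This lets us compare $\ell_N(u)$ with $\ell_N(u_t)$ using the uniform LAN expansion of Proposition \ref{prop:uniformLANExpansion_RandomDesign}.

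\textbf{Step 1 (algebraic identity).} Expand $\ell_N(u)-\ell_N(u_t)$ for $u\in\mathbb{A}_N$ by applying the LAN expansion to both $u$ and $u_t$; both lie in $A_N$. Write $h=u-u_0$ and $g=\ProjTwo_J\psi_0/\sqrt N$. The quadratic terms give
\[-\tfrac N2\norm{h}_{\operatorname{LAN}}^2+\tfrac N2\norm{h-tg}_{\operatorname{LAN}}^2=-tN\iprod{h}{g}_{\operatorname{LAN}}+\tfrac{t^2}{2}\norm{\ProjTwo_J\psi_0}_{\operatorname{LAN}}^2 .\]
The score terms give $\tfrac{t}{\sigma\sqrt N}\sum_i\epsilon_i\opDeriv{u_0}[\ProjTwo_J\psi_0](X_i)$. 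By \eqref{eq:FromDualtoLAN}, and because $\ProjTwo_J$ is the LAN-orthogonal projection with $u\in V_J$,
\[\iprod{h}{\ProjTwo_J\psi_0}_{\operatorname{LAN}}=\iprod[\mathcal{V}^\ast][\mathcal{V}]{\psi}{u-\ProjTwo_Ju_0}.\]
The displayed identity for $\iprod[\mathcal{V}^\ast][\mathcal{V}]{\psi}{\tilde u_N}$ then shows that the score term combined with $-t\sqrt N\iprod{\psi}{\ProjTwo_Ju_0}$ equals $-t\sqrt N\iprod{\psi}{\tilde u_N-\ProjTwo_J u_0}\cdot$(sign) exactly. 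Hence
\[t\sqrt N\iprod[\mathcal{V}^\ast][\mathcal{V}]{\psi}{u-\tilde u_N}=\ell_N(u)-\ell_N(u_t)+\tfrac{t^2}{2}\norm{\ProjTwo_J\psi_0}_{\operatorname{LAN}}^2+r_N(u,\psi),\]
where $r_N=N(u_t)-N(u)$. The signs must be tracked carefully so that the $t^2$ term comes out with a plus sign. By Proposition \ref{prop:uniformLANExpansion_RandomDesign}, $\sup_{u\in\mathbb{A}_N,\norm{\psi}_{\mathcal V^\ast}\le M}\abs{r_N}=\smallo_{\prob[u_0][N]{}}(1)$.

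\textbf{Step 2 (change of measure).} Substituting Step 1 gives
\[\EV[\Pi_N^{\mathbb{A}_N}]{\cdots}=e^{\frac{t^2}{2}\norm{\ProjTwo_J\psi_0}^2_{\operatorname{LAN}}}\,\frac{\int_{\mathbb{A}_N}e^{\ell_N(u_t)+r_N}\D\pi_N(u)}{\int_{\mathbb{A}_N}e^{\ell_N(u)}\D\pi_N(u)}.\]
Since $\abs{e^{r_N}-1}$ is uniformly $\smallo_{\prob{}}(1)$, the ratio equals $(1+\smallo_{\prob{}}(1))$ times the ratio in Lemma \ref{lem:Change_of_measures}. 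That lemma shows the ratio is $1+\smallo_{\prob{}}(1)$ uniformly over $\norm{\psi}_{\mathcal V^\ast}\le M$. This gives \eqref{eq:claim_LaplaceConvergence}, with $R_{N,t}(\psi)$ collecting the two uniform error factors.

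\textbf{Step 3 (consequences).} For (ii), use $\norm{\ProjTwo_J\psi_0}_{\operatorname{LAN}}\le\norm{\psi_0}_{\operatorname{LAN}}\lesssim\norm{\psi}_{\mathcal V^\ast}\le M$ together with \eqref{eq:InformationIsomorphism}, so the prefactor is bounded uniformly and $1+R_{N,t}=\mathcal{O}_{\prob{}}(1)$. For (i), it remains to show $\norm{\ProjTwo_J\psi_0}_{\operatorname{LAN}}\to\norm{\psi_0}_{\operatorname{LAN}}$. This holds because $\norm{(\identity-\ProjTwo_J)\psi_0}_{\operatorname{LAN}}$ is the LAN-distance from $\psi_0$ to $V_J$. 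By \eqref{eq:Normequivalence} it is bounded by a multiple of $\inf_{v\in V_J}\norm{\psi_0-v}_{H^m(\domain)}$, which tends to $0$ by the density of $\bigcup_J V_J$ in $\mathcal V$ (Assumption \ref{assump:ApproximatonSets}).

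\textbf{Main obstacle.} The main obstacle is the exact bookkeeping in Step 1: verifying that the linear terms cancel precisely against the definition of $\tilde u_N$, with no leftover bias term $\iprod{u_0}{(\identity-\ProjTwo_J)\psi_0}_{\operatorname{LAN}}$. The cancellation relies on $\tilde u_N$ being centred at $\ProjTwo_Ju_0$ rather than $u_0$, and on $u-\ProjTwo_Ju_0\in V_J$. The uniformity over $\psi$ is inherited directly from the uniform statements in Proposition \ref{prop:uniformLANExpansion_RandomDesign} and Lemma \ref{lem:Change_of_measures}.
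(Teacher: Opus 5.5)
Your proposal is correct and follows the paper's proof essentially step for step. You use the LAN expansion at $u$ and $u_t$, then cancel the linear terms via the definition of $\tilde{u}_N$ and the LAN-orthogonality of $\ProjTwo_J$ (with $u\in V_J$), then invoke Lemma~\ref{lem:Change_of_measures} for the integral ratio, and finally use $\norm{\ProjTwo_J\psi_0}_{\operatorname{LAN}}\le\norm{\psi_0}_{\operatorname{LAN}}$ together with density of $\bigcup_J V_J$ for (i)--(ii). The only slip is a sign error in the Step 1 display: the correct identity is $t\sqrt{N}\iprod[\mathcal{V}^\ast][\mathcal{V}]{\psi}{u-\tilde{u}_N}=\ell_N(u_t)-\ell_N(u)+\tfrac{t^2}{2}\norm{\ProjTwo_J\psi_0}_{\operatorname{LAN}}^2+r_N$ with $r_N=N(u)-N(u_t)$. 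This corrected form is exactly the one you already use in Step 2.
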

\begin{proof}
Note that we have 
\begin{align*}
    \EV*[\Pi_N^{\mathbb{A}_N}]{\exp\left(t\sqrt{N}\iprod[\mathcal{V}^\ast][\mathcal{V}]{\psi}{u-\tilde{u}_N}\right)\indicator_{\mathbb{A}_N}\given \data} \hspace{-10em}&\\
    &=\frac{\int_{\mathbb{A}_N}e^{t\sqrt{N}\iprod[\mathcal{V}^\ast][\mathcal{V}]{\psi}{u-\tilde{u}_N}}e^{\ell_N(u)-\ell_N(u_t)}e^{\ell_N(u_t)}\, \D\pi_N(u)}{\int_{\mathbb{A}_N} e^{\ell_N(u)}\, \D\pi_N(u)}.
\end{align*}
    %Since $\Pi_N(\mathbb{A}_N^c|\data) = \smallo_{\prob[u_0][N]{}}(1)$ by Theorem \ref{thm:Contraction} and Lemma \ref{lem:Prior} \eqref{num:aux_rescaled_Prior_restrict}, it suffices to show that with $u_t= u - t\ProjTwo_{J}\psi_0/\sqrt{N}$ the ratio
    %\begin{equation*}
    %    \frac{\int_{\mathbb{A}_N}e^{\frac{t\sqrt{N}}{\sigma}(\iprod[\mathcal{V}][\mathcal{V}^\ast]{u}{\psi} - \tilde{u}_N)}e^{\ell_N(u)-\ell_N(u_t)}e^{\ell_N(u_t)}\, \D\pi_N(u)}{\int_{\mathbb{A}_N} e^{\ell_N(u)}\, \D\pi_N(u)}%\hspace{-15em} &\\
        %&= \frac{\int_{\mathbb{A}_N}e^{\frac{t\sqrt{N}}{\sigma}(\iprod[\mathcal{V}][\mathcal{V}^\ast]{u}{\psi} - \tilde{u}_N)}e^{\ell_N(u)-\ell_N(u_t)}e^{\ell_N(u_t)}\, \D\pi_N(u)}{\int e^{\ell_N(u)}\, \D\pi_N(u)}\Pi_N(\mathbb{A}_N|\data)^{-1}
    %\end{equation*}
    %converges in $\prob[u_0]{}$ to $\exp(t^2\norm{\psi_0}_{\operatorname{LAN}}^2/2)$ as $N\to\infty$, see also Lemma 1 in the supplement of \cite{castilloBernsteinMisesTheorem2015} for convergence in probability of the Laplace transformation.
    By Proposition \ref{prop:uniformLANExpansion_RandomDesign} we have the expansion
    \begin{align*}
        \ell_N(u)-\ell_N(u_t) &= - \frac{N}{2}\norm{u-u_0}_{\operatorname{LAN}}^2 + \frac{N}{2}\norm{u_t-u_0}_{\operatorname{LAN}}^2 + \frac{1}{\sigma}\sum_{i=1}^N \epsilon_i\opDeriv{u_0}[u-u_t](X_i)\\
        &\quad +\smallo_{\prob[u_0][N]{}}(1)\\
        &= - t\sqrt{N}\iprod{\ProjTwo_{J}\psi_0}{u-u_0}_{\operatorname{LAN}}  +\frac{t^2}{2}\norm{\ProjTwo_{J}\psi_0}_{\operatorname{LAN}}^2\\
        &\quad + \frac{t}{\sigma\sqrt{N}}\sum_{i=1}^N\epsilon_i \opDeriv{u_0}[\ProjTwo_{J}\psi_0](X_i) +\smallo_{\prob[u_0][N]{}}(1).
    \end{align*}
    Note that the remainder estimate is uniform over $u\in \mathbb{A}_N$ and $\norm{\psi}_{\mathcal{V}^\ast}\le M$ by Proposition \ref{prop:uniformLANExpansion_RandomDesign}.
    Recalling that the expression \eqref{eq:EfficientEstimator_RandomDesign} for $\tilde{u}_N$ and using \eqref{eq:FromDualtoLAN}, we find
    %\begin{equation*}
    %    \tilde{u}_N = \iprod[\mathcal{V}][\mathcal{V}^\ast]{u_0}{\psi} - \iprod{u_0}{(\identity - \ProjTwo_{J})\psi_0}_{\operatorname{LAN}} + \frac{1}{\sigma N}\sum_{i=1}^N \epsilon_i \opDeriv{u_0}[\ProjTwo_{J}\psi_0](X_i),
    %\end{equation*}
    \begin{align*}
        t\sqrt{N}\iprod[\mathcal{V}^\ast][\mathcal{V}]{\psi}{u-\tilde{u}_N} + \ell_N(u)-\ell_N(u_t) \hspace{-15em}&\\
        &= \frac{t^2}{2}\norm{\ProjTwo_{J}\psi_0}_{\operatorname{LAN}}^2 - t\sqrt{N}\iprod{\ProjTwo_{J}\psi_0}{u-u_0}_{\operatorname{LAN}} \\
        &\quad +t\sqrt{N}\iprod{u}{\psi_0}_{\operatorname{LAN}}-t\sqrt{N}\iprod{u_0}{\psi_0}_{\operatorname{LAN}} + t\sqrt{N}\iprod{u_0}{(\identity - \ProjTwo_{J})\psi_0}_{\operatorname{LAN}}+ \smallo_{\prob[u_0][N]{}}(1)\\
        &= \frac{t^2}{2}\norm{\ProjTwo_{J}\psi_0}_{\operatorname{LAN}}^2 -t\sqrt{N}\iprod{\ProjTwo_{J}\psi_0}{u-u_0}_{\operatorname{LAN}} + t\sqrt{N}\iprod{u-u_0}{\psi_0}_{\operatorname{LAN}}\\
        &\quad + t\sqrt{N}\iprod{u_0}{(\identity - \ProjTwo_{J})\psi_0}_{\operatorname{LAN}}  + \smallo_{\prob[u_0][N]{}}(1)\\
        &= \frac{t^2}{2}\norm{\ProjTwo_{J}\psi_0}_{\operatorname{LAN}}^2 + t\sqrt{N}\iprod{u}{(\identity -\ProjTwo_{J})\psi_0}_{\operatorname{LAN}} + \smallo_{\prob[u_0][N]{}}(1).
    \end{align*}
    Since $\ProjTwo_{J}$ is the $\iprod[][\operatorname{LAN}]{}{}$-orthogonal projection on $V_{J}$ and $u\in V_J$, the second summand is zero. Note that the term included in $\smallo_{\prob[u_0][N]{}}(1)$ is understood uniformly over $u\in\mathbb{A}_N$ and $\norm{\psi}_{\mathcal{V}^\ast} \le  M$, such that we have 
    \begin{align*}
         \EV*[\Pi_N^{\mathbb{A}_N}]{\exp\left(t\sqrt{N}\iprod[\mathcal{V}^\ast][\mathcal{V}]{\psi}{u-\tilde{u}_N}\right)\indicator_{\mathbb{A}_N}\given \data}\hspace{-7em}&\\
        &= e^{t^2\norm{\ProjTwo_J\psi_0}_{\operatorname{LAN}}^2/2}(1+\smallo_{\prob[u_0][N]{}}(1))\frac{\int_{\mathbb{A}_N}e^{\ell_N(u_t)}\,\D\pi_N(u)}{\int_{\mathbb{A}_N}e^{\ell_N(u)}\,\D\pi_N(u)}.
    \end{align*}
    %Using $\norm{}_{\operatorname{LAN}}=\norm{\opDeriv{u_0}\MTemptyplaceholder}_{L^2(\domain)}$ and the direct estimate \eqref{eq:directestimate_eigenbasis} we find
    %\begin{align}
    %    \iprod*{u-u_0}{(\identity -\ProjTwo_{J})\psi_0}_{\operatorname{LAN}} &= \iprod*{\opDeriv{u_0}[u-u_0]}{\opDeriv{u_0}(\identity -\ProjTwo_{J})\psi_0}_{L^2(\domain)}\nonumber\\
    %    &= \iprod*{(\identity -\ProjTwo_{J}) \opDeriv{u_0}[u-u_0]}{\opDeriv{u_0}(\identity -\ProjTwo_{J})\psi_0}_{L^2(\domain)}\label{eq:critical}\\
    %    &\le \norm{(\identity -\ProjTwo_{J}) \opDeriv{u_0}[u-u_0]}_{L^{2}(\domain)}\norm{\opDeriv{u_0}(\identity -\ProjTwo_{J})\psi_0}_{L^{2}(\domain)}\nonumber\\
    %    &\lesssim \norm{(\identity -\ProjTwo_{J})(u-u_0)}_{H^{2}(\domain)}\norm{(\identity -\ProjTwo_{J})\psi_0}_{H^{2}(\domain)}\nonumber\\
    %    &\le  N^{-\beta/(2\beta)}\norm{u-u_0}_{H^{2+\beta}(\domain)} N^{-\epsilon/(2\beta)}\norm{\psi_0}_{H^{2+\epsilon}(\domain)}\nonumber\\
        %&=2^{-J\beta_0}\mathcal{O}(1)\smallo(1)\\
    %    &=\smallo(N^{-1/2}).\nonumber
   % \end{align}
    %\begin{align*}
    %   \iprod{u-u_0}{(\identity -\ProjTwo_{J})\psi_0}_{\operatorname{LAN}} &\le \norm{u-u_0}_{H^2(\domain)}\norm{(\identity -\ProjTwo_{J})\psi_0}_{H^{2}(\domain)}\lesssim \epsilon_N 2^{-J(\epsilon+d/2)}\norm{\psi_0}_{H^{2+\epsilon+d/2}(\domain)}\\
     %   &\lesssim \epsilon_N 2^{-J(\epsilon+d/2)}\norm{\psi}_{H^{-2+\epsilon+d/2}(\domain)}\lesssim \epsilon_N 2^{-J(\epsilon+d/2)}\\
    %    & = N^{-(\beta+\epsilon+d/2)/(2\beta + d)}\log(N)= \smallo(N^{-1/2}),
    %\end{align*}
    %since $\epsilon>0$.
    We deduce \eqref{eq:claim_LaplaceConvergence} from Lemma \ref{lem:Change_of_measures}.
    Furthermore, for any fixed $\psi\in\mathcal{V}^\ast$ we have 
    \begin{equation}
        \abs{\norm{\ProjTwo_{J}\psi_0}_{\operatorname{LAN}}- \norm{\psi_0}_{\operatorname{LAN}}} \le \norm{(\identity-\ProjTwo_{J})\psi_0}_{\operatorname{LAN}} =\smallo(1)\label{eq:aux_convergence_projections}
    \end{equation}
    as $N\to\infty$ by Lemma \ref{lem:Properties_LAN_Norm} \eqref{num:ProjectionDensity}.
    %Since the remainder is uniformly small over $u\in \mathbb{A}_N$ and $\norm{\psi}_{H^{2}(\domain)}\le M$,
    We conclude the proof by noting that $\norm{\ProjTwo_J \psi_0}_{\operatorname{LAN}}\le \norm{\psi_0}_{\operatorname{LAN}}\lesssim \norm{\psi}_{\mathcal{V}^\ast}\le M  $ by \eqref{eq:InformationIsomorphism} uniformly for $\norm{\psi}_{\mathcal{V}^\ast}\le M$.
\end{proof}

\subsection{Proofs for the posterior mean centring}

\begin{proof}[Proof of Theorem \ref{thm:BvM_RandomDesign}]
    The proof strategy follows similar steps as the proof of Theorem 2 of \cite{nicklBernsteinvonMisesTheorems2025}. Lemma \ref{lem:Localisationisfine} shows that $\mathcal{W}_{1,H^\gamma(\domain)}((\tau_N)_{\#}\Pi_N,(\tau_N)_{\#}\Pi_N^{\mathbb{A}_N})=\smallo_{\prob[u_0][N]{}}(1)$, such that it suffices to prove
     \begin{equation}
        \mathcal{W}_{1,H^{\gamma}(\domain)}\left((\tau_N)_{\#}\Pi_N^{\mathbb{A}_N}, \gaussian\right) = \smallo_{\prob[u_0][N]{}}(1)\label{eq:claim_BvM_localised}
     \end{equation}
     as $N\to\infty$.
     %Let $(\psi_j)_{j\in\mathbb{N}}$ be the eigenfunctions of $-\opDeriv{u_0}$ with corresponding eigenvalues $\lambda_j\sim  j^{2/d}$.
     %The corresponding orthogonal projection onto the first $K$ eigenfunctions is denoted by $\ProjTwo_{J}$.\SG{choose $K=J$?}
     We first prove \eqref{eq:claim_BvM_localised} for a different recentering than the posterior mean, namely $\tilde{u}_N$ from \eqref{eq:EfficientEstimator_RandomDesign} understood as an element in $\mathcal{V}$. By \eqref{eq:EfficientEstimator_RandomDesign} it is evident that $\tilde{u}_N\in V_J\subset \mathcal{V}$.
For the $\norm{}_{H^\gamma(\domain)}$-norm we even find by \eqref{eq:CharacterisationSobolevSpaces} and \eqref{eq:InformationIsomorphism} that
     \begin{align}
     \begin{split}
         \EV[\prob[u_0][N]{}][][\Big]{\norm{ Z_N}_{H^{\gamma}(\domain)}^2}&\lesssim \sum_{\abs{\mu}\le J} 2^{2\gamma\abs{\mu}}\EV[\prob[u_0][N]{}]{\iprod{Z_N}{\psi_\mu}_{L^2(\domain)}^2}\\
         &=\frac{1}{N^2}\sum_{\abs{\mu}\le J} 2^{2\gamma\abs{\mu}}\EV[\prob[u_0][N]{}]{\Big(\sum_{i=1}^N\epsilon_i \opDeriv{u_0}[\ProjTwo_J\mathcal{I}_{u_0}^{-1}\psi_\mu]\Big)^2}\\
         %&\lesssim \frac{1}{\sigma^2N^2}\EV[][][\Big]{\norm{\sum_{\mu\in\Lambda} \sum_{i=1}^N \epsilon_i\opDeriv{u_0}[\ProjTwo_{J}\mathcal{I}_{u_0}^{-1}\psi_\mu](X_i) \psi_\mu](X_i)\psi_\mu}_{H^{\gamma}(\domain)}^2}\\
         %&= \frac{1}{\sigma^2 N^2}\sum_{\mu\in\Lambda}2^{2\abs{\mu}\gamma} \EV[][][\Big]{\Big(\sum_{i=1}^N \epsilon_i \opDeriv{u_0}[\ProjTwo_{J} \mathcal{I}_{u_0}^{-1}\psi_\mu](X_i)\Big)^2}\\
         &\lesssim \frac{1}{N}\sum_{\abs{\mu}\le J} 2^{2\abs{\mu}\gamma}\norm{\ProjTwo_{J} \mathcal{I}_{u_0}^{-1}\psi_\mu}_{\operatorname{LAN}}^2\le \frac{1}{N}\sum_{\mu\in\Lambda} 2^{2\abs{\mu}\gamma}\norm{\mathcal{I}_{u_0}^{-1}\psi_\mu}_{\operatorname{LAN}}^2\\
         &\lesssim \frac{1}{N}\sum_{\abs{\mu}\le J} 2^{2\abs{\mu}\gamma}\norm{\mathcal{I}_{u_0}^{-1}\psi_\mu}_{H^m(\domain)}^2\lesssim \frac{1}{N}\sum_{\mu\in\Lambda} 2^{2\abs{\mu}\gamma}\norm{\psi_\mu}_{\mathcal{V}^\ast }^2\\
        &\le \frac{1}{N}\sum_{\abs{\mu}\le J} 2^{2\abs{\mu}\gamma}\norm{\psi_\mu}_{H^{-m}(\domain)}^2\lesssim \frac{1}{N}\sum_{\mu\in\Lambda} 2^{2\abs{\mu}(\gamma-m+\epsilon)}\\
        &\lesssim\frac{1}{N}\sum_{j=1}^J  2^{2j(\gamma-m+\epsilon+d/2)}=\mathcal{O}(N^{-1})
         \end{split}\label{eq:stochastic_remainder}
     \end{align}
     for $0<\epsilon<m-\gamma-d/2$ small enough.

     For a cut-off $K\in\mathbb{N}$ and $u\in\mathbb{A}_N$ define the transformations $\tilde{\tau}_N(u)\coloneqq \sqrt{N}(u-\tilde{u}_N)$ and $\tilde{\tau}_N^K(u)\coloneqq \sqrt{N}\ProjThree_{K} (u-\tilde{u}_N)$.
The triangle inequality gives
\begin{align}
    \begin{split}
    \mathcal{W}_{1,H^{\gamma}(\domain)}((\tilde{\tau}_N)_{\#}\Pi_N^{\mathbb{A}_N}, \gaussian)&\le \mathcal{W}_{1,H^{\gamma}(\domain)}((\tilde{\tau}_N)_{\#}\Pi_N^{\mathbb{A}_N},(\tilde{\tau}_N^K)_{\#}\Pi_N^{\mathbb{A}_N})\\
    &\quad +  \mathcal{W}_{1,H^{\gamma}(\domain)}((\tilde{\tau}_N^K)_{\#}\Pi_N^{\mathbb{A}_N},\gaussian^{K}) + \mathcal{W}_{1,H^{\gamma}(\domain)}(\gaussian^{K},\gaussian)\\
    &=\colon A+B+C,
    \end{split}\label{eq:decompBvM}
\end{align}
where $\gaussian^{K}=(\ProjThree_K)_{\#}\gaussian$ is the push-forward of $\gaussian$ under the projection $\ProjThree_{K}$ on $V_{K}$.
We treat the three terms separately.
For $A$ we proceed as follows:
\begin{align*}
    A^2 &= \sup_{F\colon \norm{F}_{\operatorname{Lip}(H^{\gamma}(\domain),\mathbb{R})}\le 1} \abs{\EV[\Pi_N^{\mathbb{A}_N}]{F(\sqrt{N}\ProjThree_{K}( u-\tilde{u}_N)) - F(\sqrt{N}(u-\tilde{u}_N))\given\data}}^2\\
    &\le \EV[\Pi_N^{\mathbb{A}_N}]{\norm{\sqrt{N}(\identity-\ProjThree_{K})(u-\tilde{u}_N)}_{H^{\gamma}(\domain)}\given\data}^2.
\end{align*}
By the characterisation of the Sobolev norms using the wavelet basis from \eqref{eq:CharacterisationSobolevSpaces} we find 
\begin{align}
    \begin{split}
    A^2&\lesssim \EV[\Pi_N^{\mathbb{A}_N}][][\Big]{\sum_{\abs{\mu}>K} 2^{2\abs{\mu}\gamma}\iprod{\sqrt{N}(u-\tilde{u}_N)}{\psi_\mu}_{L^2(\domain)}^2\given\data}\\
    %&= \sqrt{N}\EV[\Pi_N^{\mathbb{A}_N}]{\sup_{\norm{\psi}_{H^\gamma(\domain)}\le 1}\iprod{\tilde{u}_N_{K} - \tilde{u}_N}{\psi}_{L^2(\domain)}\given\data}\\
    %&\lesssim \EV[u][\mathbb{A}_N]{\sup_{\norm{\psi}_{H^\gamma(\domain)}=1}\sqrt{N}\iprod{u-\tilde{u}_N}{\psi-\ProjOne_K\psi}_{L^2(\domain)}\given \data}\\
    %&= \EV[u][\mathbb{A}_N]{\sqrt{N}\norm{\ProjOne_K^\perp[u-\tilde{u}_N]}_{H^\gamma(\domain)}\given \data}\\
    %&\le \EV[u][\mathbb{A}_N]{\norm{\ProjOne_K^\perp[\sqrt{N}(u-\tilde{u}_N)]}_{H^\gamma(\domain)}^2\given\data}^{1/2}\\
    %&=\EV[u][\mathbb{A}_N][\Bigg]{\sum_{j=J+1}^\infty j^{2\gamma/d} \iprod{\sqrt{N}(u-\tilde{u}_N)}{\phi_j}^2\given\data}^{1/2}\\
    %&=\Bigg(\sum_{j=J+1}^\infty j^{-2\alpha+2\gamma/d}\EV[u][\mathbb{A}_N]{ \iprod{\sqrt{N}(u-\tilde{u}_N)}{j^\alpha\phi_j}^2\given\data}\Bigg)^{1/2}.
    &\le \sum_{\abs{\mu}>K}2^{2\abs{\mu}(\gamma -m+\epsilon)} \EV[\Pi_N^{\mathbb{A}_N}]{\exp\left(\sqrt{N}\iprod{u - \tilde{u}_N}{2^{(m-\epsilon)\abs{\mu}}\psi_\mu}_{L^2(\domain)}\right)\given\data}\\
    &\quad +\sum_{\abs{\mu}>K}2^{2\abs{\mu}(\gamma -m+\epsilon)} \EV[\Pi_N^{\mathbb{A}_N}]{\exp\left(\sqrt{N}\iprod{u - \tilde{u}_N}{-2^{(m-\epsilon)\abs{\mu}}\psi_\mu}_{L^2(\domain)}\right)\given\data}
\end{split}\label{eq:Helper_Approximation_Error}
\end{align}
with $\epsilon>0$, where we used $x^2\le e^x+e^{-x}$ for $x\in\mathbb{R}$ in the last line.
Note that for any $\mu$ we have
\begin{equation*}
    \iprod{u - \tilde{u}_N}{\pm \psi_\mu}_{L^2(\domain)}  =\iprod[\mathcal{V}^\ast][\mathcal{V}]{\pm \psi_\mu}{u-\tilde{u}_N}
\end{equation*}
%with $\tilde{u}_N$ from \eqref{eq:EfficientEstimator_RandomDesign} with $\psi=\psi_\mu$.
Since for some $M>0$ we have $\norm{2^{(m-\epsilon)\abs{\mu}}\psi_\mu}_{\mathcal{V}^\ast}\lesssim \norm{2^{(m-\epsilon)\abs{\mu}}\psi_\mu}_{H^{-m}(\domain)}\lesssim 1$ for any $\mu$ by \eqref{eq:CharacterisationSobolevSpaces},
%\begin{equation*}
%    \norm{k^{\alpha}\psi_k}_{H^{-2+\kappa}(\domain)} \lesssim \norm{k^{\alpha}\psi_k}_{H^{\alpha d}(\domain)}\lesssim 1,
%\end{equation*}
 we can apply Proposition \ref{prop:Uniform_Laplace_Transform_Control} to deduce for any $\mu$ the bound
\begin{equation*}
    \sup_{\mu\in\Lambda}\EV[\Pi_N^{\mathbb{A}_N}]{\exp\left(\sqrt{N}\iprod{u - \tilde{u}_N}{\pm 2^{(m-\epsilon)\abs{\mu}}\psi_\mu}_{L^2(\domain)}\right)\given\data} = \mathcal{O}_{\prob[]{}}(1).
\end{equation*}
The summability of $2^{2\abs{\mu}(\gamma -m+\epsilon)}$ is ensured by $\gamma < m-d/2$ and $0<\epsilon< m-d/2-\gamma$:
\begin{equation*}
    \sum_{\mu\in\Lambda}2^{2\abs{\mu}(\gamma-m+\epsilon)}\lesssim \sum_{j\in\mathbb{N}}2^{2j(\gamma-m +\epsilon+d/2)}<\infty.
\end{equation*}
Thus, we conclude that 
\begin{equation*}
    \lim_{K\to\infty} \limsup_{N\to\infty}\prob[u_0][N]{A>\delta}=0
\end{equation*}
for any $\delta>0$.\\
For $B$ note that the convergence in $\prob[u_0][N]{}$ of the Laplace transformation from Proposition \ref{prop:Uniform_Laplace_Transform_Control} combined with Exercise 4.3.3 of \cite{N22} and the Cram\'er--Wold device (Page 16 of \cite{Vaart1998}) shows that 
\begin{equation*}
    d_{\operatorname{weak}}((\tilde{\tau}_N^K)_{\#}\Pi_N^{\mathbb{A}_N},\gaussian^K)\xrightarrow{\prob[u_0][N]{}}0
\end{equation*}
as $N\to\infty$ for every $K\in\mathbb{N}$ fixed, where $d_{\operatorname{weak}}$ is any metric for weak convergence of measures on $\mathbb{R}^{\operatorname{dim}(V_K)}$.
%\SW{Bit more detail: We use again that $\Pi_N(\tilde A_n|\data)\to 1$ etc.}\SG{We are already on the set $\mathbb{A}_N$} 
We are left to strengthen the topology to Wasserstein-1. 
By Theorem 6.9 (see also part (iii) of Definition 6.8) of \cite{villaniOptimalTransport2009}, it suffices to prove the uniform integrability. % \SW{The below is a convergence in probability? Perhaps one can strengthen it to almost sure convergence using Borel-Cantelli but it seems unnecessary.}
%\SW{One has to be careful because the measure is random.. thus the limsup is only defined $\omega$-wise ...}
%\begin{equation}
%    \lim_{R\to\infty}\limsup_{N\to\infty}\int_{\norm{u}_{H^{-\gamma}(\domain)}>R}\norm{u}_{H^{-\gamma}(\domain)}d \Pi_N^{\mathbb{A}_N}(u|\data)=0.\label{eq:claim:Wassersteinlift}
%\end{equation}
We do so by bounding the second moments
%\begin{align*}
%    \EV[\Pi_N^{\mathbb{A}_N}]{\norm{\sqrt{N}(\ProjThree_{K}( u - \tilde{u}_N))}_{H^{-\gamma}(\Omega)}^2\given\data},\quad K\in\mathbb{N}.
%\end{align*}
and proceed as in \eqref{eq:Helper_Approximation_Error} to find for any $\epsilon> 0$ the bound
\begin{align*}
    \EV[\Pi_N^{\mathbb{A}_N}]{\norm{\sqrt{N}(\ProjThree_{K}( u - \tilde{u}_N))}_{H^{\gamma}(\domain)}^2\given\data} \hspace{-10em}&\\
    &\le \EV[\Pi_N^{\mathbb{A}_N}][][\Big]{\sum_{\abs{\mu}\le K} 2^{2\abs{\mu}\gamma}\iprod{\sqrt{N}(u-\tilde{u}_N)}{\psi_\mu}_{L^2(\domain)}^2\given\data}\\
    &\le \sum_{\abs{\mu}\le K} 2^{2\abs{\mu}(\gamma-m)} \EV[\Pi_N^{\mathbb{A}_N}]{\exp\left(\sqrt{N}\iprod{u - \tilde{u}_N}{2^{m\abs{\mu}}\psi_\mu}_{L^2(\domain)}\right)\given\data}\\
    &\quad +\sum_{\abs{\mu}\le K} 2^{2\abs{\mu}(\gamma-m)} \EV[\Pi_N^{\mathbb{A}_N}]{\exp\left(\sqrt{N}\iprod{u - \tilde{u}_N}{-2^{m\abs{\mu}}\psi_\mu}_{L^2(\domain)}\right)\given\data},
    %&=\SW{\le} 2\left(\sum_{k=1}^K k^{-2\gamma/d+2\alpha} \EV[\Pi_N^{\mathbb{A}_N}]{\exp\left(\sqrt{N}\iprod{u - \tilde{u}_N}{k^{-\alpha}\psi_k}_{L^2(\domain)}\right)\given\data}\right)^{1/2}.
\end{align*}
where we used $x^2\le e^x+e^{-x}$ for any $x\in\mathbb{R}$.
Applying Proposition \ref{prop:Uniform_Laplace_Transform_Control} with $\psi=\pm2^{m\abs{\mu}}\psi_\mu$ such that $\norm{2^{m\abs{\mu}}\psi_\mu}_{\mathcal{V}^\ast}\le \norm{2^{m\abs{\mu}}\psi_\mu}_{H^{-m}(\domain)}\le M$ for some $M>0$ we find a uniform (in $N\in\mathbb{N}$) upper bound for the second moments and conclude the proof of the convergence in $\mathcal{W}_{1,H^{\gamma}(\domain)}$.\\
Part C is controlled in Lemma \ref{lem:Convergence_Gaussian_Measures}. Substituting the previous bounds for $A$, $B$ and $C$ back into \eqref{eq:decompBvM}, we find
\begin{equation*}
    \mathcal{W}_{1,H^\gamma(\domain)}((\tilde{\tau}_N)_{\#}\Pi_N^{\mathbb{A}_N},\gaussian)\xrightarrow{\prob[u_0][N]{}}0
\end{equation*}
as $N\to\infty$\\
To show  \eqref{eq:claim_BvM_localised}, we next argue that we can replace the centering $\tilde{u}_N$ by the localised posterior mean $\bar{u}_N^{\mathbb{A}_N}\coloneqq \EV[\Pi_N^{\mathbb{A}_N}]{u\given\data}$. 
By Equation (6.3) of \cite{villaniOptimalTransport2009} we have
%\begin{equation*}
%    \norm{\EV[\Pi_N^{\mathbb{A}_N}]{\sqrt{N}(u-\tilde{u}_N)|\data}-\EV[X\sim\gaussian]{X}}_{H^\}
%\end{equation*}
%Since convergence in 1-Wasserstein distance implies convergence of the first moments by Theorem 6.9 of \cite{villaniOptimalTransport2009}, we have \SW{This needs to be made precise. This is convergence in probability with respect to the $H^{\gamma}(\domain)$ norm (right?) -- I would perhaps even be tempted to write this in $\eps,~\delta$ style ...}\SG{@Sven better now?}
 \begin{align}
 \begin{split}
     \norm{\sqrt{N}(\bar{u}_N^{\mathbb{A}_N}-\tilde{u}_N)}_{H^{\gamma}(\domain)}&=\norm{\EV[\Pi_N^{\mathbb{A}_N}]{\sqrt{N}(u-\tilde{u}_N)|\data} - \EV[X\sim\gaussian]{X}}_{H^{\gamma}(\domain)}\\
     &\le \mathcal{W}_{1,H^\gamma(\domain)}((\tilde{\tau}_N)_{\#}\Pi_N^{\mathbb{A}_N},\gaussian)\xrightarrow{\prob[u_0][N]{}}0
     \end{split}\label{eq:Comparison_PosteriorMean_EfficientEstimator}
 \end{align}
 as $N\to\infty$ and by Lemma \ref{lem:Localisationisfine} we have
 \begin{align}
 \begin{split}
     \norm{\sqrt{N}(\bar{u}_N^{\mathbb{A}_N}-\bar{u}_N)}_{H^\gamma(\domain)} &= \norm{\EV[\Pi_N^{\mathbb{A}_N}]{\sqrt{N}(u-\bar{u}_N)}-\EV[\Pi_N]{\sqrt{N}(u-\bar{u}_N)}}_{H^\gamma(\domain)} \\
     &\le \mathcal{W}_{1,H^\gamma(\domain)}((\tau_N)_{\#}\Pi_N^{\mathbb{A}_N},(\tau_N)_{\#}\Pi_N)\xrightarrow{\prob[u_0][N]{}}0
     \end{split}\label{eq:Comparison_PosteriorMean_Locality}
 \end{align}
 as $N\to\infty$.
 Consequently,
%\begin{equation*}
%    \sqrt{N}\left(\hat{u} - \tilde{u}_N\right) =\EV[\Pi_N]{\sqrt{N}(u-\tilde{u}_N)\given\data}\xrightarrow[H^{-\gamma}(\domain)]{N\to\infty} \EV[X\sim\gaussian]{X} = 0,
%\end{equation*}
%such that
\begin{align*}
    \mathcal{W}_{1,H^{\gamma}(\domain)}\left((\tilde{\tau}_N)_{\#}\Pi_N^{\mathbb{A}_N}, (\tau_N)_{\#}\Pi_N^{\mathbb{A}_N}\right)&\le \sqrt{N}\norm{\bar{u}_N^{\mathbb{A}_N}-\tilde{u}_N}_{H^{\gamma}(\domain)}+\sqrt{N}\norm{\bar{u}_N^{\mathbb{A}_N}-\bar{u}_N}_{H^{\gamma}(\domain)}\\
    &\xrightarrow{\prob[u_0][N]{}} 0
\end{align*}
as $N\to\infty$ and \eqref{eq:claim_BvM_localised} follows. The proof is completed by appealing to Lemma \ref{lem:Localisationisfine}.
\end{proof}

\begin{proof}[Proof of Lemma \ref{lem:Convergence_PosteriorMean}]
    The proof strategy is similar to the proof of Theorem 2 of \cite{nicklBernsteinvonMisesTheorems2025}.
    By \eqref{eq:Comparison_PosteriorMean_EfficientEstimator}, \eqref{eq:Comparison_PosteriorMean_Locality} and Slutsky's Lemma, it suffices to prove the statement with $\tilde{u}_N$ instead of $\bar{u}_N$.

    By the triangle inequality, we have for any $K\in\mathbb{N}$ the decomposition
    \begin{align}
    \begin{split}
        \mathcal{W}_{1,H^{\gamma}(\domain)}\left(\Law[\prob[u_0][N]{}]{\sqrt{N}(\tilde{u}_N-\ProjTwo_J u_0)}, \gaussian\right) \hspace{-16em}& \\
        &\le \mathcal{W}_{1,H^{\gamma}(\domain)}\left(\Law[\prob[u_0][N]{}]{\sqrt{N}\ProjThree_{K}(\tilde{u}_N-\ProjTwo_J u_0)}, \Law[\prob[u_0][N]{}]{\sqrt{N}(\tilde{u}_N-\ProjTwo_J u_0)}\right)\\
        &\quad+\mathcal{W}_{1,H^{\gamma}(\domain)}\left(\Law[\prob[u_0][N]{}]{\sqrt{N}\ProjThree_{K}(\tilde{u}_N-\ProjTwo_J u_0)}, \gaussian^{K}\right)\\
        &\quad + \mathcal{W}_{1,H^{\gamma}(\domain)}\left(\gaussian^{K}, \gaussian\right).
        \end{split}\label{eq:aux_WassersteinDecomposition_PosteriorMean}
    \end{align}
    To control the first summand in \eqref{eq:aux_WassersteinDecomposition_PosteriorMean}, we use the stability \eqref{eq:H^m_stability} for $\kappa=\gamma$ and %and proceed as in \eqref{eq:determinisitic_remainder} and 
    \eqref{eq:stochastic_remainder} to compute
    \begin{align*}
        \sup_{N\in\mathbb{N}}\mathcal{W}_{1,H^{\gamma}(\domain)}\left(\Law[\prob[u_0][N]{}]{\sqrt{N}\ProjThree_{K}(\tilde{u}_N-\ProjTwo_J u_0)}, \Law[\prob[u_0][N]{}]{\sqrt{N}(\tilde{u}_N-\ProjTwo_J u_0)}\right)^2\hspace{-20em} &\\
        &\le \sup_{N\in\mathbb{N}}\EV[\prob[u_0][N]{}]{\norm{\sqrt{N}(\identity -\ProjThree_{K})(\tilde{u}_N-\ProjTwo_J u_0)}_{H^{\gamma}(\domain)}}^2\\
        &\lesssim \sum_{\abs{\mu}\ge K}2^{2\abs{\mu}(\gamma-m+\delta)}\to 0
    \end{align*}
    as $K\to\infty$ by \eqref{eq:stochastic_remainder}.
    
    To bound the second summand in \eqref{eq:aux_WassersteinDecomposition_PosteriorMean}, we first recall that
    %note that by \eqref{eq:determinisitic_remainder} we find 
    for any $v=\psi|_{\mathcal{V}}\neq 0\in \mathcal{V}^\ast$, $\psi\in H^{-\gamma}(\domain)$, we have
    \begin{equation*}
        \iprod[\mathcal{V}^\ast][\mathcal{V}]{v}{\sqrt{N}(\tilde{u}_N-\ProjTwo_J u_0)} =\frac{1}{\sigma \sqrt{N}}\sum_{i=1}^N \epsilon_i\opDeriv{u_0}[\ProjTwo_{J}\mathcal{I}_{u_0}^{-1}v](X_i) = \colon \sum_{i=1}^N\xi_{i,N}
        %+\sum_{k=1}^{J} \frac{1}{\sigma \sqrt{N}}\sum_{i=1}^N \epsilon_i \lambda_k^{-1}\psi_k(X_i)\psi_k
        %+ \smallo_{\prob[u_0][N]{}}(1).
    \end{equation*}
    with centered $\xi_{i,N} = \epsilon_i \opDeriv{u_0}[\ProjTwo_{J}\mathcal{I}_{u_0}^{-1}v](X_i)/(\sigma\sqrt{N})$, $i=1,\dots, N$. 
    Note that $\xi_{i,N}$ depends on $N$ through the projection $\ProjTwo_J$. By \eqref{eq:aux_convergence_projections} we find $\sum_{i=1}^N\EV[\prob[u_0][N]{}]{\xi_{i,N}^2}=\norm{\ProjTwo_J v_0}_{\operatorname{LAN}}^2\to \norm{v_0}_{\operatorname{LAN}}^2>0$ with $v_0=\mathcal{I}_{u_0}^{-1}v$ as $N\to\infty$. We also compute
    \begin{align*}
        \sum_{i=1}^N\EV[\prob[u_0][N]{}]{\xi_{i,N}^4} &= \frac{1}{\sigma^4 N} \EV[\prob[u_0]{}]{\epsilon_1^4}\EV[\prob[u_0]{}]{\opDeriv{u_0}[\ProjTwo_{J}\mathcal{I}_{u_0}^{-1}v](X_1)^4}\\
        &\lesssim \frac{1}{N}\norm{\opDeriv{u_0}[\ProjTwo_J\mathcal{I}_{u_0}^{-1}v]}_{L_p^4(\domain)}^4\le \frac{1}{N}\norm{\opDeriv{u_0}[\ProjTwo_J\mathcal{I}_{u_0}^{-1}v]}_{L^\infty(\domain)}^2\norm{\opDeriv{u_0}[\ProjTwo_J\mathcal{I}_{u_0}^{-1}v]}_{L_p^2(\domain)}^2\\
        &\lesssim \frac{1}{N}\norm{\ProjTwo_J\mathcal{I}_{u_0}^{-1}v}_{H^{m+\kappa}(\domain)}^2\norm{\ProjTwo_J\mathcal{I}_{u_0}^{-1} v}_{\operatorname{LAN}}^2\\
        &\lesssim \frac{1}{N}2^{2\kappa J}\norm{\mathcal{I}_{u_0}^{-1}v}_{H^m(\domain)}^2\norm{v}_{\mathcal{V}^\ast}^2\\
        &\lesssim N^{2\kappa/(2\beta+d)-1}\norm{v}_{\mathcal{V}^\ast}^4\to 0
    \end{align*}
    as $N\to\infty$ for $d/2<\kappa<\beta+d/2$, where we used the independence of $(\epsilon_1,\dots,\epsilon_N)$ from $(X_1,\dots, X_N)$, that $\opDeriv{u_0}\colon \mathcal{V}\cap W^{m,\infty}(\domain)\to L^\infty(\domain)$ is bounded by Assumption \ref{assump:operator_linearisation}, the Sobolev embedding, the $\norm{}_{\operatorname{LAN}}$-stability of $\ProjTwo_J$, the Bernstein inequality \eqref{eq:Bernstein_orthogonalprojection} and \eqref{eq:InformationIsomorphism}.
    Consequently, the Lindeberg--Feller central limit theorem (Proposition 2.27 of \cite{Vaart1998}) implies that
    %Note that for every $k\in\mathbb{N}$ we have
    %\begin{equation}
    %    \EV*{N\iprod{\tilde{u}_N-u_0}{\psi_k}_{L^2(\domain)}^2} \SW{=
   %     E \big[ \frac{1}{\sigma^2}\lambda_k^{-2} \langle \eta, \psi_k\rangle_N \big]}
    %    =\frac{1}{\sigma^2} \lambda_k^{-2}\SW{=\frac{1}{\sigma^2}\norm{\psi_k}_{\operatorname{LAN}}^2}.\label{eq:UniformControlSecondMoments}
    %\end{equation}
    %\SW{I think it should be $\lambda_k^{-2}$?}\SG{Yes, corrected}
    %the central limit theorem and \eqref{eq:aux_convergence_projections} imply
    \begin{equation*}
        \iprod[\mathcal{V}^\ast][\mathcal{V}]{v}{\sqrt{N}(\tilde{u}_N-\ProjTwo_Ju_0)} \xrightarrow{d}N\left(0,\norm{\mathcal{I}_{u_0}^{-1}v}_{\operatorname{LAN}}^2\right)
    \end{equation*}
    as $N\to\infty$. The Cram\'er--Wold device (p. 16 of \cite{Vaart1998}) implies convergence of the finite-dimensional distributions of $\sqrt{N}(\tilde{u}_N-\ProjTwo_Ju_0)$ to those of $\gaussian$. Moreover, \eqref{eq:stochastic_remainder} implies that
    \begin{equation*}
        \sup_{N\in\mathbb{N}}\EV[\prob[u_0][N]{}]{\norm{\sqrt{N}(\tilde{u}_N-\ProjTwo_Ju_0)}_{H^\gamma(\domain)}^2}<\infty,
    \end{equation*}
    such that 
    \begin{equation*}
        \mathcal{W}_{1,H^\gamma(\domain)}\left(\Law[\prob[u_0][N]{}]{\sqrt{N}\ProjOne_K (\tilde{u}-\ProjTwo_Ju_0)},\gaussian^K\right)\to 0 
    \end{equation*}
    as $N\to\infty$ for any $K\in\mathbb{N}$.
    
The third summand in \eqref{eq:aux_WassersteinDecomposition_PosteriorMean} can be made arbitrarily small by choosing $K$ large enough by Lemma \ref{lem:Convergence_Gaussian_Measures}. 
    
\end{proof}

Recall the transformation $\tau_N(u) = \sqrt{N}(u-\bar{u}_N)$ for $u\in V_J$.

\begin{lem}[Localisation]\label{lem:Localisationisfine}
Grant Assumptions \ref{assump:ApproximatonSets}, \ref{assump:Operator_stability} and \ref{assump:operator_linearisation}. Let $m>d/2$, $0<\gamma<m-d/2$, consider the prior $\pi_N$ from \eqref{eq:Prior} with cut-off $2^J\sim N^{1/(2\beta+d)}$ and assume that $u_0\in \mathcal{V}\cap H^{m+\beta}(\domain)$ for $\beta>2d$, $d/2<\beta_0<\beta+d/4$. Then
\begin{equation*}
    \mathcal{W}_{1,H^{\gamma}(\domain)}((\tau_N)_{\#}\Pi_N^{\mathbb{A}_N},(\tau_N)_{\#}\Pi_N)=\smallo_{\prob[u_0][N]{}}(1),\quad N\to\infty.
\end{equation*}
\end{lem}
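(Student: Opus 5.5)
The plan is to construct an explicit coupling between the localised and the full posterior, and then to control the excess mass outside $\mathbb{A}_N$ through a posterior moment bound. Since $\tau_N$ is an affine map with the same shift $\bar u_N$ in both measures, the Wasserstein distance reduces to one between the unshifted measures:
\begin{equation*}
\mathcal{W}_{1,H^\gamma(\domain)}((\tau_N)_{\#}\Pi_N^{\mathbb{A}_N},(\tau_N)_{\#}\Pi_N)=\sqrt N\,\mathcal{W}_{1,H^\gamma(\domain)}(\Pi_N^{\mathbb{A}_N},\Pi_N).
\end{equation*}
Write $\Pi_N=\Pi_N(\mathbb{A}_N)\Pi_N^{\mathbb{A}_N}+\Pi_N(\mathbb{A}_N^c)\Pi_N^{\mathbb{A}_N^c}$. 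Couple the two measures by drawing $u\sim\Pi_N^{\mathbb{A}_N}$ and, independently, $v\sim\Pi_N^{\mathbb{A}_N^c}$. With probability $\Pi_N(\mathbb{A}_N)$ set $X=Y=u$; otherwise set $X=u$ and $Y=v$. This gives
\begin{equation*}
\mathcal{W}_1\le \Pi_N(\mathbb{A}_N^c)\,\EV{\norm{u-v}_{H^\gamma}}\le \Pi_N(\mathbb{A}_N^c)\,\EV[\Pi_N^{\mathbb{A}_N}]{\norm{u-u_0}_{H^m}}+\EV[\Pi_N]{\norm{v-u_0}_{H^m}\indicator_{\mathbb{A}_N^c}},
\end{equation*}
where I use $\gamma<m$ to pass from the $H^\gamma$ to the $H^m$ norm. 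The first term is at most $L\epsilon_N\,\Pi_N(\mathbb{A}_N^c)$.

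It remains to show that $\sqrt N\,\Pi_N(\mathbb{A}_N^c|\data)$ and $\sqrt N\,\EV[\Pi_N]{\norm{u-u_0}_{H^m}\indicator_{\mathbb{A}_N^c}}$ are $\smallo_{\prob[u_0][N]{}}(1)$. Both follow from exponentially small posterior mass of $\mathbb{A}_N^c$. For the $H^m$-ball part of $\mathbb{A}_N$, Theorem \ref{thm:Contraction} gives mass $\mathcal{O}_{\prob[u_0][N]{}}(e^{-DN\epsilon_N^2})$. For the condition $\norm{u}_{H^{m+\beta}}\le K\log N$, I need to check that it is implied on the contraction ball by Lemma \ref{lem:Prior}\eqref{num:aux_rescaled_Prior_restrict} with $\kappa=\beta$, once $K\ge L''$. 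So $\mathbb{A}_N^c$ is contained in the complement of an $H^m$-ball of radius $L\epsilon_N$ (taking $L$ equal to the contraction constant). Since $N\epsilon_N^2\ge N^{d/(2\beta+d)}\log^2N$, the factor $\sqrt N e^{-DN\epsilon_N^2}\to0$.

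For the moment term I reuse the argument from the proof of Lemma \ref{lem:Convergence_PosteriorMean_Nonparametric}. On the event $\Omega_N$ from \eqref{eq:Marginal_Likelihood_Bound_RandomDesign}, whose probability tends to $1$, Fubini together with $\EV[\prob[u_0][N]{}]{e^{\ell_N(u)-\ell_N(u_0)}}=1$ and the finite prior second moment $\int\norm{u-u_0}_{H^m}^2\D\pi_N\lesssim1$ (valid since $\beta_0>d/2$) gives $\EV[\Pi_N]{\norm{u-u_0}_{H^m}^2}=\mathcal{O}_{\prob[u_0][N]{}}(e^{\tilde DN\epsilon_N^2})$. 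Cauchy--Schwarz then yields
\begin{equation*}
\EV[\Pi_N]{\norm{u-u_0}_{H^m}\indicator_{\mathbb{A}_N^c}}\le \bigl(\EV[\Pi_N]{\norm{u-u_0}^2_{H^m}}\,\Pi_N(\mathbb{A}_N^c)\bigr)^{1/2}=\mathcal{O}_{\prob[u_0][N]{}}\bigl(e^{-(D-\tilde D)N\epsilon_N^2/2}\bigr),
\end{equation*}
and multiplying by $\sqrt N$ still leaves a quantity tending to $0$.

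The step I expect to be the main obstacle is making sure that $D>\tilde D$ can be achieved simultaneously with the choice of $L$ defining $\mathbb{A}_N$. This requires tracking, in the proof of Theorem \ref{thm:Contraction}, that $c_3=C_0$ can be taken arbitrarily large while $\tilde D=2+C_2$ does not depend on $C_0$, so that $D$ is chosen between $\tilde D$ and $c_3-c_2-2$. A second point to verify is that the $H^{m+\beta}$ constraint does not destroy this exponential bound; this is handled by the deterministic inclusion from Lemma \ref{lem:Prior}\eqref{num:aux_rescaled_Prior_restrict}, applied with $L'''=L$.
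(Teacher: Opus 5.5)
Your proposal is correct and is essentially the paper's argument. The paper invokes Theorem 6.15 of Villani (a weighted total-variation bound around $\sqrt{N}(u_0-\bar u_N)$, followed by a split into near and far regions), whereas you write the mixture coupling explicitly, which gives the same bound $\Pi_N(\mathbb{A}_N^c\mid\data)\int\|u-u_0\|\,d\Pi_N^{\mathbb{A}_N}+\int_{\mathbb{A}_N^c}\|u-u_0\|\,d\Pi_N$ a little more directly; the remaining ingredients are exactly the paper's, namely exponential contraction with $\mathbb{A}_N$ reduced to an $H^m$-ball via Lemma~\ref{lem:Prior}\eqref{num:aux_rescaled_Prior_restrict}, the $e^{\tilde D N\epsilon_N^2}$ posterior second-moment bound combined with Cauchy--Schwarz, and taking $C_0$ large so that $D>\tilde D$.
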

\begin{proof}
    Let $T=\sqrt{N}(u_0-\bar{u}_N)$, which does not depend on an individual posterior draw $u$. We use Theorem 6.15 of \cite{villaniOptimalTransport2009} to deduce
    \begin{align*}
        \mathcal{W}_{1,H^{\gamma}(\domain)}((\tau_N)_{\#}\Pi_N^{\mathbb{A}_N},(\tau_N)_{\#}\Pi_N)\hspace{-10em}&\\
        &\le \left(\int_{H^{\gamma}(\domain)} \norm{u-T}_{H^{\gamma}(\domain)}d\abs{(\tau_N)_{\#}\Pi_N^{\mathbb{A}_N}-(\tau_N)_{\#}\Pi_N}(u)\right)\\
        &=\left(\int_{N^{-1/2}\norm{u-T}_{H^{\gamma}(\domain)}\le 1}\norm{u-T}_{H^{\gamma}(\domain)} d\abs{(\tau_N)_{\#}\Pi_N^{\mathbb{A}_N}-(\tau_N)_{\#}\Pi_N}(u) \right.\\
        &\quad \left. + \int_{N^{-1/2}\norm{u-T}_{H^{\gamma}(\domain)}>1}\norm{u-T}_{H^{\gamma}(\domain)} d\abs{(\tau_N)_{\#}\Pi_N^{\mathbb{A}_N}-(\tau_N)_{\#}\Pi_N}(u)\right)\\
        &\lesssim \sqrt{N}\operatorname{TV}((\tau_N)_{\#}\Pi_N^{\mathbb{A}_N},(\tau_N)_{\#}\Pi_N)\\
        &\quad + \int_{N^{-1/2}\norm{u-T}_{H^{\gamma}(\domain)}>1}\norm{u-T}_{H^{\gamma}(\domain)} \,\D (\tau_N)_{\#}\Pi_N^{\mathbb{A}_N}(u)\\
        &\quad + \int_{N^{-1/2}\norm{u-T}_{H^{\gamma}(\domain)}>1}\norm{u-T}_{H^{\gamma}(\domain)} \,\D (\tau_N)_{\#}\Pi_N(u).
    \end{align*}
    For the first term we use the results from page 142 of \cite{Vaart1998} to deduce
    \begin{equation*}
        \sqrt{N}\operatorname{TV}((\tau_N)_{\#}\Pi_N^{\mathbb{A}_N},(\tau_N)_{\#}\Pi_N) \le \sqrt{N}\Pi_N(\mathbb{A}_N^c|\data)=\smallo_{\prob[u_0][N]{}}\left(\sqrt{N}e^{-DN\epsilon_N^2}\right)=\smallo_{\prob[u_0][N]{}}(1)
    \end{equation*}
    by the contraction \eqref{eq:Contraction_RandomDesign} and Lemma \ref{lem:Prior} \eqref{num:aux_rescaled_Prior_restrict}. Since $\norm{u-u_0}_{H^{\gamma}(\domain)}\le \norm{u-u_0}_{H^m(\domain)}\to 0$ $\Pi_N^{\mathbb{A}_N}(\MTemptyplaceholder|\data)$ almost surely, the second term is zero for large enough $N$. For the third term, we note that it equals
    \begin{align*}
        I&\coloneqq \int_{N^{-1/2}\norm{u-T}_{H^{\gamma}(\domain)}>1}\norm{u-T}_{H^{\gamma}(\domain)} \,\D (\tau_N)_{\#}\Pi_N(u)\\
        &=\EV*[\Pi_N]{\norm*{\sqrt{N}(u-u_0)}_{H^{\gamma}(\domain)}\indicator_{\{\norm{u-u_0}_{H^{\gamma}(\domain)}^2>1\}}\given  \data}\\
        %&\le \sqrt{N}\EV*[\Pi_N]{\norm{u-u_0}_{H^{\gamma}(\domain)}\indicator_{\{\norm{u-u_0}_{H^{\gamma}(\domain)}^2>1\}}\given \data}\\
        &\le \sqrt{N}\EV*[\Pi_N]{\norm{u-u_0}_{H^{\gamma}(\domain)}^2\given \data}^{1/2}\Pi_N(\norm{u-u_0}_{H^m(\domain)}>c|\data)^{1/2}
        %&\le \sqrt{N}\EV*[\Pi_N]{\norm{u-u_0}_{H^{\gamma}(\domain)}^2\given \data}^{1/2}e^{-DN\epsilon_N^2/2},
    \end{align*}
    for some constant $0<c<\infty$.
    Note that $\int e^{\ell_N(u)-\ell_N(u_0)}\D\pi_N(u) \ge e^{-\tilde{D}N\epsilon_N^2}$ with probability tending to one by \eqref{eq:Marginal_Likelihood_Bound_RandomDesign}. Denote this sequence of events by $\Omega_N$, then we have 
    \begin{equation*}
        \EV[\prob[u_0][N]{}]{\indicator_{\Omega_N} \EV[\Pi_N]{\norm{u-u_0}_{H^{\gamma}(\domain)}^2\given \data}}\le e^{\tilde{D}N\epsilon_N^2}\int_{V_J}\norm{u-u_0}_{H^\gamma(\domain)}^2\,\D\pi_N(u)
    \end{equation*}
    since $\EV[\prob[u_0][N]{}]{e^{\ell_N(u)-\ell_N(u_0)}} = 1$ (expectation of the likelihood ratio). %Since $\gamma<m-d/2<m+\beta_0-d/2$, we can view the prior as a truncated Gaussian measure on $H^{\gamma}(\domain)$ and obtain from Fernique's Theorem (Theorem 2.7 of \cite{DaPrato2014}) a constant $0<\bar{C}<\infty$, independent of $N\in\mathbb{N}$, such that
    Moreover, by \eqref{eq:CharacterisationSobolevSpaces} and since $\beta_0>d/2$, we find 
    \begin{align*}
        \int_{V_J}\norm{u-u_0}_{H^\gamma(\domain)}^2\,\D\pi_N(u)&\lesssim  1+ \int_{V_J}\norm{u}_{H^m(\domain)}^2\,\D\pi_N(u)\lesssim 1 + \sum_{\abs{\mu}\le J}2^{2m\abs{\mu}}2^{-2(m+\beta_0)\abs{\mu}}\\
        &\lesssim \sum_{j\le J}2^{jd}2^{-2j\beta_0}\le \sum_{j\in\mathbb{N}}2^{-j(2\beta_0-d)}\lesssim 1.
    \end{align*}
    Consequently, we find
    \begin{equation*}
        I = \mathcal{O}_{\prob[u_0][N]{}}(\sqrt{N}e^{-(D-\tilde{D})N\epsilon_N^2/2}),
    \end{equation*}
    where we used the contraction \eqref{eq:Contraction_RandomDesign}. Since $D>\tilde{D}$, we find $I=\smallo_{\prob[u_0][N]{}}(1)$ as $N\to\infty$.
\end{proof}

\begin{lem}[Convergence of the Gaussian measures]\label{lem:Convergence_Gaussian_Measures}
    Assume that $m>d/2$, $0<\gamma<m-d/2$ and let $\gaussian^{K}$, $K\in\mathbb{N}$ be the push-forward measure of $\gaussian$ under $\ProjThree_K$. Then
    \begin{equation*}
    \mathcal{W}_{1,H^{\gamma}(\domain)}(\gaussian^{K},\gaussian)\to 0
\end{equation*}
as $K\to\infty$.
\end{lem}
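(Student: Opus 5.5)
The plan is to couple $\gaussian$ and $\gaussian^K$ on a single probability space via the projection itself. Take $X\sim\gaussian$ realised as the tight Gaussian random element of $(\mathbb{V}_\gamma,\norm{}_{H^\gamma(\domain)})$ from Remark \ref{rmk:Tightness_Limiting_Measure}. Then $(\ProjThree_K X, X)$ is a coupling of $(\gaussian^K,\gaussian)$, and the definition of $\mathcal{W}_1$ gives
\begin{equation*}
    \mathcal{W}_{1,H^\gamma(\domain)}(\gaussian^K,\gaussian)\le \EV{\norm{(\identity-\ProjThree_K)X}_{H^\gamma(\domain)}}\le \EV{\norm{(\identity-\ProjThree_K)X}_{H^\gamma(\domain)}^2}^{1/2}.
\end{equation*}
Here $\ProjThree_K$ is extended to $\mathbb{V}_\gamma$ by continuity; this is legitimate by the $H^\gamma$-stability \eqref{eq:H^m_stability} with $\kappa=\gamma$.

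It remains to show that the right-hand side vanishes as $K\to\infty$. I would use the multiscale characterisation \eqref{eq:CharacterisationSobolevSpaces}. Since $(\identity-\ProjThree_K)X$ has $L^2$-coefficients $\iprod{X}{\psi_\mu}_{L^2(\domain)}$ for $\abs{\mu}>K$ and zero otherwise, the lower bound in \eqref{eq:CharacterisationSobolevSpaces} gives
\begin{equation*}
    \EV{\norm{(\identity-\ProjThree_K)X}_{H^\gamma(\domain)}^2}\lesssim \sum_{\abs{\mu}>K}2^{2\gamma\abs{\mu}}\EV{\iprod{X}{\psi_\mu}_{L^2(\domain)}^2}.
\end{equation*}
The coefficient $\iprod{X}{\psi_\mu}_{L^2(\domain)}$ is the isonormal process evaluated at the functional $\psi_\mu|_{\mathcal{V}}\in\mathcal{V}^\ast$, that is, $\gaussian(\mathcal{I}_{u_0}^{-1}\psi_\mu)$ by \eqref{eq:FromDualtoLAN}. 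Its variance is therefore $\norm{\mathcal{I}_{u_0}^{-1}\psi_\mu}_{\operatorname{LAN}}^2\lesssim\norm{\psi_\mu}_{H^{-m}(\domain)}^2\lesssim 2^{-2(m-\epsilon)\abs{\mu}}$, using \eqref{eq:Normequivalence}, \eqref{eq:InformationIsomorphism} and the duality bound from \eqref{eq:CharacterisationSobolevSpaces} already used in \eqref{eq:stochastic_remainder}. Counting $\lesssim 2^{jd}$ indices per level, the sum is bounded by
\begin{equation*}
    \sum_{j>K}2^{2j(\gamma-m+\epsilon+d/2)},
\end{equation*}
which is the tail of a convergent geometric series once $0<\epsilon<m-d/2-\gamma$. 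It therefore tends to $0$ as $K\to\infty$.

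The main obstacle is justifying the identification of the coefficients of the $\mathbb{V}_\gamma$-valued element $X$ with the isonormal process at $\mathcal{I}_{u_0}^{-1}\psi_\mu$. I would handle this through the series representation $X=\sum_k\gaussian(e_k)\iota e_k$ from Remark \ref{rmk:Tightness_Limiting_Measure}. Pairing with $\psi_\mu$ is $L^2(\Omega)$-continuous, so
\begin{equation*}
    \iprod{X}{\psi_\mu}_{L^2(\domain)}=\sum_k\gaussian(e_k)\iprod{e_k}{\mathcal{I}_{u_0}^{-1}\psi_\mu}_{\operatorname{LAN}}=\gaussian(\mathcal{I}_{u_0}^{-1}\psi_\mu).
\end{equation*}
A second point to check is the consistency of the limit $\gaussian^K=(\ProjThree_K)_\#\gaussian$ with this coupling; this holds by construction. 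Summing over $\mu$ and exchanging sum and expectation is justified by monotone convergence.
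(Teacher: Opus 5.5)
Your proof is correct, and its first step, coupling $X\sim\gaussian$ with $\ProjThree_K X$ and bounding $\mathcal{W}_{1,H^\gamma(\domain)}(\gaussian^K,\gaussian)$ by the second moment of $\norm{(\identity-\ProjThree_K)X}_{H^\gamma(\domain)}$, is the same as the paper's. The two arguments differ in how that moment is shown to vanish.

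The paper argues softly. It shows $\norm{(\identity-\ProjOne_K)X}_{H^\gamma(\domain)}\to0$ almost surely, using density of $\bigcup_K V_K$ and the uniform $H^\gamma$-stability of $\ProjOne_K$ (as in \eqref{eq:aux_DensityOriginalprojections}). Stability together with Fernique's theorem then supplies the dominating function for dominated convergence.

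You instead compute the tail explicitly from the coefficients $\iprod{X}{\psi_\mu}_{L^2(\domain)}=\gaussian(\mathcal{I}_{u_0}^{-1}\psi_\mu)$. This is the same computation the paper carries out in \eqref{eq:stochastic_remainder} and for $A_{N,K}$ in the proof of Theorem \ref{thm:LaplaceApproximation}.

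Your route gives an explicit rate $\mathcal{W}_{1,H^\gamma(\domain)}(\gaussian^K,\gaussian)\lesssim 2^{-K(m-\gamma-d/2-\epsilon)}$ and needs neither Fernique nor dominated convergence. The paper's argument is shorter but gives no rate. It also uses nothing about $\gaussian$ beyond a finite second moment on $\mathbb{V}_\gamma$, so it would apply to any such law.

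Two minor points. First, no extension of $\ProjThree_K$ is needed: it is already the $L^2(\domain)$-orthogonal projection, and $\mathbb{V}_\gamma\subset L^2(\domain)$.

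Second, the lower bound in \eqref{eq:CharacterisationSobolevSpaces} is assumed only for $u\in\mathcal{V}\cap H^{s+\epsilon}(\domain)$, while $X\notin\mathcal{V}$ almost surely. You should add one line extending it to $u\in\mathbb{V}_\gamma$. For example, apply it to $\ProjOne_n u\in V_n$, whose weighted coefficient sum is dominated by that of $u$. Then let $n\to\infty$, using that $\ProjOne_n u\to u$ in $L^2(\domain)$ (since $\mathbb{V}_\gamma$ lies in the $L^2$-closed span of the $\psi_\mu$) and weak lower semicontinuity of the $H^\gamma(\domain)$-norm.
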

\begin{proof}
    By Fernique's Theorem (Theorem 2.7 of \cite{DaPrato2014}) we have $\EV[X\sim\gaussian]{\norm{X}_{H^{\gamma}(\domain)}^2}<\infty$ and $\EV[X\sim\gaussian^K]{\norm{X}_{H^{\gamma}(\domain)}^2}<\infty$. Consequently, we find
    \begin{align*}
        \mathcal{W}_{1,H^{\gamma}(\domain)}(\gaussian^{K},\gaussian)^2 &= \sup_{F\colon \norm{F}_{\operatorname{Lip}}\le 1}\abs{\EV[X\sim\gaussian]{F(\ProjThree_{K} X)-F(X)}}^2\\
     &\le \EV[X\sim\gaussian]{\norm{(\mathbb{I}-\ProjThree_{K})X}_{H^{\gamma}(\domain)}^2}.
    \end{align*}
    Similarly to \eqref{eq:aux_DensityOriginalprojections} we can show that $\norm{(\mathbb{I}-\ProjThree_{K})X}_{H^{\gamma}(\domain)}\to 0$ $\gaussian$-almost surely, such that the claim follows by the $H^\gamma(\domain)$ stability of $\ProjOne_K$ and the dominated convergence theorem.
    %which transfers to $\EV{\abs{F(X)}}<\infty$ for any Lipschitz-continuous $F\colon H^{\gamma}(\domain)\to\mathbb{R}$. Moreover, we find
    %\begin{align*}
    % \mathcal{W}_{1,H^{\gamma}(\domain)}(\gaussian^{K},\gaussian) &= \sup_{F\colon \norm{F}_{\operatorname{Lip}}\le 1}\abs{\EV[X\sim\gaussian]{F(\ProjThree_{K} X)-F(X)}}\\
    % &\le \EV[X\sim\gaussian]{\norm{(\mathbb{I}-\ProjOne_{K})X}_{H^{\gamma}(\domain)}}\\
    % &\le \sqrt{\EV[X\sim\gaussian]{\norm{(\mathbb{I}-\ProjThree_{K})X}_{H^{\gamma}(\domain)}^2}}\\
    % &\lesssim \sqrt{\sum_{\abs{\mu}>{K}}2^{2\abs{\mu}\gamma}\norm{\psi_\mu}_{\operatorname{LAN}}^2}\\
    % &\lesssim \sqrt{\sum_{\abs{\mu}>{K}}2^{2\abs{\mu}(\gamma+2)}},
%\end{align*}
%where the summability of the full sum is verified for $0<\gamma<m-d/2$ and tail convergence follows as $K\to\infty$.
\end{proof}

\begin{proof}[Proof of Theorem \ref{thm:nonlinear_wasserstein_delta_method}]
%By Lemma \ref{lem:Localisationisfine} we have 
%\begin{equation*}
%    \mathcal{W}_{1,H^\gamma(\domain)}(\bar{\tau}_N,\tau_N)=\smallo_{\prob[u_0][N]{}}(1),
%\end{equation*}
%where $\tau_N$ and $\bar{\tau}_N$ are the laws of $\sqrt{N}(u-\hat{u})$ with $u\sim \Pi_N(\MTemptyplaceholder|\data)$ and $u\sim \Pi_N^{\mathbb{A}_N}(\MTemptyplaceholder|\data)$, respectively.
\textbf{Step 1: Preliminaries.}
Recall that from Theorem \ref{thm:Contraction} and Lemma \ref{lem:Prior} \eqref{num:aux_rescaled_Prior_restrict} we have
\begin{equation}
    \Pi_N(\mathbb{A}_N^c)=\Pi_N(\mathbb{A}_N^c|\data) = \mathcal{O}_{\prob[u_0][N]{}}\left(e^{-DN\epsilon_N^2}\right).
\label{eq:GoodSetComplementProbability}
\end{equation}
 By \eqref{eq:claim_BvM_localised} we know that
\begin{equation}
    \mathcal{W}_{1,H^\gamma(\domain)}((\tau_N)_{\#}\Pi_N^{\mathbb{A}_N},\gaussian) =\smallo_{\prob[u_0][N]{}}(1),\label{eq:LocalisedBvMForDeltaMethod}
\end{equation}
where $\tau_N(u) =\sqrt{N}(u-\bar{u}_N)$ for $u\in V_J\supset \mathbb{A}_N$,
such that
\begin{equation}
    \int_{\mathbb{A}_N} \norm{\sqrt{N}(u-\bar{u}_N)}_{H^\gamma(\domain)}\,\D \Pi_N^{\mathbb{A}_N}(u) = \mathcal{O}_{\prob[u_0][N]{}}(1).\label{eq:LocalisedPosteriorFirstMoment}
\end{equation}
Furthermore, Lemma \ref{lem:Convergence_PosteriorMean} and the Jackson estimate \eqref{eq:Jackson_orthogonalprojection} imply
\begin{align}
\begin{split}
\norm{\bar{u}_N- u_0}_{H^\gamma(\domain)}&\le \norm{\bar{u}_N-\ProjTwo_J u_0}_{H^\gamma(\domain)}+\norm{(\identity -\ProjTwo_J) u_0}_{H^\gamma(\domain)}\\
&=\mathcal{O}_{\prob[u_0][N]{}}(N^{-1/2}) + \norm{(\identity -\ProjTwo_J) u_0}_{H^m(\domain)}\\
&= \mathcal{O}_{\prob[u_0][N]{}}(N^{-1/2}) + \mathcal{O}(\epsilon_N)= \mathcal{O}_{\prob[u_0][N]{}}(\epsilon_N).
\end{split}
\label{eq:PosteriorMeanRootNTight}
\end{align}
Since $0<\gamma<m$ we have
\begin{equation*}
    \delta_N\coloneqq \sup_{u\in\mathbb{A}_N}\norm{u-u_0}_{H^\gamma(\domain)}\lesssim \epsilon_N\to 0,\quad N\to\infty
\end{equation*}
and for sufficiently large $N\in\mathbb{N}$ we have $\mathbb{A}_N \subset \mathscr{U}$.\\
\textbf{Step 2 (Localisation).}
We aim to show that
\begin{equation}
    \mathcal{W}_{1,V}\left((S_{N})_{\#}\Pi_N, (S_{N})_{\#}\Pi_N^{\mathbb{A}_N}\right) = \smallo_{\prob[u_0][N]{}}(1)\label{eq:TransformedLocalisation}
\end{equation}
as $N\to\infty$.
With 
\begin{equation*}
 b_{N}\coloneqq \sqrt{N}(\Phi(u_0) - \Phi(\bar{u}_N))
\end{equation*}
we have that $S_{N}(u) - b_{N} = \sqrt{N}(\Phi(u) - \Phi(u_0))$.
Using Theorem 6.15 of \cite{villaniOptimalTransport2009} we find
\begin{align}
    \mathcal{W}_{1,V}\left((S_{N})_{\#}\Pi_N, (S_{N})_{\#}\Pi_N^{\mathbb{A}_N}\right) \hspace{-5em}&\hspace{5em}\le \int_{\mathbb{A}_N^c}\norm{S_{N}(u) - b_{N}}_V \,\D \Pi_N(u)\nonumber\\
    &\quad +\Pi_N(\mathbb{A}_N^c) \int_{\mathbb{A}_N}\norm{S_{N}(v) - b_{N}}_V \,\D \Pi_N^{\mathbb{A}_N}(v)\nonumber\\
    \begin{split}
    &=\int_{\mathbb{A}_N^c}\norm{\sqrt{N}(\Phi(u)-\Phi(u_0))}_V \,\D \Pi_N(u)\\
    &\quad +\Pi_N(\mathbb{A}_N^c) \int_{\mathbb{A}_N}\norm{\sqrt{N}(\Phi(v)-\Phi(u_0))}_V \,\D \Pi_N^{\mathbb{A}_N}(v).\end{split}\label{eq:TransformedLocalisationCoupling}
\end{align}
Since $\Phi$ is locally  Lipschitz-continous, we can bound the
second term in \eqref{eq:TransformedLocalisationCoupling} by
\begin{equation}
    \sqrt{N}\epsilon_N \Pi_N(\mathbb{A}_N^c)=\smallo_{\prob[u_0][N]{}}(1),\quad N\to\infty,\label{eq:TransformedLocalisationGoodPart}
\end{equation}
where we used \eqref{eq:GoodSetComplementProbability}.
For the first term in \eqref{eq:TransformedLocalisationCoupling} combining the Cauchy--Schwarz inequality and the polynomial growth condition for $\Phi$ yields
\begin{align*}
    \sqrt{N}\int_{\mathbb{A}_N^c}\norm{\Phi(u)-\Phi(u_0)}_V\,\D\Pi_N(u)& \le \sqrt{N\EV[\Pi_N]{\norm{\Phi(u) - \Phi(u_0)}_V^2}\Pi_N(\mathbb{A}_N^c)}\\
    &\lesssim \sqrt{N\EV[\Pi_N]{1+\norm{u}_{H^\gamma(\domain)}^{2q}}\Pi_N(\mathbb{A}_N^c)}
\end{align*}
On the event where $\int_{V_J}e^{\ell_N(u)-\ell_N(u_0)}\,\D\pi_N(u) \ge e^{-\tilde{D}N\epsilon_N^2}$, which has $\prob[u_0][N]{}$-mass approaching one by \eqref{eq:Marginal_Likelihood_Bound_RandomDesign} we find that
\begin{align*}
    \EV[\Pi_N]{1+\norm{u}_{H^\gamma(\domain)}^{2q}}&\le e^{\tilde{D}N\epsilon_N^2}\int_{V_J}(1+\norm{u}_{H^\gamma(\domain)}^{2q})e^{\ell_N(u)-\ell_N(u_0)}\,\D \pi_N(u).
\end{align*}
Since $\EV[\prob[u_0][N]{}]{e^{\ell_N(u)-\ell_N(u_0)}} = 1$ for any $u\in V_J$, we find using Fernique's Theorem (Theorem 2.7 of \cite{DaPrato2014}) that
\begin{equation*}
    \EV*[\prob[u_0][N]{}]{\int_{V_J}(1+\norm{u}_{H^\gamma(\domain)}^{2q})e^{\ell_N(u)-\ell_N(u_0)}\,\D \pi_N(u)} = \int_{V_J}(1+\norm{u}_{H^\gamma(\domain)}^{2q})\,\D \pi_N(u)\le \bar{C}
\end{equation*}
uniformly in $N\in\mathbb{N}$ for some constant $0<\bar{C}<\infty$ since $\gamma<m-d/2<m+\beta_0-d/2$ such that the prior can be realised as the truncation of a Gaussian measure on $H^{\gamma}(\domain)$.
Markov's inequality and \eqref{eq:GoodSetComplementProbability} therefore imply
\begin{equation}
    N\EV[\Pi_N]{1+\norm{u}_{H^\gamma(\domain)}^{2q}}\Pi_N(\mathbb{A}_N^c) = \mathcal{O}_{\prob[u_0][N]{}}(N e^{\tilde{D}N\epsilon_N^2}\bar{C}e^{-DN\epsilon_N^2}) = \smallo_{\prob[u_0][N]{}}(1),\label{eq:PosteriorMomentPhi}
\end{equation}
where the latter bound follows since $D>\tilde{D}$ in Theorem \ref{thm:Contraction}.
Combining \eqref{eq:TransformedLocalisationCoupling}, \eqref{eq:TransformedLocalisationGoodPart} and \eqref{eq:PosteriorMomentPhi} shows the claimed localisation bound \eqref{eq:TransformedLocalisation}.\\

\textbf{Step 3 (Conclusion).} 
Note that \eqref{eq:PosteriorMeanRootNTight} implies that $\norm{\bar{u}_N-u_0}_{H^\gamma(\domain)}=\smallo_{\prob[u_0][N]{}}(1)$, such that $\bar{u}_N\in\mathscr{U}$ with probability tending to one as $N\to\infty$. Choose $r>0$ such that $\{v\in\mathbb{V}_\gamma\colon\norm{v-u_0}_{H^\gamma(\domain)}<r\}\subset \mathscr{U}$. With probability tending to one we have $\delta_N+\norm{\bar{u}_N-u_0}_{H^\gamma(\domain)}<r$. On this event, for every $u\in\mathbb{A}_N$ and $0\le t\le 1$, we have 
\begin{equation*}
    \norm{\bar{u}_N+t(u-\bar{u}_N)-u_0}_{H^\gamma(\domain)}\le \norm{\bar{u}_N-u_0}_{H^\gamma(\domain)}+\delta_N<r.
\end{equation*}
Hence, the line segment between $\bar{u}_N$ and $u$ is contained in $\mathscr{U}$ for any $u\in\mathbb{A}_N$ with probability tending to one. Define $R_2(u)\coloneqq \Phi(u)-\Phi(\bar{u}_N)-\mathcal{D}\Phi_{u_0}(u-\bar{u}_N)$ for any $u\in\mathbb{A}_N$. By the fundamental theorem of calculus (Proposition A.2.3 of \cite{Roeckner2015}), we find
\begin{equation*}
    R_2(u) = \int_0^1 (\mathcal{D}\Phi_{\bar{u}_N + t(u-\bar{u}_N)}-\mathcal{D}\Phi_{u_0})(u-\bar{u}_N)\,\D t, \quad u\in \mathbb{A}_N,
\end{equation*}
such that
\begin{align*}
    \norm{R_2(u)}_V&\lesssim \int_0^1 \norm{\bar{u}_N+t(u-\bar{u}_N)-u_0}_{H^\gamma(\domain)}\,\D t \norm{u-\bar{u}_N}_{H^\gamma(\domain)}\\
    & \le (\delta_N + \norm{\bar{u}_N-u_0}_{H^\gamma(\domain)})\norm{u-\bar{u}_N}_{H^\gamma(\domain)}.
\end{align*}
This implies that 
\begin{align*}
    \sqrt{N}\int_{\mathbb{A}_N}&\norm{R_2(u)}_V\,\D \Pi_N^{\mathbb{A}_N}(u)\\
    &\lesssim (\delta_N+\norm{\bar{u}_N-u_0}_{H^\gamma(\domain)})\int_{\mathbb{A}_N}\norm{\sqrt{N}(u-\bar{u}_N)}_{H^\gamma(\domain)}\,\D\Pi_N^{\mathbb{A}_N}(u)\\
    &= \smallo_{\prob[u_0][N]{}}(1)
\end{align*}
by \eqref{eq:LocalisedPosteriorFirstMoment},
\eqref{eq:PosteriorMeanRootNTight} and $\delta_N\to0$. Since for any $u\in\mathbb{A}_N$, we have $S_{N,2}(u)=\mathcal{D}\Phi_{u_0}(\sqrt{N}(u-\bar{u}_N))+\sqrt{N}R_2(u)$, we obtain using Theorem 6.15 of \cite{villaniOptimalTransport2009} the bound
\begin{equation*}
    \mathcal{W}_{1,V}\left((S_{N,2})_{\#}\Pi_N^{\mathbb{A}_N},(\mathcal{D}\Phi_{u_0}\circ \tau_N)_{\#}\Pi_N^{\mathbb{A}_N}\right)=\smallo_{\prob[u_0][N]{}}(1),\quad N\to\infty.
\end{equation*}
Furthermore, since $\mathcal{D}\Phi_{u_0}$ is bounded and linear, we have
\begin{align*}
    \mathcal{W}_{1,V}((\mathcal{D}\Phi_{u_0}\circ\tau_N)_{\#}\Pi_N^{\mathbb{A}_N},(\mathcal{D}\Phi_{u_0})_{\#}\gaussian) &\le \norm{\mathcal{D}\Phi_{u_0}}_{\mathcal{L}(\mathbb{V}_\gamma,V)}\mathcal{W}_{1,H^\gamma(\domain)}((\tau_N)_{\#}\Pi_N^{\mathbb{A}_N},\gaussian)\\
    &= \smallo_{\prob[u_0][N]{}}(1)
\end{align*}
by \eqref{eq:LocalisedBvMForDeltaMethod}.
In combination with \eqref{eq:TransformedLocalisation} and the
triangle inequality, the previous two bounds prove the claim.
\end{proof}

\subsection{Proofs for the MAP centring}

Let $V_J^\ast$ be the topological dual space to $(V_J,\norm{}_{H^m(\domain)})$ equipped with its dual norm. For $u\in V_J$ denote by $\mathcal{I}_{u_0,J}u$ the restriction of $\mathcal{I}_{u_0}u\in\mathcal{V}^\ast$ to $V_J$, i.e.
\begin{equation*}
    (\mathcal{I}_{u_0,J} u)(v)=\iprod{u}{v}_{\operatorname{LAN}},\quad u,v\in V_J.
\end{equation*}
By \eqref{eq:Normequivalence} it follows that $\mathcal{I}_{u_0,J}\colon V_J\to V_J^\ast$ is a topological isomorphism, where the norms $\norm{\mathcal{I}_{u_0,J}}_{\mathcal{L}(V_J,V_J^\ast)}$ and $\norm{\mathcal{I}_{u_0,J}^{-1}}_{\mathcal{L}(V_J^\ast,V_J)}$ are bounded uniformly in $J\in\mathbb{N}$. For the open neighbourhood $\mathcal{U}$ of Assumption \ref{assump:operator_linearisation} define
\begin{equation*}
    \mathcal{J}_N(u)\coloneqq -\frac{1}{N}\mathcal{D}^2 \ell_N^{\operatorname{reg}}(u),\quad u\in \mathcal{U},
\end{equation*}
such that $\mathcal{J}_N(u)\colon V_J\to V_J^\ast$ with
\begin{align*}
    \mathcal{J}_N(u)[v,v']&=\frac{1}{\sigma^2N}\sum_{i=1}^N \opDeriv{u}[v](X_i)\opDeriv{u}[v'](X_i)\\
    &\quad - \frac{1}{\sigma^2 N}\sum_{i=1}^N (Y_i-\op[u](X_i))\mathcal{D}^2\op_u[v,v'](X_i)+\frac{1}{N}\iprod{v}{v'}_{\mathbb{H}_N}
\end{align*}
for $v,v'\in V_J$. Furthermore, define the line segment
\begin{equation*}
    u_t^{\operatorname{MAP}}\coloneqq \ProjOne_J u_0 + t(\hat{u}_N^{\operatorname{MAP}} - \ProjOne_J u_0),\quad 0\le t\le 1,
\end{equation*}
and the operator $\bar{\mathcal{J}}_N\colon V_J\to V_J^\ast$ by
\begin{equation}
    \bar{\mathcal{J}}_N\coloneqq \int_0^1 \mathcal{J}_N(u_t^{\operatorname{MAP}})\,\D t.\label{eq:IntegratedMidPointOperator}
\end{equation}
\begin{lem}\label{lem:IntegratedIntermediatepoint_Invertibility}
     Grant Assumptions \ref{assump:ApproximatonSets}, \ref{assump:Operator_stability}, \ref{assump:operator_linearisation} and \ref{assump:operator_linearisation_2}. Let $m>d/2$, $0<\gamma<m-d/2$, consider the prior $\pi_N$ from \eqref{eq:Prior} with cut-off $2^J\sim N^{1/(2\beta+d)}$ and assume that $u_0\in \mathcal{V}\cap H^{m+\beta}(\domain)$ for $\beta>2d$, $d/2<\beta_0<\beta+d/4$. Then 
    \begin{equation*}
        \norm{\bar{\mathcal{J}}_N-\mathcal{I}_{u_0,J}}_{\mathcal{L}(V_J,V_J^\ast)} = \smallo_{\prob[u_0][N]{}}(\operatorname{dim}(V_J)^{-1/2}).
    \end{equation*}
    Moreover, with probability tending to one, $\bar{\mathcal{J}}_N$ is invertible and
    \begin{equation*}
        \norm{\bar{\mathcal{J}}_N^{-1}}_{\mathcal{L}(V_J^\ast,V_J)}=\mathcal{O}_{\prob[u_0][N]{}}(1)\text{ and }\norm{\bar{\mathcal{J}}_N^{-1}-\mathcal{I}_{u_0,J}^{-1}}_{\mathcal{L}(V_J^\ast,V_J)}=\smallo_{\prob[u_0][N]{}}(\operatorname{dim}(V_J)^{-1/2}).
    \end{equation*}
\end{lem}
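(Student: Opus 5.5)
The idea is to compare, for $v,v'\in V_J$, the bilinear form $\mathcal{J}_N(u)[v,v']$ with $\iprod{v}{v'}_{\operatorname{LAN}}$ uniformly along the segment $u=u_t^{\operatorname{MAP}}$, $t\in[0,1]$. The invertibility and inverse bounds then follow from a Neumann series argument.

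The first step localises the segment. By Lemma \ref{lem:MAP_Convergence}, $\norm{\hat{u}_N^{\operatorname{MAP}}-u_0}_{H^m(\domain)}=\mathcal{O}_{\prob[u_0][N]{}}(\epsilon_N)$ and $\norm{\hat{u}_N^{\operatorname{MAP}}-u_0}_{W^{m,\infty}(\domain)}=\mathcal{O}_{\prob[u_0][N]{}}(2^{J\eta}\epsilon_N)$. The Jackson estimate gives the same bounds for $\ProjOne_J u_0$, so with probability tending to one the whole segment lies in $\mathcal{U}\cap W^{m,\infty}(\domain)$. Along it we have $\sup_t\norm{u_t^{\operatorname{MAP}}-u_0}_{H^m(\domain)}\lesssim\epsilon_N$ and $\sup_t\norm{u_t^{\operatorname{MAP}}-u_0}_{W^{m,\infty}(\domain)}\lesssim 2^{J\eta}\epsilon_N$, with $\eta>d/2$ small.

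The second step splits $\mathcal{J}_N(u)-\mathcal{I}_{u_0,J}$ into four pieces:
\begin{enumerate}[(a)]
\item The empirical-process term $\frac{1}{\sigma^2}(\iprod{\opDeriv{u_0}v}{\opDeriv{u_0}v'}_N-\iprod{\opDeriv{u_0}v}{\opDeriv{u_0}v'}_{L^2_p(\domain)})$.
\item The linearisation change $\opDeriv{u}-\opDeriv{u_0}$ inside the empirical product.
\item The second-order term $\frac{1}{\sigma^2N}\sum_i(\sigma\epsilon_i+\op[u_0](X_i)-\op[u](X_i))\mathcal{D}^2\op_u[v,v'](X_i)$.
\item The prior term $N^{-1}\iprod{v}{v'}_{\mathbb{H}_N}$.
\end{enumerate}

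Term (d) is the easy one: by \eqref{eq:RKHS_Norm} and Bernstein it is bounded by $N^{-1}2^{2J(\beta_0+\epsilon)}\norm{v}_{H^m}\norm{v'}_{H^m}$. This is $\smallo(2^{-Jd/2})$ because $\beta_0<\beta+d/4$, so that $2(\beta_0+\epsilon)+d/2<2\beta+d$.

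For (a), I would write it as a random matrix in the $H^m$-normalised basis of $V_J$ and control its operator norm with a matrix Bernstein inequality. Alternatively, one can use Lemma 3.12 of \cite{NW20} over the class $\{\opDeriv{u_0}[v]\opDeriv{u_0}[v']:\norm{v}_{H^m},\norm{v'}_{H^m}\le1,\ v,v'\in V_J\}$. By Bernstein and \eqref{eq:linearisation2} this class has envelope $\lesssim 2^{2J\eta}$ and variance $\lesssim 2^{2J\eta}$. This yields a bound of order $\sqrt{2^{Jd}2^{2J\eta}\log N/N}$. Since $\dim V_J^{1/2}\sim2^{Jd/2}$, this is $\smallo(2^{-Jd/2})$ exactly when $2^{J(2d+2\eta)}\log N=\smallo(N)$, i.e.\ when $2d+2\eta<2\beta+d$. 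This holds because $\beta>2d$.

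For (b), \eqref{eq:linearisation2} bounds both the $L^\infty$ and $L^2$ parts, giving roughly $2^{2J\eta}\cdot2^{J\eta}\epsilon_N$. To handle the empirical norm, I would reuse the bound from (a) with $u_0$ replaced by the difference, or simply use the sup-norm bounds; either way this is polynomially smaller than $2^{-Jd/2}$ given $\beta>2d$.

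For (c), the deterministic part $\op[u_0]-\op[u]$ is at most $2^{J\eta}\epsilon_N$ in sup norm, times $\norm{\mathcal{D}^2\op_u[v,v']}_\infty\lesssim2^{2J\eta}$. The noise part $\frac1N\sum\epsilon_i\mathcal{D}^2\op_u[v,v'](X_i)$ is again a supremum of an empirical process. I would first replace $u$ by $u_0$ using the Lipschitz property of $\mathcal{D}^2\op$ from Assumption \ref{assump:operator_linearisation_2}, then apply chaining over the finite-dimensional unit ball, which gives $\sqrt{2^{Jd}2^{4J\eta}\log N/N}$.

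I expect the main obstacle to be the empirical-process bounds (a) and (c) in operator norm. They must hold uniformly over the unit ball of a growing-dimensional space, and they must be sharp enough to beat the factor $\dim(V_J)^{-1/2}$. This is where the exponent bookkeeping with $\beta>2d$ and $\eta$ close to $d/2$ is tight. Integrating over $t\in[0,1]$ preserves these bounds, since they are uniform in $t$. This gives $\norm{\bar{\mathcal{J}}_N-\mathcal{I}_{u_0,J}}=\smallo_{\prob[u_0][N]{}}(\dim(V_J)^{-1/2})$.

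Finally, write $\bar{\mathcal{J}}_N=\mathcal{I}_{u_0,J}(\identity+\mathcal{I}_{u_0,J}^{-1}(\bar{\mathcal{J}}_N-\mathcal{I}_{u_0,J}))$. Because $\norm{\mathcal{I}_{u_0,J}^{-1}}$ is bounded uniformly in $J$, the perturbation has norm $\smallo_{\prob[u_0][N]{}}(1)$, and a Neumann series gives invertibility with probability tending to one and $\norm{\bar{\mathcal{J}}_N^{-1}}=\mathcal{O}_{\prob[u_0][N]{}}(1)$. The resolvent identity $\bar{\mathcal{J}}_N^{-1}-\mathcal{I}_{u_0,J}^{-1}=\bar{\mathcal{J}}_N^{-1}(\mathcal{I}_{u_0,J}-\bar{\mathcal{J}}_N)\mathcal{I}_{u_0,J}^{-1}$ then transfers the rate $\smallo(\dim(V_J)^{-1/2})$ to the inverses.
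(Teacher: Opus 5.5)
Your proof is correct and has the same architecture as the paper's. You localise the segment with Lemma~\ref{lem:MAP_Convergence}. You split $\mathcal{J}_N(u_t^{\operatorname{MAP}})-\mathcal{I}_{u_0,J}$ into $\mathcal{J}_N(u_0)-\mathcal{I}_{u_0,J}$ (your (a), (d), and the noise part of (c) evaluated at $u_0$) and $\mathcal{J}_N(u_t^{\operatorname{MAP}})-\mathcal{J}_N(u_0)$ (your (b), the deterministic part of (c), and the Lipschitz replacement of $\mathcal{D}^2\op_u$ by $\mathcal{D}^2\op_{u_0}$). You bound the latter deterministically in sup-norm by $\mathcal{O}_{\prob[u_0][N]{}}(2^{3J\eta}\epsilon_N)$ using $\frac1N\sum_i\abs{\epsilon_i}=\mathcal{O}_{\prob[u_0][N]{}}(1)$, and you finish with a Neumann series and the resolvent identity.

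The one real difference is how you control the operator norm of the random matrices at $u_0$.
The paper works entrywise in an $H^m(\domain)$-orthonormal basis. It uses scalar Bernstein inequalities, a union bound over the $\operatorname{dim}(V_J)^2$ entries, and $\norm{A}\le \operatorname{dim}(V_J)\max_{k,l}\abs{A_{kl}}$, which gives $\operatorname{dim}(V_J)2^{2J\eta}\sqrt{J/N}$. This is elementary but has no slack: it is $\smallo(\operatorname{dim}(V_J)^{-1/2})$ only if $\beta>d+2\eta$.
Your chaining over the unit ball saves a factor $\sqrt{\operatorname{dim}(V_J)}$:
\begin{itemize}
\item for (a) it gives $2^{J\eta}\sqrt{\operatorname{dim}(V_J)\log N/N}$, plus a harmless $2^{2J\eta}\operatorname{dim}(V_J)/N$ term from the sup-norm part of the chaining bound, which needs only $\beta>d/2+\eta$;
\item for the noise part of (c) it gives $2^{2J\eta}\sqrt{\operatorname{dim}(V_J)\log N/N}$, which needs only $\beta>d/2+2\eta$.
\end{itemize}

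As a consequence, your diagnosis of the main obstacle is slightly off. With your bounds, (a) and (c) have room to spare. The term that actually forces $\beta>2d$ is the segment perturbation $2^{3J\eta}\epsilon_N$. It is $\smallo(\operatorname{dim}(V_J)^{-1/2})$ only when $\beta>d/2+3\eta$ with $\eta>d/2$, in your proof as in the paper's.

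A minor citation point: the envelope $\norm{\opDeriv{u_0}[v]}_{L^\infty(\domain)}\lesssim 2^{J\eta}$ comes from the $L^\infty$-boundedness of $\opDeriv{u_0}$ in Assumption~\ref{assump:operator_linearisation}(ii), not from \eqref{eq:linearisation2}.
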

\begin{proof}
    Consider an $\iprod{}{}_{H^m(\domain)}$ orthonormal basis $(\tilde{\psi}_k\colon k=1,\dots,\operatorname{dim}(V_J))$ of $V_J$. Note that the Bernstein inequality \eqref{eq:Bernstein} yields
    \begin{equation*}
        \sup_{k\in\mathbb{N}}\norm{\tilde{\psi}_k}_{W^{m,\infty}(\domain)}\lesssim 2^{J\eta}\sup_{k\in\mathbb{N}}\norm{\tilde{\psi}_k}_{H^m(\domain)}\le2^{J\eta} 
    \end{equation*}
    for any $\eta>d/2$. Note that
    \begin{align*}
        (\mathcal{J}_N(u_0)-\mathcal{I}_{u_0,J})[\tilde{\psi}_k,\tilde{\psi}_l] &= \left(\frac{1}{\sigma^2}\iprod{\opDeriv{u_0}[\tilde{\psi}_k]}{\opDeriv{u_0}[\tilde{\psi}_l]}_N - \iprod{\tilde{\psi}_k}{\tilde{\psi}_l}_{\operatorname{LAN}}\right)\\
        &\quad - \frac{1}{\sigma}\iprod{\eta}{\mathcal{D}^2\op_{u_0}[\tilde{\psi}_k,\tilde{\psi}_l]}_N + \frac{1}{N}\iprod{\tilde{\psi}_k}{\tilde{\psi}_l}_{\mathbb{H}_N},
    \end{align*}
    for $k,l\in\{1,\dots,\operatorname{dim}(V_J)\}$. Since $\norm{\opDeriv{u_0}[\tilde{\psi}_k]}_{L^\infty(\domain)}\lesssim \norm{\tilde{\psi}_k}_{W^{m,\infty}(\domain)}\le C 2^{J\eta}$ for some $0<C<\infty$ not depending on $k=1,\dots, \operatorname{dim}(V_J)$ or $J\in\mathbb{N}$, we find with the Bernstein inequality (Proposition 3.1.8 of \cite{GN16}) that
    \begin{equation*}
        \prob*[u_0][N]{\abs{\frac{1}{\sigma^2}\iprod{\opDeriv{u_0}[\tilde{\psi}_k]}{\opDeriv{u_0}[\tilde{\psi}_l]}_N - \iprod{\tilde{\psi}_k}{\tilde{\psi}_l}_{\operatorname{LAN}}}>x} \le 2\exp\left(-\frac{Nx^2}{C2^{2J\eta}+C2^{2J\eta}x}\right)
    \end{equation*}
    for any $x\ge 0$. Using Assumption \ref{assump:operator_linearisation_2} we find $\norm{\mathcal{D}^2\op_{u_0}[\tilde{\psi}_k,\tilde{\psi}_l]}_{L^\infty(\domain)}\le C 2^{2J\eta}$ uniformly in $k,l=1,\dots, \operatorname{dim}(V_J)$ and $J\in\mathbb{N}$ for some $0<C<\infty$. Since the Gaussian measurement noise $(\epsilon_1,\epsilon_2,\dots)$ is independent of the design $(X_1,X_2,\dots)$ we find
    \begin{equation*}
        \prob[u_0][N]{\abs{\iprod{\eta}{\mathcal{D}^2\op_{u_0}[\tilde{\psi}_k,\tilde{\psi}_l]}_N}>x\given (X_1,\dots, X_N)}\le 2\exp\left(-\frac{Nx^2}{C2^{4J\eta}}\right)
    \end{equation*}
    for any $x\ge 0$. Choosing $x=C(2^{J\eta}\sqrt{J/N} + 2^{2J\eta}J/N)$ and $x=C2^{2J\eta}\sqrt{J/N}$, respectively, the union bound yields
    \begin{align*}
        \max_{k,l=1,\dots,\operatorname{dim}(V_J)}\abs{\frac{1}{\sigma^2}\iprod{\opDeriv{u_0}[\tilde{\psi}_k]}{\opDeriv{u_0}[\tilde{\psi}_l]}_N - \iprod{\tilde{\psi}_k}{\tilde{\psi}_l}_{\operatorname{LAN}}} &=\mathcal{O}_{\prob[u_0][N]{}}\Big(2^{J\eta}\sqrt{\frac{J}{N}} + 2^{2J\eta}\frac{J}{N}\Big)\\
        \max_{k,l=1,\dots,\operatorname{dim}(V_J)}\abs{\iprod{\eta}{\mathcal{D}^2\op_{u_0}[\tilde{\psi}_k,\tilde{\psi}_l]}_N}& =\mathcal{O}_{\prob[u_0][N]{}}\Big(2^{2J\eta}\sqrt{\frac{J}{N}} \Big),
    \end{align*}
    since $\operatorname{dim}(V_J)\lesssim 2^{Jd}$.
    Furthermore, by the Bernstein estimate \eqref{eq:Bernstein} and \eqref{eq:CharacterisationSobolevSpaces} we have for any $\epsilon>0$
    \begin{equation*}
        \sup_{\norm{v}_{H^m(\domain)},\norm{v'}_{H^m(\domain)}\le 1}\frac{1}{N}\abs{\iprod{v}{v'}_{\mathbb{H}_N}}\lesssim \frac{1}{N}2^{2J(\beta_0+\epsilon)}.
    \end{equation*}
    Since the operator norm of a matrix is bounded by the dimension multiplied with the maximum of its entries we find
    \begin{equation}
        \norm{\mathcal{J}_N(u_0) - \mathcal{I}_{u_0,J}}_{\mathcal{L}(V_J,V_J^\ast)}
        %&\le \operatorname{dim}(V_J)\max_{k,l}\abs{\mathcal{J}_N(u_0) - \mathcal{I}_{u_0,J}}[\tilde{\psi}_k,\tilde{\psi}_l]\\
        = \mathcal{O}_{\prob[u_0][N]{}}\Big( \operatorname{dim}(V_J)2^{2J\eta}\Big(\sqrt{\frac{J}{N}}+\frac{J}{N}\Big) + \frac{2^{2J(\beta_0+\epsilon)}}{N}\Big).\label{eq:aux_TrueInformationvsAverages}
    \end{equation}
    We proceed to bound the error over the line segment $(\ProjOne_Ju_0 + t(\hat{u}_N^{\operatorname{MAP}} - \ProjOne_J u_0)\colon 0\le t\le 1)$. Since $\mathcal{U}$ from Assumption \ref{assump:operator_linearisation_2} is open, Lemma \ref{lem:MAP_Convergence} and the Jackson estimate \eqref{eq:Jackson} imply that the whole segment belongs to $\mathcal{U}$ with $\prob[u_0][N]{}$- probability tending to one. Furthermore, for any $u\in\mathcal{U}$ and $v,v'\in V_J$, we find
    \begin{align}
    \begin{split}
        \mathcal{J}_N(u)[v,v'] - \mathcal{J}_N(u_0)[v,v'] &= \frac{1}{\sigma^2 N}\sum_{i=1}^N (\opDeriv{u}[v]\opDeriv{u}[v'] - \opDeriv{u_0}[v]\opDeriv{u_0}[v'])(X_i)\\
        &\quad - \frac{1}{\sigma}\iprod{\eta}{(\mathcal{D}^2 \op_u-\mathcal{D}^2 \op_{u_0})[v,v']}_N\\
        &\quad - \frac{1}{\sigma^2}\iprod{\op[u_0]-\op[u]}{\mathcal{D}^2 \op_u[v,v']}_N.
        \end{split}
        \label{eq:aux_helperDecompMAPSegment}
    \end{align}
    We bound the three summands one by one. For the first one we find
    \begin{align*}
        \norm{\opDeriv{u}[v]\opDeriv{u}[v'] - \opDeriv{u_0}[v]\opDeriv{u_0}[v']}_{L^\infty(\domain)}\hspace{-5em}&\\
        &\le \norm{(\opDeriv{u}-\opDeriv{u_0})[v]\opDeriv{u_0}[v']}_{L^\infty(\domain)}\\
        &\quad + \norm{\opDeriv{u_0}[v](\opDeriv{u}-\opDeriv{u_0})[v']}_{L^\infty(\domain)}\\
        &\quad +\norm{(\opDeriv{u}-\opDeriv{u_0})[v](\opDeriv{u}-\opDeriv{u_0})[v']}_{L^\infty(\domain)}.
    \end{align*}
    Applying \eqref{eq:linearisation2} from Assumption \ref{assump:operator_linearisation} we find
    \begin{align*}
        \norm{\opDeriv{u}[v]\opDeriv{u}[v'] - \opDeriv{u_0}[v]\opDeriv{u_0}[v']}_{L^\infty(\domain)}\hspace{-11em}&\\
        &\lesssim (\norm{u-u_0}_{W^{m,\infty}(\domain)} +\norm{u-u_0}_{W^{m,\infty}(\domain)}^2)\norm{v}_{W^{m,\infty}(\domain)}\norm{v'}_{W^{m,\infty}(\domain)}.
    \end{align*}
    For the second summand in \eqref{eq:aux_helperDecompMAPSegment} we apply Assumption \ref{assump:operator_linearisation_2}  to obtain
    \begin{equation*}
        \norm{(\mathcal{D}^2\op_u - \mathcal{D}^2\op_{u_0})[v,v']}_{L^\infty(\domain)}\lesssim \norm{u-u_0}_{W^{m,\infty}(\domain)}\norm{v}_{W^{m,\infty}(\domain)}\norm{v'}_{W^{m,\infty}(\domain)}.
    \end{equation*}
    By the Lipschitz-bound \eqref{eq:Lipschitz_Bounds_infty} and Assumption \ref{assump:operator_linearisation_2} we control the third summand in \eqref{eq:aux_helperDecompMAPSegment} by
    \begin{equation*}
        \frac{1}{\sigma^2}\abs{\iprod{\op[u_0]-\op[u]}{\mathcal{D}^2 \op_u[v,v']}_N} \lesssim \norm{u-u_0}_{W^{m,\infty}(\domain)}\norm{v}_{W^{m,\infty}(\domain)}\norm{v'}_{W^{m,\infty}(\domain)}.
    \end{equation*}
    Consequently, choosing $v,v'$ with $\norm{v}_{H^m(\domain)},\norm{v'}_{H^m(\domain)}\le 1$ and using the Bernstein estimate \eqref{eq:Bernstein}, we find for any $\eta>d/2$ that 
    \begin{equation*}
        \norm{\mathcal{J}_N(u_t^{\operatorname{MAP}})-\mathcal{J}_N(u_0)}_{\mathcal{L}(V_J,V_J^\ast)}\lesssim \Big(1 + \frac{1}{N}\sum_{i=1}^N\abs{\epsilon_i}\Big)2^{2J\eta}\norm{u_t^{\operatorname{MAP}}-u_0}_{W^{m,\infty}(\domain)}
    \end{equation*}
    for all $0\le t\le 1$ on an event of $\prob[u_0][N]{}$-probability tending to one. Since $\norm{u_t^{\operatorname{MAP}}-u_0}_{W^{m,\infty}(\domain)}\le \norm{\hat{u}_N^{\operatorname{MAP}}-u_0}_{W^{m,\infty}(\domain)} + \norm{(\identity-\ProjOne_J)u_0}_{W^{m,\infty}(\domain)}=\mathcal{O}_{\prob[u_0][N]{}}(2^{J\eta}\epsilon_N)$ by Lemma \ref{lem:MAP_Convergence} and the Jackson and Bernstein estimates \eqref{eq:Jackson}, \eqref{eq:Bernstein}, and $\sum_{i=1}^N\abs{\epsilon_i}/N=\mathcal{O}_{\prob[u_0][N]{}}(1)$, we find
    \begin{equation}
        \sup_{0\le t\le 1}\norm{\mathcal{J}_N(u_t^{\operatorname{MAP}})-\mathcal{J}_N(u_0)}_{\mathcal{L}(V_J,V_J^\ast)} = \mathcal{O}_{\prob[u_0][N]{}}(2^{3J\eta}\epsilon_N).\label{eq:UniformHessianControl}
    \end{equation}
    In combination with \eqref{eq:aux_TrueInformationvsAverages} and the triangle inequality, we obtain the bound
    \begin{align*}
        \norm{\bar{\mathcal{J}}_N-\mathcal{I}_{u_0,J}}_{\mathcal{L}(V_J,V_J^\ast)}&\le \norm{\mathcal{J}_N(u_0)-\mathcal{I}_{u_0,J}}_{\mathcal{L}(V_J,V_J^\ast)} + \sup_{0\le t\le 1}\norm{\mathcal{J}_N(u_t^{\operatorname{MAP}})-\mathcal{J}_N(u_0)}_{\mathcal{L}(V_J,V_J^\ast)}\\
        &=\mathcal{O}_{\prob[u_0][N]{}}\Big(\operatorname{dim}(V_J) 2^{2J\eta}\Big(\sqrt{\frac{J}{N}}+\frac{J}{N}\Big) + \frac{2^{2J(\beta_0+\epsilon)}}{N} + 2^{3J\eta}\epsilon_N\Big)\\
        &=\mathcal{O}_{\prob[u_0][N]{}}\Big(\operatorname{dim}(V_J)^{-1/2}\Big( N^{\frac{d+2\eta-\beta}{2\beta+d}}\sqrt{\log(N)}+N^{\frac{d/2+2\eta-2\beta}{2\beta+d}}\log(N)\\
        &\quad + N^{\frac{2\beta_0+2\epsilon-2\beta-d/2}{2\beta+d}} + N^{\frac{d/2+3\eta-\beta}{2\beta+d}}\log(N)\Big)\Big)\\
        &= \smallo_{\prob[u_0][N]{}}(\operatorname{dim}(V_J)^{-1/2}),
    \end{align*}
    for $d/2<\eta<\min((\beta-d)/2,(\beta-d/2)/3)$ and $0<\epsilon<\beta+d/4-\beta_0$, which is possible since $\beta>2d$ and $d/2<\beta_0<\beta+d/4$.
    We are left to show invertibility. Note that $\mathcal{I}_{u_0,J}\colon V_J\to V_J^\ast$ is invertible with $\norm{\mathcal{I}_{u_0,J}^{-1}}_{\mathcal{L}(V_J^\ast,V_J)}\le C<\infty$ for all $J\in\mathbb{N}$. Define $\mathcal{T}_N\coloneqq -\mathcal{I}_{u_0,J}^{-1}(\bar{\mathcal{J}}_N - \mathcal{I}_{u_0,J})$, then $
        \norm{\mathcal{T}_N}_{\mathcal{L}(V_J,V_J)}=\smallo_{\prob[u_0][N]{}}(\operatorname{dim}(V_J)^{-1/2})$,
    such that $\norm{\mathcal{T}_N}_{\mathcal{L}(V_J,V_J)}\le 1/2$ on an event of $\prob[u_0][N]{}$-probability tending to one. On this event
    \begin{equation*}
        \bar{\mathcal{J}}_N = \mathcal{I}_{u_0,J}(\identity + \mathcal{I}_{u_0,J}^{-1}(\bar{\mathcal{J}}_N-\mathcal{I}_{u_0,J}))= \mathcal{I}_{u_0,J}(\identity- \mathcal{T}_N)
    \end{equation*}
    is invertible by a standard Neumann-series argument (Theorem II.1.12 of \cite{Werner2018}) and
    \begin{equation*}
        \norm{\bar{\mathcal{J}}_N^{-1}}_{\mathcal{L}(V_J^\ast,V_J)}\le \frac{\norm{\mathcal{I}_{u_0,J}^{-1}}_{\mathcal{L}(V_J^\ast,V_J)}}{1-\norm{\mathcal{T}_N}_{\mathcal{L}(V_J,V_J)}}=\mathcal{O}_{\prob[u_0][N]{}}(1).
    \end{equation*}     
    The final claim follows from 
    \begin{align*}
        \norm{\bar{\mathcal{J}}_N^{-1}-\mathcal{I}_{u_0,J}^{-1}}_{\mathcal{L}(V_J^\ast,V_J)}&\le \norm{\bar{\mathcal{J}}_N^{-1}}_{\mathcal{L}(V_J^\ast,V_J)} \norm{\bar{\mathcal{J}}_N-\mathcal{I}_{u_0,J}}_{\mathcal{L}(V_J,V_J^\ast)}\norm{\mathcal{I}_{u_0,J}^{-1}}_{\mathcal{L}(V_J^\ast,V_J)}\\
        &= \smallo_{\prob[u_0][N]{}}(\operatorname{dim}(V_J)^{-1/2}).\qedhere
    \end{align*}
\end{proof}

\begin{lem}\label{lem:ScoreExpansion}
     Grant Assumptions \ref{assump:ApproximatonSets}, \ref{assump:Operator_stability}, \ref{assump:operator_linearisation} and \ref{assump:operator_linearisation_2}. Let $m>d/2$, $0<\gamma<m-d/2$, consider the prior $\pi_N$ from \eqref{eq:Prior} with cut-off $2^J\sim N^{1/(2\beta+d)}$ and assume that $u_0\in \mathcal{V}\cap H^{m+\beta}(\domain)$ for $\beta>2d$, $d/2<\beta_0<\beta+d/4$. Then
    \begin{equation*}
        \sqrt{N}\norm{\frac{1}{N}\mathcal{D}\ell_N^{\operatorname{reg}}(\ProjOne_J u_0) - \mathcal{I}_{u_0,J}(\tilde{u}_N-\ProjOne_Ju_0)}_{V_J^\ast} \xrightarrow{\prob[u_0][N]{}}0
    \end{equation*}
    and
    \begin{equation*}
        \sqrt{N}\norm{\tilde{u}_N-\ProjOne_J u_0}_{H^m(\domain)}=\mathcal{O}_{\prob[u_0][N]{}}(\sqrt{\operatorname{dim}(V_J)})
    \end{equation*}
    as $N\to\infty$.
\end{lem}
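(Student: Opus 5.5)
I would first write out the regularised score explicitly. For $v\in V_J$,
\begin{equation*}
    \frac{1}{N}\mathcal{D}\ell_N^{\operatorname{reg}}(\ProjOne_J u_0)[v] = \frac{1}{\sigma^2 N}\sum_{i=1}^N \big(\op[u_0]-\op[\ProjOne_J u_0]\big)(X_i)\,\opDeriv{\ProjOne_J u_0}[v](X_i) + \frac{1}{\sigma N}\sum_{i=1}^N\epsilon_i\opDeriv{\ProjOne_J u_0}[v](X_i) - \frac{1}{N}\iprod{\ProjOne_J u_0}{v}_{\mathbb{H}_N}.
\end{equation*}
By definition \eqref{eq:EfficientEstimator_RandomDesign} and the identity following it, $\mathcal{I}_{u_0,J}(\tilde u_N-\ProjTwo_J u_0)[v]=\frac{1}{\sigma N}\sum_i\epsilon_i\opDeriv{u_0}[v](X_i)$. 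In addition, $\mathcal{I}_{u_0,J}(\ProjTwo_J u_0-\ProjOne_J u_0)[v]=\iprod{u_0-\ProjOne_J u_0}{v}_{\operatorname{LAN}}$, since $\ProjTwo_J$ is LAN-orthogonal. The difference to be controlled, tested against $v$ with $\norm{v}_{H^m(\domain)}\le1$, therefore splits into four terms:
\begin{itemize}
\item[(a)] the noise term $\frac{1}{\sigma N}\sum_i\epsilon_i(\opDeriv{\ProjOne_J u_0}-\opDeriv{u_0})[v](X_i)$;
\item[(b)] the bias term $\frac{1}{\sigma^2}\iprod{\op[u_0]-\op[\ProjOne_Ju_0]}{\opDeriv{\ProjOne_Ju_0}[v]}_N-\iprod{u_0-\ProjOne_Ju_0}{v}_{\operatorname{LAN}}$;
\item[(c)] the prior term $\frac1N\iprod{\ProjOne_Ju_0}{v}_{\mathbb{H}_N}$.
\end{itemize}
Each must be $\smallo(N^{-1/2})$ uniformly over the unit ball of $V_J$.

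\textbf{Prior term (c).} This is deterministic. Using \eqref{eq:RKHS_Norm} together with the Bernstein and Jackson estimates, as in the proof of Lemma \ref{lem:Change_of_measures},
\begin{equation*}
    \frac1N\norm{\ProjOne_Ju_0}_{\mathbb{H}_N}\sup_{v}\norm{v}_{\mathbb{H}_N}\lesssim N^{-1}2^{J(\beta_0+\epsilon+(\beta_0+\epsilon-\beta)_+)},
\end{equation*}
which is $\smallo(N^{-1/2})$ because $\beta_0<\beta+d/4$.

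\textbf{Bias term (b).} I would split this into two pieces. The first is the linearisation error $\op[u_0]-\op[\ProjOne_Ju_0]-\opDeriv{u_0}[u_0-\ProjOne_Ju_0]$ together with the replacement of $\opDeriv{\ProjOne_Ju_0}$ by $\opDeriv{u_0}$. Using \eqref{eq:linearisation_2}, \eqref{eq:linearisation2} and $L^\infty$ bounds, this is of order $2^{J\eta}\norm{(\identity-\ProjOne_J)u_0}_{H^m(\domain)}^2$. The second piece is the empirical-minus-population deviation $(\iprod{}{}_N-\iprod{}{}_{L_p^2})$ applied to $\opDeriv{u_0}[u_0-\ProjOne_Ju_0]\,\opDeriv{u_0}[v]$.

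For the deviation, I would expand $v$ in an $H^m$-orthonormal basis $(\tilde\psi_k)$ of $V_J$, exactly as in Lemma \ref{lem:IntegratedIntermediatepoint_Invertibility}. The dual norm is then at most $\sqrt{\operatorname{dim}V_J}\max_k$ of the coordinates. Bernstein's inequality with a union bound gives coordinates of size $2^{-J(\beta-\eta)}2^{J\eta}\sqrt{J/N}$. Multiplying by $\sqrt N\sqrt{\operatorname{dim}V_J}$ yields $N^{(d/2+2\eta-\beta)/(2\beta+d)}\sqrt{\log N}\to0$ for $\eta$ close to $d/2$, since $\beta>2d$.

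\textbf{Noise term (a).} This is handled the same way. Conditionally on the design it is Gaussian in each coordinate, with variance bounded by $N^{-1}\norm{\ProjOne_Ju_0-u_0}_{W^{m,\infty}(\domain)}^2 2^{2J\eta}$ via \eqref{eq:linearisation2}. The same dimension factor times the union bound again gives $\smallo(N^{-1/2})$.

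\textbf{Main obstacle.} I expect the hardest part to be term (b). Its deterministic part contains a genuine bias, $N^{1/2}2^{J\eta}2^{-2J\beta}$. The naive bound $\norm{(\identity-\ProjOne_J)u_0}_{H^m}\lesssim 2^{-J\beta}$ used inside the Lipschitz estimates gives only first-order smallness unless the exact cancellation with $\iprod{u_0-\ProjOne_Ju_0}{v}_{\operatorname{LAN}}$ is used. So the bookkeeping must make sure the first-order bias cancels exactly through the $\ProjTwo_J$ versus $\ProjOne_J$ identity above. Once it does, only quadratic terms of order $N^{1/2}2^{J\eta}2^{-2J\beta}\to0$ remain.

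\textbf{Second claim.} I would write $\tilde u_N-\ProjOne_Ju_0=Z_N+(\ProjTwo_J-\ProjOne_J)u_0$. The second summand has $H^m$-norm $\lesssim2^{-J\beta}=\smallo(N^{-1/2})$, by Lemma \ref{lem:Properties_LAN_Norm} and the Jackson estimate. For the first, $\EV{\norm{Z_N}_{H^m}^2}\lesssim N^{-1}\sum_k\norm{\ProjTwo_J\mathcal{I}_{u_0}^{-1}\tilde\psi_k^\ast}_{\operatorname{LAN}}^2\lesssim\operatorname{dim}(V_J)/N$, computed as in \eqref{eq:stochastic_remainder} with $\gamma=m$ and a finite sum over $\abs{\mu}\le J$. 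Markov's inequality then gives the stated rate.
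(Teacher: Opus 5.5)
Your argument is essentially the paper's. It uses the same split into a noise term, an empirical-minus-population term, a population bias term in which the first-order bias cancels exactly through the LAN-orthogonality of $\ProjTwo_J$, and a prior term, with the dual norm computed in an $H^m(\domain)$-orthonormal basis of $V_J$. The paper centres the nonlinear product $(\op[u_0]-\op[\ProjOne_Ju_0])\opDeriv{\ProjOne_Ju_0}[h]$ before linearising in $L_p^2(\domain)$, and bounds the stochastic terms through second moments rather than coordinatewise Bernstein plus a union bound. That is slightly more economical, with fewer factors of $2^{J\eta}$ and no $\sqrt{J}$, but under $\beta>2d$ your exponents still close, even with the cruder $2^{3J\eta-2J\beta}$ bound on the linearisation error.

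One correction in the second claim: $2^{-J\beta}$ is not $\smallo(N^{-1/2})$. In fact $\sqrt{N}\,2^{-J\beta}\asymp 2^{Jd/2}\asymp\sqrt{\operatorname{dim}(V_J)}$, which is exactly the stated rate, so your conclusion survives. Your second-moment bound for $Z_N$ also supplies a step that the paper leaves implicit.
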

\begin{proof}
    First note that for any $h\in V_J$ we have
    \begin{equation*}
        \mathcal{I}_{u_0,J}(\tilde{u}_N-\ProjTwo_J u_0)[h] = \iprod{\tilde{u}_N-\ProjTwo_J u_0}{h}_{\operatorname{LAN}}= \frac{1}{\sigma N}\sum_{i=1}^N \epsilon_i\opDeriv{u_0}[h](X_i)
    \end{equation*}
    by \eqref{eq:FromDualtoLAN} and \eqref{eq:EfficientEstimator_RandomDesign}. Since $\ProjTwo_J$ is $\iprod{}{}_{\operatorname{LAN}}$-orthogonal, we have 
    \begin{equation*}
        \iprod{(\ProjTwo_J  - \ProjOne_J)u_0}{h}_{\operatorname{LAN}} = \iprod{(\identity - \ProjOne_J)u_0}{h}_{\operatorname{LAN}}.
    \end{equation*}
    Moreover, we have
    \begin{align*}
        \frac{1}{N}\mathcal{D}\ell_N^{\operatorname{reg}}(\ProjOne_J u_0)[h]&= \frac{1}{\sigma}\iprod{\eta}{\opDeriv{\ProjOne_J u_0}[h]}_N + \frac{1}{\sigma^2}\iprod{\op[u_0]-\op[\ProjOne_J u_0]}{\opDeriv{\ProjOne_J u_0}[h]}_N\\
        &\quad - \frac{1}{N}\iprod{\ProjOne_J u_0}{h}_{\mathbb{H}_N}.
    \end{align*}
    In combination, the previous three expressions show that
    \begin{align*}
        \frac{1}{N}\mathcal{D}\ell_N^{\operatorname{reg}}(\ProjOne_J u_0)[h]-\mathcal{I}_{u_0,J}(\tilde{u}_N -\ProjOne_J u_0)[h] \hspace{-17em}&\\
        &=\frac{1}{\sigma}\iprod{\eta}{\opDeriv{\ProjOne_J u_0}[h] - \opDeriv{u_0}[h]}_N\\
        &\quad + \frac{1}{\sigma^2}\left(\iprod{\op[u_0]-\op[\ProjOne_J u_0]}{\opDeriv{\ProjOne_J u_0}[h]}_N - \iprod{\op[u_0]-\op[\ProjOne_J u_0]}{\opDeriv{\ProjOne_J u_0}[h]}_{L_p^2(\domain)}\right)\\
        &\quad +\frac{1}{\sigma^2}\iprod{\op[u_0]-\op[\ProjOne_J u_0]}{\opDeriv{\ProjOne_J u_0}[h]}_{L_p^2(\domain)} - \frac{1}{\sigma^2}\iprod{\opDeriv{u_0}[u_0-\ProjOne_J u_0]}{\opDeriv{u_0}[h]}_{L_p^2(\domain)}\\
        &\quad-\frac{1}{N}\iprod{\ProjOne_J u_0}{h}_{\mathbb{H}_N}\\
        &=\colon A[h]+B[h]+C[h]+D[h].
    \end{align*}
    Let $(\tilde{\psi}_k)_{k=1,\dots, \operatorname{dim}(V_J)}$ be an $\iprod{}{}_{H^m(\domain)}$-orthonormal basis of $V_J$.
    For $A$ we use the Jackson estimate \eqref{eq:Jackson} and \eqref{eq:linearisation2} to find
    \begin{align*}
        N\EV[\prob[u_0][N]{}]{\norm{A}_{V_J^\ast}^2}&= \frac{1}{\sigma^2}\sum_{k=1}^{\operatorname{dim}(V_J)}\norm{(\opDeriv{\ProjOne_J u_0}-\opDeriv{u_0})\tilde{\psi}_k}_{L_p^2(\domain)}^2\lesssim \operatorname{dim}(V_J) \norm{(\identity-\ProjOne_J)u_0}_{H^m(\domain)}^2\\
        &\lesssim \operatorname{dim}(V_J)2^{-2J\beta}.
    \end{align*}
    For $B$ we find
    \begin{align*}
        N\EV[\prob[u_0][N]{}]{\norm{B}_{V_J^\ast}^2}&=N\sum_{k=1}^{\operatorname{dim}(V_J)}\mathrm{E}_{\prob[u_0][N]{}}\Big[ \Big|\iprod{\op[u_0]-\op[\ProjOne_J u_0]}{\opDeriv{\ProjOne_J u_0}[\tilde{\psi}_k]}_N\\
        &\quad - \iprod{\op[u_0]-\op[\ProjOne_J u_0]}{\opDeriv{\ProjOne_J u_0}[\tilde{\psi}_k]}_{L_p^2(\domain)}\Big|^2\Big]\\
        &\lesssim \operatorname{dim}(V_J)\EV[\prob[u_0][N]{}]{(\op[u_0]-\op[\ProjOne_J u_0])(X_1)^2\opDeriv{\ProjOne_J u_0}[\tilde{\psi}_k](X_1)^2}\\
        &\lesssim \operatorname{dim}(V_J) \norm{\op[u_0]-\op[\ProjOne_J u_0]}_{L^\infty(\domain)}^2\norm{\opDeriv{\ProjOne_J u_0}[\tilde{\psi}_k]}_{L_p^2(\domain)}^2\\
        &\lesssim \operatorname{dim}(V_J) \norm{(\identity-\ProjOne_J)u_0}_{W^{m,\infty}(\domain)}^2\Big(\norm{\opDeriv{u_0}[\tilde{\psi}_k]}_{L_p^2(\domain)}\\
        &\quad +\norm{(\opDeriv{\ProjOne_J u_0} - \opDeriv{u_0})\tilde{\psi}_k}_{L_p^2(\domain)}\Big)^2\\
        &\lesssim \operatorname{dim}(V_J)2^{-2J(\beta-\eta)}(\norm{\tilde{\psi}_k}_{H^m(\domain)} + \norm{(\identity-\ProjOne_J)u_0}_{H^m(\domain)}\norm{\tilde{\psi}_k}_{H^m(\domain)})^2\\
        &\lesssim \operatorname{dim}(V_J)2^{-2J(\beta-\eta)}
    \end{align*}
    for any $\eta>d/2$, where we used the Sobolev embedding, \eqref{eq:Lipschitz_Bounds_infty},  \eqref{eq:Graphnormequivalence} and \eqref{eq:linearisation2}. For $C$ we find 
    \begin{equation*}
        \abs{C[h]}\lesssim \norm{(\identity-\ProjOne_J)u_0}_{H^m(\domain)}^2 \norm{h}_{H^m(\domain)}\lesssim 2^{-2J\beta}\norm{h}_{H^m(\domain)},
    \end{equation*}
    where we used \eqref{eq:linearisation_2}, \eqref{eq:Graphnormequivalence} and \eqref{eq:linearisation2}.
   We find $\norm{C}_{V_J^\ast}\lesssim 2^{-2J\beta}$. For $D$ we compute
   \begin{equation*}
       \abs{D[h]}\le \frac{1}{N}\norm{\ProjOne_J u_0}_{\mathbb{H}_N}\norm{h}_{\mathbb{H}_N}\lesssim \frac{1}{N}2^{J(\beta_0 +\epsilon +(\beta_0+\epsilon-\beta)_+)}
   \end{equation*} for any $\epsilon>0$
   by \eqref{eq:CharacterisationSobolevSpaces} and the Bernstein inequality \eqref{eq:Bernstein}. In combination the previous bounds show that
   \begin{align*}
       \sqrt{N}\norm{\frac{1}{N}\mathcal{D}\ell_N^{\operatorname{reg}}(\ProjOne_J u_0) - \mathcal{I}_{u_0,J}(\tilde{u}_N-\ProjOne_Ju_0)}_{V_J^\ast}\hspace{-18em}&\hspace{18em}\\
       &\le \sqrt{N}(\norm{A}_{V_J^\ast}+\norm{B}_{V_J^\ast}+\norm{C}_{V_J^\ast}+\norm{D}_{V_J^\ast})\\
       &=\mathcal{O}_{\prob[u_0][N]{}}\Big(\sqrt{\operatorname{dim}(V_J)}(2^{-J\beta} + 2^{-J(\beta-\eta)} + \sqrt{N}2^{-2J\beta}+N^{-1/2}2^{J(\beta_0+\epsilon+(\beta_0+\epsilon-\beta)_+)})\Big)\\
       &=\smallo_{\prob[u_0][N]{}}(1)
   \end{align*}
   if $\epsilon <\beta + d/4-\beta_0$ and $d/2<\eta<\min(\beta-d/2,3d/4)$. This shows the first claim. For the second claim we use the Jackson estimates \eqref{eq:Jackson} and \eqref{eq:Jackson_orthogonalprojection} to obtain
   \begin{align*}
       \sqrt{N}\norm{(\ProjTwo_J-\ProjOne_J)u_0}_{H^m(\domain)}&\le \sqrt{N}\Big(\norm{(\identity - \ProjTwo_J)u_0}_{H^m(\domain)}+\norm{(\identity - \ProjOne_J)u_0}_{H^m(\domain)}\Big)\\
       &\lesssim \sqrt{N}2^{-J\beta}\lesssim \sqrt{\operatorname{dim}(V_J)}.
   \end{align*}
\end{proof}

\begin{proof}[Proof of Theorem \ref{thm:BvM_MAP}]
Since in the proof of Theorem \ref{thm:BvM_RandomDesign} we established the BvM theorem for centring at $\tilde{u}_N$, it suffices to control the difference between $\tilde{u}_N$ and $\hat{u}_N^{\operatorname{MAP}}$.
By Lemmas \ref{lem:MAP_Convergence} and \ref{lem:IntegratedIntermediatepoint_Invertibility} with $\prob[u_0][N]{}$-probability tending to one the segment $(u_t^{\operatorname{MAP}} = \ProjOne_J u_0 + t(\hat{u}_N^{\operatorname{MAP}} - \ProjOne_Ju_0)\colon 0\le t\le 1)$ lies in $\mathcal{U}$ from Assumption \ref{assump:operator_linearisation} and the operator $\bar{\mathcal{J}}_N$ from \eqref{eq:IntegratedMidPointOperator} is invertible. The fundamental theorem of calculus (Proposition A.2.3) applied to $t\mapsto \frac{1}{N}\mathcal{D}\ell_N^{\operatorname{reg}}(u_t^{\operatorname{MAP}})$ and the condition $\mathcal{D}\ell_N^{\operatorname{reg}}(\hat{u}_N^{\operatorname{MAP}})=0$ imply
\begin{align*}
    \frac{1}{N}\mathcal{D}\ell_N^{\operatorname{reg}}(\ProjOne_J u_0)&= \frac{1}{N}\mathcal{D}\ell_N^{\operatorname{reg}}(\ProjOne_J u_0) - \frac{1}{N}\mathcal{D}\ell_N^{\operatorname{reg}}(\hat{u}_N^{\operatorname{MAP}})\\
    &=-\frac{1}{N}\int_0^1 \mathcal{D}^2\ell_N^{\operatorname{reg}}(u_t^{\operatorname{MAP}})[\hat{u}_N^{\operatorname{MAP}}-\ProjOne_J u_0]\,\D t=\bar{\mathcal{J}}_N (\hat{u}_N^{\operatorname{MAP}} - \ProjOne_J u_0).
\end{align*}
We then find
\begin{equation*}
    \hat{u}_N^{\operatorname{MAP}} - \tilde{u}_N = \bar{\mathcal{J}}_N^{-1}\Big(\frac{1}{N}\mathcal{D}\ell_N^{\operatorname{reg}}(\ProjOne_J u_0) - \mathcal{I}_{u_0,J}(\tilde{u}_N-\ProjOne_Ju_0) + (\mathcal{I}_{u_0,J} - \bar{\mathcal{J}}_N)(\tilde{u}_N - \ProjOne_Ju_0)\Big).
\end{equation*}
With $\prob[u_0][N]{}$-probability tending to one find
\begin{align*}
    &\sqrt{N}\norm{\hat{u}_N^{\operatorname{MAP}} - \tilde{u}_N}_{H^m(\domain)}\\
    &\qquad \lesssim \norm{\bar{\mathcal{J}}_N^{-1}}_{\mathcal{L}(V_J^\ast,V_J)}\sqrt{N}\norm{\frac{1}{N}\mathcal{D}\ell_N^{\operatorname{reg}}(\ProjOne_J u_0) - \mathcal{I}_{u_0,J}(\tilde{u}_N-\ProjOne_Ju_0)}_{V_J^\ast}\\
    &\qquad \qquad  + \norm{\bar{\mathcal{J}}_N^{-1}}_{\mathcal{L}(V_J^\ast,V_J)}\norm{\mathcal{I}_{u_0,J} - \bar{\mathcal{J}}_N}_{\mathcal{L}(V_J,V_J^\ast)}\sqrt{N}\norm{\tilde{u}_N - \ProjOne_Ju_0}_{H^m(\domain)}.
\end{align*}
By Lemmas \ref{lem:IntegratedIntermediatepoint_Invertibility}
and \ref{lem:ScoreExpansion} we find
\begin{equation}
    \sqrt{N}\norm{\hat{u}_N^{\operatorname{MAP}}-\tilde{u}_N}_{H^m(\domain)}= \smallo_{\prob[u_0][N]{}}(1)\label{eq:MAP_efficientEstimaotr}
\end{equation}
as $N\to\infty$.
\end{proof}

\begin{proof}[Proof of Lemma \ref{lem:Convergence_MAP_Asymptotic_Normality}]
    In the proof of Lemma \ref{lem:Convergence_PosteriorMean} we showed that $\sqrt{N}(\tilde{u}_N - \ProjTwo_J u_0)\xrightarrow{d}\gaussian$ in $H^\gamma(\domain)$ under $\prob[u_0][N]{}$. The claim follows from \eqref{eq:MAP_efficientEstimaotr} combined with Slutsky's Lemma.
\end{proof}

\begin{proof}[Proof of Theorem \ref{thm:LaplaceApproximation}]
    First note that combining the bounds \eqref{eq:aux_TrueInformationvsAverages} and \eqref{eq:UniformHessianControl} evaluated at $t=1$ implies that
    \begin{equation}
        \norm{\mathcal{J}_N(\hat{u}_N^{\operatorname{MAP}}) - \mathcal{I}_{u_0,J}}_{\mathcal{L}(V_J,V_J^\ast)}=\smallo_{\prob[u_0][N]{}}(1)\label{eq:Convergence_Empirical_Hessian}
    \end{equation}
    as $N\to\infty$. Since $\mathcal{I}_{u_0,J}$ is uniformly coercive (in $J\in\mathbb{N}$), we deduce that $\mathcal{J}_N(\hat{u}_N^{\operatorname{MAP}})$ is also invertible with $\prob[u_0][N]{}$-probability tending to one. On this event, define $\Gamma_{N,J}= N(0,\mathcal{J}_N(\hat{u}_N^{\operatorname{MAP}})^{-1})$ and for $K\le J$ let $\gaussian^K = (\ProjOne_K)_{\#}\gaussian$. Then we find
    \begin{align*}
        \mathcal{W}_{1,H^\gamma(\domain)}(\Gamma_{N,J},\gaussian) &\le \mathcal{W}_{1,H^\gamma(\domain)}(\Gamma_{N,J},(\ProjOne_K)_{\#}\Gamma_{N,J}) + \mathcal{W}_{1,H^\gamma(\domain)}((\ProjOne_K)_{\#}\Gamma_{N,J},\gaussian^K)\\
        &\quad + \mathcal{W}_{1,H^\gamma(\domain)}(\gaussian^K,\gaussian)\\
        &=\colon A_{N,K} + B_{N,K} + C_{N,K}.
    \end{align*}
    We proceed by bounding the three summands individually. For $A_{N,K}$ note that \eqref{eq:Convergence_Empirical_Hessian} combined with the uniform coercivity of $\mathcal{I}_{u_0,J}$ implies that $\mathcal{J}_N(\hat{u}_N^{\operatorname{MAP}})^{-1}\le 2\mathcal{I}_{u_0,J}^{-1}$ in Loewner order with $\prob[u_0][N]{}$-probability. Consequently, we find
    \begin{align*}
        A_{N,K}^2&\le \EV[X_{N,J}|\data\sim\Gamma_{N,J}]{\norm{(\identity - \ProjOne_K)X_{N,J}}_{H^\gamma(\domain)}^2\given\data}\\
        &\le 2 \EV[Z_J\sim N(0,\mathcal{I}_{u_0,J}^{-1})]{\norm{(\identity - \ProjOne_K)Z_J}_{H^\gamma(\domain)}^2}\\
        &\lesssim \sum_{K<\abs{\mu}\le J}2^{2\gamma\abs{\mu}}\EV[Z_J\sim N(0,\mathcal{I}_{u_0,J}^{-1})]{\iprod{Z_J}{\psi_\mu}_{L^2(\domain)}^2}\\
        &= \sum_{K<\abs{\mu}\le J}2^{2\gamma\abs{\mu}}\norm{\mathcal{I}_{u_0,J}^{-1}\psi_\mu}_{\operatorname{LAN}}^2\\
        &\lesssim \sum_{K<\abs{\mu}\le J}2^{2\gamma\abs{\mu}}\norm{\psi_\mu}_{H^{-m}(\domain)}^2\\
        &\lesssim 2^{-2K(m-\gamma-d/2-\epsilon)}
    \end{align*}
    for any $0<\epsilon<m-\gamma-d/2$. Consequently, we have
    \begin{equation*}
    \lim_{K\to\infty}\limsup_{N\to\infty}\prob[u_0][N]{A_{N,K}>\delta}=0
    \end{equation*}
    for any $\delta>0$.
    For $B_{N,K}$ first note the that covariances of the Gaussian measures involved are $\Sigma_{N,K}=\ProjOne_K\mathcal{J}_N(\hat{u}_N^{\operatorname{MAP}})^{-1}\ProjOne_K^\ast$ and $\Sigma_K = \ProjOne_K\mathcal{I}_{u_0}^{-1}\ProjOne_K^\ast$, respectively. Then
    \begin{align*}
        \Sigma_{N,K}-\Sigma_K &= \ProjOne_K(\mathcal{J}_N(\hat{u}_N^{\operatorname{MAP}})^{-1}-\mathcal{I}_{u_0,J}^{-1})\ProjOne_K^\ast + \ProjOne_K(\mathcal{I}_{u_0,J}^{-1}-\mathcal{I}_{u_0}^{-1})\ProjOne_K\\
        &=\ProjOne_K\mathcal{J}_N(\hat{u}_N^{\operatorname{MAP}})^{-1}(\mathcal{I}_{u_0,J}-\mathcal{J}_N(\hat{u}_N^{\operatorname{MAP}}))\mathcal{I}_{u_0,J}^{-1}\ProjOne_K^\ast + \ProjOne_K(\mathcal{I}_{u_0,J}^{-1}-\mathcal{I}_{u_0}^{-1})\ProjOne_K.
    \end{align*}
    The uniform coercivity of $\mathcal{I}_{u_0}$ and \eqref{eq:Convergence_Empirical_Hessian} imply that $\norm{\mathcal{J}_N(\hat{u}_N^{\operatorname{MAP}})^{-1}-\mathcal{I}_{u_0,J})}_{\mathcal{L}(V_J^\ast,V_J)}\xrightarrow{\prob[u_0][N]{}}0$ as $N\to\infty$. Since $\mathcal{I}_{u_0,J}^{-1} = \ProjTwo_J\mathcal{I}_{u_0}^{-1}$ on $V_K^\ast$ and $\ProjTwo_J\to \identity$ strongly on $\mathcal{V}$, we deduce that $\Sigma_{N,K}\xrightarrow{\prob[u_0][N]{}}\Sigma_K$ as $N\to\infty$ for every fixed $K$. Since the Gaussian measures $(\ProjOne_K)_{\#}\Gamma_{N,J}$ and $\gaussian^K$ are finite-dimensional, the convergence of the covariance matrices implies convergence in Wasserstein-1 distance.\\    
    The summand $C_{N,K}$ is controlled in Lemma \ref{lem:Convergence_Gaussian_Measures}.\\
    Define the transformation $\tau_N(u) = \sqrt{N}(u-\hat{u}_N^{\operatorname{MAP}})$, $u\in V_J$, then $(\tau_N)_{\#}\hat{\Pi}_N^{\operatorname{Lap}}(\MTemptyplaceholder|\data) = \Gamma_{N,J}$ and
    \begin{align*}
        \sqrt{N}\mathcal{W}_{1,H^\gamma(\domain)}\Big(\Pi_N(\MTemptyplaceholder|\data),\hat{\Pi}_N^{\operatorname{Lap}}(\MTemptyplaceholder|\data)\Big) &= \mathcal{W}_{1,H^\gamma(\domain)}((\tau_N)_{\#}\Pi_N(\MTemptyplaceholder|\data),\Gamma_{N,J})\\
        &\xrightarrow{\prob[u_0][N]{}}0,
    \end{align*}
    as $N\to\infty$.
\end{proof}

\section{Further technical results}

We observe the following bound between the Hellinger distance, Kullback-Leibler divergence and the $L_p^2(\domain)  $ distance.  For two measures $\mu,\nu$ on the same probability space with dominating measures $\lambda$ we define their Hellinger distance
\begin{equation*}
    h^2(\mu,\nu)\coloneqq \int\left(\sqrt{\frac{\D\mu}{\D\lambda}}-\sqrt{\frac{\D \nu}{\D\lambda}}\right)^2\,\D\lambda
\end{equation*}
and their Kullback-Leibler divergence
\begin{equation*}
    \operatorname{KL}(\mu,\nu) \coloneqq 
    \begin{cases}
        \int \frac{\D\mu}{\D \nu}\log\left(\frac{\D\mu}{\D\nu}\right)\,\D\nu,&\mu\ll\nu,\\
        \infty,&\text{else.}
    \end{cases}
\end{equation*}
\begin{lem}\label{lem:RandomDesign_Hellinger/KL}
Suppose that $u,v\in \mathcal{V}$ with $\op[u],\op[v]\in C(\domain)$ and let 
\[S\coloneqq S(u,v) \coloneqq\| \op[u]-\op[v]\|_{L^\infty(\domain)}.\]
Then the Hellinger distance $h(\prob[u]{}, \prob[v]{})$ satisfies
    \[ \frac{2(1 - e^{-\frac{S^2}{8\sigma^2}})}{S^2} \norm{\op[u]-\op[v]}_{L_p^2(\domain)}^2 \leq h^2(\prob[u]{}, \prob[v]{}) \leq \frac{1}{4\sigma^2} \norm{\op[u]-\op[v]}_{L_p^2(\domain)}^2. \]
Moreover, we have the following expression for the KL-divergence:
\[ \operatorname{KL}(\prob[u]{},\prob[v]{}) = \frac{1}{2\sigma^2} \norm{\op[u]-\op[v]}_{L_p^2(\domain)}^2, \]
and
\begin{equation*}
    \EV[\prob[u][]{}][][\Big]{ \Big(\log \Big(\frac{\D\prob[u]{}(X_1,Y_1)}{\D\prob[v]{}(X_1,Y_1)}\Big)\Big)^2}
    %&\le \frac{1}{2\sigma^4}\norm{\op[u]-\op[v]}_{L^4(\domain)}^4 + \frac{2}{\sigma^2}\norm{\op[u]-\op[v]}_{L_p^2(\domain)}^2\\
    \le \frac{S^2+4\sigma^2}{4\sigma^4} \norm{\op[u]-\op[v]}_{L_p^2(\domain)}^2.
\end{equation*}
\end{lem}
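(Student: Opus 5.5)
Write $g\coloneqq \op[u]-\op[v]$, so that $S=\norm{g}_{L^\infty(\domain)}<\infty$ by continuity. The plan is to condition on the design point: conditionally on $X_1=x$, the law of $Y_1$ under $\prob[u]{}$ is $N(\op[u](x),\sigma^2)$, and under $\prob[v]{}$ it is $N(\op[v](x),\sigma^2)$. Both measures share the design marginal $p(x)\,\D x$. Hence every divergence reduces to an explicit one-dimensional Gaussian quantity with mean shift $g(x)$, integrated against $p(x)\,\D x$.

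\textbf{Kullback--Leibler divergence and second moment.} The log-likelihood ratio at $(x,y)$ is
\[
\frac{1}{2\sigma^2}\big[(y-\op[v](x))^2-(y-\op[u](x))^2\big].
\]
Under $\prob[u]{}$ we have $y=\op[u](x)+\sigma\epsilon$, so the ratio equals $\frac{g(x)^2}{2\sigma^2}+\frac{g(x)\epsilon}{\sigma}$. Taking expectations gives the KL identity $\operatorname{KL}=\frac{1}{2\sigma^2}\norm{g}_{L_p^2(\domain)}^2$. For the second moment, $\epsilon$ is independent of $X_1$, so
\[
\EV{(\cdot)^2}=\int\Big(\frac{g^4}{4\sigma^4}+\frac{g^2}{\sigma^2}\Big)p\,\D x .
\]
Bounding $g^4\le S^2g^2$ pointwise yields the stated factor $(S^2+4\sigma^2)/(4\sigma^4)$.

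\textbf{Hellinger distance.} Both measures share the $x$-marginal, so the Hellinger affinity factorises through the Gaussian Hellinger affinity. With the unnormalised convention used here ($h^2=\int(\sqrt{\cdot}-\sqrt{\cdot})^2$, so $h^2=2-2\,\text{affinity}$), this gives
\[
h^2(\prob[u]{},\prob[v]{})=2\int_{\domain}\big(1-e^{-g(x)^2/(8\sigma^2)}\big)p(x)\,\D x .
\]
The upper bound follows from $1-e^{-z}\le z$, which gives $h^2\le \frac{1}{4\sigma^2}\norm{g}_{L_p^2}^2$. For the lower bound, the function $z\mapsto (1-e^{-z})/z$ is nonincreasing on $(0,\infty)$. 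Since $g(x)^2\le S^2$ pointwise, we get
\[
1-e^{-g^2/(8\sigma^2)}\ge \frac{1-e^{-S^2/(8\sigma^2)}}{S^2}\,g^2 .
\]
Integrating gives the claim. If $S=0$ the statement is trivial, interpreting the ratio by its limit.

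\textbf{Main point of care.} The only step needing attention is this monotonicity argument and fixing the Hellinger normalisation. The constant $2(1-e^{-S^2/8\sigma^2})/S^2$ pins down the convention: without the factor $1/2$ in front of the integral, $h^2=2-2\cdot\text{affinity}$. With that convention checked, the remaining computations are elementary Gaussian integrals.
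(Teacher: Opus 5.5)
Your proof is correct and is essentially the argument behind the paper's proof, which simply defers to Lemmas 22 and 23 of \cite{GN20}. That argument conditions on the design point and reduces everything to explicit Gaussian formulas: the log-likelihood ratio $g^2/(2\sigma^2)+g\epsilon/\sigma$, and, under the unnormalised convention, the affinity $e^{-g^2/(8\sigma^2)}$. It then uses $1-e^{-z}\le z$ together with the monotonicity of $(1-e^{-z})/z$. One small caveat: continuity on the open set $\domain$ does not by itself give $S<\infty$, but if $S=\infty$ the two $S$-dependent bounds become trivial, so nothing is lost.
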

\begin{proof}
    Compare Lemmas 22 and 23 of \cite{GN20}.
\end{proof}

We need the following lemma for the concentration of the empirical norm.
\begin{lem}[Concentration of empirical norm]\label{lem:EmpiricalNorm} Let $\mathcal{O} \subseteq \mathbb R^d$ be some open subset with finite Lebesgue measure, and let $X_i \overset{i.i.d.}{\sim} p(x)\,\D x$. Let $\mathcal{W}$ be a class of real-valued functions on $\mathcal{O}$ with uniform upper bound
\[
\sup_{u \in \mathcal{W}} \|u\|_{L^\infty(\mathcal{O})} =: M < \infty,
\]
where $\|u\|_{L^\infty(\mathcal{O})} = \sup_{x \in \mathcal{O}} |u(x)|$. There exists a universal constant $C > 0$ such that for any $(\delta_N : N \geq 1)$ satisfying
\begin{equation}
N \delta_N^2 \geq C M^2 \log(N(\mathcal{W}, \|\cdot\|_\infty, \delta_N)),\label{eq:aux_Entropycondition}
\end{equation}
and any $R \geq C(\delta_N + M/\sqrt N)$, it holds that
\[
\mathbb P\left(\|u\|_{L_p^2(\mathcal{O})} \geq 2\|u\|_N \text{ for some } u \in \mathcal{W} \text{ with } \|u\|_{L_p^2(\mathcal{O})} \geq R\right)
\leq 2\exp\left(-\frac{NR^2}{CM^2}\right).
\]
\end{lem}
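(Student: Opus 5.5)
This is a uniform ratio-type lower deviation bound for the empirical norm. I would prove it by a peeling argument over $L_p^2$-shells combined with a uniform-in-class Bernstein (Talagrand/Bousquet-type) deviation inequality for the empirical process indexed by $\{u^2\colon u\in\mathcal{W}\}$.

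\textbf{Reduction.} Write $\|u\|_N^2-\|u\|_{L_p^2}^2=(P_N-P)u^2$. On the event in question some $u\in\mathcal{W}$ has $\|u\|_{L_p^2}=:r\ge R$ and $\|u\|_N\le r/2$. Hence $(P-P_N)u^2\ge \tfrac34 r^2$.

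\textbf{Peeling.} Split into shells $\mathcal{W}_j=\{u\in\mathcal{W}\colon 2^{j}R\le\|u\|_{L_p^2}<2^{j+1}R\}$ for $j\ge0$. Only shells with $2^jR\le M$ are nonempty. On $\mathcal{W}_j$ the class $\{u^2\}$ has envelope $M^2$ and variance
\[
\mathrm{Var}(u^2(X_1))\le M^2\|u\|_{L_p^2}^2\le 4M^2 4^jR^2 .
\]
It therefore suffices to bound
\[
\mathbb{P}\Big(\sup_{u\in\mathcal{W}_j}(P-P_N)u^2\ge \tfrac34 4^jR^2\Big).
\]

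\textbf{Expectation bound.} This is the main obstacle. By symmetrisation and the contraction principle, using that $x\mapsto x^2$ is $2M$-Lipschitz on $[-M,M]$, the expected supremum is at most $4M\,\mathbb{E}\sup_{u\in\mathcal{W}_j}|N^{-1}\sum_i\varepsilon_i u(X_i)|$. I would bound this by a one-step discretisation at scale $\delta_N$ in $\|\cdot\|_\infty$:
\begin{itemize}
\item the net error is at most $\delta_N$;
\item the finite maximum over $\mathcal{N}=N(\mathcal{W},\|\cdot\|_\infty,\delta_N)$ points with $\|u\|_{L_p^2}\lesssim 2^jR$ is bounded by Bernstein's inequality plus a union bound, giving
\[
\lesssim 2^jR\sqrt{\log\mathcal{N}/N}+M\log\mathcal{N}/N .
\]
\end{itemize}
Using \eqref{eq:aux_Entropycondition} gives $\log\mathcal{N}\le N\delta_N^2/(CM^2)$. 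Together with $R\ge C\delta_N$, this yields an expectation bound of the form
\[
M\big(\delta_N+2^jR\,\delta_N/M+\delta_N^2/M\big)\lesssim \delta_N M+2^jR\,\delta_N .
\]
The term $M\delta_N$ is not obviously $\le 4^jR^2/8$. Fixing this requires either using the net only for $u$ (with $|u^2-v^2|\le 2M|u-v|$), or noting $M\delta_N\le MR/C$ together with $R\ge CM/\sqrt N$. I would handle it by applying the discretisation directly to $u^2$ with net error $2M\delta_N$ and then absorbing. This is exactly why the hypothesis includes $R\ge C(\delta_N+M/\sqrt N)$. The bookkeeping of the constants $C$ is the step I expect to be delicate, and it may need $\delta_N\lesssim R^2/M$-type adjustments.

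\textbf{Concentration and summation.} Once the expectation is below $\tfrac14 4^jR^2$, Talagrand's inequality (Bousquet's form, Theorem 3.3.9 of \cite{GN16}) with variance $\lesssim M^2 4^jR^2$ and envelope $M^2$ bounds the probability of exceeding $\tfrac34 4^jR^2$ by $\exp(-cN4^jR^2/M^2)$. Summing over $j\ge0$ gives a geometric series dominated by its first term, and enlarging $C$ yields the bound $2\exp(-NR^2/(CM^2))$.
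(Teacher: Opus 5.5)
The paper gives no argument of its own for this lemma; it specialises Lemma A.5 of \cite{reinhardtStatisticalLearningTheory2024}. Your self-contained route has a genuine gap at exactly the step you flagged, and none of your proposed remedies closes it.

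Any one-step $\|\cdot\|_\infty$-discretisation of the squared class leaves a remainder of order $M\delta_N$. This holds whether you first symmetrise and contract (factor $2M$ times net error $\delta_N$) or discretise directly via $|u^2-v^2|\le 2M|u-v|$. That remainder must be at most $c\,4^jR^2$, and in particular at most $cR^2$ when $j=0$. The hypotheses only give $R\ge C\delta_N$ and $R\ge CM/\sqrt N$, hence $R^2\ge C^2M\delta_N/\sqrt N$, which misses by a factor $\sqrt N$. Likewise, the bound $M\delta_N\le MR/C$ would need $R\gtrsim M$. So $M\delta_N\lesssim R^2$ is an extra assumption, and it fails exactly where the paper uses the lemma. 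In Step 2 of the proof of Lemma \ref{lem:MAP_Convergence}, $R=s_N$, $\delta_N=c_0s_N$, and the envelope is only controlled as $M_{s_N}\lesssim 2^{J\eta}s_N$. There $M\delta_N/R^2$ may be of order $c_0 2^{J\eta}\to\infty$. Imposing $\delta_N\lesssim R^2/M$ would therefore prove a weaker statement that does not cover this application.

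The missing idea is to discretise norms rather than squared norms. The maps $u\mapsto\|u\|_N$ and $u\mapsto\|u\|_{L_p^2}$ are $1$-Lipschitz for $\|\cdot\|_\infty$, with no factor $M$. Take a $2\delta_N$-net $\{w_k\}\subset\mathcal W$ of cardinality at most $N(\mathcal W,\|\cdot\|_\infty,\delta_N)$. Suppose $u$ lies in the bad event and $\|u-w_k\|_\infty\le 2\delta_N$. Then $\|w_k\|_{L_p^2}\ge R-2\delta_N\ge R/2$ and $\|w_k\|_N\le \|w_k\|_{L_p^2}/2+3\delta_N\le 0.6\|w_k\|_{L_p^2}$ for $C$ large, so $(P-P_N)w_k^2\ge 0.64\|w_k\|_{L_p^2}^2$. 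The scalar Bernstein inequality (variance $\le M^2\|w_k\|_{L_p^2}^2$, envelope $M^2$) bounds each such probability by $\exp(-cNR^2/M^2)$. The union bound costs at most $\exp(N\delta_N^2/(CM^2))\le\exp(NR^2/(C^3M^2))$. This proves the lemma without symmetrisation, peeling or Talagrand; the condition $R\ge CM/\sqrt N$ is not even needed.

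If you prefer to keep your peeling--Talagrand structure, the same observation repairs the expectation bound. Write $|(P_N-P)(u^2-v^2)|\le\|u-v\|_\infty(\|u+v\|_N+\|u+v\|_{L_p^2})$. Set $Z_j=\sup_{u\in\mathcal W_j}|(P_N-P)u^2|$ and use $\sup_{u\in\mathcal W_j}\|u\|_N\le 2^{j+1}R+\sqrt{Z_j}$. Then absorb $\delta_N\sqrt{Z_j}\le Z_j/4+\delta_N^2$. This gives $\mathrm{E}Z_j\lesssim 2^jR\delta_N+\delta_N^2+2^jRM/\sqrt N+M^2/N\lesssim 4^jR^2/C$. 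After that, your concentration and summation step goes through.
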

\begin{proof}
    This is a direct specialisation of Lemma A.5 in \cite{reinhardtStatisticalLearningTheory2024} to $\mathcal{W}$ with uniform envelope $M = \sup_{u\in \mathcal{W}} \norm{u}_{L^\infty(\mathcal{O})}$.
\end{proof}

We proceed with studying the $\iprod{}{}_{\operatorname{LAN}}$-orthogonal projection $\ProjTwo_J\colon \mathcal{V}\to V_J$.

\begin{lem}\label{lem:Properties_LAN_Norm}
The orthogonal projection $\ProjTwo_{J}$ of $(\mathcal{V},\iprod{}{}_{\operatorname{LAN}})$ onto $V_{J}$ satisfies the following properties:
    \begin{enumerate}[(i)]
        \item\label{num:ProjectionBoundedness} Boundedness: For $u\in \mathcal{V}$ we have $\norm{\ProjTwo_{J} u}_{H^m(\domain)}\le \frac{C}{c}\norm{u}_{H^m(\domain)}$ for $0<c\le C<\infty$ from \eqref{eq:Normequivalence}.
        \item \label{num:ProjectionDensity} Density in $(\mathcal{V},\norm{}_{H^m(\domain)})$: For any $u\in \mathcal{V}$ we have 
        \begin{equation*}
            \norm{(\identity - \ProjTwo_J)u}_{H^m(\domain)}\to 0,
        \end{equation*}
        as $J\to\infty$.
        \item\label{num:ProjectionJackson} Jackson estimate: For $m\le s< S$ and $u\in \mathcal{V}\cap H^s(\domain)$ we have
        \begin{equation}
            \norm{(\identity-\ProjTwo_{J})u}_{H^m(\domain)}\lesssim 2^{-J(s-m)}\norm{u}_{H^s(\domain)}.\label{eq:Jackson_orthogonalprojection}
        \end{equation}
        \item\label{num:ProjectionBernstein} Bernstein estimate: For $m\le s< t< S$ and $u\in \mathcal{V}\cap H^s(\domain)$ we have
        \begin{equation}
            \norm{\ProjTwo_{J}u}_{H^t(\domain)} \lesssim 2^{J(t-s)}\norm{u}_{H^s(\domain)}.\label{eq:Bernstein_orthogonalprojection}
        \end{equation}
    \end{enumerate}
\end{lem}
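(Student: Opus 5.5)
Everything follows from the norm equivalence \eqref{eq:Normequivalence}, the Céa-type best-approximation property of orthogonal projections, and the properties of $\ProjOne_J$ from Assumption \ref{assump:ApproximatonSets}. Write $c\norm{u}_{H^m(\domain)}\le\norm{u}_{\operatorname{LAN}}\le C\norm{u}_{H^m(\domain)}$.

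For (i), since $\ProjTwo_J$ is an orthogonal projection in $(\mathcal{V},\iprod{}{}_{\operatorname{LAN}})$, it has operator norm at most one in the LAN norm. Hence
\[
c\norm{\ProjTwo_Ju}_{H^m(\domain)}\le\norm{\ProjTwo_Ju}_{\operatorname{LAN}}\le\norm{u}_{\operatorname{LAN}}\le C\norm{u}_{H^m(\domain)}.
\]

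The key tool for (ii) and (iii) is a quasi-optimality statement. Since $\ProjTwo_J u$ is the LAN-best approximation of $u$ from $V_J$, for every $v\in V_J$ we have
\[
\norm{(\identity-\ProjTwo_J)u}_{H^m(\domain)}\le c^{-1}\norm{u-v}_{\operatorname{LAN}}\le \tfrac{C}{c}\norm{u-v}_{H^m(\domain)}.
\]
For (ii), density of $\bigcup_J V_J$ in $(\mathcal{V},\norm{}_{H^m(\domain)})$ together with nestedness gives, for any $\delta>0$, some $J_0$ and $v\in V_{J_0}\subset V_J$ with $\norm{u-v}_{H^m}<\delta$ for all $J\ge J_0$. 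For (iii), take $v=\ProjOne_Ju$ and apply the Jackson estimate \eqref{eq:Jackson} with $(s,t)=(m,s)$. This gives $\norm{u-\ProjOne_Ju}_{H^m}\lesssim 2^{-J(s-m)}\norm{u}_{H^s}$ for $m<s<S$. The case $s=m$ is just (i).

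Part (iv) is the only step that needs more than best approximation, because $\ProjTwo_J$ is not a priori $H^s$-stable. The plan is to split
\[
\ProjTwo_Ju=\ProjOne_Ju+\ProjTwo_J(u-\ProjOne_Ju),
\]
using that $\ProjTwo_J$ fixes $V_J$.

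For the first term, since $\ProjOne_Ju\in V_J$, the Bernstein estimate \eqref{eq:Bernstein} from $H^s$ to $H^t$ gives $\norm{\ProjOne_Ju}_{H^t}\lesssim2^{J(t-s)}\norm{\ProjOne_Ju}_{H^s}$. The stability \eqref{eq:H^m_stability} with $\kappa=s$ then bounds this by $2^{J(t-s)}\norm{u}_{H^s}$.

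For the second term, we have $\ProjTwo_J(u-\ProjOne_Ju)\in V_J$. Applying Bernstein from $H^m$ to $H^t$, then (i), then Jackson from $H^s$ to $H^m$ gives
\[
\norm{\ProjTwo_J(u-\ProjOne_Ju)}_{H^t}\lesssim 2^{J(t-m)}\norm{u-\ProjOne_Ju}_{H^m}\lesssim2^{J(t-m)}2^{-J(s-m)}\norm{u}_{H^s}=2^{J(t-s)}\norm{u}_{H^s}.
\]
When $s=m$ the Jackson step is replaced by the $H^m$-stability of $\ProjOne_J$, which yields the same bound. All the indices used lie in $[0,S)$, as Assumption \ref{assump:ApproximatonSets} requires.
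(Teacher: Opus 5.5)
Your proposal is correct and follows essentially the same route as the paper: you obtain (i)--(iii) from the LAN-contractivity and best-approximation property of $\ProjTwo_J$ together with the norm equivalence and the Jackson estimate, and (iv) from the same splitting $\ProjTwo_J u=\ProjOne_J u+\ProjTwo_J(\identity-\ProjOne_J)u$. The differences are cosmetic. The paper first uses this splitting to prove uniform $H^s$-stability of $\ProjTwo_J$ and then applies \eqref{eq:Bernstein} once, whereas you apply Bernstein to each piece directly at level $H^t$ (and you also get (ii) from density and nestedness without passing through $\ProjOne_J$), and you treat the endpoint $s=m$ explicitly where the paper leaves it implicit.
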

\begin{proof}
    Since $V_J$ is a closed linear subspace of $(\mathcal{V},\iprod{}{}_{\operatorname{LAN}})$, there exists a unique $\iprod{}{}_{\operatorname{LAN}}$-orthogonal projection $\ProjTwo_J\colon (\mathcal{V},\iprod{}{}_{\operatorname{LAN}})\to V_J$. The Property \eqref{num:ProjectionBoundedness} follows from the norm equivalence \eqref{eq:Normequivalence} and boundedness of $\ProjTwo_J$ in $\norm{}_{\operatorname{LAN}}$-norm. To show \eqref{num:ProjectionDensity} take any $u\in \mathcal{V}$ and observe for any $v\in V_J$ that
    \begin{align*}
        \norm{(\identity- \ProjTwo_J)u}_{H^m(\domain)}&\le \frac{1}{c}\norm{(\identity- \ProjTwo_J)u}_{\operatorname{LAN}} \le \frac{1}{c}\left(\norm{u-v}_{\operatorname{LAN}} + \norm{\ProjTwo_J(u-v)}_{\operatorname{LAN}}\right)\\
        &\le \frac{2}{c}\norm{u-v}_{\operatorname{LAN}}\le \frac{2C}{c}\norm{u-v}_{H^m(\domain)}\le \frac{2C}{c}\norm{(\identity- \ProjOne_J)u}_{H^m(\domain)}.
    \end{align*}
    To show that this tends to zero as $J\to\infty$, we use the uniform $\norm{}_{H^m(\domain)}$-stability of $\ProjOne_J$ from \eqref{eq:H^m_stability} and density of the approximation spaces in $(\mathcal{V},\norm{}_{H^m(\domain)})$:
    \begin{align}
    \begin{split}
        \norm{(\identity-\ProjOne_J)u}_{H^m(\domain)}&\le \inf_{v\in V_J}\left(\norm{u-v}_{H^m(\domain)} + \norm{\ProjOne_J(u-v)}_{H^m(\domain)}\right)\\
        &\lesssim \inf_{v\in V_J} \norm{u-v}_{H^m(\domain)}\to 0
    \end{split}\label{eq:aux_DensityOriginalprojections}
    \end{align}
    as $J\to\infty$.
    The Jackson estimate \eqref{num:ProjectionJackson} follows from the best-approximation property of $\ProjTwo_J$, the norm equivalence \eqref{eq:Normequivalence} and the Jackson estimate \eqref{eq:Jackson}: For any $u\in \mathcal{V}\cap H^s(\domain)$ we have
    \begin{align*}
        \norm{(\identity-\ProjTwo_J)u}_{H^m(\domain)}&\le C \norm{(\identity-\ProjTwo_J)u}_{\operatorname{LAN}} = \inf_{v\in V_J}C\norm{u-v}_{\operatorname{LAN}}\le C\norm{u-\ProjOne_J u}_{\operatorname{LAN}}\\
        &\le \frac{C}{c}\norm{(\identity -\ProjOne_J) u}_{H^m(\domain)}\lesssim 2^{-J(s-m)}\norm{u}_{H^s(\domain)}.
    \end{align*}
    It remains to show the Bernstein estimate \eqref{num:ProjectionBernstein}.
    Since $\ProjTwo_J$ is the identity on $V_J$, we find by the wavelet Bernstein estimate \eqref{eq:Bernstein}, the boundedness \eqref{num:ProjectionBoundedness} and the wavelet Jackson estimate \eqref{eq:Jackson} that
    \begin{align*}
        \norm{\ProjTwo_J u}_{H^s(\domain)}&\le \norm{\ProjOne_J u}_{H^s(\domain)} + \norm{\ProjTwo_J(\identity - \ProjOne_J)u}_{H^s(\domain)}\\
        &\lesssim \norm{u}_{H^s(\domain)} + 2^{J(s-m)}\norm{\ProjTwo_J (\identity - \ProjOne_J)u}_{H^m(\domain)}\\
        &\lesssim \norm{u}_{H^s(\domain)} + 2^{J(s-m)}\norm{(\identity - \ProjOne_J)u}_{H^m(\domain)}\\
        &\lesssim \norm{u}_{H^s(\domain)} + 2^{J(s-m)}2^{J(m-s)}\norm{u}_{H^s(\domain)}\\
        &\lesssim \norm{u}_{H^s(\domain)}.
    \end{align*}
    Since $\ProjTwo_J u\in V_J$ for any $u\in\mathcal{V}$ we find with the Bernstein estimate \eqref{eq:Bernstein} that
    \begin{equation*}
        \norm{\ProjTwo_J u}_{H^t(\domain)}\lesssim 2^{J(t-s)}\norm{\ProjTwo_J u}_{H^s(\domain)}\lesssim 2^{J(t-s)}\norm{u}_{H^s(\domain)}.\qedhere
    \end{equation*}
    %\begin{align*}%for s\le 2\le t
    %     \norm{\ProjTwo_{J}u}_{H^t(\domain)}&\lesssim 2^{J(t-2)}\norm{\ProjTwo_{J}u}_{H^2(\domain)}\le \frac{C}{c}2^{J(t-2)}\norm{u}_{H^2(\domain)}\\
    %     &\lesssim 2^{J(t-s)}\norm{u}_{H^s(\domain)}
    %\end{align*}
\end{proof}

\end{document}